\newtheorem{theorem}{Theorem}[section]
\newtheorem{corollary}[theorem]{Corollary}
\newtheorem{lemma}[theorem]{Lemma}
\newtheorem{prop}[theorem]{Proposition}
\theoremstyle{definition}
\newtheorem{definition}[theorem]{Definition}
\newtheorem{example}[theorem]{Example}
\newtheorem{remark}[theorem]{Remark}
\newtheorem{question}[theorem]{Question}
\newenvironment{romenum}
{ 

\begin{enumerate}}{\end{enumerate}}
\newcommand{\Z}{\mathbb{Z}}
\newcommand{\Q}{\mathbb{Q}}
\newcommand{\R}{\mathbb{R}}
\newcommand{\C}{\mathbb{C}}
\newcommand{\CP}{\mathbb{CP}}
\newcommand{\QP}{\mathbb{QP}}
\newcommand{\sV}{\mathsf{V}}
\newcommand{\sW}{\mathsf{W}}
\newcommand{\sE}{\mathsf{E}}
\renewcommand{\k}{\Bbbk}
\newcommand{\RR}{\mathcal{R}}
\newcommand{\VV}{\mathcal{V}}
\newcommand{\A}{{\mathcal{A}}}
\newcommand{\B}{{\mathcal{B}}}
\newcommand{\CC}{{\mathcal{C}}}
\newcommand{\WW}{\mathcal{W}}
\newcommand{\cV}{\mathcal{V}}
\newcommand{\G}{\Gamma}
\DeclareMathOperator{\rank}{rank}
\DeclareMathOperator{\gr}{gr}
\DeclareMathOperator{\im}{im}
\DeclareMathOperator{\id}{id}
\DeclareMathOperator{\ab}{{ab}}
\DeclareMathOperator{\Sym}{Sym}
\DeclareMathOperator{\ch}{char}
\DeclareMathOperator{\supp}{supp}
\DeclareMathOperator{\depth}{depth}
\DeclareMathOperator{\GL}{GL}
\DeclareMathOperator{\Hom}{{Hom}}
\DeclareMathOperator{\ann}{{ann}}
\DeclareMathOperator{\proj}{pr}
\DeclareMathOperator{\Spec}{{Spec}}
\DeclareMathOperator{\FP}{{FP}}
\DeclareMathOperator{\init}{in}
\DeclareMathOperator{\lk}{lk}
\DeclareMathOperator{\const}{const}
\DeclareMathOperator{\Grass}{Gr}
\DeclareMathOperator{\PD}{PD}
\newcommand{\PL}{\scriptscriptstyle{\rm PL}}
\newcommand{\m}{{\mathfrak m}}
\newcommand{\wX}{\widetilde{X}}
\newcommand{\same}{\Longleftrightarrow}
\newcommand{\surj}{\twoheadrightarrow}
\newcommand{\isom}{\xrightarrow{\,\simeq\,}}
\newcommand{\compl}{\scriptscriptstyle{\complement}}
\newcommand{\abs}[1]{\left| #1 \right|}
\def\angl#1{{\langle #1\rangle}}
\def\set#1{{\{ #1\}}}
\newcommand{\cv}{\check{{\mathcal V}}_1}
\newcommand{\phinorm}{\|\phi\|}
\newcommand{\chim}{\chi^{\:}_{_{-}}}
\begin{document}

\title[Fundamental groups and cohomology jumping loci]{%
Fundamental groups, Alexander invariants, and \\ cohomology jumping loci} 

\author[Alexander~I.~Suciu]{Alexander~I.~Suciu}
\address{Department of Mathematics,
Northeastern University,
Boston, MA 02115, USA}
\email{a.suciu@neu.edu}
\urladdr{http://www.math.neu.edu/\~{}suciu}
\thanks{Partially supported by NSA grant H98230-09-1-0012 
and an ENHANCE grant from Northeastern University}

\subjclass[2000]{Primary
14F35; 
Secondary
20J05,  
32S22,  
55N25. 
}

\keywords{Fundamental group, Alexander polynomial, 
characteristic variety, resonance variety, abelian cover, 
formality, Bieri--Neumann--Strebel--Renz invariant, 
right-angled Artin group, K\"{a}hler manifold, 
quasi-K\"{a}hler manifold, hyperplane arrangement, 
Milnor fibration, boundary manifold}

\dedicatory{Dedicated to Anatoly Libgober on the 
occasion of his sixtieth birthday}

\begin{abstract}

We survey the cohomology jumping loci and the 
Alexander-type invariants associated to a space,  
or to its fundamental group.  Though most of the 
material is expository, we provide new examples 
and applications, which in turn raise several 
questions and conjectures. 

The jump loci of a space $X$ come in two basic 
flavors: the characteristic varieties, or, the support 
loci for homology with coefficients in rank $1$ localÊ
systems, and the resonance varieties, or, the support lociÊ
for the homology of the cochain complexes arisingÊ
from multiplication by degree $1$ classes in theÊ
cohomology ring of $X$. ÊThe geometry of these 
varieties is intimately related to the formality, 
(quasi-) projectivity, and homological finitenessÊ
properties of $\pi_1(X)$.Ê

We illustrate this approach with various applications 
to the study of hyperplane arrangements, Milnor fibrations, 
$3$-manifolds, and right-angled Artin groups.  
\end{abstract}
\maketitle

\tableofcontents

\section{Introduction}
\label{sect:intro}

\subsection{Fundamental groups}
\label{intro:pi1}
The fundamental group of a topological space was introduced 
in 1904 by H.~Poincar\'{e}, with the express purpose of 
distinguishing between certain manifolds, such as the dodecahedral 
space and the three-sphere, which otherwise share a lot in common. 
Subsequently, it was realized that the geometric nature of a 
manifold $M$ influences the group-theoretic properties of its 
fundamental group, $\pi_1(M)$; and conversely, the nature 
of a group $G$ can say a lot about those manifolds with 
fundamental group $G$, at least in low dimensions.

As is well-known, every finitely presented group $G$ occurs 
as the fundamental group of a smooth, compact connected, 
orientable manifold $M$ of dimension $n=4$. The manifold 
can even be chosen to be symplectic (Gompf); 
and, if one is willing to go to dimension $n=6$, 
it can be chosen to be complex (Taubes).  
On the other hand, requiring $M$ to be 
$3$-dimensional, or to carry a K\"{a}hler structure 
puts severe restrictions on its fundamental group.

A finitely presented group $G$ is said to be a 
{\em K\"{a}hler group}\/ if it can be realized as 
$G=\pi_1(M)$, where $M$ is a compact, connected 
K\"{a}hler manifold. The notions of quasi-K\"{a}hler group 
and \mbox{(quasi-)} projective group are defined similarly. 
A classical problem, formulated by \mbox{J.-P.~Serre} 
in the late 1950s (cf.~\cite{Se}), is to determine which 
groups can be so realized.    

A basic theme of this survey is that certain topological 
invariants, such as the Alexander polynomial, or the 
cohomology jumping loci, are very good tools for 
studying questions about fundamental groups in 
algebraic geometry.  In particular, they can be brought 
to bear to settle Serre's realization problem for several 
notable classes of groups. 

\subsection{Alexander-type invariants}
\label{intro:alex}

In his foundational paper \cite{Al}, 
J. W. Alexander assigned to every knot $K$ in 
$S^3$ a polynomial with integer coefficients, as follows. 
Let $\pi\colon X'\to X$ be the infinite cyclic cover of 
the knot complement. Then 
$A=H_1(X', \pi^{-1}(x_0), \Z)$ is a finitely presented 
module over the group ring $\Z\Z\cong \Z[t^{\pm 1}]$.  
The Alexander polynomial, $\Delta_K(t)$, equals---up to 
normalization---the greatest common divisor of the 
codimension~$1$ minors of a presentation matrix for 
the Alexander module $A$.  Clearly, $\Delta_K(t)$ 
depends only on the knot group, $G=\pi_1(X,x_0)$.  
More generally, if $K$ is an $n$-component 
link in $S^3$, there is a multi-variable Alexander polynomial 
$\Delta_K(t_1, \dots , t_n)$, depending only on the link 
group, and a choice of meridians. 

These definitions extend with only slight modifications 
to arbitrary finitely presented groups (see \S\ref{sect:alex}). 
In \cite{Li82}, A.~Libgober considered the single-variable 
Alexander polynomial $\Delta_C(t)$ associated to the 
complement of an algebraic curve $C\subset \C^2$, and 
its total linking cover. He then proved in \cite{Li94} 
that all the roots of $\Delta_C(t)$ are roots of unity, 
a result which constrains the class of groups realizable 
as fundamental groups of plane curve complements. 

As shown in \cite{DPS-imrn, DPS-duke}, the multi-variable 
Alexander polynomial and the related Alexander 
varieties provide further constraints on the kind of 
fundamental groups that can appear in this algebraic 
context. For example, if $G$ is a quasi-projective group 
with $b_1(G)\ne 2$, then the Newton polytope of 
$\Delta_G$ is a line segment; and, if $G$ is 
a projective group, then $\Delta_G$ must be constant 
(see Theorem \ref{thm:alex qk}). 

\subsection{Cohomology jumping loci}
\label{intro:cvs}

An essential tool for us are the cohomology jumping loci 
associated to a connected, finite-type CW-complex $X$.  
The {\em characteristic varieties}\/ $\VV^i_d(X,\k)$ are 
algebraic subvarieties of  $\Hom (\pi_1(X), \k^{\times})$, 
while the {\em resonance varieties} $\RR^i_d(X,\k)$ 
are homogeneous subvarieties of  $H^1(X,\k)$.  
The former are the jump loci for the homology of 
$X$ with rank $1$ twisted complex coefficients 
(see \S\ref{sect:cvs}), while the latter are the jump 
loci for the homology of the cochain complexes 
arising from multiplication by degree $1$ classes in 
$H^*(X,\k)$ (see  \S\ref{sect:res vars}).  The jump 
loci of a group are defined in terms of the jump 
loci of the corresponding classifying space. 

It has been known since the work of Hironaka \cite{Hi97} 
that the degree $1$ characteristic varieties of a finitely 
generated group $G$ coincide with the determinantal 
varieties of its Alexander matrix, at least away from 
the origin.  A more general statement, valid in arbitrary 
degrees, was recently proved in \cite{PS-bns} 
(see Theorem \ref{thm:vw}). 

Foundational results on the structure of the characteristic 
varieties of K\"{a}hler and quasi-K\"{a}hler manifolds and 
smooth projective and quasi-projective varieties were 
obtained by Beauville \cite{Be}, Green--Lazarsfeld 
\cite{GL}, Simpson \cite{Sp92}, Campana \cite{Cm01}, 
and ultimately, Arapura~\cite{Ar}.  
In particular, if $X=\overline{X}\setminus D$, 
with $\overline{X}$ compact K\"{a}hler, $b_1(\overline{X})=0$, 
and $D$ a normal-crossings divisor, then each variety 
$\VV^i_d(X,\C)$ is a union of subtori of 
$\Hom (\pi_1(X) ,\C^{\times})$, possibly translated 
by unitary characters (see \S\ref{sect:kahler}).  
In \cite{Li01}, Libgober---who coined the name 
``characteristic varieties"---showed how to find irreducible 
components of $\VV^1_d(X,\C)$ from the faces of a 
certain ``polytope of quasiadjunction,'' in the case 
when $X$ is a plane curve complement.  Recently, 
he proved in \cite{Li09} a local version of Arapura's 
theorem, in which all the translations are done 
by characters of finite order. 

The resonance varieties were first defined by Falk 
in \cite{Fa97}, in the case when $X=X(\A)$ is the 
complement of a complex hyperplane arrangement $\A$.  
Best understood are the degree $1$ resonance varieties 
$\RR_d(\A)=\RR^1_d(X(\A),\C)$;  these varieties 
admit a very precise combinatorial description, 
owing to work of Falk \cite{Fa97}, Cohen--Suciu \cite{CS99}, 
Libgober \cite{Li01}, Libgober--Yuzvinsky \cite{LY}, and others, 
with the state of the art being the work of Falk, 
Pereira, and Yuzvinsky \cite{FY, PeY, Yu}. 

The cohomology jump loci provide a unifying framework for 
the study of a host of questions, both quantitative and 
qualitative, concerning a space $X$ and its fundamental 
group, $G=\pi_1(X)$.  A lot of the recent developments 
described in this survey come from our joint work 
with A.~Dimca and S.~Papadima \cite{DPS-imrn, 
DPS-duke, DPS-qk, DS, PS-toric, PS-bns}.  

\subsection{Abelian covers}
\label{intro:abel}
 
As shown by Libgober in \cite{Li92}, with further refinements 
by Hironaka \cite{Hi93}, Sarnak--Adams \cite{SarA}, 
Sakuma \cite{Sa}, and Matei--Suciu \cite{MS02}, counting 
torsion points on the character group, according 
to their depth with respect to the stratification
by the characteristic varieties, yields very precise 
information about the homology of finite abelian 
covers of $X$ (see~\S\ref{sect:abel covers}).  

An old observation in knot theory is that the Alexander 
polynomial $\Delta_K(t)$ of a fibered knot $K$ is monic.  
More generally, Dwyer and Fried \cite{DF} showed that 
the support varieties of the Alexander invariants of a 
finite CW-complex completely determine the homological 
finiteness properties of its free abelian covers.  This 
result was recast in \cite{PS-bns} in terms of the 
characteristic varieties and their (exponential) 
tangent cones (see Theorem \ref{thm:df cv}). 

In \cite{BNS}, Bieri, Neumann, and Strebel associated 
to every finitely generated group $G$ an open, conical 
subset  $\Sigma^1(G)$ of the real vector space $\Hom(G,\R)$. 
The BNS invariant and its higher-order generalizations, the 
invariants $\Sigma^q(G,\Z)$ of Bieri and Renz \cite{BR}, 
hold subtle information about the homological finiteness 
properties of normal subgroups of $G$ with abelian 
quotients (see \S\ref{sect:bnsr}). 
 The actual computation of the $\Sigma$-invariants 
is enormously complicated.  Yet, as shown in \cite{PS-bns}, 
the cohomology jumping loci of a classifying space $K(G,1)$ 
provide computable upper bounds for the $\Sigma$-invariants 
of $G$ (see Theorems \ref{thm:bns tau} and \ref{thm:bns valuation}). 

\subsection{Formality and the tangent cone formula}
\label{intro:tcf}

A central point in the theory of cohomology jumping loci 
is the relationship between the characteristic and resonance 
varieties.  As proved by Libgober in \cite{Li02}, the tangent 
cone to $\VV^i_d(X,\C)$ at the origin is contained in 
$\RR^i_d(X,\C)$, for any finite-type CW-complex $X$.  
But, as first noted in \cite{MS02}, the inclusion can be 
strict; in fact, as shown in \cite{DPS-duke}, equality may 
even fail for quasi-projective groups.  

The crucial property that bridges the gap between the tangent 
cone to a characteristic variety and the corresponding 
resonance variety is {\em formality} (see \S\ref{sect:formal}).  
A space $X$ as above is formal if its Sullivan minimal model 
is quasi-isomorphic to $(H^*(X,\Q),0)$, while a finitely 
generated group $G$ is $1$-formal if its Malcev Lie algebra 
has a quadratic presentation.  For a recent survey of these 
notions, we refer to \cite{PS-formal}.   

One of the main results from \cite{DPS-duke} establishes 
an isomorphism between the analytic germ of $\VV^1_d(G,\C)$ 
at $1$ and the analytic germ of $\RR^1_d(G,\C)$ at $0$, in the 
case when $G$ is a $1$-formal group (see Theorem \ref{thm:tcone}). 
This isomorphism provides a new, and very effective $1$-formality 
criterion. 

As also shown in \cite{DPS-duke}, the tangent 
cone theorem, when used in conjunction with Arapura's 
theorem, imposes very strong restrictions on the 
nature of the resonance varieties of K\"{a}hler groups, 
and, more generally, $1$-formal, quasi-K\"{a}hler groups 
(see Theorem \ref{thm:res kahler}).   These restrictions 
lead to both quasi-projectivity obstructions (in the formal 
setting), and enhanced formality obstructions (in the 
quasi-projective setting). 

\subsection{Applications}
\label{intro:apps}

The techniques described above have a large range of 
applicability.  We illustrate their usefulness with a variety 
of examples, arising in algebraic geometry, low-dimensional 
topology, combinatorics, and group theory. 

As a running example, we use an especially well-suited 
class of combinatorially defined spaces.  A simplicial 
complex $L$ on $n$ vertices determines a subcomplex 
$T_L$ of the $n$-torus, with fundamental 
group the right-angled Artin group $G_{\G}$ corresponding 
to the graph $\G=L^{(1)}$. The cohomology jumping loci 
of these ``toric complexes" were computed in \cite{PS-toric}.  
Using the computation of $\RR^1_1(G_\G,\C)$, it 
was shown in \cite{DPS-duke} that a group $G_\G$ 
is quasi-projective if and only if $\G$ is a complete 
multi-partite graph. We show here that the Alexander 
polynomial of $G_\G$ is non-constant if and only if 
$\G$ has connectivity $1$ (see Proposition 
\ref{prop:alexpoly raag}).   

In \S\ref{sect:arrs}, we treat in detailed fashion
an important class of quasi-projective varieties: those 
arising as complements of complex hyperplane arrangements. 
The cohomology jumping loci of such a complement, 
$X=X(\A)$, are rather well understood, especially 
in degree $1$, with a major open problem being 
the determination of the translated components  
in $\VV^1_1(X(\A),\C)$.  Much effort has been put 
over the years into comprehending fundamental 
groups of arrangements (see \cite{Su01} for more 
on this subject).   We identify here precisely the class 
of line arrangements $\A$ in $\C^2$ for which the 
group $G(\A)=\pi_1(X(\A))$ is a K\"{a}hler group 
(see Theorem \ref{thm:arr kahler}), respectively, 
a right-angled Artin group (see Theorem \ref{thm:arr raag}). 
Along the way, we reprove a recent result of Fan \cite{Fan09}, 
identifying those line arrangements $\A$ for which 
$G(\A)$ is a free group (see Theorem \ref{thm:fan}). 

In \S\ref{sect:milnor}, we discuss Milnor fibrations, with 
special emphasis on those arising from hyperplane arrangements. 
A well-known construction, due to J.~Milnor \cite{Mi}, associates 
to a weighted homogeneous polynomial $f\in \C[z_0,\dots, z_d]$ 
of degree $n$ a smooth fibration, $f\colon X \to \C^{\times}$, where 
$X$ is the complement in $\C^{d+1}$ of the hypersurface $\{f=0\}$. 
The Milnor fiber, $F=f^{-1}(1)$, is an $n$-fold cyclic cover 
of $U=X/\C^{\times}$. We review this construction, and the 
method of computing the homology groups $H_1(F,\k)$
from the characteristic varieties $\VV^1_d(U,\k)$, 
provided $(\ch \k, n)=1$.  We also review  
recent progress on the formality question for 
the Milnor fiber, first raised in \cite{PS-formal}, and 
recently answered---with different examples---by 
Zuber \cite{Zu} and Fern\'andez de Bobadilla~\cite{FdB}. 

We conclude in \S\ref{sect:3mfd} with a look at the world 
of $3$-dimensional manifolds. A major application of the 
techniques discussed in this survey is the solution given 
in \cite{DS} to the following question asked by 
S.~Donaldson and W.~Goldman in 1989, and 
independently by A.~Reznikov in 1993:  Which 
$3$-manifold groups are K\"{a}hler groups? 
The proof uses in an essential way the contrasting 
nature of the resonance varieties for these two 
classes of groups.  Another application, given 
in \cite{DPS-qk}, is to the classification (up to Malcev 
completion) of $3$-manifold groups which are also 
$1$-formal, quasi-K\"ahler groups.  This classification 
can be made very precise in the case of boundary 
manifolds of line arrangements in $\CP^2$: as 
shown in \cite{CS08} and \cite{DPS-imrn}, the 
only way the fundamental group of a boundary 
manifold $M(\A)$ can be either $1$-formal, or 
quasi-K\"ahler (or both), is for $\A$ to be a pencil, 
or a near-pencil. 

\section{Characteristic varieties}
\label{sect:cvs}

We start with the jumping loci for homology in rank $1$ 
local systems, and two types of tangent cones associated to a 
subvariety of the character variety of a group. 

\subsection{Rank~$1$ local systems}
\label{subsec:loc syst}   
Let $X$ be a connected CW-complex, with finite $k$-skeleton, 
for some $k\ge 1$.  Without loss of generality, we may assume 
$X$ has a single $0$-cell, call it $x_0$.  Moreover, we may 
assume all attaching maps $(S^i,*) \to (X^i,x_0)$ 
are basepoint-preserving. 

Fix a field $\k$, and denote by $(C_i(X, \k),\partial_i)_{i\ge 0}$ 
the cellular chain complex of $X$. Let $p\colon \wX \to X$ be the 
universal cover.  The cell structure on $X$ lifts in a natural fashion 
to a cell structure on $\wX$.  Fixing a lift $\tilde{x}_0\in p^{-1}(x_0)$ 
identifies the fundamental group, $G=\pi_1(X,x_0)$, with the 
group of deck transformations of $\wX$, which permute the cells. 
Therefore, we may view $(C_i(\wX, \k),\tilde{\partial}_i)_{i\ge 0}$  
as a chain complex of left-modules over the group ring $\k{G}$.  

Let $\k^{\times}$ be the group of units in $\k$. 
The group of $\k$-valued characters, $\Hom(G,\k^{\times})$, 
is an algebraic group, with pointwise multiplication inherited 
from $\k^{\times}$,  and identity the character taking constant 
value $1\in \k^{\times}$ for all $g\in G$. 
This character group parametrizes rank~$1$ local systems 
on $X$: given a character $\rho\colon G \to \k^{\times}$, 
denote by $\k_{\rho}$ the $1$-dimensional $\k$-vector space, 
viewed as a right module over the group ring $\k{G}$ 
via $a \cdot g = \rho(g)a$, for $g\in G$ and $a\in \k$. 
The homology groups of $X$ with coefficients in $\k_{\rho}$ 
are defined as 
\begin{equation}
\label{eq:twist hom}
H_i(X,\k_{\rho}):=H_i(C_{\bullet}(\wX,\k)\otimes_{\k{G}}\k_{\rho}).
\end{equation}
In this setup, the identity $1\in \Hom(G,\k^{\times})$ 
corresponds to the trivial coefficient system, $\k_{1}:=\k$, and  
$H_*(X,\k)$ is the usual homology of $X$ with $\k$-coefficients. 

\subsection{Homology jump loci}
\label{subsec:cvs} 

Computing homology groups with coefficients in rank $1$   
local systems leads to a natural filtration of the character 
group. 

\begin{definition}
\label{def:cvs}
The {\em characteristic varieties}\/ of $X$ (over $\k$) 
are the Zariski closed sets 
\begin{equation*}
\label{eq:cvs}
\VV^i_d(X,\k)=\{\rho \in \Hom(G,\k^{\times})
\mid \dim_\k H_i(X,\k_{\rho})\ge d\},
\end{equation*}
defined for all integers $0\le i\le k$ and $d>0$. 
\end{definition}

When computing the characteristic varieties in degrees up to $k$, 
we may assume, without loss of generality, that $X$ is a finite 
CW-complex of dimension $k+1$; see \cite[Lemma 2.1]{PS-bns} 
for an explanation. In each fixed degree $i$, the characteristic 
varieties define a descending filtration on the character group, 
\begin{equation}
\label{eq:filt}
\Hom(G,\k^{\times}) \supseteq \VV^i_1(X,\k^{\times}) \supseteq 
\VV^i_2(X,\k^{\times}) \supseteq \cdots.
\end{equation}

Clearly, $1\in \VV^i_1(X,\k)$ if and only if $H_i(X,\k)\ne 0$. 
In degree $0$, we have $\VV^0_1(X, \k)= \{ 1\}$ 
and $\VV^0_d (X, \k)=\emptyset$, for $d>1$.  In degree $1$, 
the characteristic varieties $\VV^1_d(X,\k)$ depend only on the 
fundamental group $G=\pi_1(X)$---in fact, only on its maximal 
metabelian quotient, $G/G''$---so we sometimes denote them 
as $\VV_d(G,\k)$.  

Define the {\em depth}\/ of a character 
$\rho\colon \pi_1(X,x_0)\to \k^{\times}$ relative to 
the stratification \eqref{eq:filt} by
\begin{equation} 
\label{eq:depth}
\depth^i_\k(\rho) = \max \{d \mid \rho \in \VV^i_d(X,\k)\}.
\end{equation}
As above, we abbreviate $\depth_\k (\rho)=\depth^1_\k(\rho)$. 

In general, the characteristic varieties depend on the field of 
definition $\k$.  Nevertheless, if $\k\subseteq \mathbb{K}$ is 
a field extension, then
\begin{equation}
\label{eq:vkk}
\VV^i_d(X,\k)=\VV^i_d(X,\mathbb{K}) \cap \Hom(G,\k^{\times}).
\end{equation}
Thus, for practical purposes it is usually assumed that $\k$ 
is algebraically closed. We will often suppress the coefficient 
field in the default situation when $\k=\C$; for instance, 
we will write $\VV^i_d(X)=\VV^i_d(X,\C)$. 

The depth-$1$ characteristic varieties satisfy a simple 
product formula:
\begin{equation}
\label{eq:cv prod}
\VV^i_1(X_1\times X_2,\k)= 
\bigcup_{p+q=i} \VV^{p}_1(X_1,\k) \times \VV^{q}_1(X_2,\k), 
\end{equation}
provided both $X_1$ and $X_2$ have finitely many 
$i$-cells, see \cite[Proposition 13.1]{PS-bns}. 

\subsection{Toric complexes}
\label{subsec:toric complexes}
Before developing the theory further, let us pause for 
some examples, showing how the characteristic varieties 
can be computed explicitly in favorable situations. 

\begin{example}
\label{ex:torus}
Let $S^1$ be the unit circle in $\R^2$, with basepoint $*=(1,0)$. 
Identify the fundamental group $\pi_1(S^1,x_0)$ with $\Z$, 
and its character group, $\Hom(\Z,\k)$, with $\k^{\times}$.  
It is readily seen that $\VV^0_1(S^1,\k)=\VV^1_1(S^1,\k)=\{1\}$, 
and $\VV^i_d(S^1,\k)  = \emptyset$, otherwise. 

More generally, let $T^n=S^1\times \cdots \times S^1$ be 
the $n$-torus.  Upon identifying 
$\pi_1(T^n)=\Z^n$ and $\Hom(\Z^n,\k)=(\k^{\times})^n$, 
we find that $\VV^i_d(T^n,\k)=\{1\}$, if $d\le \binom{n}{i}$, 
and $\VV^i_d(T^n,\k)  = \emptyset$, otherwise. 
\end{example}

\begin{example}
\label{ex:wedge}
Now let $(T^n)^{(1)}=\bigvee^{n} S^1$ be the $1$-skeleton 
of the $n$-torus, identified with the wedge of $n$ copies of $S^1$ at 
the basepoint.  Clearly, $\pi_1(\bigvee^{n} S^1)=F_n$, the free 
group of rank $n$, and $\Hom(F_n,\k)=(\k^{\times})^n$. 
It is readily seen that $\VV^1_d(\bigvee^{n} S^1,\k)=(\k^{\times})^n$ 
for $d\le n-1$, while $\VV^1_n(\bigvee^{n} S^1,\k)=\{1\}$ and 
$\VV^1_d(\bigvee^{n} S^1,\k)=\emptyset$ for $d>n$. 
\end{example}

The above computations can be put in an unified context, as follows. 
Given a simplicial complex $L$ on $n$ vertices, define the 
associated {\em toric complex}, $T_L$, as the subcomplex 
of the $n$-torus, obtained by deleting the cells corresponding 
to the missing simplices of $L$, i.e., 
\begin{equation}
\label{eq:toric complex}
T_L=\bigcup_{\sigma\in L}  T^{\sigma}, \quad 
\text{where $T^{\sigma}= \{ x \in T^n \mid x_i = * \text{ if } 
i\notin \sigma\}$}.
\end{equation}
This construction behaves well with respect to simplicial 
joins:  $T_{L*L'}=T_{L}\times T_{L'}$. 

Let $\G=(\sV,\sE)$ be the graph with vertex set $\sV$ the 
$0$-cells of $L$, and edge set $\sE$ the $1$-cells of $L$.  
The fundamental group of the toric complex $T_L$ is the 
{\em right-angled Artin group}
\begin{equation}
\label{eq:raag}
G_{\G} = \langle v \in \sV \mid vw = wv \text{ if } 
\{v,w\} \in \sE \rangle. 
\end{equation}
Groups of this sort interpolate between $G_{\G}=\Z^n$ in 
case $\G$ is the complete graph $K_n$, and $G_{\G}=F_n$ in 
case $\G$ is the discrete graph $\overline{K}_n$. Evidently, 
this class of groups is closed under direct products:  
$G_{\G}\times G_{\G'}=G_{\G*\G'}$. 

Given a right-angled Artin group $G_{\G}$, identify the 
character group $\Hom(G_{\G},\k^{\times})$ with the algebraic 
torus $(\k^{\times})^{\sV}:=(\k^{\times})^n$.  For each subset 
$\sW \subseteq \sV$, let $(\k^{\times})^{\sW} \subseteq 
(\k^{\times})^{\sV}$ be the corresponding subtorus; 
in particular, $(\k^{\times})^{\emptyset}=\{1\}$. 

\begin{theorem}[\cite{PS-toric}]
\label{thm:cjl tc}
With notation as above,
\begin{equation}
\label{eq:cv toric}
\VV^i_d(T_L,\k)=  \bigcup_{\stackrel{\sW \subseteq \sV}{%
\sum_{\sigma\in L_{\sV\setminus \sW}}
\dim_{\k} \widetilde{H}_{i-1-\abs{\sigma}} 
(\lk_{L_\sW}(\sigma),\k) \ge d}} (\k^{\times})^ \sW,
\end{equation}
where $L_\sW$ is the subcomplex induced by $L$ on $\sW$, 
and $\lk_{K}(\sigma)$ is the link of a simplex $\sigma$ 
in a subcomplex $K\subseteq L$. 
\end{theorem}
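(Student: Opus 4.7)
The plan is to compute $H_\bullet(T_L,\k_\rho)$ by writing down the equivariant cellular chain complex of the universal abelian cover of $T_L$, decomposing it according to the support of $\rho$, and performing a Koszul-type rescaling to identify each summand with a (shifted) simplicial chain complex of a link.

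Since every rank-one local system $\k_\rho$ factors through $G_\Gamma^{\ab} = \Z^{\sV}$, one works with $\wX$, the preimage of $T_L$ in the universal cover $\R^{\sV}$ of $T^{\sV}$. Its cellular chain complex is freely generated over $\Lambda := \k[\Z^{\sV}]$ by generators $e_\sigma$, one for each simplex $\sigma \in L$ in degree $|\sigma|$, with
\[
\partial e_\sigma = \sum_{v \in \sigma} \pm (1 - t_v)\, e_{\sigma \setminus v}.
\]
Tensoring with $\k_\rho$ over $\Lambda$ yields a finite-dimensional complex $C_\bullet(\rho)$ computing $H_\bullet(T_L,\k_\rho)$, whose differential is $\partial e_\sigma = \sum_{v \in \sigma} \pm (1-\rho(v))\, e_{\sigma \setminus v}$.

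Fix $\sW \subseteq \sV$ and restrict to the Zariski-open subset of characters $\rho \in (\k^\times)^{\sW}$ with $\rho(v) \neq 1$ for every $v \in \sW$. The factor $(1-\rho(v))$ then vanishes precisely when $v \notin \sW$, so the differential preserves the ``trivial part'' $\tau := \sigma \setminus \sW \in L_{\sV \setminus \sW}$ of each simplex, giving a direct-sum decomposition $C_\bullet(\rho) = \bigoplus_{\tau \in L_{\sV \setminus \sW}} C_\bullet(\tau)$, where $C_\bullet(\tau)$ is spanned by $\{e_{\tau \cup \alpha} : \alpha \subseteq \sW,\ \tau \cup \alpha \in L\}$, i.e.\ by $\alpha$ ranging over the simplices of $\lk_{L_\sW}(\tau) := \{\alpha \subseteq \sW : \tau \cup \alpha \in L\}$. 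The crux is computing $H_\bullet(C_\bullet(\tau))$: since $\rho(v) \neq 1$ for each $v \in \sW$, the change of basis $e_{\tau \cup \alpha} \mapsto \prod_{v \in \alpha}(1-\rho(v))^{-1}\, e_{\tau \cup \alpha}$ identifies $C_\bullet(\tau)$, after shifting degrees down by $|\tau|$, with the augmented simplicial chain complex of $\lk_{L_\sW}(\tau)$. Hence $H_i(C_\bullet(\tau)) \cong \widetilde{H}_{i - |\tau| - 1}(\lk_{L_\sW}(\tau), \k)$, and summing over $\tau$ reproduces the quantity $S(\sW)$ appearing in the theorem.

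The conclusion follows by passing to Zariski closures: every character $\rho$ lies in the open stratum of $(\k^\times)^{N(\rho)}$ for $N(\rho) := \{v : \rho(v) \neq 1\}$, so $\rho \in \VV^i_d(T_L,\k)$ iff $S(N(\rho)) \ge d$, which in turn is equivalent to $\rho \in (\k^\times)^{\sW}$ for some $\sW$ satisfying the displayed inequality. The step I expect to be most delicate is the Koszul rescaling: one must check that the chosen scalars $\prod_{v \in \alpha}(1-\rho(v))^{-1}$ are compatible with all face maps (which comes down to the multiplicativity of the product under removing a vertex), and that the resulting complex---with the empty face contributing in degree $0$---is genuinely the augmented rather than unreduced simplicial chain complex, accounting for the shift by $-1$ in the homological index. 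A minor secondary point is pinning down the convention for $\lk_{L_\sW}(\tau)$ when $\tau \not\subseteq \sW$; the correct definition is precisely the one forced by the chain-level decomposition above.
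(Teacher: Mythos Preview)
Your argument is correct and is essentially the one given in \cite{PS-toric}: identify the equivariant cellular chain complex of $T_L$ with a truncated Koszul complex over $\k{\Z^{\sV}}$, specialize at $\rho$, decompose according to the support $\sW=N(\rho)$, and rescale to recognize each summand as a shifted augmented simplicial chain complex of the appropriate link.

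One point in your write-up deserves tightening. In the last paragraph you assert that ``$S(N(\rho))\ge d$ is equivalent to $\rho\in(\k^{\times})^{\sW}$ for some $\sW$ with $S(\sW)\ge d$.'' The forward direction is immediate (take $\sW=N(\rho)$), but the reverse direction needs justification: if $\rho\in(\k^{\times})^{\sW}$ with $S(\sW)\ge d$ and $N(\rho)\subsetneq\sW$, you must still conclude $S(N(\rho))\ge d$. This follows either from upper semi-continuity of $\dim_{\k} H_i(T_L,\k_{\rho})$ (which is what ``passing to Zariski closures'' amounts to, since the open stratum of $(\k^{\times})^{\sW}$ is Zariski-dense in $(\k^{\times})^{\sW}$ when $\k$ is infinite), or from a direct combinatorial check that $\sW'\subseteq\sW$ implies $S(\sW')\ge S(\sW)$. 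Either route is routine, but the sentence as written elides the non-trivial direction.
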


A classifying space for the group $G_{\G}$ is the toric complex 
$T_{\Delta_{\G}}$, where $\Delta_{\G}$ is the {\em flag complex}\/ 
of $\G$, i.e., the maximal simplicial complex with 
$1$-skeleton equal to the graph $\G$. Thus, for $i=d=1$, 
formula \eqref{eq:cv toric} yields:
\begin{equation} 
\label{eq:vv raag}
\VV_1(G_{\G},\k) = \bigcup_{\stackrel{\sW\subseteq \sV}{
\G_{\sW} \textup{ disconnected}}} (\k^{\times})^{\sW}.
\end{equation}

\subsection{Tangent cones}
\label{subsec:exp tcone}
We now return to the general situation from \S\ref{subsec:cvs}.   
In the sequel, we will be interested in approximating the 
characteristic varieties $\VV^i_d(X,\C)$ by their tangent 
cones (or exponential versions thereof) at the origin, that is, 
$1\in  \Hom(\pi_1(X,x_0),\C^{\times})$. 
We conclude this section with a review of these constructions. 

Let $G$ be a finitely generated group.   Its character group, 
$\Hom (G, \C^{\times})$, may be identified with the cohomology 
group $H^1(G, \C^{\times})$, while the group $\Hom (G, \C)$ 
may be identified with $H^1(G, \C)$.   The exponential map 
$\C\to \C^{\times}$, $z\mapsto e^z$ is a group homomorphism.  
As such, it defines a coefficient homomorphism, 
$\exp\colon H^1(G, \C) \to H^1(G, \C^{\times})$.

Now let $W$ be a Zariski closed subset of $\Hom (G, \C^{\times})$.   
The {\em tangent cone}\/ of $W$ at $1$ is the subset of $H^1(G,\C)=\C^n$, 
where $n=b_1(G)$, defined as follows.  Let $J$ be the ideal in the ring of 
analytic functions $\C \{ z_1,\dots , z_n\}$ defining the germ of $W$ 
at $1$, and let $\init (J)$ be the ideal in the polynomial ring 
$\C[z_1,\dots, z_n]$ spanned by the initial forms of non-zero 
elements of $J$. Then  
\begin{equation}
\label{eq:tc1}
TC_1(W)=V(\init (J)).
\end{equation}

On the other hand, the {\em exponential tangent cone}\/ of $W$ 
at $1$ is the set
\begin{equation}
\label{eq:tau1}
\tau_1(W)= \{ z\in H^1(G, \C) \mid \exp(tz)\in W,\ 
\text{for all $t\in \C$} \}.
\end{equation}

It is readily seen that both $TC_1(W)$ and $\tau_1(W)$ are 
homogeneous subvarieties of $H^1(G, \C)$, depending only 
on the analytic germ of $W$ at the identity.  In particular, 
$TC_1(W)$ and $\tau_1(W)$ are non-empty if and 
only if $1$ belongs to $W$.  

\begin{lemma}[\cite{DPS-duke}]
\label{lem:tau1 lin}
For any subvariety $W\subseteq H^1(G, \C^{\times})$, 
the exponential tangent cone $\tau_1(W)$ is a finite union 
of rationally defined linear subspaces of $H^1(G, \C)$.
\end{lemma}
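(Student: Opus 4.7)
The plan is to reduce to the case where $W$ is cut out by a single Laurent polynomial, and then exploit the classical linear independence of distinct complex exponential functions.

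To begin, I would observe that the image of $\exp\colon H^1(G,\C)\to H^1(G,\C^{\times})$ lies in the identity component of the character group, an algebraic torus $(\C^{\times})^n$ with $n=b_1(G)$; hence only the part of $W$ in this identity component contributes to $\tau_1(W)$, and we may identify $W$ with a Zariski closed subset of $(\C^{\times})^n$. By Noetherianity, $W = V(f_1) \cap \cdots \cap V(f_r)$ for finitely many Laurent polynomials $f_j\in \C[t_1^{\pm 1},\dots,t_n^{\pm 1}]$. Since $\tau_1$ manifestly commutes with intersections of subvarieties, and since finite unions of linear subspaces are closed under pairwise intersection, it suffices to prove the claim for a single hypersurface $W=V(f)$.

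For such an $f=\sum_{\alpha\in S} c_\alpha t^\alpha$ with $S\subset \Z^n$ finite and all $c_\alpha\ne 0$, the condition $z\in\tau_1(V(f))$ unfolds to
$$\sum_{\alpha\in S} c_\alpha\,e^{t\langle \alpha,z\rangle}=0 \qquad \text{for all } t\in \C.$$
Grouping terms having a common exponent and invoking the linear independence of the functions $t\mapsto e^{\mu t}$ for distinct $\mu\in \C$, this is equivalent to requiring that, for the partition $\mathcal{P}_z$ of $S$ defined by $\alpha\sim_z\beta\iff\langle\alpha-\beta,z\rangle=0$, every block $B$ satisfies $\sum_{\alpha\in B}c_\alpha=0$. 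This yields the explicit description
$$\tau_1(V(f))=\bigcup_{\mathcal{P}} L_{\mathcal{P}},$$
where $\mathcal{P}$ ranges over the finitely many partitions of $S$ all of whose blocks have vanishing coefficient sum, and $L_{\mathcal{P}}$ is the subspace cut out by the equations $\langle \alpha-\beta,z\rangle=0$ for $\alpha,\beta$ in a common block of $\mathcal{P}$. The inclusion $\supseteq$ is a direct computation, while $\subseteq$ follows by taking $\mathcal{P}=\mathcal{P}_z$. Since $\alpha-\beta\in\Z^n$, each $L_{\mathcal{P}}$ is defined by $\Z$-linear equations, hence is rationally defined.

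The sole delicate point is the appeal to linear independence of exponentials, which demands that the exponents appearing in the grouped sum be pairwise distinct; this is precisely what introducing $\mathcal{P}_z$ before applying the independence theorem accomplishes. Everything else is bookkeeping over partitions of a finite set, and, since the description is stable under the finite intersections arising from the initial reduction, it extends from a single hypersurface to arbitrary $W$.
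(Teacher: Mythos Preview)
Your proof is correct and follows essentially the same route as the paper: reduce via $\tau_1(V(f_1)\cap\cdots\cap V(f_r))=\bigcap_j \tau_1(V(f_j))$ to a single hypersurface, then use linear independence of exponentials $t\mapsto e^{\mu t}$ to obtain the partition description \eqref{eq:tau1w}. The paper states exactly this formula and attributes the argument to \cite[Lemma~4.3]{DPS-duke}; your write-up supplies precisely that argument.
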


Let us describe these subspaces explicitly.  
Clearly, $\tau_1$ commutes with intersections, 
so it is enough to consider the case $W=V(f)$, where 
$f= \sum_{u\in S} c_u t_1^{u_1}\cdots t_n^{u_n}$ 
is a non-zero Laurent polynomial, with support $S\subseteq \Z^n$. 
We may assume $f(1)=0$, for otherwise, $\tau_1(W)=\emptyset$. 
Let $\mathcal{P}$ be the set of partitions 
$p=S_1 \coprod \dots \coprod S_r$ of $S$, 
having the property that $\sum_{u\in S_i} c_u =0$, 
for  all $1\le i\le r$. For each such partition, let $L(p)$ 
be the (rational) linear subspace consisting of all points 
$z\in \C^n$ for which the dot product $\langle u-v, z\rangle$ 
vanishes, for all $u,v\in S_i$ and all $1\le i\le r$. 
Then, as shown in \cite[Lemma~4.3]{DPS-duke}, 
\begin{equation}
\label{eq:tau1w}
\tau_1(W)= \bigcup_{p\in \mathcal{P}} L(p).
\end{equation}

By \cite[Proposition 4.4]{PS-bns}, the following inclusion 
holds for any subvariety $W\subseteq H^1(G, \C^{\times})$:
\begin{equation}
\label{eq:tc incl}
\tau_1(W) \subseteq TC_1(W).
\end{equation}
If all irreducible components of $W$ containing $1$ 
are subtori, then clearly $\tau_1(W)=TC_1(W)$, 
but in general the two types of tangent cones differ. 

\section{Homology of abelian covers}
\label{sect:abel covers}

Much on the initial motivation for studying the characteristic varieties 
of a space comes from the precise information they give about the 
homology of its regular abelian covers. We now describe some of 
the  ways in which this is achieved. 

\subsection{Finite abelian covers}
\label{subsec:cyclic}
Work of Libgober \cite{Li92}, Hironaka \cite{Hi93, Hi97}, 
Sarnak and Adams \cite{SarA}, and Sakuma \cite{Sa} revealed 
the varied and fruitful connections between the characteristic 
varieties, the Alexander invariants (see \S\ref{sect:alex} below), 
and the Betti numbers of finite abelian covers.  Let us summarize 
two of those results, in a somewhat stronger form, following the 
treatment from \cite{MS02} and \cite{Su01}. 

As before, let $X$ be a connected CW-complex with finite 
$1$-skeleton, and denote by $G$ its fundamental group.  
Let $p\colon Y\to X$ be a regular, $n$-fold cyclic cover, 
with classifying map $\lambda\colon G \surj \Z_n$. Fix an 
algebraically closed field $\k$, and assume $\ch\k\nmid n$. 
Then, the homomorphism $\iota\colon \Z_n \to \k^{\times}$ 
which sends a generator of $\Z_n$ to a primitive $n$-th 
root of unity in $\k$ is an injection. For each integer $j>0$, 
define a character $(\iota\circ \lambda)^j\colon G\to \k^{\times}$ 
by $(\iota\circ \lambda)^j(g)=\iota(\lambda(g))^j$.  
Proceeding as in the proof of Theorem 6.1 from \cite{MS02}, 
we obtain the following slightly more general result.

\begin{theorem}[\cite{MS02}]
\label{thm:betti1 cover}
Let $\lambda\colon G \surj \Z_n$ be an epimorphism, 
and let $Y\to X$ be the corresponding regular cover.  
If $\bar{\k}=\k$ and $\ch\k\nmid n$, then
\begin{equation}
\label{eq:betti1}
\dim_{\k} H_1(Y, \k) = \dim_{\k} H_1(X,\k) + \sum_{1\ne k | n} 
\varphi(k) \depth_{\k} \big((\iota\circ \lambda)^{n/k}\big), 
\end{equation}
where $\varphi$ is the Euler totient function.  
\end{theorem}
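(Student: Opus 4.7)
The strategy is the standard one for decomposing homology of a finite abelian cover in characteristic coprime to the degree, coupled with a Galois-invariance observation to reorganize the resulting sum. The assumption $\ch\k \nmid n$ ensures the group algebra $\k[\Z_n]$ is semisimple, and since $\k$ is algebraically closed, it splits as a direct sum of $n$ one-dimensional representations, indexed by the characters $\iota^j \colon \Z_n \to \k^{\times}$, $j=0,\dots,n-1$, where $\iota$ is as in the statement.

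First, I would identify the $\k G$-module structure. Since $Y \to X$ is the regular cover classified by $\lambda$, the cellular chain complex satisfies $C_\bullet(Y,\k) \cong C_\bullet(\wX,\k) \otimes_{\k G} \k[\Z_n]$, where $\k[\Z_n]$ is a right $\k G$-module via $\lambda$. The semisimple decomposition $\k[\Z_n] = \bigoplus_{j=0}^{n-1} \k_{\iota^j}$ as $\k[\Z_n]$-modules then passes to a $\k G$-module decomposition $\k[\Z_n] = \bigoplus_{j=0}^{n-1} \k_{(\iota\circ\lambda)^j}$, which upon taking homology yields
\begin{equation*}
H_1(Y,\k) \;\cong\; \bigoplus_{j=0}^{n-1} H_1\!\left(X, \k_{(\iota\circ\lambda)^j}\right).
\end{equation*}
The $j=0$ summand contributes $\dim_\k H_1(X,\k)$, while each $j\ne 0$ summand has dimension $\depth_\k\bigl((\iota\circ\lambda)^j\bigr)$ by the definitions in \S\ref{subsec:cvs}.

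Second, I would reorganize the sum $\sum_{j=1}^{n-1}\depth_\k((\iota\circ\lambda)^j)$ by grouping the indices $j$ according to the order of $(\iota\circ\lambda)^j$ as a character. An index $j$ gives a character of order exactly $k$ (for a divisor $k \mid n$, $k>1$) iff $j = (n/k)\cdot m$ with $\gcd(m,k)=1$ and $1\le m < k$; there are $\varphi(k)$ such indices, and the resulting characters are precisely the powers $\bigl((\iota\circ\lambda)^{n/k}\bigr)^m$ for $m$ coprime to $k$. To complete the proof, one needs:
\begin{equation*}
\depth_\k\!\bigl((\iota\circ\lambda)^{n/k}\bigr)^m \;=\; \depth_\k\!\bigl((\iota\circ\lambda)^{n/k}\bigr), \quad \text{for all } m \text{ with } \gcd(m,k)=1.
\end{equation*}

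The main obstacle is justifying this last equality, and this is where Galois invariance enters. The variety $\VV^1_d(X,\k)$ is cut out in $\Hom(G,\k^\times)=(\k^\times)^{b_1(G)} \times \Tors$ by the vanishing of certain minors of the Alexander matrix of $G$, whose entries lie in the integral group ring of $G_{\ab}$; in particular, the defining equations have coefficients in the prime field $\k_0\subseteq\k$. Consequently $\VV^1_d(X,\k)$ is stable under the absolute Galois group $\Gal(\k/\k_0)$. The Galois automorphism sending a primitive $k$-th root of unity $\zeta_k$ to $\zeta_k^m$ (with $\gcd(m,k)=1$) carries the character $(\iota\circ\lambda)^{n/k}$ to its $m$-th power, so Galois stability of each $\VV^1_d$ forces the two characters to lie in the same stratum and hence have the same depth. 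Combining this with the decomposition and regrouping above yields the claimed formula \eqref{eq:betti1}.
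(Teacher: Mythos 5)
Your opening move is right, and it is the standard one (also the one used in \cite{MS02}): semisimplicity of $\k[\Z_n]$ gives
$H_1(Y,\k)\cong\bigoplus_{j=0}^{n-1}H_1\bigl(X,\k_{(\iota\circ\lambda)^j}\bigr)$,
and the $j\ne 0$ summands have dimension $\depth_\k\bigl((\iota\circ\lambda)^j\bigr)$.
The gap is in the Galois step you use to regroup. When $\ch\k=p>0$ the prime field is $\F_p$, and $\operatorname{Gal}(\k/\F_p)$ is topologically generated by the Frobenius $x\mapsto x^p$; its action on the primitive $k$-th roots of unity therefore factors through the cyclic subgroup $\langle p\bmod k\rangle\subseteq(\Z/k\Z)^\times$, which is typically a proper subgroup and is trivial whenever $p\equiv 1\pmod{k}$. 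So there need not exist any $\sigma\in\operatorname{Gal}(\k/\F_p)$ with $\sigma\circ\rho=\rho^m$, and the asserted equality $\depth_\k(\rho^m)=\depth_\k(\rho)$ for all $m$ coprime to $k$ is simply not forced by Galois stability of the $\VV^1_d$.

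This is not a repairable technicality; in fact the grouped form of the identity genuinely fails in positive characteristic. Take $G=\langle t,a\mid tat^{-1}=a^2\rangle$, $\k=\overline{\F_7}$, and $\lambda\colon G\surj\Z_3$ with $\lambda(t)=1$, $\lambda(a)=0$. Here $H_1(G,\Z)\cong\Z$ and $\Delta_G\doteq t-2$, so the two characters of order $3$, namely $\rho(t)=2$ and $\rho^2(t)=4$, have $\depth_\k(\rho)=1$ and $\depth_\k(\rho^2)=0$. One has $\pi_1(Y)\cong\langle s,b\mid sbs^{-1}=b^8\rangle$, hence $H_1(Y,\Z)\cong\Z\oplus\Z_7$ and $\dim_\k H_1(Y,\k)=2$, which matches the ungrouped sum $\depth_\k(1)+\depth_\k(\rho)+\depth_\k(\rho^2)=1+1+0$, whereas \eqref{eq:betti1} would give $1+\varphi(3)\depth_\k(\iota\circ\lambda)\in\{1,3\}$ depending on the choice of $\iota$. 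So your argument is complete only when $\ch\k=0$ (where $\operatorname{Gal}(\overline{\Q}/\Q)$ does act transitively on primitive $k$-th roots of unity), and for a general algebraically closed $\k$ with $\ch\k\nmid n$ the correct conclusion is the ungrouped formula $\dim_\k H_1(Y,\k)=\sum_{j=0}^{n-1}\depth_\k\bigl((\iota\circ\lambda)^j\bigr)$.
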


In the same vein, for each $n>1$, let $X_n$ be the $n$-th congruence 
cover of $X$, i.e., the regular cover  determined by the canonical 
projection $\pi_1(X,x_0)\surj H_1(X,\Z_n)$.  Then, a mild   
generalization of Theorem 5.2 from \cite{Su01} yields the 
following formula for the first Betti number of such a cover:
\begin{equation}
\label{eq:b1cong}
b_1(X_n)=b_1(X)+\sum_{\rho} \depth_{\C}(\rho),
\end{equation}
where the sum is taken over all characters 
$\rho\in \Hom(G,\C^{\times})$ of order exactly equal to $n$. 

\subsection{Free abelian covers}
\label{subsec:free abel}

It turns out that the characteristic varieties also control 
the homological finiteness properties of (regular) free abelian 
covers. 

Let $\nu\colon G\surj \Z^r$  ($r>0$) be an epimorphism, 
and let $X^{\nu}\to X$ be the corresponding cover.  
Denote by $\nu^*\colon \Hom(\Z^r,\k^{\times} ) \to \Hom(G,\k^{\times} )$ 
the induced homomorphism between character groups. When   
$r=1$, denote by $\nu_{\k}\in H^1(X,\k)$ the corresponding 
cohomology class. 

\begin{theorem}[\cite{DF}, \cite{PS-bns}]
\label{thm:df cv}
Suppose $X$ has finite $k$-skeleton. 
\begin{romenum}
\item \label{df1} 
$\sum_{i=0}^{k} \dim_{\k} H_{i} (X^{\nu}, \k) <\infty \same 
\text{$\im(\nu^*) \cap \Big( \bigcup_{i=0}^{k} 
\VV^i_1(X,\k)\Big)$ is finite}$.
\item \label{df2} If $r=1$, then 
$\sum_{i=0}^{k} \dim_{\C} H_{i} (X^{\nu}, \C) <\infty \same 
\nu_{\C}\not\in  \bigcup_{i=0}^{k} \tau_1\big(\VV^i_1(X,\C)\big)$.
\end{romenum}
\end{theorem}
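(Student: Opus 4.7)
My plan is to reinterpret both parts in terms of the support of the Alexander-type $\Lambda$-modules $A_i := H_i(X^\nu,\k)$ over the group algebra $\Lambda := \k[\Z^r]$, and then match those supports with the characteristic varieties of $X$ via a fiber-wise comparison of chain complexes. Under the $k$-skeleton hypothesis we may assume $X$ is a finite CW-complex of dimension $k+1$, so that
$$C_\bullet(X^\nu,\k) \;=\; C_*(\widetilde X,\k) \otimes_{\k G} \Lambda$$
is a bounded complex of finitely generated free $\Lambda$-modules, with finitely generated homology $A_0,\dots,A_k$. The total $\k$-dimension of $\bigoplus_i A_i$ is finite iff each $A_i$ has $0$-dimensional support in $\Spec \Lambda = \Hom(\Z^r,\k^{\times})$, which reduces (i) to determining $\bigcup_{i\le k}\supp(A_i)$. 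For $\rho \in \Hom(\Z^r,\k^{\times})$ with maximal ideal $\m_\rho$, the fiber complex factors as
$$C_\bullet(X^\nu,\k) \otimes_\Lambda \k_\rho \;=\; C_*(\widetilde X,\k) \otimes_{\k G} \k_{\nu^*\rho},$$
whose homology is $H_*(X,\k_{\nu^*\rho})$ by \eqref{eq:twist hom}. A standard Nakayama-lemma argument for bounded complexes of free modules over the Noetherian local ring $\Lambda_{\m_\rho}$ shows that all the $A_i$ vanish at $\rho$ iff this fiber complex is acyclic, so
$$\bigcup_{i\le k}\supp(A_i) \;=\; (\nu^*)^{-1}\Big(\bigcup_{i\le k}\VV^i_1(X,\k)\Big).$$
Since $\nu^*$ is injective (as $\nu$ is surjective), this set is finite iff $\im(\nu^*) \cap \bigcup_{i\le k}\VV^i_1(X,\k)$ is finite, proving (i).

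For (ii), take $r = 1$ and $\k = \C$. Then $\nu^*\colon \C^{\times} \to \Hom(G,\C^{\times})$ sends $e^t \mapsto \exp(t\nu_\C)$, so $\im(\nu^*)$ is an irreducible one-dimensional algebraic subgroup of the character torus. Setting $W := \bigcup_{i\le k}\VV^i_1(X,\C)$, the intersection $\im(\nu^*) \cap W$ is either all of $\im(\nu^*)$ (hence infinite) or a proper closed subvariety of this irreducible curve (hence finite). The former alternative amounts to $\exp(t\nu_\C) \in W$ for every $t \in \C$, which by the definition \eqref{eq:tau1} is precisely $\nu_\C \in \tau_1(W)$. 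Since $\tau_1$ commutes with finite unions, (ii) now follows from (i).

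The main technical step is the Nakayama-type identification of $\bigcup_{i\le k}\supp(A_i)$ with $(\nu^*)^{-1}\bigl(\bigcup_{i\le k}\VV^i_1(X,\k)\bigr)$. The key subtlety is that the naive degree-by-degree equality $\supp(A_i) = (\nu^*)^{-1}(\VV^i_1(X,\k))$ generally \emph{fails}: higher $\Tor^p_\Lambda(A_q,\k_\rho)$ can feed into $H_{p+q}(X,\k_{\nu^*\rho})$ through the hyper-$\Tor$ spectral sequence, so it is only after taking the union over all $i\le k$ that the two sides coincide. Controlling this spectral sequence, via contractibility of the localised complex $(C_\bullet)_{\m_\rho}$ as a bounded complex of free modules over $\Lambda_{\m_\rho}$, is the technical heart of the proof.
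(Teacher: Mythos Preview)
Your argument is correct and is essentially the approach of the cited references: the paper itself gives no proof, merely attributing part \eqref{df1} to Dwyer--Fried \cite{DF} (reinterpreted as \cite[Corollary 6.2]{PS-bns}) and part \eqref{df2} to \cite[Theorem 6.5]{PS-bns}. Your reconstruction---identifying $\bigcup_{i\le k}\supp H_i(X^\nu,\k)$ with $(\nu^*)^{-1}\bigl(\bigcup_{i\le k}\VV^i_1(X,\k)\bigr)$ via the hyper-$\Tor$ spectral sequence and Nakayama, then for $r=1$ using that $\im(\nu^*)\cong\C^{\times}$ is irreducible so its intersection with a closed set is either everything or finite---matches those arguments, including your correct observation that $\tau_1$ commutes with finite unions (since $t\mapsto f(\exp(t\nu_\C))$ is entire, hence identically zero or with discrete zero set).
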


Part \eqref{df1} was proved by Dwyer and Fried in \cite{DF}, 
and reinterpreted in this context in \cite[Corollary 6.2]{PS-bns}, 
while part \eqref{df2} was proved in \cite[Theorem 6.5]{PS-bns}.  

An epimorphism $\nu\colon G\surj \Z^r$ as above gives rise to 
a map $\bar\nu\colon H_1(X,\Q)\surj \Q^r$, which may be viewed as 
an element in the Grassmanian $\Grass_r(H^1(X,\Q))$ 
of $r$ planes in the vector space $H^1(X,\Q)$. Following \cite{DF}, 
consider the set 
\begin{equation}
\label{eq:grass}
\Omega^k_r(X)=\Big\{\bar\nu\in \Grass_r(H^1(X,\Q)) \mid 
\sum_{i=0}^{k} \dim_{\C} H_{i} (X^{\nu}, \C) <\infty \Big\}. 
\end{equation}
By Theorem \ref{thm:df cv}\eqref{df2} and Lemma \ref{lem:tau1 lin}, 
$\Omega^k_1(X)$ is the complement of a finite union of projective 
subspaces; in particular, it is an open set. For $r>1$, though, 
$\Omega^k_r(X)$ is not necessarily open, as the following example 
of Dwyer and Fried shows.

\begin{example}[\cite{DF}]
\label{ex:df}
Let $Y = T^3 \vee S^2$.  Then $\pi_1(Y)=H$, a free abelian 
group on generators $x_1,x_2,x_3$, and $\pi_2 (Y) = \Z{H}$, 
a free module generated by the inclusion of $S^2$ in $Y$. 
Let $f\colon S^2 \to  Y$ represent the element $x_1-x_2+1$ 
of $\pi_2(Y)$, and attach a $3$-cell to $Y$ along $f$ to obtain 
a CW-complex $X=Y \cup_f D^3$, with $\pi_1(X) = H$ and 
$\pi_2(X)=\Z{H}/(x_1-x_2 +1)$.  Identifying 
$\Hom(H,\C)=(\C^{\times})^3$, we have $\VV^1_1(X)= \{1\}$ and 
$\VV^2_1(X)=\set{ z \in  (\C^{\times})^3 \mid z_1-z_2+1= 0}$. 

Consider an algebraic $2$-torus $T$ in $(\C^{\times})^3$, given 
by an equation of the form $\{z_1^{a_1}z_2^{a_2}z_3^{a_3}=1\}$, 
for some $a_i\in \Z$.  It is readily checked that the subvariety 
$T\cap \VV^2_1(X)$ is either the empty set (this happens 
precisely when  $T=\{z_1z_2^{-1}= 1\}$ or $T=\{z_2 = 1\}$),  
or is $1$-dimensional. Thus, the locus in the Grassmannian 
of $2$-planes in $H^1(X,\Q)=\Q^3$ giving rise to algebraic 
$2$-tori in $(\C^{\times})^3$ having finite intersection with 
$\VV^2_1(X)$ consists of two points.  It follows that exactly 
two $\Z^2$-covers of $X$ have finite Betti numbers. In particular, 
the set $\Omega^2_2(X)$ is not open in $\Grass_2(H^1(X,\Q))=\QP^2$, 
even for the usual topology on this projective plane.
\end{example}

For an in-depth discussion of the interplay between the 
geometry of the cohomology jumping loci and the homological 
finiteness properties of free abelian covers, we refer to \cite{Su10a} 
and \cite{Su10b}.

\section{Alexander invariants} 
\label{sect:alex}

In this section, we discuss the various Alexander-type invariants 
associated to a space, with special emphasis on the Alexander 
polynomial and the Alexander varieties, and how these objects 
relate to the characteristic varieties. 

\subsection{Alexander modules}
\label{subsec:alex modules}
As before, let $X$ be a connected CW-complex, with a unique 
$0$-cell, $x_0$, which we take as the basepoint, and with finitely 
many $1$-cells.  Let $G=\pi_1(X,x_0)$ be the fundamental group, 
and let 
\begin{equation}
\label{eq:tf ab}
H=H_1(G,\Z)/\text{torsion}\cong \Z^{b_1(G)}
\end{equation} 
be its maximal torsion-free abelian quotient. 
The canonical projection, $\ab\colon G \surj H$, determines 
a regular cover, $\pi\colon X' \to X$.  Set $F=\pi^{-1}(x_0)$. 
The exact sequence of the pair $(X', F)$ yields an exact 
sequence of $\Z{H}$-modules,
\begin{equation}
\label{eq:crow1}
\xymatrixcolsep{16pt}
\xymatrix{0 \ar[r]& H_1(X', \Z) \ar[r]& H_1(X',F, \Z) \ar[r]& 
H_0(F, \Z) \ar[r]& H_0(X', \Z) \ar[r]& 0}.
\end{equation}

The module $H_1(X', \Z)$ is called the {\em (first) Alexander invariant}\/ 
of $X$, while $H_1(X',F, \Z)$ is called the {\em Alexander module} 
of $X$.  Clearly, these two $\Z{H}$-modules depend only on the 
fundamental group $G=\pi_1(X,x_0)$, so we denote them by 
$B_G$ and $A_G$, respectively. Identifying the kernel of the 
map $H_0(F, \Z) \to H_0(X', \Z)$ with the augmentation ideal, 
$I_{H}=\ker \big(\epsilon\colon \Z{H} \to \Z\big)$, one extracts 
from \eqref{eq:crow1} the exact sequence 
$0 \to B_G \to A_G \to I_{H} \to 0$.

\subsection{Fox derivatives and the Alexander matrix} 
\label{subsec:alex matrix}
The  free differential calculus of R.~Fox \cite{Fox54, Fox56} 
yields an efficient algorithm for computing the Alexander module 
of a finitely generated group $G$.  For all practical purposes, 
we may assume $G$ is finitely presented; as explained in 
\cite[\S 2.6]{DPS-imrn}, there is no real loss of generality 
in doing that. 

Let $F_{q}$ be the free group with generators 
$x_1,\dots,x_{q}$.  For each $1\le j \le q$, there is a
linear operator $\partial_j=\partial/\partial x_j \colon \Z F_{q}\to 
\Z F_{q}$, known as the $j$-th Fox derivative, uniquely determined 
by the following rules: 
$\partial_j (1)=0$,
$\partial_j (x_i)=\delta_{ij}$, and
$\partial_j (uv)=\partial_j (u) \epsilon(v)+
u\partial_j (v)$.  

Next, let $G=\langle x_1,\dots ,x_{q}\mid r_1,\dots ,r_m\rangle$
be a finite presentation for our group, and let $\phi\colon F_q\to G$ 
be the presenting homomorphism.  Define the {\em Jacobian matrix}\/ 
of (the given presentation of) $G$ as 
\begin{equation}
\label{eq:jac mat}
J_G=\big(\partial_j r_i \big)^{\phi} 
\colon \Z{G}^m \to \Z{G}^q.
\end{equation}
As noted by Fox (see \cite[Theorem 4.1]{MS02} for a short proof), 
the matrix $J_G$ determines the first homology group 
of any finite-index subgroup of $G$.  

\begin{prop}[\cite{Fox56}]
\label{prop:fox}
Let $K<G$ be a subgroup of index $k<\infty$, 
let $\sigma\colon G\to \Sym(G/K)\cong S_k$ be the coset 
representation, and let $\pi\colon S_{k}\to \GL(k,\Z)$ be 
the permutation representation. Then $J_{G}^{\pi\circ \sigma}$ is 
a presentation matrix for the abelian group $H_1(K)\oplus\Z^{k-1}$.
\end{prop}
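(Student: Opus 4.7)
The plan is to interpret $J_G^{\pi \circ \sigma}$ as the second boundary operator in the cellular chain complex of the $k$-sheeted cover of the presentation $2$-complex of $G$ associated to $K$, and then to read off its cokernel from a short exact sequence.

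First, I would set up the standard presentation $2$-complex $Y$ of $G$: one $0$-cell, loops $x_1, \ldots, x_q$, and $2$-cells attached along the relators $r_1, \ldots, r_m$; by Van Kampen, $\pi_1(Y) = G$. By Fox's fundamental formula for the free differential calculus, the cellular chain complex of the universal cover $\widetilde{Y}$, viewed as a complex of left $\Z G$-modules, has the form
\begin{equation*}
\Z G^m \xrightarrow{\partial_2} \Z G^q \xrightarrow{\partial_1} \Z G \to 0,
\end{equation*}
where $\partial_2$ is represented by the Jacobian matrix $J_G$ and $\partial_1(e_j) = x_j - 1$.

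Next, let $Y_K \to Y$ be the connected $k$-sheeted cover classified by $K$. Its cells are indexed by pairs (cell of $Y$, coset in $G/K$), and $G$ permutes the sheets via $\sigma$. Concretely, the cellular chain complex of $Y_K$ is obtained from that of $\widetilde{Y}$ by replacing each entry $g$ of $\partial_i$ by the $k \times k$ permutation matrix $\pi(\sigma(g))$ and extending $\Z$-linearly. This yields the complex of free abelian groups
\begin{equation*}
\Z^{mk} \xrightarrow{\partial_2'} \Z^{qk} \xrightarrow{\partial_1'} \Z^k \to 0,
\end{equation*}
in which $\partial_2'$ is represented by precisely the integral matrix $J_G^{\pi \circ \sigma}$.

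To conclude, I would extract the cokernel of $J_G^{\pi \circ \sigma}$ from this complex. Because $Y_K$ is connected, $H_0(Y_K, \Z) = \Z$, so $\im(\partial_1')$ is the augmentation kernel in $\Z[G/K]$, which is free abelian of rank $k-1$. The tautological short exact sequence
\begin{equation*}
0 \to H_1(Y_K, \Z) \to \coker(\partial_2') \to \im(\partial_1') \to 0
\end{equation*}
then splits by freeness of $\im(\partial_1')$, giving $\coker(J_G^{\pi \circ \sigma}) \cong H_1(Y_K, \Z) \oplus \Z^{k-1}$. Since $\pi_1(Y_K) = K$, abelianization yields $H_1(Y_K, \Z) = K^{\ab} = H_1(K)$, which is the claim. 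The only delicate point is verifying that passing from the chain complex of $\widetilde{Y}$ to that of $Y_K$ amounts precisely to the entrywise substitution $g \mapsto \pi(\sigma(g))$ in the Fox Jacobian; this is the elementary covering-space/induced-representation dictionary, and once it is in place, the splitting argument is purely formal.
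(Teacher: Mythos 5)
The paper does not itself prove Proposition \ref{prop:fox}; it cites Fox's original paper \cite{Fox56} and points to \cite[Theorem 4.1]{MS02} for a short proof. Your argument is the standard covering-space proof of that result, and it is correct: passing from the equivariant chain complex of the universal cover of the presentation $2$-complex to the chain complex of the intermediate $k$-fold cover $Y_K$ does amount (up to the usual left/right coset bookkeeping) to the entrywise substitution $g\mapsto \pi(\sigma(g))$ in the Fox Jacobian, and your splitting of $\coker(\partial_2')$ via the freeness of $\im(\partial_1')\cong\Z^{k-1}$ is exactly the right way to extract $H_1(K)\oplus\Z^{k-1}$. This is the same approach taken in the cited short proof, so no further comparison is needed.
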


Define now the {\em Alexander matrix}\/ of the group $G$ as the 
(torsion-free) abelianization of the Fox Jacobian matrix of $G$, 
\begin{equation}
\label{eq:alex mat}
\Phi_G=J_G^{\ab} \colon \Z{H}^m \to \Z{H}^q. 
\end{equation}

\begin{prop}[\cite{Fox54}]
\label{prop:fox alex} 
The Alexander matrix $\Phi_G$ is a presentation matrix 
for the Alexander module $A_{G}$. 
\end{prop}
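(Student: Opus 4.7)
The plan is to compute $A_G = H_1(X',F;\Z)$ directly, using for $X$ the \emph{presentation $2$-complex} attached to the given finite presentation of $G$. This complex has a single $0$-cell $x_0$, one $1$-cell for each generator $x_j$, and one $2$-cell $e_i$ for each relator $r_i$, attached along the loop spelling out the word $r_i$ in the $1$-skeleton. Since $\pi_1(X)=G$, and since $A_G$ depends only on $G$ (as noted right after \eqref{eq:crow1}), we may work with this particular $X$.

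First I would describe the cellular chain complex of the universal cover $\widetilde{X}$. After choosing a lift $\tilde{x}_0$ and basepoint-preserving lifts of the remaining cells, $C_i(\widetilde X,\Z)$ becomes a free left $\Z G$-module with basis the chosen lifts. The central computation---classical Fox calculus---is that in this basis the cellular boundary maps are
\begin{equation*}
\widetilde{\partial}_1\colon \Z G^q\to \Z G,\ \ \tilde e_j \mapsto x_j-1,
\quad\ \
\widetilde{\partial}_2\colon \Z G^m\to \Z G^q,\ \ \tilde e_i\mapsto \sum_{j}(\partial_j r_i)^{\phi}\,\tilde e_j.
\end{equation*}
The Fox product rule $\partial_j(uv)=\partial_j(u)\epsilon(v)+u\,\partial_j(v)$ is exactly what one gets by tracing a lift of the attaching loop of $e_i$ through the $1$-skeleton of $\widetilde{X}$ and recording, for each generator $x_j$, the $\Z G$-coefficient of every traversal of the corresponding lifted $1$-cell.

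Next I would pass from $\widetilde{X}$ to the maximal free abelian cover $X'=\widetilde{X}/\ker(\ab\colon G\surj H)$. Its cellular chain complex is obtained by tensoring $C_\bullet(\widetilde X,\Z)$ over $\Z G$ with $\Z H$, so the second boundary becomes precisely $\Phi_G=J_G^{\ab}\colon \Z H^m\to \Z H^q$, by the definition \eqref{eq:alex mat}. Finally, I would unwind the relative chain complex of $(X',F)$, where $F=\pi^{-1}(x_0)$ is the $0$-skeleton of $X'$. Since $F$ is $0$-dimensional, $C_i(X',F;\Z)=C_i(X';\Z)$ for $i\ge 1$ while $C_0(X',F;\Z)=0$, and therefore
\begin{equation*}
A_G \;=\; H_1(X',F;\Z) \;=\; \coker\bigl(\widetilde{\partial}_2\otimes_{\Z G}\Z H\bigr) \;=\; \coker(\Phi_G),
\end{equation*}
which is exactly the assertion that $\Phi_G$ is a presentation matrix for $A_G$.

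The only real obstacle is the identification of $\widetilde{\partial}_2$ with the Fox Jacobian $J_G$; everything afterwards is formal unwinding of \eqref{eq:crow1} at the chain level. This identification is the content of Fox's original theorem on free differential calculus, and it can be proved by induction on word length using the Leibniz-type rule above, together with the observation that the boundary of the lifted $1$-cell $\widetilde{x_{i_1}\cdots x_{i_\ell}}$ in $\widetilde{X}$ decomposes equivariantly as $\sum_k (x_{i_1}\cdots x_{i_{k-1}})\cdot \tilde e_{i_k}$. Once this step is in hand, Proposition \ref{prop:fox} of the excerpt is an immediate corollary obtained by further tensoring with the permutation representation of $G$ on the cosets of $K$.
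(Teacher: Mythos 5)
The paper does not prove this proposition; it cites it directly to Fox's 1954 paper, which establishes the result by algebraic (group-ring) methods rather than topology. Your argument is the now-standard \emph{topological} proof, and it is correct: take $X$ to be the presentation $2$-complex, identify the boundary $\widetilde{\partial}_2$ of the universal cover's cellular chain complex with the Fox Jacobian $J_G$, tensor down over $\Z G$ to $\Z H$ to get $\Phi_G$, and observe that since $X$ has a single $0$-cell the fiber $F=\pi^{-1}(x_0)$ is exactly the $0$-skeleton of $X'$, so $C_0(X',F)=0$ and $H_1(X',F;\Z)=\coker(\Phi_G)$. The one thing worth being explicit about, which you gesture at but do not pin down, is the implicit invocation that $A_G$ is an invariant of $G$ alone (independent of the choice of $X$ with $\pi_1(X)=G$ and a single $0$-cell); the paper states this right after \eqref{eq:crow1} without proof, and your argument uses it when you replace an arbitrary $X$ by the presentation $2$-complex. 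If one is willing to \emph{define} $A_G$ via the presentation $2$-complex, that step disappears; otherwise one needs the standard lemma that $H_1(X',F;\Z)$ depends only on $\pi_1$, e.g.\ via the identification of $A_G$ with the $\Z H$-module $I_G\otimes_{\Z G}\Z H$ where $I_G$ is the augmentation ideal. Modulo making that hypothesis explicit, the proof is complete and is the cleanest modern route to Fox's theorem.
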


\subsection{Elementary ideals}
\label{subsec:fitting}
Before proceeding, we need to recall some basic notions 
from commutative algebra.  Let $R$ be a commutative ring 
with unit. Assume $R$ is Noetherian and a unique factorization 
domain.  Let $M$ be a finitely generated $R$-module.  
Then $M$ admits a finite presentation, of the form 
\begin{equation}
\label{eq:pres M}
\xymatrix{R^{m} \ar^{\Phi}[r] & R^{q} \ar[r] & M \ar[r] &0 }.
\end{equation}

The {\em $i$-th elementary ideal}\/ of $M$, denoted $E_i(M)$, 
is the ideal generated by the minors of size $q-i$ of the $q \times m$ 
matrix $\Phi$, with the convention that $E_i(M)=R$ if  $i \ge q$, 
and $E_i(M)=0$ if $q-i>m$. It is a standard exercise to show that 
$E_i(M)$ does not depend on the choice of presentation \eqref{eq:pres M}. 
Clearly, $E_i(M) \subset E_{i+1}(M)$, for all $i\ge 0$.  The annihilator 
ideal, $\ann (M)$, contains the ideal of maximal minors, $E_0(M)$, 
and they both have the same radical. 

Let $\Delta_i(M)$ be a generator of the smallest principal 
ideal in $R$ containing $E_i(M)$, i.e., the greatest common
divisor of all elements of $E_i(M)$. As such, $\Delta_i(M)$ is 
well-defined only up to units in $R$. (If two elements 
$\Delta$, $\Delta'$ in $R$ generate the same principal ideal, 
that is, $\Delta =u\Delta'$, for some unit $u\in R^*$, we 
shall write $\Delta \doteq \Delta'$.)   Note that 
$\Delta_{i+1}(M)$ divides $\Delta_i(M)$, for all $i\ge 0$. 

\subsection{Alexander polynomial} 
\label{subsec:alex poly}
As before, let $G$ be a finitely generated group, 
and let $H=\ab(G)$ be its maximal torsion-free abelian 
quotient.  It is readily seen that the group ring $\Z{H}$ is a 
(commutative) Noetherian ring, and a unique factorization 
domain.  

\begin{definition}
\label{def:alex poly}
The {\em Alexander polynomial}\/ of the group $G$ is 
the greatest common divisor of all elements in the 
first elementary ideal of the Alexander module of $G$,
\begin{equation}
\label{eq:alex G}
\Delta_G= \Delta_1(A_G)=\gcd(E_1(A_G))\in \Z{H}.
\end{equation}  
\end{definition}
 
From the discussion in \S\ref{subsec:fitting}, it follows that 
$\Delta_G$ depends only on $G$, modulo units in $\Z{H}$.  

Now suppose $G$ admits a finite presentation, with  
$q$ generators and $m$ relators.  Fix a basis 
$\alpha=\{\alpha_1,\dots ,\alpha_n\}$ 
for the free abelian group $H=\ab(G)$. 
This identifies the group ring $\Z{H}$ with the Laurent 
polynomial ring $\Lambda=\Z[t^{\pm 1}_1, \dots ,t_n^{\pm 1}]$. 
In this fashion, the Alexander polynomial of $G$ 
may be viewed as a Laurent polynomial in $n$ 
variables, well-defined up to monomials of the 
form $u=\pm  t_1^{\nu_1} \cdots t_n^{\nu_n}$.  
By Proposition \ref{prop:fox alex}, $\Delta_G$ is the 
greatest common divisor of the minors of size $q-1$ 
of the Alexander matrix, $\Phi_G\colon \Lambda^m\to \Lambda^q$. 

\begin{remark}
\label{rem:alex def}
Note that the Alexander polynomial, when 
viewed as an element of $\Lambda$, depends on the 
choice of basis $\alpha$ for $H=\Z^n$,  even though 
we suppress that dependence from the notation.  
If $\alpha'$ is another choice of basis, then the 
corresponding polynomial, $\Delta'_G$, is obtained 
from $\Delta_G$ by applying the automorphism of the ring 
$\Lambda=\Z\Z^n$ induced by the linear automorphism  
of $\Z^n$ taking $\alpha$ to $\alpha'$. Nevertheless, 
various features  of the Alexander polynomial, such 
as the constancy of $\Delta_G$, or the dimension 
of its Newton polytope, are unaffected by the choice 
of basis for $H$. 
\end{remark}

\subsection{Alexander varieties}
\label{subsec:alex vars}
Let $X$ be CW-complex as in \S\ref{subsec:alex modules}, 
with fundamental group $G=\pi_1(X,x_0)$, and let $X' \to X$ 
be the maximal abelian cover, defined by the projection 
$\ab\colon G \surj H$.  
Fix a field $\k$, and let $\Hom(G,\k^{\times})^0$ be the identity 
component of the algebraic group $\Hom(G,\k^{\times})$. 
Clearly, the map $\ab$ induces an isomorphism 
$\ab^*\colon \Hom(H,\k^{\times}) \isom \Hom(G,\k^{\times})^0$.

Now consider the $i$-th Alexander invariant of $X$, defined 
as the homology group $H_i(X',\k)$, viewed as a module over 
the group ring $\k{H}$. The support loci of the elementary ideals 
of this finitely generated $\k{H}$-module define a filtration 
of the character group $\Spec \k{H}=\Hom(H,\k^{\times})$. 

\begin{definition}
\label{def:alex}
The {\em Alexander varieties}\/ of $X$ (over $\k$) are 
the subvarieties of $\Hom(G,\k^{\times})^0$ given by 
\begin{equation*}
\label{eq:supp loci}
\WW^i_d(X,\k)=\ab^*(V(E_{d-1}(H_i(X',\k)))).
\end{equation*}
In particular, $\WW^i_1(X,\k)=\ab^*(V(\ann H_i(X',\k)))$.  
\end{definition}

In degree $i=1$, these varieties depend only on the group 
$G=\pi_1(X,x_0)$.   Suppose $G$ admits a finite presentation, 
with, say, $q$ generators, and identify the group $\Hom(G,\k^{\times})^0$ 
with the algebraic torus $(\k^{\times})^{b_1(G)}$.  The varieties 
$\WW_d(G,\k)=\WW^1_d(X,\k)$ can then be described as 
the subvarieties of this torus, defined by the vanishing 
of all minors of size $q-d$ of the Alexander matrix $\Phi_G$.

\subsection{Alexander varieties and characteristic varieties}
\label{subsec:av cv} 
It has been known since the work of Hironaka \cite{Hi97} 
that the characteristic and Alexander varieties of a space are 
closely related.  As shown in \cite[Theorem 3.6]{PS-bns}, there 
is an exact match between the filtrations defined on the identity 
component of the character variety by these subvarieties, at least 
for depth $d=1$.

\begin{theorem}[\cite{PS-bns}]
\label{thm:vw}
For all $q\ge 0$, 
\begin{equation}
\label{eq:vw}
\Hom(G,\k^{\times})^0 \cap\Big( \bigcup_{i=0}^{q} \VV^i_{1}(X,\k) \Big) = 
\bigcup_{i=0}^{q} \WW^i_{1}(X,\k).
\end{equation}
\end{theorem}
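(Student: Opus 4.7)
The plan is to re-cast both $\VV^i_1(X,\k)$ and $\WW^i_1(X,\k)$ as support conditions on the cellular chain complex of the maximal abelian cover $X'$, and then to compare them through the universal coefficient spectral sequence. By \cite[Lemma 2.1]{PS-bns} (cited just after Definition \ref{def:cvs}), we may assume without loss of generality that $X$ is a finite CW-complex of dimension $q+1$, since neither side of \eqref{eq:vw} is affected by such a truncation. Set $R=\k H$ and let $C_\bullet = C_\bullet(X',\k)$; this is a bounded complex of finitely generated free $R$-modules.

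A character $\rho\in\Hom(G,\k^{\times})^{0}$ factors uniquely as $\rho=\bar\rho\circ\ab$ with $\bar\rho\in\Hom(H,\k^{\times})$, and determines a maximal ideal $\m\subset R$ with $R/\m\cong\k$. From $C_\bullet(\wX,\k)\otimes_{\k G} R = C_\bullet$ one obtains
\begin{equation*}
H_i(X,\k_{\rho}) = H_i(C_\bullet\otimes_R R/\m),
\end{equation*}
so $\rho\in\VV^i_1(X,\k)$ is equivalent to the nonvanishing of this homology. On the Alexander side, Definition \ref{def:alex} together with $\sqrt{E_0(M)}=\sqrt{\ann M}$ translates $\bar\rho\in\WW^i_1(X,\k)$ into $\m\in\supp H_i(C_\bullet)$, i.e.\ $(H_i(C_\bullet))_\m\ne 0$, which by Nakayama is the same as $H_i(C_\bullet)\otimes_R R/\m\ne 0$. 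Thus I reduce \eqref{eq:vw} to the purely algebraic statement
\begin{equation*}
\exists\,i\le q\colon H_i(C_\bullet\otimes_R R/\m)\ne 0 \;\same\; \exists\,j\le q\colon H_j(C_\bullet)\otimes_R R/\m\ne 0.
\end{equation*}

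The bridge between the two sides is the Cartan--Eilenberg hyperhomology spectral sequence $E^2_{p,j}=\Tor^R_p(H_j(C_\bullet),R/\m)\Rightarrow H_{p+j}(C_\bullet\otimes_R R/\m)$. The forward implication is easy: nonvanishing of the abutment at some degree $i\le q$ forces some $E^2_{p,i-p}\ne 0$; but every group $\Tor^R_p(M,R/\m)$ is naturally an $R/\m$-vector space, hence coincides with its own localization at $\m$, so its nonvanishing entails $M_\m\ne 0$ and thus places $H_{i-p}(C_\bullet)$ in the support, with $i-p\le q$. For the converse, choose $j\le q$ minimal with $(H_j(C_\bullet))_\m\ne 0$. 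Minimality gives $(H_{j'}(C_\bullet))_\m=0$ for every $j'<j$, and hence $\Tor^R_p(H_{j'}(C_\bullet),R/\m)=0$ for all $p\ge 0$. Consequently, every incoming differential to $E^r_{0,j}$ originates in a subquotient of $E^2_{r,j-r+1}=0$ for $r\ge 2$, while every outgoing differential lands in the zero column $p<0$. It follows that $E^\infty_{0,j}=E^2_{0,j}=H_j(C_\bullet)\otimes_R R/\m\ne 0$ by Nakayama, so $H_j(C_\bullet\otimes_R R/\m)\ne 0$.

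The main obstacle is this backward implication: one has to guarantee that, when $j$ is chosen minimally, no spectral sequence differential can destroy the nontrivial class produced at $E^2_{0,j}$. The observation that unlocks it is that $\Tor^R_p(M,R/\m)$ is automatically an $R/\m$-module, so its vanishing is detected locally at $\m$; combined with the minimality of $j$, this annihilates every differential that could otherwise obstruct the argument.
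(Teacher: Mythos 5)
Your argument is correct. The translation of $\rho\in\VV^i_1(X,\k)$ for a character factoring through $H$ into non-vanishing of $H_i(C_\bullet\otimes_R R/\m)$, and of $\bar\rho\in\WW^i_1(X,\k)$ into $\m\in\supp H_i(C_\bullet)$, is exactly right, and the universal-coefficients hyperhomology spectral sequence is the correct bridge. In the forward direction, $E^\infty_{p,i-p}\ne 0$ for some $p$ with $0\le p\le i\le q$, and $\Tor^R_p(M,R/\m)\ne 0$ does force $M_\m\ne 0$ because $\Tor$ commutes with localization and the $\Tor$ group is already an $R/\m$-module. In the backward direction, taking $j\le q$ minimal kills all $E^2_{p,j'}$ with $j'<j$, so the edge term $E^2_{0,j}=H_j(C_\bullet)\otimes_R R/\m$ receives and emits no nonzero differentials; Nakayama (using that $H_j(C_\bullet)$ is finitely generated over the Noetherian ring $R=\k H$) then gives $E^\infty_{0,j}\ne 0$, which sits inside $H_j(C_\bullet\otimes_R R/\m)$ as the bottom filtration step. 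This is essentially the same route as the paper's source \cite[Theorem 3.6]{PS-bns}, which proves the matching of the two truncated filtrations by reducing to the comparison, over a Noetherian local ring, of the homology of a bounded complex of finite free modules at the residue field with the support of its homology modules; your spectral-sequence formulation and the minimal-free-complex formulation are two standard packagings of the same underlying minimality argument.
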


In homological degree~$1$, there is a more precise 
comparison, valid for arbitrary depths.  The following 
result was proved in \cite{Hi97}, 
and further extended in \cite{MS02} and \cite{DPS-imrn}. 

\begin{prop} 
\label{prop:echar}
Let $\rho\colon H\to \k^{\times}$ be a non-trivial character.  
Then, for all $d\ge 1$, 
\begin{equation*}
\label{eq:vars ab}
\ab^*(\rho) \in \VV_{d}(G,\k) \same
\rho \in V (E_{d}(A_{G}\otimes \k)) \same
\rho\in  V (E_{d-1}(B_{G}\otimes \k)).
\end{equation*}
\end{prop}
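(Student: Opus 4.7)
The plan is to split the double equivalence into its two halves, both exploiting the reduction (harmless for the statement, as noted in \S\ref{subsec:alex matrix}) to a finite presentation $G=\langle x_1,\dots,x_q\mid r_1,\dots,r_m\rangle$ and a CW-structure on $X$ with a single $0$-cell. The cellular chain complex of $X$ with coefficients in the rank-$1$ local system $\k_{\ab^*\rho}$ then takes the form $\k^m\xrightarrow{d_2}\k^q\xrightarrow{d_1}\k$, in which $d_1(e_i)=\rho(\ab(x_i))-1$ and, by Fox's formula (Proposition~\ref{prop:fox alex}), $d_2$ is precisely the Alexander matrix $\Phi_G$ evaluated at $\rho$.

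For the first equivalence I would argue as follows. Since $\rho$ is non-trivial on $H$ and the $\ab(x_i)$ generate $H$, the map $d_1$ is surjective, so
\[
\dim_\k H_1(X,\k_{\ab^*\rho}) = (q-1)-\rank\Phi_G(\rho) = \dim_\k(A_G\otimes_{\k H}\k_\rho)-1,
\]
the second equality following from $A_G=\coker(\Phi_G)$. By the standard Fitting-ideal description applied to this presentation, $\rho\in V(E_d(A_G\otimes\k))$ amounts to the vanishing of every $(q-d)\times(q-d)$ minor of $\Phi_G(\rho)$, i.e.\ to $\rank\Phi_G(\rho)\le q-d-1$, which is exactly the condition $\dim H_1(X,\k_{\ab^*\rho})\ge d$.

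For the second equivalence I would apply $-\otimes_{\k H}\k_\rho$ to the exact sequence $0\to B_G\to A_G\to I_H\to 0$ and show that it remains short exact with $I_H\otimes\k_\rho$ one-dimensional; this forces $\dim(A_G\otimes\k_\rho)=\dim(B_G\otimes\k_\rho)+1$, and translates via the Fitting-ideal dictionary into the shift from $E_d$ to $E_{d-1}$. The crucial input is the vanishing $\Tor_i^{\k H}(\k,\k_\rho)=0$ for every $i\ge 0$ whenever $\rho$ is non-trivial. This follows from the Koszul resolution of the trivial module $\k$ over $\k H=\k[t_1^{\pm1},\dots,t_n^{\pm1}]$: after applying $-\otimes_{\k H}\k_\rho$, the differentials become multiplication by the scalars $\rho(t_j)-1\in\k$, not all of which are zero, so the resulting Koszul complex over the field $\k$ is acyclic. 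Feeding this vanishing into the long exact Tor sequence of $0\to I_H\to\k H\to\k\to 0$ yields $I_H\otimes_{\k H}\k_\rho\cong\k_\rho$ and $\Tor_1^{\k H}(I_H,\k_\rho)=0$, which is exactly what is needed for the tensored sequence of $A_G$ to remain short exact.

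The main obstacle is really just careful bookkeeping: one must check that the Koszul argument cleanly encodes the non-triviality of $\rho$, and that the Fitting-ideal indexing aligns with the dimensions of the various cokernels. The assumption $\rho\ne 1$ is essential at every step — at $\rho=1$ the boundary $d_1$ vanishes, the Koszul complex fails to be acyclic, and the neat dimension shift between $A_G\otimes\k_\rho$ and $B_G\otimes\k_\rho$ collapses, reflecting the $b_1(G)$-correction that enters the global comparison between the characteristic varieties and the Alexander varieties (Theorem~\ref{thm:vw}).
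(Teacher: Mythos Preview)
Your argument is correct. The paper does not actually give a proof of Proposition~\ref{prop:echar}; it simply attributes the result to \cite{Hi97}, with extensions in \cite{MS02} and \cite{DPS-imrn}. So there is no in-text proof to compare against, and your write-up supplies a clean self-contained argument that fits the paper's framework.

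A couple of minor remarks on presentation. For the first equivalence you implicitly use the standard fact that Fitting ideals commute with base change along the evaluation map $\k H\to\k$ determined by $\rho$: this is what justifies passing from ``$\rho\in V(E_d(A_G\otimes\k))$'' to the rank condition on $\Phi_G(\rho)$, since for a $\k$-vector space $V$ one has $E_d(V)=0$ precisely when $\dim_\k V>d$. It would be worth saying this explicitly rather than leaving it under the phrase ``Fitting-ideal description.'' Likewise, the identification of the twisted boundary $d_2$ with $\Phi_G(\rho)$ comes not directly from Proposition~\ref{prop:fox alex} but from the equality $J_G^{\ab^*\rho}=\Phi_G^{\,\rho}$, i.e., from evaluating the Fox Jacobian~\eqref{eq:jac mat} via the composite $G\xrightarrow{\ab}H\xrightarrow{\rho}\k^\times$; Proposition~\ref{prop:fox alex} is then what tells you the resulting cokernel is $A_G\otimes_{\k H}\k_\rho$.

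For the second equivalence your Koszul argument is exactly right: the acyclicity of $K_\bullet(\rho(t_1)-1,\dots,\rho(t_n)-1;\k)$ over the field $\k$ when some $\rho(t_j)\ne 1$ gives $\Tor_i^{\k H}(\k,\k_\rho)=0$ for all $i$, whence $I_H\otimes_{\k H}\k_\rho\cong\k$ and $\Tor_1^{\k H}(I_H,\k_\rho)=0$. This is the standard route taken in the references the paper cites.
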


We now isolate a class of groups $G$ for which 
all characteristic varieties $\VV_{d}(G,\k)$  
are determined by the Alexander matrix $\Phi_G$, 
even at the trivial character. 

\begin{definition}
\label{def:comm rel}
A group $G$ is said to be a {\em commutator-relators}\/ 
group if it admits a presentation of the form 
$G=\langle x_1,\dots ,x_{n}\mid r_1,\dots ,r_m\rangle$, 
with each relator $r_i$ belonging to the commutator 
subgroup of $F_n=\langle x_1,\dots ,x_{n}\rangle$. 
\end{definition}

For a commutator-relators group $G$ as above, the group 
$H=H_1(G,\Z)$ is free abelian of rank $n$, and comes  
endowed with a preferred basis, $\ab(x_1),\dots ,\ab(x_{n})$. 
This allows us to identify in a standard way 
$\Hom(G,\k^{\times})$ with $(\k^{\times})^n$. 
Let $\Phi_G\colon \Lambda^m \to \Lambda^n$ 
be the Alexander matrix of $G$, and let 
$\Phi_G(t)\colon \k^m \to \k^n$ be its evaluation at a 
point $t=(t_1,\dots , t_n)$ in $(\k^{\times})^n$.  
A routine computation with Fox derivatives shows 
that $\Phi_G(1)$ is the zero matrix.  The next result 
then follows from Propositions \ref{prop:fox alex} 
and \ref{prop:echar}. 

\begin{prop}
\label{prop:alex comm}
Let $G$ be a commutator-relators group, with $b_1(G)=n$. 
Then $\VV_d(G,\k)=  \set{ t  \in (\k^{\times})^n \mid 
\rank \Phi_G(t) < n-d}$. 
\end{prop}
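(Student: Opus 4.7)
The plan is to combine Propositions \ref{prop:fox alex} and \ref{prop:echar} to dispatch non-trivial characters, then verify the formula at the trivial character by a Fox-calculus computation showing $\Phi_G(1)=0$. Throughout I would use the preferred basis on $H=H_1(G,\Z)=\Z^n$ to identify $\Hom(G,\k^\times)$ with $(\k^\times)^n$ and the group ring $\k H$ with $\Lambda=\k[t_1^{\pm 1},\ldots,t_n^{\pm 1}]$.

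For a non-trivial character $t\in(\k^\times)^n$, Proposition \ref{prop:fox alex} says $\Phi_G$ presents the Alexander module $A_G$ over $\Lambda$. Consequently the elementary ideal $E_d(A_G\otimes\k)$ is generated by the $(n-d)\times(n-d)$ minors of $\Phi_G$, and the simultaneous vanishing of all such minors at $t$ is exactly the condition $\rank\Phi_G(t)<n-d$. Invoking Proposition \ref{prop:echar} would then give $t\in\VV_d(G,\k)$ \same $\rank\Phi_G(t)<n-d$, establishing the claimed equality away from the origin.

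Next I would verify the formula at $t=1$ by showing $\Phi_G(1)=0$. Evaluating $\Phi_G$ at $1$ is the same as applying the augmentation $\epsilon\colon \k H\to\k$ entrywise, which factors through the augmentation $\epsilon\colon \k F_n\to\k$. A short induction on word length, using the Leibniz rule $\partial_j(uv)=\partial_j(u)+u\,\partial_j(v)$ (valid once $\epsilon$ is applied) together with $\partial_j(x_i)=\delta_{ij}$, shows that $\epsilon(\partial_j r)$ is precisely the exponent sum of $x_j$ in the reduced word $r\in F_n$. Since every relator $r_i$ of a commutator-relators presentation lies in $[F_n,F_n]$, all such exponent sums vanish, forcing $\epsilon(\partial_j r_i)=0$ for every $i,j$. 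Hence $\Phi_G(1)=0$ and so $\rank\Phi_G(1)=0$, which automatically satisfies the rank inequality in the relevant range and matches the containment $1\in\VV_d(G,\k)$ coming from $b_1(G)=n$.

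The main (essentially only) obstacle is the single computation $\Phi_G(1)=0$, and this is the one place where the commutator-relators hypothesis is used in an essential way: without it, the evaluation at the identity character need not vanish and the rank formula would fail at $1$. Away from the origin the argument is a purely formal combination of Propositions \ref{prop:fox alex} and \ref{prop:echar}, so the entire proof reduces to this one Fox-calculus identity.
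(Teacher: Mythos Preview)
Your proposal is correct and follows exactly the same route as the paper: invoke Propositions~\ref{prop:fox alex} and~\ref{prop:echar} for non-trivial characters, and handle the trivial character separately by the Fox-calculus computation that $\Phi_G(1)=0$ (which is precisely the ``routine computation with Fox derivatives'' the paper alludes to). Your exponent-sum argument for $\epsilon(\partial_j r_i)=0$ is the standard justification and makes explicit what the paper leaves implicit.
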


\subsection{Alexander polynomial and characteristic varieties}
\label{subsec:alex cvar}

Let $\VV_1(G)=\VV_1(G,\C)$ be the depth~$1$ 
characteristic variety of $G$, with coefficients in $\C$.  
Denote by $\cv (G)$ the union of all 
codimension-one irreducible components of 
$\VV_1(G) \cap \Hom(G,\C^{\times})^0$.  
The Alexander polynomial $\Delta_G$  
defines a hypersurface, $V(\Delta_G)$, in the complex 
algebraic torus $\Hom(G,\C^{\times})^0$.  
The next theorem details the relationship 
between $\cv (G)$ and $V(\Delta_G)$. 

\begin{theorem}[\cite{DPS-imrn}]
\label{thm:deltacd1}
For a finitely generated group $G$, the following hold:
\begin{romenum}
\item \label{dc1}
$\Delta_G=0$ if and only if 
$\Hom(G,\C^{\times})^0 \subseteq \VV_1(G)$.
In this case, $\cv (G)=\emptyset$.
\item \label{dc2}
If $b_1(G)\ge 1$ and $\Delta_G\ne 0$, then
\begin{equation*}
\cv (G) =\begin{cases}
V(\Delta_G) & \text{if $b_1(G)>1$}\\
V(\Delta_G)\coprod \{ 1\}  & \text{if $b_1(G)=1$}.
\end{cases}
\end{equation*}
\item \label{dc3} If $b_1(G)\ge 2$, then 
$\cv (G)=\emptyset$ if and only if $\Delta_G\doteq \const$.
\end{romenum}
\end{theorem}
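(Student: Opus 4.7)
The plan is to deduce all three statements from Proposition \ref{prop:echar} in the case $d=1$, which, for every non-trivial $\rho\in\Hom(H,\C^{\times})\cong\Hom(G,\C^{\times})^0$, equates the condition $\ab^*(\rho)\in\VV_1(G)$ with the vanishing $\rho\in V(E_1(A_G\otimes\C))$. Thus, away from the identity $1$ of the torus $(\C^{\times})^n$ (with $n=b_1(G)$), the analysis of $\VV_1(G)\cap\Hom(G,\C^{\times})^0$ reduces to that of $V(E_1(A_G))$. Since $\Lambda=\C[H]$ is a Noetherian UFD, writing generators of $E_1(A_G)$ as $f_i=\Delta_G g_i$ with $\gcd(g_i)=1$ yields $V(E_1(A_G))=V(\Delta_G)\cup V(g_1,\dots,g_k)$, the second piece having codimension at least two. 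Hence the codimension-one components of $V(E_1(A_G))$ in $(\C^{\times})^n$ are exactly the irreducible components of the hypersurface $V(\Delta_G)$.

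Part (i) will then follow by observing that $\Delta_G=\gcd(E_1(A_G))=0$ iff $E_1(A_G)=0$, iff $V(E_1(A_G))$ is the entire character torus; combining this with the displayed equivalence and the Zariski closedness of $\VV_1(G)$, this is the same as $\Hom(G,\C^{\times})^0\subseteq\VV_1(G)$, in which situation $\VV_1(G)\cap\Hom(G,\C^{\times})^0$ is the irreducible ambient torus itself, of codimension zero, so $\cv(G)=\emptyset$. For part (ii), with $\Delta_G\ne 0$, the previous paragraph identifies every codimension-one component of $\VV_1(G)\cap\Hom(G,\C^{\times})^0$ other than the point $\{1\}$ with an irreducible component of $V(\Delta_G)$, and conversely. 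The character $1$ itself always lies in $\VV_1(G)$ when $b_1(G)\ge 1$ because $H_1(X,\C)\ne 0$, but $\{1\}$ is a $0$-dimensional subset of the $b_1(G)$-dimensional torus $\Hom(G,\C^{\times})^0$, so it can contribute a codimension-one component only when $b_1(G)=1$, yielding the stated case split.

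Part (iii) is a direct consequence: when $b_1(G)\ge 2$, part (ii) gives $\cv(G)=V(\Delta_G)$, which is empty inside $(\C^{\times})^n$ iff $\Delta_G$ is a unit of $\C[t_1^{\pm 1},\dots,t_n^{\pm 1}]$, i.e.\ a nonzero scalar times a monomial; combined with the degenerate case $\Delta_G=0$ from part (i), this is precisely $\Delta_G\doteq\const$. The main technical point throughout is the careful bookkeeping at the trivial character: since Proposition \ref{prop:echar} excludes $\rho=1$, one must separately invoke density together with Zariski closure to conclude part (i), and separately track the codimension of $\{1\}$ inside $\Hom(G,\C^{\times})^0$ to distinguish the $b_1(G)=1$ case from $b_1(G)\ge 2$ in part (ii). Beyond that, everything is a routine manipulation of elementary ideals in the Noetherian UFD $\Lambda$.
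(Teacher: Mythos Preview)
This survey does not actually prove Theorem~\ref{thm:deltacd1}; the result is quoted from \cite{DPS-imrn} without argument. That said, your proof is correct and is precisely the line of reasoning the surrounding material is set up to support: use Proposition~\ref{prop:echar} to identify $\bigl(\VV_1(G)\cap\Hom(G,\C^{\times})^0\bigr)\setminus\{1\}$ with $V(E_1(A_G\otimes\C))\setminus\{1\}$, then exploit the UFD structure of $\Lambda$ to decompose $V(E_1(A_G))=V(\Delta_G)\cup Z$ with $\codim Z\ge 2$, and finish by tracking the trivial character via Zariski closure and a codimension count. This is also how the argument goes in the original source.

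One small point worth making explicit in part~\eqref{dc2} when $b_1(G)=1$: your computation gives $\cv(G)=\bigl(V(\Delta_G)\setminus\{1\}\bigr)\cup\{1\}$, and the theorem writes this as $V(\Delta_G)\coprod\{1\}$. For the disjoint union to be literally correct one needs $1\notin V(\Delta_G)$; you have not argued this, and strictly speaking the statement (as recorded here) is most safely read as giving the description of $\cv(G)\setminus\{1\}$---which is exactly what the paper extracts from it in the line following the theorem.
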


In particular, if $\Delta_G$ does not vanish identically, then 
$\cv (G)\setminus\set{1}= V(\Delta_G) \setminus\set{1}$.  
Moreover, under all circumstances, 
\begin{equation}
\label{eq:v1 alex}
\VV_1(G)\setminus\set{1} \supseteq V(\Delta_G) \setminus\set{1}.
\end{equation}

In general, we cannot expect a perfect match between 
$\VV_1(G)$ and $V(\Delta_G)$, as the following example 
from knot theory illustrates.

\begin{example}
\label{ex:knot}
Let $K$ be a non-trivial knot in the $3$-sphere, with 
complement $X=S^3 \setminus K$. The knot group,  
$G=\pi_1(X)$, has abelianization $\Z$, and thus 
$1\in \VV_1(G)$.  On the other hand, the Alexander 
polynomial of the knot, $\Delta_G\in \Z[t^{\pm 1}]$,  
satisfies $\Delta_G(1)=\pm 1$; thus, $1\notin V(\Delta_G)$.  
Nevertheless, by Theorem \ref{thm:deltacd1}\eqref{dc2}, 
$\VV_1(G)\setminus\set{1}$ equals $V(\Delta_G)$, 
the set of roots of the Alexander polynomial. 
\end{example}

As a further application of Theorem \ref{thm:deltacd1}, 
we consider the Alexander polynomial $\Delta_{G_\G}$ 
associated to a right-angled Artin group, and ask:  
for which graphs $\G$ is $\Delta_{G_{\G}}$ constant? 
It is easy to see that $\Delta_{F_n}=0$, for $n\ge 1$, 
while $\Delta_{\Z^n}\doteq 1$, for $n>1$.  
To handle the general case, we need to 
recall a definition from graph theory.  

\begin{definition}
\label{def:graph conn}
The {\em connectivity}\/ of a graph $\G=(\sV,\sE)$, denoted 
$\kappa(\G)$, is the maximum integer $r$ so that, for any 
subset $\sW \subset \sV$ with $\abs{\sW}<r$, the induced  
subgraph on $\sV\setminus \sW$ is connected. 
\end{definition}

\begin{prop}
\label{prop:alexpoly raag}
A  right-angled Artin group $G_{\G}$ has non-constant 
Alexander polynomial  if and only if the 
graph $\G$ has connectivity $1$:
\[
\Delta_{G_{\G}} \not\doteq \const \same \kappa(\G) =1 .
\]
\end{prop}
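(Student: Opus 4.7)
My plan is to invoke Theorem~\ref{thm:deltacd1}\eqref{dc3} to convert the question of whether $\Delta_{G_\Gamma}$ is (essentially) constant into the question of whether the depth-one characteristic variety $\VV_1(G_\Gamma)$ possesses an irreducible component of codimension one in the character torus, and then to read that condition off from the combinatorial description of $\VV_1(G_\Gamma)$ given in~\eqref{eq:vv raag}.

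Since $H_1(G_\Gamma, \Z) = \Z^{|\sV|}$ is torsion-free, the character group $\Hom(G_\Gamma, \C^\times) = (\C^\times)^{|\sV|}$ is already connected, so $\cv(G_\Gamma)$ equals the union of codimension-one irreducible components of $\VV_1(G_\Gamma)$. By~\eqref{eq:vv raag},
\begin{equation*}
\VV_1(G_\Gamma, \C) = \bigcup_{\substack{\sW \subseteq \sV \\ \Gamma_\sW \textup{ disconnected}}} (\C^\times)^\sW,
\end{equation*}
a union of irreducible coordinate subtori. Since $(\C^\times)^\sW \subseteq (\C^\times)^{\sW'}$ iff $\sW \subseteq \sW'$, the irreducible components of this union are exactly the $(\C^\times)^\sW$ for $\sW$ maximal in the poset of subsets with $\Gamma_\sW$ disconnected. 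A codimension-one component must therefore have the form $(\C^\times)^{\sV \setminus \{v\}}$ for some vertex $v$ with $\Gamma - v$ disconnected, and maximality of this $\sW$ demands that $\sV$ itself not be a disconnecting set, i.e., that $\Gamma$ itself be connected.

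Combining the two conditions, $\cv(G_\Gamma) \neq \emptyset$ if and only if $\Gamma$ is connected and some single-vertex deletion disconnects it, which is precisely the statement $\kappa(\Gamma) = 1$. Applying Theorem~\ref{thm:deltacd1}\eqref{dc3} (valid since $b_1(G_\Gamma) = |\sV| \geq 2$ in the non-trivial range) then yields $\Delta_{G_\Gamma} \not\doteq \const \iff \kappa(\Gamma) = 1$. The degenerate cases $|\sV| \leq 1$ are handled by direct inspection, since $G_\Gamma$ is then trivial or $\Z$, with constant Alexander polynomial and $\kappa(\Gamma) \neq 1$. When $\Gamma$ is disconnected the set $\sW = \sV$ itself enters the union and forces $\VV_1(G_\Gamma)$ to fill the whole torus; by Theorem~\ref{thm:deltacd1}\eqref{dc1} we then get $\Delta_{G_\Gamma} = 0$, which is $\doteq \const$ in the sense of the proposition and compatible with $\kappa(\Gamma) = 0$.

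The only genuinely non-routine step is the order-theoretic identification of the codimension-one components of the above union of subtori as the $(\C^\times)^{\sV \setminus \{v\}}$ whose index set is maximal among disconnecting subsets; this is what forces the connectedness of $\Gamma$ itself into the final characterization, upgrading the bare existence of a cut vertex into the precise condition $\kappa(\Gamma) = 1$.
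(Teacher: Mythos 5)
Your argument is correct and follows essentially the same route as the paper: use Theorem~\ref{thm:deltacd1}\eqref{dc3} to reduce to the (non)emptiness of $\cv(G_\Gamma)$, then read off codimension-one components from the description of $\VV_1(G_\Gamma)$ in~\eqref{eq:vv raag}. Your treatment is somewhat more explicit than the paper's (spelling out the maximality condition forcing $\Gamma$ to be connected, and separately invoking part~\eqref{dc1} when $\Gamma$ is disconnected), but the underlying mechanism is identical.
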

\begin{proof}
Recall from formula \eqref{eq:vv raag} that 
$\VV_1(G_{\G})$ consists of coordinate subspaces 
$(\C^{\times})^{\sW}$, indexed by (maximal) subsets 
$\sW \subset \sV$ such that $\G_{\sW}$ is disconnected. 
Thus, $\cv(G_{\G})$ is non-empty if and only if $\G$ is 
connected and has a cut point, i.e., $\kappa(\G) =1$. 

If $\G$ has just $1$ vertex, then $\kappa(\G) =0$; on the 
other hand, $G_{\G}=\Z$, and so $\Delta_{G_{\G}}=0$. 
For all other graphs, $b_1(G_{\G})\ge 2$, and 
Theorem \ref{thm:deltacd1}\eqref{dc3} yields the 
desired conclusion.  
\end{proof}

\subsection{Almost principal Alexander ideals}
\label{subsec:almost princ}

We conclude this section with a class of groups $G$  for which 
the Alexander polynomial $\Delta_G$ may be used to inform 
in a more precise fashion on the characteristic varieties 
$\VV_d(G)$.  We start with a definition from \cite{DPS-imrn}, 
inspired by work of Eisenbud--Neumann \cite{EN} and 
McMullen \cite{McM}. 

\begin{definition}
\label{def:alexprinc}
Let $G$ be a finitely generated group, and set $H=\ab(G)$.  
We say that the Alexander ideal $E_1(A_G)\subset \Z{H}$ is 
{\em almost principal}\/ if there exists an integer $q\ge 0$ 
such that the following inclusion holds in $\C{H}$:
\begin{equation}
\label{eq:almost princ}
I_H^q\cdot ( \Delta_G )\subseteq E_1(A_G) \otimes \C.
\end{equation}
\end{definition}

For this class of groups, the Alexander polynomial determines 
to a large extent the depth~$1$ characteristic variety. 

\begin{prop}
\label{prop:v1 ap}
Suppose the Alexander ideal $E_1(A_G)$ is almost principal. Then:

\begin{romenum}
\item \label{al1}
$\big( \VV_1(G) \cap \Hom(G,\C^{\times})^0\big) \setminus\set{1} 
= V(\Delta_G) \setminus\set{1}$.
\item  \label{al2}
If, moreover, $H_1(G,\Z)$ is torsion-free, then 
$\VV_1(G)  \setminus\set{1} = V(\Delta_G) \setminus\set{1}$.
\end{romenum}
\end{prop}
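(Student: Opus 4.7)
The plan is to prove (i) by combining Proposition \ref{prop:echar} at depth $d=1$ with the almost-principal hypothesis, and then deduce (ii) from (i) by a purely topological remark about the character group. One direction of (i) is essentially already known: since $\Delta_G$ generates the smallest principal ideal of $\Z H$ containing $E_1(A_G)$, we have $E_1(A_G)\subseteq (\Delta_G)$, hence $V(\Delta_G)\subseteq V(E_1(A_G)\otimes \C)$ as subvarieties of $\Hom(H,\C^{\times})$. Transported via the canonical identification $\ab^*\colon \Hom(H,\C^{\times})\isom \Hom(G,\C^{\times})^0$, this yields the ``$\supseteq$''-part of (i), and in particular shows that $V(\Delta_G)$ already lies in the identity component.

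For the reverse inclusion in (i), I would pass to vanishing loci in the almost-principal inclusion $I_H^q\cdot (\Delta_G)\subseteq E_1(A_G)\otimes \C$, obtaining
\[
V(E_1(A_G)\otimes \C) \;\subseteq\; V\bigl(I_H^q\cdot (\Delta_G)\bigr) \;=\; V(I_H^q)\cup V(\Delta_G).
\]
Because $I_H$ is the augmentation ideal of $\C H$, its set-theoretic vanishing locus is the single point $\set{1}\subseteq \Hom(H,\C^{\times})$, and the same holds for any power $I_H^q$. Excising $\set{1}$ from both sides then gives $V(E_1(A_G)\otimes \C)\setminus\set{1}\subseteq V(\Delta_G)\setminus\set{1}$. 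Invoking Proposition \ref{prop:echar} with $d=1$ and transporting along $\ab^*$ identifies the left-hand side with $(\VV_1(G)\cap \Hom(G,\C^{\times})^0)\setminus\set{1}$, which completes the proof of (i).

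Part (ii) is a cosmetic consequence: when $H_1(G,\Z)$ is torsion-free, one has $H=H_1(G,\Z)$ and the character group $\Hom(G,\C^{\times})=\Hom(H,\C^{\times})$ is a single complex algebraic torus, hence equal to its own identity component. The statement of (i) then reads $\VV_1(G)\setminus\set{1}=V(\Delta_G)\setminus\set{1}$. I expect no serious technical hurdle here; the entire argument is a direct unpacking of the almost-principal hypothesis via vanishing loci. The one small but essential observation doing all the work is that $V(I_H^q)=\set{1}$ set-theoretically — equivalently, a non-trivial character cannot annihilate every element of $I_H^q$, since it already fails to annihilate some $h-1\in I_H$ — which is precisely what allows \eqref{eq:almost princ} to control the characteristic variety at every non-trivial character.
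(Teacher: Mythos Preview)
Your argument is correct and is exactly the one the paper sketches: the paper's proof simply cites \eqref{eq:v1 alex} (your ``$\supseteq$'' direction) and \eqref{eq:almost princ} (your ``$\subseteq$'' direction via $V(I_H^q)=\{1\}$), and declares part \eqref{al2} obvious. You have merely unpacked these references, including the use of Proposition~\ref{prop:echar} at $d=1$ to translate between $V(E_1(A_G)\otimes\C)$ and $\VV_1(G)\cap\Hom(G,\C^{\times})^0$ away from the identity.
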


\begin{proof}
Part \eqref{al1} follows from \eqref{eq:v1 alex} and 
\eqref{eq:almost princ}. Part \eqref{al2} is now obvious. 
\end{proof}l

Finally, let us remark on the connection between the multiplicities 
of the factors of $\Delta_G$ and the higher-depth characteristic 
varieties of $G$.  Identify $\Hom(G,\C^{\times})^0=
(\C^{\times})^n$, where $n=b_1(G)$.  For a character 
$\rho\in (\C^{\times})^n$, and a Laurent polynomial 
$f\in \Z[t^{\pm 1}_1, \dots ,t_n^{\pm 1}]$, denote by 
$\nu_{\rho}(f)$ the order of vanishing of the germ of 
$f$ at $\rho$.  

\begin{theorem}[\cite{DPS-imrn}]
\label{thm:h1 bound}
Suppose the Alexander ideal of $G$ is almost principal, and let 
$\Delta_G \doteq c f_1^{\mu_1}\cdots f_s^{\mu_s}$ be 
the decomposition into irreducible factors of the Alexander polynomial.
Then, $\dim_{\C} H_1(G; \C_{\rho})\le \sum_{j=1}^s \mu_j\cdot 
\nu_{\rho}(f_j)$, for all $\rho\in \Hom(G,\C^{\times})^0 \setminus \{1\}$. 
\end{theorem}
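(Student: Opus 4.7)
The strategy is to localize at $\rho$, where the almost principal hypothesis forces the first Fitting ideal of $A_G$ to be principal and generated by $\Delta_G$, and then to extract the valuation bound from a simple computation with the Alexander matrix in the resulting local ring.

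First, I would recast the left-hand side. By Proposition \ref{prop:echar}, $\dim_\C H_1(G,\C_\rho)$ equals the largest $d$ with $E_d(A_G\otimes\C)\subseteq \m_\rho$, where $\m_\rho\subset \C H$ is the maximal ideal corresponding to $\rho$; equivalently, setting $d:=\dim_\C H_1(G,\C_\rho)$, one has $\dim_\C(A_G\otimes_{\C H}\C_\rho)=d+1$. Next, pass to the regular local ring $R=(\C H)_{\m_\rho}$, a UFD of Krull dimension $n=b_1(G)$. Since $\rho\ne 1$, the augmentation ideal $I_H$ contains an element $t_i-1$ that evaluates to a nonzero complex number at $\rho$, so $I_H\cdot R=R$. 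The almost principal inclusion $I_H^q\cdot(\Delta_G)\subseteq E_1(A_G)\otimes\C$, extended to $R$, then yields $(\Delta_G)\cdot R\subseteq E_1(A_G)\otimes R$; combined with the reverse inclusion $E_1(A_G)\subseteq(\Delta_G)$ coming from the definition of $\Delta_G$ as a gcd, this gives the ideal equality $E_1(A_G)\otimes R=(\Delta_G)\cdot R$ inside $R$.

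The crux is then to exploit this local principality through a matrix normal form. Over $R$, invertible row and column operations (which preserve both the cokernel and the Fitting ideals) reduce the localized Alexander matrix $\Phi_G\otimes R$ to block diagonal form $\mathrm{diag}(I_r,\phi')$, where $r$ is the rank of $\Phi_G(\rho)$ and every entry of $\phi'$ lies in the maximal ideal of $R$. Because $\dim_\C(A_G\otimes\C_\rho)=d+1$, the block $\phi'$ has exactly $d+1$ rows, so $E_1(A_G)\otimes R$ is generated by the $d\times d$ minors of $\phi'$. Each such minor is a signed sum of products of $d$ elements of the maximal ideal, and hence lies in $\m_\rho^d\cdot R$. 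Therefore $(\Delta_G)\cdot R=E_1(A_G)\otimes R\subseteq \m_\rho^d\cdot R$, which forces $\nu_\rho(\Delta_G)\ge d$. Writing $\Delta_G\doteq c\prod_j f_j^{\mu_j}$ with $c\in\C^\times$ turns this into $\sum_j\mu_j\nu_\rho(f_j)\ge d$, as required.

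The main obstacle will be the matrix reduction step: one must verify that the block diagonal form can indeed be achieved over $R$ with every entry of $\phi'$ in the maximal ideal. This is a standard application of Gaussian elimination over a local ring, starting from an $r\times r$ minor of $\Phi_G(\rho)$ that is a unit, but it deserves careful treatment to confirm that the Fitting ideal equality (not merely the isomorphism type of the cokernel) survives the reduction. A minor auxiliary point is that one may take any finite presentation of $G$, since Fitting invariants are intrinsic to the module $A_G$.
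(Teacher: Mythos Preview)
The survey does not supply a proof of this theorem; it is quoted from \cite{DPS-imrn} without argument, so there is no in-paper proof to compare against. Your argument is correct and is essentially the natural one: localize at $\m_\rho$ so that the almost-principal hypothesis collapses to the ideal equality $E_1(A_G)\otimes R=(\Delta_G)R$, reduce the Alexander matrix over the local ring $R$ to block form $\mathrm{diag}(I_r,\phi')$ with $\phi'$ a $(d+1)$-rowed matrix having all entries in $\m_\rho R$, and read off $(\Delta_G)R=E_1(A_G)\otimes R\subseteq \m_\rho^d R$, whence $\nu_\rho(\Delta_G)\ge d$.

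A couple of remarks on the points you flagged. The identification $\dim_\C(A_G\otimes_{\C H}\C_\rho)=d+1$ follows from Proposition~\ref{prop:echar} together with the standard Fitting-ideal fact that $E_i(M)\subseteq\m$ if and only if $\dim_{\kappa(\m)}(M\otimes\kappa(\m))\ge i+1$; you may want to make that link explicit. The matrix reduction over a local ring is routine: any entry that is a unit serves as a pivot, and invertible row and column operations preserve the ideals of minors of each fixed size (not merely the isomorphism class of the cokernel), so the Fitting-ideal equality survives the reduction without further justification. Your auxiliary observation that Fitting ideals are intrinsic to $A_G$ is exactly what licenses choosing a convenient presentation.
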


If the upper bound from Theorem \ref{thm:h1 bound} is attained 
for every nontrivial character $\rho\colon G\to \C^{\times}$, 
then clearly the Alexander polynomial $\Delta_G$ determines the 
characteristic varieties $\VV_d(G)$, for all $d\ge 1$, at least 
away from $1$.  

\section{Resonance varieties}
\label{sect:res vars}

We now look at the jumping loci associated to the 
cohomology ring of a space, and how they relate 
to the Alexander matrix and to the characteristic 
varieties.  

\subsection{Jump loci for the Aomoto complex}
\label{subsec:rv}
As before, let $X$ be a connected CW-complex with 
finite $k$-skeleton, for some $k\ge 1$.   Also, let $\k$ 
be a field; if $\ch\k =2$, assume additionally that 
$H_1(X,\Z)$ has no $2$-torsion.  

Consider the cohomology algebra $A=H^* (X,\k)$, with 
graded ranks the $\k$-Betti numbers, $b_i= \dim_{\k} A^i$. 
For each $a\in A^1$, we have $a^2=0$, by graded-commutativity 
of the cup product (and our assumption on the $2$-torsion). 
Thus, right-multiplication by $a$ defines a 
cochain complex 
\begin{equation}
\label{eq:aomoto}
\xymatrix{(A , \cdot a)\colon  \ 
A^0\ar^(.66){a}[r] & A^1\ar^{a}[r] & A^2  \ar[r]& \cdots},
\end{equation}
known as the {\em Aomoto complex}.  Let 
$\beta_i(A,a) = \dim_{\k} H^i(A,\cdot a)$ be the Betti 
numbers of this complex.   The jump loci for the 
Aomoto-Betti numbers define a natural filtration 
of the affine space $A^1=H^1(X,\k)$. 

\begin{definition}
\label{def:rvs}
The {\em resonance varieties} of $X$ (over $\k$) are the 
algebraic sets 
\begin{equation*}
\label{eq:rvs}
\RR^i_d(X,\k)=\{a \in A^1 \mid 
\beta_i(A,a) \ge  d\},  
\end{equation*}
defined for all integers $0\le i\le k$ and $d>0$. 
\end{definition}

It is readily seen that each of these sets is a 
homogeneous algebraic subvariety of $A^1=\k^{b_1}$.  
Indeed, $\beta_i(A,x a) =\beta_i(A,a)$, for all $x\in \k^{\times}$, 
and homogeneity follows. In each degree $i\ge 0$, the 
resonance varieties provide a descending filtration,
\begin{equation}
\label{eq:res filt}
H^1(X,\k) \supseteq \RR^i_1(X,\k) \supseteq \cdots 
\supseteq \RR^i_{b_i}(X,\k) \supseteq \RR^i_{b_{i}+1}(X,\k)=\emptyset.
\end{equation} 

Note that, if $A^i=0$, then $\RR^i_d(X,\k)=\emptyset$, for all $d>0$. 
In degree $0$, we have $\RR^0_1(X, \k)= \{ 0\}$,
and $\RR^0_d(X, \k)= \emptyset$, for $d>1$. In degree $1$, 
the varieties $\RR^1_d(X,\k)$ depend only on the group 
$G=\pi_1(X)$---in fact, only on the cup-product map 
$\cup \colon H^1(G,\k) \wedge H^1(G,\k) \to H^2(G,\k)$---%
so we sometimes denote them by $\RR_d(G,\k)$. 

The resonance varieties depend only on the 
characteristic of the ground field: 
if $\k\subseteq \mathbb{K}$ is an extension, then
$\RR^i_d(X,\k)=\RR^i_d(X,\mathbb{K}) \cap H^1(X,\k)$. 
Moreover, 
\begin{equation}
\label{eq:res prod}
\RR^i_1(X_1\times X_2,\k)= 
\bigcup_{p+q=i} \RR^{p}_1(X_1,\k) \times \RR^{q}_1(X_2,\k), 
\end{equation}
provided both $X_1$ and $X_2$ have finitely many 
$i$-cells, see \cite[Proposition 13.1]{PS-bns}. 

\subsection{Matrix interpretation}
\label{subsec:lin alex}

An alternate way to compute the degree $1$ 
resonance varieties of the algebra $A=H^*(X,\k)$ 
is to realize them as the determinantal varieties 
of a certain matrix of linear forms. Let us describe 
this method, following the approach taken in \cite{MS00}. 

By definition, an element $a\in A^1$ belongs to $R_d(X,\k)$ if 
and only if there exists a subspace $W\subset A^1$ of 
dimension $d+1$ such that $a\cup b=0$, for all $b\in W$.  
Fix ordered bases, $\{\alpha_1,\dots ,\alpha_{b_1}\}$ for 
$A^1$ and  $\{\beta_1,\dots , \beta_{b_2}\}$ for $A^2$.  
The multiplication map,  
$\mu\colon A^1 \otimes A^1 \to A^2$, is then given by 
\begin{equation}
\label{eq:mult}
\mu(\alpha_i , \alpha_j) =
\sum_{k=1}^{b_2}\mu_{ijk}\,  \beta_k,
\end{equation}
with coefficients $\mu_{ijk}\in \k$ satisfying 
$\mu_{jik}=-\mu_{ijk}$. Denote by $A_1$ the dual $\k$-vector 
space to $A^1$, and identify the symmetric algebra 
$S=\Sym(A_1)$ with the polynomial ring $\C[x_1,\dots ,x_{b_1}]$, 
where $x_i$ is the dual of $\alpha_i$.  With this notation, define 
the {\em linearized Alexander matrix}\/ of $A$ as  
\begin{equation}
\label{eq:linalex}
\Theta_A=\big( \Theta_{kj} \big) \colon S^{b_2} \to S^{b_1}, \quad 
\text{where $\Theta_{kj}=\sum_{i=1}^{b_1} \mu_{ijk} x_i$}. 
\end{equation}

Adapting the proof of \cite[Theorem 3.9]{MS00} to 
this slightly more general context, we obtain the following 
result. 

\begin{prop}[\cite{MS00}]
\label{prop:res linalex}
With notation as above, 
\begin{equation}
\label{eq:res linalex}
\RR_{d}(X,\k)=V(E_{d}(\Theta_A)).
\end{equation}
\end{prop}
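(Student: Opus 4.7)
The plan is to identify, for each $a\in A^1$, the scalar matrix $\Theta_A(a)$ obtained by substituting $x_i\mapsto a_i$ with the matrix of the multiplication map $\mu_a\colon A^1\to A^2$, $b\mapsto a\cdot b$, and then to read off simultaneously both the Aomoto Betti number $\beta_1(A,a)$ and the elementary-ideal condition from this single matrix.

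First I would compute $\Theta_A(a)$ on the basis $\{\alpha_j\}$. Specializing $x_i\mapsto a_i$ in $\Theta_{kj}=\sum_i\mu_{ijk}x_i$ and using \eqref{eq:mult} gives
\[
a\cdot\alpha_j \;=\; \sum_i a_i\,\alpha_i\cdot\alpha_j \;=\; \sum_k\Bigl(\sum_i\mu_{ijk}a_i\Bigr)\beta_k \;=\; \sum_k \Theta_{kj}(a)\,\beta_k,
\]
so, modulo the transpose implicit in the map convention $S^{b_2}\to S^{b_1}$, the matrix $\Theta_A(a)$ represents $\mu_a$ in the bases $\{\alpha_j\}$ and $\{\beta_k\}$. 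In particular $\dim\ker\mu_a = b_1-\rank\Theta_A(a)$.

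Next I would express $\beta_1(A,a)$ in terms of this rank. By graded-commutativity (invoking the $2$-torsion hypothesis on $H_1(X,\Z)$ when $\ch\k=2$) one has $a^2=0$, so $a\in\ker\mu_a$; hence for $a\neq 0$ the image of $\cdot a\colon A^0\to A^1$ is exactly the line $\k\cdot a$, which sits inside $\ker\mu_a$. Therefore
\[
\beta_1(A,a) \;=\; \dim\ker\mu_a-1 \;=\; b_1-1-\rank\Theta_A(a),
\]
and so $a\in\RR_d(X,\k)$ if and only if $\rank\Theta_A(a)\le b_1-d-1$. By the conventions of \S\ref{subsec:fitting}, this is precisely the vanishing at $a$ of every $(b_1-d)\times(b_1-d)$ minor of $\Theta_A$, i.e., $a\in V(E_d(\Theta_A))$, as desired.

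The bulk of the argument is the one-line identification in the first step; the main obstacle is purely bookkeeping. One must reconcile the ``$-1$'' coming from the image of $A^0$ in the Aomoto complex with the ``$q-i$'' shift built into the definition of $E_i$ (here $q=b_1$), and then check that the boundary points — the origin $a=0$ and, if needed, the extreme depth $d=b_1$ — match on both sides. Homogeneity of $\RR_d(X,\k)$ and of the determinantal loci takes care of $a=0$ once the identity is verified on $A^1\setminus\{0\}$.
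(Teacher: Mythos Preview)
Your argument is correct and is precisely the approach the paper sets up: the sentence preceding the proposition already rewrites membership in $\RR_d$ as the existence of a $(d{+}1)$-dimensional subspace of $\ker\mu_a$, i.e., $\rank\mu_a\le b_1-d-1$, and your identification $\Theta_A(a)=\text{matrix of }\mu_a$ then matches this with the vanishing of the $(b_1-d)$-minors. One small point: homogeneity alone does not decide whether $0$ belongs to a cone (both $\{0\}$ and $\emptyset$ are cones), so the origin should be checked directly---here $\Theta_A(0)=0$, whence $0\in V(E_d(\Theta_A))$ exactly when $b_1-d>0$, which agrees with the paper's characterization of $0\in\RR_d$.
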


Now let $G$ be a finitely presented group. Set $H=\ab(G)$, and 
identify $\Lambda=\k{H}$ with $\k[t_1^{\pm 1}, \dots , t_n^{\pm 1}]$, 
where $n=\rank H$. Let $I=I_H$ be the augmentation ideal. 
The $I$-adic completion, $\widehat{\Lambda}$, may be 
identified with the power series ring $\k[[t_1, \dots , t_n]]$, 
while the associated graded ring, $\gr(\widehat{\Lambda})$, 
may be identified with the polynomial ring $S=\k[x_1,\dots, x_n]$, 
where $x_i=t_1-1$.  

The next result (adapted from \cite[\S3.5]{MS00}) describes 
the relationship between the Alexander matrix 
from \eqref{eq:alex mat} and 
the linearized Alexander matrix from \eqref{eq:linalex}, 
thereby justifying {\it a posteriori}\/ the terminology. 

\begin{prop}[\cite{MS00}]
\label{prop:linalex}
Let $G=\langle x_1,\dots ,x_{q}\mid r_1,\dots ,r_m\rangle$
be a finitely presented group.  
Let $\Phi_G\colon  \Lambda^m \to \Lambda^q$ be the 
Alexander matrix, and 
$\gr(\widehat{\Phi}_G) \colon S^{m} \to S^{q}$ 
 its image under the $\gr$ functor.
Finally, let $A=H^*(X,\k)$ be the cohomology ring 
of the presentation $2$-complex.   Then:
\begin{equation}
\label{eq:gr alex}
\gr(\widehat{\Phi}_G)=\Theta_A.
\end{equation}
\end{prop}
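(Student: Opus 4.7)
The plan is to compute the matrices $\gr(\widehat{\Phi}_G)$ and $\Theta_A$ explicitly in the two natural bases and identify them entry by entry, translating Fox calculus on $G$ into cup products on the presentation $2$-complex $X$. I will work in the commutator-relators setting, where $b_1=q$, $b_2=m$ and the matrices in question have the same shape.

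First I would $I$-adically expand the entries of the Alexander matrix. For any $g\in F_q$, the augmentation $\epsilon(\partial_j g)$ is the exponent sum of $x_j$ in $g$. Iterating the derivation rule $\partial_i(uv)=\partial_i(u)\epsilon(v)+u\partial_i(v)$ and substituting $t_i=1+x_i$ in $\widehat{\Lambda}$ produces the expansion
\[
(\partial_j g)^{\ab} \;\equiv\; \epsilon(\partial_j g) \;+\; \sum_{i=1}^{q}\epsilon(\partial_i\partial_j g)\, x_i \pmod{\widehat{I}^{\,2}}.
\]
In a commutator-relators presentation the constant term $\epsilon(\partial_j r_k)$ vanishes, so the $(j,k)$-entry of $\gr(\widehat{\Phi}_G)$ is the linear form $\sum_i \epsilon(\partial_i\partial_j r_k)\,x_i$.

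Second I would compute the cup products on $X$ at the chain level. Take the bases $\alpha_1,\dots,\alpha_q\in H^1(X,\k)$ dual to the $1$-cells and $\beta_1,\dots,\beta_m\in H^2(X,\k)$ dual to the $2$-cells. Using the Alexander--Whitney diagonal on the universal cover $\widetilde{X}$ together with Fox's identification of the cellular boundary $\partial\widetilde{c}_k=\sum_j (\partial_j r_k)\,\widetilde{e}_j$ of each lifted $2$-cell, a direct chain-level computation identifies the structure constants $\mu_{ijk}$ of $\alpha_i\cup\alpha_j=\sum_k \mu_{ijk}\beta_k$ with the iterated Fox derivatives $\epsilon(\partial_i\partial_j r_k)$, subject to the graded-antisymmetrization $\mu_{jik}=-\mu_{ijk}$. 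This is the classical Fox--Reidemeister description of low-degree cup products.

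Combining the two computations yields the entry-wise identity $\bigl(\gr(\widehat{\Phi}_G)\bigr)_{jk}=\sum_i \mu_{ijk}\,x_i=(\Theta_A)_{jk}$ for all $(j,k)$, which is the desired equality of matrices. The main obstacle is the second step: identifying cup products with second-order Fox derivatives requires a careful chain-level argument on $\widetilde{X}$ and bookkeeping of signs arising from the Alexander--Whitney map. I would carry it out by adapting the argument of \cite[Theorem~3.9]{MS00}, which performs exactly this translation between Fox calculus and cohomology.
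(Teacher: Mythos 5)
Your proposal is correct and follows essentially the same route as the paper: the paper states this proposition without proof, simply attributing it to \cite[\S 3.5]{MS00}, which is precisely where the identification of the cup-product structure constants $\mu_{ijk}$ with the second-order Fox derivative coefficients $\epsilon(\partial_i\partial_j r_k)$ is carried out, and you likewise defer that chain-level step to the same source. You also correctly spell out the elementary half of the argument (the $I$-adic Magnus expansion $(\partial_j r_k)^{\ab}\equiv\sum_i\epsilon(\partial_i\partial_j r_k)x_i\pmod{\widehat I^2}$) and rightly note that the commutator-relators hypothesis is what makes the two matrices have the same shape and kills the constant terms.
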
 

\subsection{Tangent cone inclusion}
\label{subsec:tc inclusion}
The above discussion hints at a relationship between the 
characteristic varieties $\VV_d(X,\k)$---the determinantal 
varieties of the Alexander matrix, at least away from $1$---and 
the resonance varieties $\RR_d(X,\k)$---the determinantal 
varieties of the linearized Alexander matrix.  This relationship, 
explored in the context of hyperplane arrangements in \cite{CS99}, 
was established in full generality by Libgober in \cite{Li02},  
at least for $\k=\C$. 

\begin{theorem}[\cite{Li02}]
\label{thm:lib}
Let $X$ be a connected CW-complex with finite $k$-skeleton.  
Then, for all $i<k$ and all $d>0$, 
\begin{equation}
\label{eq:lib}
TC_1(\VV_d^i(X, \C))\subseteq \RR_d^i(X, \C).
\end{equation}
\end{theorem}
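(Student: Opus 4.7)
The plan is to realize both $\VV_d^i$ (near $1$) and $\RR_d^i$ as determinantal loci of closely related matrices, and to show that the linearization (at $1$) of the matrix cutting out the characteristic variety coincides with the matrix cutting out the resonance variety. The tangent cone inclusion then follows from the general fact that the tangent cone of a variety at a point lies in the zero locus of the initial forms of its defining ideal.

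First I would set up the matrix description. Choose a finite CW-structure on $X$ (through dimension $k+1$), lift to the universal cover $\wX$, and let $\Delta_j \colon C_j(\wX,\C)\to C_{j-1}(\wX,\C)$ be the boundary maps, viewed as matrices with entries in $\C G$ with $G=\pi_1(X,x_0)$. For any character $\rho\in \Hom(G,\C^{\times})$, the twisted chain complex $C_\bullet(\wX,\C)\otimes_{\C G}\C_\rho$ has differentials obtained by evaluating entries at $\rho$, and $\dim_\C H_i(X,\C_\rho)$ is read off from the corank of the combined matrix
\[
\Delta_i(\rho)\oplus \Delta_{i+1}(\rho)^t.
\]
Hence, in a neighborhood of $1$, the locus $\VV_d^i(X,\C)$ is cut out by the vanishing of suitable maximal minors of this matrix (Fitting ideals), whose entries are holomorphic functions on $\Hom(G,\C^{\times})$.

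Next I would linearize at $\rho=1$. Pull back through $\exp\colon H^1(X,\C)\to \Hom(G,\C^{\times})$, so that, for $a\in H^1(X,\C)\cong \Hom(G_{\ab},\C)$ and $g\in G$, one has $\exp(ta)(g)=1+t\,a(\ab(g))+O(t^2)$. Taylor-expanding each matrix entry gives
\[
\Delta_j(\exp(ta))=\Delta_j(1)+t\cdot D_j(a)+O(t^2),
\]
for a linear-in-$a$ matrix $D_j(a)$. The key computational step is to identify the induced map of $D_\bullet(a)$ on $H^*(X,\C)$ with right multiplication by $a$ in the cohomology ring: this is the cochain-level avatar of Proposition~\ref{prop:linalex} (the Fox-derivative identity $\gr(\widehat{\Phi}_G)=\Theta_A$) extended to arbitrary degree, and amounts to checking that the standard cup-product cochain formula arises as the first-order deformation of the boundary in the twisted complex. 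Consequently, the linearization at $1$ of the family of twisted chain complexes is precisely the Aomoto complex $(A,\cdot a)$ on $H^*(X,\C)$; by Proposition~\ref{prop:res linalex} and its analog for $\RR^i_d$, the resonance variety $\RR^i_d(X,\C)$ is exactly the determinantal locus of this linearized matrix.

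Finally, to conclude the inclusion \eqref{eq:lib}, let $f$ be any minor in the Fitting ideal defining $\VV_d^i$ near $1$, pulled back via $\exp$ to an analytic germ on $H^1(X,\C)$. Since $f$ vanishes at $0$ (provided $1\in \VV_d^i$; otherwise the statement is vacuous), its initial form at $0$ is the corresponding minor of the linearized matrix $D_\bullet(a)$, i.e.\ a defining equation for $\RR^i_d(X,\C)$. As the tangent cone $TC_1(\VV_d^i(X,\C))$ is, by definition, the zero locus of the initial forms of a generating set of the defining ideal, it must lie inside the vanishing locus of all these linearized minors, which is exactly $\RR^i_d(X,\C)$.

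The main obstacle is the identification in the middle step: one must verify carefully, cell by cell, that the first-order coefficient $D_\bullet(a)$ descends on homology to cup product with $a$, and one must track that the Fitting ideal describing the homology jump really matches, under linearization, the Fitting ideal of the Aomoto complex (not merely a subset of it). Everything else—rewriting $\VV_d^i$ as a determinantal variety, exponential parametrization, and the tangent-cone/initial-form principle—is formal. A minor subtlety is that generators of the defining ideal can have varying orders of vanishing; but since the Aomoto matrix arises from evaluating all first-order coefficients, the initial forms of sufficiently many generators still cut out a subscheme containing $TC_1(\VV_d^i(X,\C))$ and contained in $\RR_d^i(X,\C)$.
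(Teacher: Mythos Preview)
The paper does not supply its own proof of this theorem; it is stated with attribution to Libgober \cite{Li02} and used as a black box thereafter, so there is nothing in the paper itself to compare your argument against.

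Your overall strategy is the standard one and can be made to work, but the third paragraph contains a concrete error. The assertion that the initial form at $0$ of a minor of the chain-level matrix $\Delta_i(\exp(\cdot))\oplus\Delta_{i+1}(\exp(\cdot))^t$ equals the corresponding minor of the first-order term $D_\bullet(a)$ is false as written: the zeroth-order term $\Delta_j(1)$ is the ordinary cellular boundary map of $X$, which is \emph{not} zero, so initial forms of its minors mix $\Delta_j(1)$ with $D_j(a)$ in a nontrivial way. Even granting that, minors of the chain-level matrix $D_\bullet(a)$ are not defining equations for $\RR_d^i$, since the resonance variety is cut out by the Aomoto complex on $H^*(X,\C)$, not on chains. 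You flag this (``descends on homology'') in the middle paragraph, but then revert to the chain-level $D_\bullet$ when drawing the conclusion.

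The missing step is an explicit reduction: near $\rho=1$, perform a holomorphic change of basis on the chain groups to split off the acyclic summand of the complex at $\rho=1$, leaving a family of complexes on $H_*(X,\C)\cong H^*(X,\C)$ whose differential $\delta_j(z)$ satisfies $\delta_j(0)=0$ and has linear part equal to the Aomoto map (cup product). Only after this reduction do all entries of the differential have order $\ge 1$, so that an $r\times r$ minor has order $\ge r$ with degree-$r$ part the corresponding Aomoto minor. Even then, the condition $\dim H_i\ge d$ becomes $\rank\delta_i+\rank\delta_{i+1}\le b_i-d$, a sum-of-ranks condition that is not a single Fitting locus; the cleanest way to finish is not to chase ideals but to observe that homology is unchanged under rescaling $\delta\mapsto\delta/t$ for $t\in\C^{\times}$, take a sequence $z_n\to 0$ in $\VV_d^i$ with $[z_n]\to[a]$ realizing $a\in TC_1(\VV_d^i)$, rescale the differentials, and invoke upper semicontinuity of Betti numbers to conclude $a\in\RR_d^i$.
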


For many spaces $X$, equality holds in \eqref{eq:lib}.  
We illustrate this phenomenon with the class of 
spaces discussed in \S\ref{subsec:toric complexes}. 

\begin{example}
\label{ex:tc toric}
Let $L$ be a simplicial complex on finite vertex set $\sV$, and 
let $X=T_L$ be the associated toric complex.  As shown in 
\cite[Theorem 3.8]{PS-toric}, the resonance varieties 
$\RR_d^i(T_L, \k)$ are given by the exact same expression 
as in \eqref{eq:cv toric}, with the subtorus $(\k^{\times})^{\sW}$ 
replaced by the coordinate subspace $\k^{\sW}$.  It follows that 
the exponential map $\exp\colon \C^{\sV}\to (\C^{\times})^{\sV}$ 
restricts to an isomorphism of analytic germs, 
$\exp\colon (\RR^i_d(T_L),0) \isom (\VV^i_d(T_L),1)$, 
for all $i\ge 0$. In particular, 
\begin{equation}
\label{eq:tcone toric}
TC_1(\VV_d^i(T_L))= \RR_d^i(T_L).
\end{equation}
\end{example}

On the other hand, as first noted in \cite[Remark 10.3]{MS02}, 
the inclusion from Theorem \ref{thm:lib} can be strict. We illustrate 
this point with a much simpler example than the original one. 

\begin{example}
\label{ex:tc heis}
Let $M=G_{\R}/G_{\Z}$ be the $3$-dimensional Heisenberg 
nilmanifold, where $G_{\R}$ is the group of real, 
unipotent $3\times 3$ matrices, and $G_{\Z}=\pi_1(M)$ 
is the subgroup of integral matrices in $G_{\R}$.   
A straightforward computation shows that $\VV_1(M)=\{1\}$, 
and thus $TC_1(\VV_1(M))=\{0\}$.  On the other hand, 
$\RR_1(M)=\C^2$, since the cup product vanishes on $H^1(M,\C)$. 
Thus,
\begin{equation}
\label{eq:tc heis}
TC_1(\VV_1(M))\varsubsetneqq \RR_1(M).
\end{equation}
\end{example}

For more information on the characteristic and resonance 
varieties of finitely generated nilpotent groups, we refer to  
M\u{a}cinic and Papadima \cite{MP} and M\u{a}cinic \cite{Mac}. 

\section{Bieri--Neumann--Strebel--Renz invariants}
\label{sect:bnsr}

In this section, we review the definition of the $\Sigma$-invariants 
of a group $G$ (and, more generally, of a space $X$), and discuss 
the relation of these invariants to the homology jumping loci. 

\subsection{$\Sigma$-invariants}
\label{subsec:bns}

We start with a definition given by  Bieri, Neumann, 
and Strebel in \cite{BNS}.   Let $G$ be a finitely generated 
group.  Choose a finite set of generators for $G$, and let 
$\CC(G)$ be the corresponding Cayley graph.  Given a 
homomorphism $\chi\colon G\to \R$, let $\CC_{\chi}(G)$ 
be the induced subgraph on vertex set 
$G_{\chi}=\{ g \in G \mid \chi(g)\ge 0\}$.  
The {\em BNS invariant}\/ of $G$ is the set   
\begin{equation}
\label{eq:bns inv}
\Sigma^1(G) =\{ \chi \in \Hom(G,\R) \setminus \{0\} \mid 
\text{$\mathcal{C}_{\chi}(G)$ is connected} \}.
\end{equation}
As shown in \cite{BNS}, $\Sigma^1(G)$ is an {\em open}, 
conical subset of the vector space $\Hom(G,\R)=H^1(G,\R)$; 
moreover, this set is independent of the choice of 
generators for $G$.

In \cite{BR}, Bieri and Renz extended this definition, as follows. 
Recall that a group $G$ (or, more generally, a monoid $G$) 
is of type $\FP_k$ if there is a projective $\Z{G}$-resolution 
$P_{\bullet}\to \Z$, with $P_i$ finitely generated, for all $i\le k$. 
In particular, $G$ is of type $\FP_1$ if and only if $G$ is finitely 
generated. The {\em BNSR invariants}\/ of $G$ are the sets 
\begin{equation}
\label{eq:bnsr def}
\Sigma^q(G,\Z)=\set{
\chi\in \Hom (G, \R)\setminus\{0\} \mid 
\text{the monoid $G_{\chi}$ is of type $\FP_q$}}.
\end{equation}
These sets form a descending chain of open 
subsets of $\Hom(G,\R)$, starting at $\Sigma^1(G,\Z)=\Sigma^1(G)$.  
Moreover, $\Sigma^q(G,\Z)$ is non-empty only if 
$G$ is of type $\FP_q$.   

To a large extent, the importance of the $\Sigma$-invariants 
lies in the fact that they control the finiteness properties 
of kernels of projections to abelian quotients.  More precisely, 
let $N$ be a normal subgroup of $G$, with $G/N$ abelian. 
Then, as shown in \cite{BNS, BR}, the group $N$ is of type $\FP_q$ 
if and only if  $ \{\chi \in \Hom (G,\R) \setminus \{0\} \mid \chi(N)=0\} 
\subseteq \Sigma^q(G, \Z)$. 
In particular, the kernel of an epimorphism $\chi\colon G \surj \Z$ is 
finitely generated if and only if both $\chi$ and $-\chi$ belong 
to $\Sigma^1(G)$.

\subsection{Novikov homology}
\label{subsec:novikov}
In \cite{Si}, Sikorav reinterpreted the BNS invariant of a 
finitely generated group $G$ in terms of Novikov homology. 
This interpretation was extended to all BNSR invariants 
by Bieri \cite{Bi07}, leading to a very general definition of the 
BNSR invariants of a space \cite{FGS, PS-bns}. 

The {\em Novikov--Sikorav completion}\/ of the group ring $\Z{G}$ 
with respect to a homomorphism $\chi\colon G\to \R$ consists of 
all formal sums $\lambda =\sum_i n_i g_i$, with $n_i\in \Z$ and 
$g_i\in G$, having the property that, for each $c\in \R$, the set 
of indices $ i$ for which $n_i \ne 0 $ and $\chi(g_i) \ge c$ is finite. 
With the obvious addition and multiplication, the Novikov--Sikorav 
completion, $\widehat{\Z{G}}_{\chi}$, is a ring, containing $\Z{G}$ 
as a subring; in particular,  $\widehat{\Z{G}}_{\chi}$ carries a natural 
$G$-module structure.  For details, we refer to Farber's book \cite{Far}. 

Now let $X$ be a connected CW-complex with finite $1$-skeleton, 
and let $G=\pi_1(X,x_0)$ be its fundamental group.  
The {\em BNSR invariants}\/ of $X$ are the sets 
\begin{equation}
\label{eq:bnsr fgs}
\Sigma^q(X, \Z)= \{ \chi\in \Hom(G, \R)\setminus \set{0} \mid
H_{i}(X, \widehat{\Z{G}}_{-\chi})=0, \ \forall\, i\le q \}.
\end{equation}
Identifying $\Hom(G,\R)=H^1(G,\R)=H^1(X,\R)$, we may view 
the $\Sigma$-invariants of $X$ as subsets of the vector space 
$H^1(X,\R)$.  As shown in \cite{Bi07}, definitions \eqref{eq:bnsr def} 
and \eqref{eq:bnsr fgs} agree: If $G$ is a group of type $\FP_k$, 
then $\Sigma^q(G, \Z)= \Sigma^q(K(G, 1), \Z)$, for all $q\le k$.   

\subsection{$\Sigma$-invariants and characteristic varieties}
\label{subsec:bns bound}
In practice, the BNSR invariants are extremely hard to compute: 
a complete description of the sets $\Sigma^q(G, \Z)$ is known only 
for some very special classes of groups $G$, such as one-relator 
groups and right-angled Artin groups.  The following result from 
\cite{PS-bns} gives a computable ``upper bound" for the 
$\Sigma$-invariants of a space $X$ or a group $G$, in 
terms of the real points on the exponential tangent cones 
to the respective characteristic varieties (discussed 
in \S\ref{subsec:exp tcone}). 

\begin{theorem}[\cite{PS-bns}]
\label{thm:bns tau}
Let $X$ be a connected CW-complex with finite $k$-skeleton, 
for some $k\ge 1$.  Then, for each $q\le k$, the following holds:
\begin{equation}
\label{eq:bns bound1}
\Sigma^q(X, \Z)\subseteq \Big( H^1(X,\R)\cap \bigcup_{i\le q} 
\tau_1\big(\VV_1^i(X, \C)\big)\Big)^{\compl}.
\end{equation}
In particular, for every finitely generated group $G$, 
\begin{equation}
\label{eq:bns bound2}
\Sigma^1(G)\subseteq 
\big( H^1(G,\R) \cap \tau_1 ( \VV_1(G,\C))\big)^{\compl}.
\end{equation}
\end{theorem}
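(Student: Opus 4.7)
I would prove \eqref{eq:bns bound1} by contrapositive: assume $\chi \in \Sigma^q(X,\Z)$ and deduce that $\chi \notin \tau_1(\VV^i_1(X,\C))$ for every $i \le q$. By the Novikov--Sikorav description \eqref{eq:bnsr fgs}, the hypothesis amounts to $H_i(X, \widehat{\Z G}_{-\chi}) = 0$ for $i \le q$; tensoring with $\C$ over $\Z$, and using that the cellular complex $C_\bullet(\wX,\Z)$ is free over $\Z G$ and finitely generated in degrees $\le k$, this vanishing passes to $\C$-Novikov coefficients: $H_i(X,\widehat{\C G}_{-\chi}) = 0$ for $i \le q$. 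Suppose, toward a contradiction, that $\chi \in \tau_1(\VV^i_1(X,\C))$ for some $i \le q$; by \eqref{eq:tau1}, the entire one-parameter family $\rho_t(g) := \exp(t\chi(g))$, $t \in \C$, lies in $\VV^i_1(X,\C)$, so $H_i(X,\C_{\rho_t}) \ne 0$ for \emph{every} $t$.

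The plan is to derive a contradiction by producing a single $t_0 \in \C$ for which $H_i(X,\C_{\rho_{t_0}}) = 0$. The bridge is a change-of-rings identification. For $t_0$ deep enough in the left half-plane, $\rho_{t_0}$ extends to a ring homomorphism $\hat\rho_{t_0}\colon \widehat{\C G}_{-\chi}\to\C$, since $|\rho_{t_0}(g)| = e^{\mathrm{Re}(t_0)\chi(g)}$ decays exponentially as $\chi(g) \to +\infty$ along the support of any Novikov element (the $-\chi$-completion forces $\chi \to +\infty$ on supports). Granting this extension, the complex $D_\bullet := C_\bullet(\wX,\C) \otimes_{\C G}\widehat{\C G}_{-\chi}$ is a chain complex of free, finitely generated $\widehat{\C G}_{-\chi}$-modules that is acyclic in degrees $\le q$; splicing this acyclic tail into short exact sequences of finitely generated projectives (which automatically split) shows that $-\otimes_{\widehat{\C G}_{-\chi}}\C_{\rho_{t_0}}$ preserves the vanishing of homology in that range. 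The natural identification
\[
D_\bullet \otimes_{\widehat{\C G}_{-\chi}} \C_{\rho_{t_0}} \;\cong\; C_\bullet(\wX,\C)\otimes_{\C G}\C_{\rho_{t_0}}
\]
then yields $H_i(X,\C_{\rho_{t_0}})=0$, the required contradiction. Inclusion \eqref{eq:bns bound2} follows by specializing \eqref{eq:bns bound1} to $q=1$ and $X=K(G,1)$, since $\Sigma^1$ and $\VV_1$ depend only on the fundamental group.

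The main obstacle I foresee is the extension of $\rho_{t_0}$ to a genuine ring homomorphism of the Novikov ring: the coefficients of a Novikov series can grow without bound, so extending $\rho_{t_0}$ uniformly across the whole ring requires more care than the elementary pointwise estimate $|\rho_{t_0}(g)|\to 0$. The cleanest route is to first handle integer-valued $\chi$, where $\widehat{\Z G}_{-\chi}$ is a twisted Laurent power series ring in a single variable and the specialization at $\rho_{t_0}$ is classical Ore-style; the general real-valued case is then recovered by exploiting the density of rational directions in $H^1(X,\R)$, the openness of $\Sigma^q(X,\Z)$ due to Bieri--Renz, the scale-invariance of $\tau_1$, and the fact that the right-hand side of \eqref{eq:bns bound1} is the complement of a finite union of rationally defined linear subspaces by Lemma \ref{lem:tau1 lin}.
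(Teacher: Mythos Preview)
The paper is a survey and does not itself prove this theorem; it is quoted from \cite{PS-bns}. But the very next result it states, Theorem~\ref{thm:bns valuation}, is the engine behind the proof in \cite{PS-bns}, and comparing your proposal to it exposes where your argument breaks.

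Your central step---extending $\rho_{t_0}=\exp(t_0\chi)$ to a ring homomorphism $\widehat{\C G}_{-\chi}\to\C$---cannot be carried out, and you already see why: a Novikov element $\sum n_g g$ has coefficients $n_g$ of completely uncontrolled size, so for \emph{every} fixed $t_0$ there are elements on which $\sum n_g e^{t_0\chi(g)}$ diverges. Restricting to integer-valued $\chi$ does not help: the Novikov ring is then a twisted Laurent \emph{series} ring, and ``specializing'' a formal power series at a complex number is simply not a ring homomorphism. Ore localization produces a skew field of fractions, not an evaluation map to $\C$. So no $t_0$ makes your change-of-rings identification go through with target $\C$. (A secondary issue: even granting such a map, your ``splicing into split short exact sequences'' step assumes the intermediate cycle modules are projective over $\widehat{\C G}_{-\chi}$, which you have not justified.)

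The fix, and the route taken in \cite{PS-bns}, is to stop aiming at $\C$ and instead aim at a valued field. Your reduction to integer-valued $\chi$ (via Lemma~\ref{lem:tau1 lin}, openness of $\Sigma^q$, and scale-invariance of both sides) is correct and is exactly how one begins. For such $\chi$, the hypothesis $\chi\in\tau_1(\VV^i_1(X,\C))$ says the entire one-dimensional subtorus $\{z^{\chi(\cdot)}:z\in\C^\times\}$ lies in $\VV^i_1(X,\C)$. Since $\VV^i_1$ is cut out by integer polynomials (minors of the equivariant boundary maps), the \emph{generic point} of this subtorus gives a character $\rho\colon G\to \k^\times$, $g\mapsto z^{\chi(g)}$, into the function field $\k=\C(z)$, with $\rho\in\VV^i_1(X,\k)$. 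The order-of-vanishing valuation $v$ at $z=0$ satisfies $v\circ\rho=\chi$, and Theorem~\ref{thm:bns valuation} then yields $\chi\notin\Sigma^q(X,\Z)$. The Novikov ring does appear in the proof of Theorem~\ref{thm:bns valuation}, but there the relevant ring map lands in a \emph{completed} valued field, where formal series are handled purely algebraically---that is what your analytic decay estimate was an inadequate substitute for.
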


Qualitatively, the above theorem says that each $\Sigma$-invariant 
is contained in the complement of a union of rationally defined 
subspaces.   As noted in \cite{PS-bns},  bound \eqref{eq:bns bound1}  
is sharp. For example, if $G$ is a finitely generated nilpotent group, 
then $\Sigma^q(G, \Z) = \Hom(G,\R)\setminus\{0\}$, while 
$\VV^q_1(G,\C)=\{1\}$, for all $q\ge 1$.  Thus, \eqref{eq:bns bound1} 
holds as an equality for $X=K(G,1)$. 

\subsection{$\Sigma$-invariants and valuations}
\label{subsec:valuations}

In \cite{De08}, Delzant discovered a surprising connection 
between the BNS invariant $\Sigma^1(G)$, discrete 
valuations on a field $\k$, and the first characteristic variety 
$\VV^1_1(G,\k)$.  Delzant's result was extended in \cite{PS-bns}, 
as follows. 

\begin{theorem}[\cite{PS-bns}]
\label{thm:bns valuation}
With notation as above, let $\rho\colon G\to \k^{\times}$ be 
a homomorphism  such that $\rho \in \bigcup_{i\le q} \VV^i_1(X, \k)$, 
for some $q\le k$, and let $v\colon \k^{\times}\to \R$ be a 
valuation such that $v\circ \rho\ne 0$. Then 
$v\circ \rho \not\in \Sigma^q(X, \Z)$. 

Conversely, let $\chi\colon G\to \R$ be a 
homomorphism such that $-\chi \in \Sigma^q(X, \Z)$, for 
some $q\le k$, and let  $\xi\colon G\surj \Gamma$ be  
the corestriction of $\chi$ to its image.  Suppose there is 
a character $\rho\colon  \Gamma \to \k^{\times}$ which is 
not an algebraic integer.  Then 
$\rho\circ \xi \not\in \bigcup_{i\le q} \VV^i_1(X, \k)$.
\end{theorem}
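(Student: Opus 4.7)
The plan is to deduce both parts from the Novikov--Sikorav description \eqref{eq:bnsr fgs} of $\Sigma^q(X,\Z)$ by building a ring-level bridge between the Novikov completion of $\Z{G}$ and the coefficient module $\k_\rho$ that computes the characteristic variety. Given a character $\rho\colon G\to\k^\times$ and a valuation $v\colon\k^\times\to\R$ with $\chi:=v\circ\rho\ne 0$, I first construct a ring homomorphism $\widehat{\rho}\colon\widehat{\Z{G}}_{-\chi}\to K_v$ into the $v$-adic completion $K_v$ of $\k$, extending $\rho$. The extension is well defined because the Novikov defining condition---that for every $c\in\R$ only finitely many support indices $i$ satisfy $-\chi(g_i)\ge c$---translates to $v(\rho(g_i))\to+\infty$ along the support, so the image sum $\sum n_i\rho(g_i)$ converges in the $v$-adic topology on $K_v$; multiplicativity is inherited from that of $\rho$.

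For Part~1, assume toward contradiction that $\chi=v\circ\rho\in\Sigma^q(X,\Z)$. By \eqref{eq:bnsr fgs}, the complex $D_\bullet=C_\bullet(\wX)\otimes_{\Z G}\widehat{\Z{G}}_{-\chi}$, which is degree-wise free over $\widehat{\Z{G}}_{-\chi}$, has $H_j(D_\bullet)=0$ for every $j\le q$. The change-of-rings hyper-$\Tor$ spectral sequence along $\widehat{\rho}$,
\[
E^2_{p,j}=\Tor^{\widehat{\Z{G}}_{-\chi}}_p\bigl(H_j(D_\bullet),K_v\bigr)\Longrightarrow H_{p+j}(X,K_v),
\]
then forces $H_i(X,K_v)=0$ for every $i\le q$. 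On the other hand, the $\Z G$-action on $K_v$ factors through $\rho$, so $K_v\cong\k_\rho\otimes_\k K_v$ as $\Z G$-modules, and flatness of the $\k$-vector space $K_v$ over $\k$ gives the K\"unneth identification $H_i(X,K_v)\cong H_i(X,\k_\rho)\otimes_\k K_v$. Thus $H_i(X,\k_\rho)=0$ for all $i\le q$, contradicting $\rho\in\bigcup_{i\le q}\VV^i_1(X,\k)$.

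For Part~2, write $\chi=\iota\circ\xi$, where $\iota\colon\Gamma\hookrightarrow\R$ is the inclusion. The hypothesis that $\rho$ is not an algebraic integer supplies, by a standard valuation-theoretic construction (in the spirit of Bourbaki, \emph{Commutative Algebra}, Ch.~VI), a real-valued valuation $v$ of $\k$ satisfying $v\circ\rho=-\iota$, so that $v\circ(\rho\circ\xi)=-\chi\ne 0$. If one had $\rho\circ\xi\in\VV^i_1(X,\k)$ for some $i\le q$, applying Part~1 to the character $\rho\circ\xi$ together with this $v$ would yield $-\chi\notin\Sigma^q(X,\Z)$, contradicting the standing hypothesis.

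The main obstacle is the valuation-theoretic input for Part~2: from the failure of $\rho$ to be an algebraic integer one must extract a valuation $v$ realizing the prescribed $\R$-valued homomorphism $-\iota$ on the entire finitely generated subgroup $\rho(\Gamma)\subset\k^\times$, not merely on a single non-integer element. The key leverage is the classical fact that algebraic integers take non-negative value under every valuation, which lets one bootstrap from the local non-integrality hypothesis to the globally prescribed valuation; the change-of-rings spectral sequence underlying Part~1 is, by comparison, a routine matter once one observes that $D_\bullet$ is degree-wise free over $\widehat{\Z{G}}_{-\chi}$.
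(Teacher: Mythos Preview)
The survey itself gives no proof of this theorem; it is quoted from \cite{PS-bns} without argument. So there is no in-paper proof to compare against, and I evaluate your proposal on its own terms.

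Your Part~1 is correct and is essentially the natural argument. Two small points worth making explicit: (a) the map $\widehat{\rho}$ lands in the completion $K_v$ because any non-archimedean valuation is automatically non-negative on the image of $\Z$ in $\k$, so $v(n_i\rho(g_i))\ge \chi(g_i)\to+\infty$ along the support; (b) multiplicativity uses that in a complete non-archimedean field one may rearrange absolutely convergent series freely. With those in hand, the K\"unneth/hyper-$\Tor$ spectral sequence and the identification $K_v\cong \k_\rho\otimes_\k K_v$ finish the argument cleanly.

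Part~2, however, has a real gap. You correctly isolate the crux---producing a valuation $v$ on $\k$ with $v\circ\rho=-\iota$ (or a positive scalar multiple, which suffices since $\Sigma^q$ is conical)---but you do not construct it, and the gesture toward Bourbaki does not discharge the obligation. The difficulty is genuine: you must prescribe $v$ on the whole finitely generated subgroup $\rho(\Gamma)\subset\k^\times$ so that the induced map $\Gamma\to\R$ equals the specific embedding $-\iota$, not merely some nonzero homomorphism. Your slogan ``algebraic integers take non-negative value under every valuation'' handles only the rank-one case $\Gamma\cong\Z$: there it yields some $v$ with $v(\rho(1))<0$, and a rescaling finishes. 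For $\rank\Gamma\ge 2$ the slogan says nothing about the ratios $v(\rho(\gamma_1)):v(\rho(\gamma_2)):\cdots$, which must match the typically irrational ratios $\gamma_1:\gamma_2:\cdots$ inside~$\R$. What actually unlocks the construction is the reformulation of ``$\rho$ not an algebraic integer'' in Novikov terms: the leading coefficients (for the order induced by $\Gamma\subset\R$) of all nonzero elements of $\ker(\ev_\rho\colon\Z\Gamma\to\k)$ generate a proper ideal $d\Z$ with $d\ge 2$, equivalently no nonzero element of that kernel becomes a unit in $\widehat{\Z\Gamma}_\iota$. From this one builds the requisite ring map (either by a $p$-adic/valuation-theoretic extension for a prime $p\mid d$, or by working directly with the Novikov ring $\widehat{\Z\Gamma}_\iota$); that step is the substantive content of the converse in \cite{PS-bns}, and it is missing from your write-up.
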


Here, $\rho$ is an {\em algebraic integer}\/ if there is an 
element $\Delta=\sum n_{\gamma}\gamma \in \Z\Gamma$ 
with $\Delta(\rho)=0$ and $n_{\gamma_0}=1$, 
where $\gamma_0$ is the greatest element 
of $\supp (\Delta)$.

\section{Formality properties}
\label{sect:formal}

In this section, all spaces $X$ will be 
connected, and homotopy equivalent to a CW-complex 
with finite $1$-skeleton. Likewise, all groups $G$ will be 
assumed to be finitely generated. 

\subsection{CDGAs and formality}
\label{subsec:formal dga}
Fix a ground field $\k$, of characteristic $0$.  
A commutative differential graded algebra (for short, 
a CDGA), is a graded, graded-commutative $\k$-algebra $A$, 
endowed with a differential $d_A\colon A\to A$ of degree $1$. 
A CDGA morphism is a quasi-isomorphism if it induces 
an isomorphism in cohomology.  Two CDGAs, $A$ and $B$, 
are said to be weakly equivalent if there is a zig-zag of 
quasi-isomorphisms (going both ways), connecting $A$ to $B$. 

A CDGA is {\em formal}\/ if it is weakly equivalent to its 
cohomology algebra, endowed with the zero differential. 
A CDGA $(A,d_A)$ is {\em $q$-formal}, for some $q\ge 1$, 
if there is a zig-zag of morphisms connecting $(A,d_A)$ to 
$(H^*(A, d_A), d=0)$, with each one of these maps inducing 
an isomorphism in cohomology up to degree $q$, and a 
monomorphism in degree $q+1$.  

The best-known formality test is provided by the (higher-order)
Massey products.  Let us briefly recall the definition of these 
cohomology operations in the simplest case. Suppose 
$\alpha_1,\alpha_2,\alpha_3$ are homogeneous elements 
in $H^*(A)$ such that $\alpha_1 \alpha_2 =\alpha_2\alpha_3=0$.  
Pick representative cocycles $a_i$ for $\alpha_i$, 
and elements $x, y\in A$ such that $dx=a_1a_2$ 
and $dy=a_2a_3$. It is readily checked that 
$xa_3-(-1)^{\abs{a_1}} a_1y$ is a cocyle.  
The Massey triple product
$\angl{\alpha_1,\alpha_2,\alpha_3}$ is 
the set of cohomology classes of all 
such cocycles.   The image of this set in the 
quotient ring $H^*(A)/( \alpha_1,\alpha_3)$ is a 
well-defined element of degree 
$|\alpha_1|+|\alpha_2|+|\alpha_3|-1$. The triple 
product $\angl{\alpha_1,\alpha_2,\alpha_3}$ 
is {\em non-vanishing}\/ if this element does not equal 
$0$.  If $(A,d)$ is formal, then all Massey products 
of order $3$ (or higher) in $H^*(A)$ vanish.  We refer to 
\cite{DGMS, Su77} for proofs and further details. 

\subsection{Formality of spaces}
\label{subsec:formal spaces}
Given a space $X$ as above, Sullivan \cite{Su77}
constructs an algebra $A_{\PL}(X)$ of polynomial differential 
forms on $X$ with coefficients in $\k$, and provides it with 
a natural CDGA structure.  The space $X$ is said to be formal 
(over $\k$) if Sullivan's algebra $A_{\PL}(X)$ is formal;   
likewise, $X$ is $q$-formal if this CDGA is $q$-formal.  
When $X$ is a smooth manifold, and $\k=\R$ or $\C$, 
one may replace in the definition the algebra of polynomial 
forms by de~Rham's algebra of differential forms.  

Examples of formal spaces include rational cohomology 
tori, surfaces, toric complexes, compact connected Lie 
groups, as well as their classifying spaces.   On the 
other hand, the only nilmanifolds which are formal 
are tori. Formality is preserved under wedges and 
products of spaces, and connected sums of manifolds.  

Whether a space $X$ is $1$-formal or not depends only 
on its fundamental group, $G=\pi_1(X,x_0)$. Indeed, 
if $f\colon X\to K(G,1)$ is a classifying map, then the 
induced homomorphism, $f^*\colon H^i(G,\k) \to H^i(X,\k)$, 
is an isomorphism for $i=1$ and a monomorphism for $i=2$. 

\subsection{$1$-formality of groups}
\label{subsec:1-formal}
In \cite{Qu}, Quillen associates to any group $G$ 
a pronilpotent, filtered Lie algebra, called 
the {\em Malcev completion}\/ of $G$. Concretely, 
the group ring $\k{G}$ has a Hopf algebra structure, 
with comultiplication given by $g \mapsto g\otimes g$, 
and with counit the augmentation map.  This Hopf algebra 
structure naturally extends to the completion of $\k{G}$ with 
respect to powers of the augmentation ideal.  The Malcev 
completion of $G$, denoted $\m(G)$, is the Lie algebra of 
primitive elements in $\widehat{\k{G}}$, equipped with the 
inverse limit filtration.

For a finitely generated group $G$, the $1$-formality property 
is equivalent to the quadraticity of $\m(G)$.  More precisely, 
$G$ is $1$-formal if and only if $\m(G)$ is isomorphic, 
as a filtered Lie algebra, to the completion with respect 
to degree of a quadratic Lie algebra.  

Examples of $1$-formal groups include finitely generated free 
groups and free abelian groups (more generally, right-angled 
Artin groups), surface groups, and groups with first Betti 
number equal to $0$ or $1$.  The $1$-formality property is 
preserved under free products and direct products.  

\subsection{The tangent cone theorem}
\label{subsec:tcone}

The main bridge between the $1$-formality property of a group 
and the nature of its cohomology jumping loci is provided by the 
following result, which summarizes Theorems A and B from 
\cite{DPS-duke} in the present setting. 

\begin{theorem}[\cite{DPS-duke}]
\label{thm:tcone}
Let $G$ be a $1$-formal group.  For each $d>0$,
\begin{romenum}

\item \label{tc1} The exponential map 
$\exp\colon H^1(G, \C) \to H^1(G, \C^{\times})$ restricts 
to an isomorphism of analytic germs, 
$\exp\colon (\RR_d(G),0)  \isom(\VV_d(G),1)$. 

\item \label{tc2} The following ``tangent cone formula" holds:
$\tau_1(\VV_d(G))=TC_1(\VV_d(G))=\RR_d(G)$. 

\item \label{tc3} The irreducible components of $\RR_d(G)$ 
are all linear subspaces, defined over $\Q$.

\item \label{tc4} 
The components of $\VV_d(G)$ passing through 
the origin are all rational subtori of the form $\exp(L)$, with $L$ 
running through the irreducible components of $\RR_d(G)$. 

\end{romenum}
\end{theorem}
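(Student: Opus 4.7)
My plan is to deduce the theorem from a CDGA-level comparison, then extract the geometric consequences (ii)--(iv) from the germ isomorphism in (i).

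For the core statement (i), I would interpret both $\RR_d(G)$ and the germ of $\VV_d(G)$ at $1$ as jump loci of a uniform construction on a CDGA model of $G$.  Concretely, for any CDGA $(A,d_A)$ with $\dim A^1<\infty$ and any $\omega\in A^1$ satisfying the Maurer--Cartan equation $d_A\omega + \omega\wedge\omega=0$, the twisted cochain complex $(A^{\bullet}, d_A+\omega\wedge)$ has well-defined cohomology jump loci inside $A^1$.  Applied to $A=(H^*(G,\C),0)$ (where the Maurer--Cartan equation is automatic on $A^1=H^1(G,\C)$ by graded commutativity in characteristic zero), this recovers $\RR_d(G)$.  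Applied to Sullivan's PL-forms CDGA on $K(G,1)$, it recovers the germ at $1$ of $\VV_d(G)$, after identifying an analytic neighborhood of $1$ with a neighborhood of $0\in H^1(G,\C)$ via the exponential map.  The degree~$1$ jump loci of either CDGA depend only on the $1$-minimal model, and the $1$-formality hypothesis states precisely that these two $1$-minimal models are isomorphic as filtered CDGAs; this yields (i).

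For (ii), the containment $\tau_1(\VV_d)\subseteq TC_1(\VV_d)$ is \eqref{eq:tc incl}.  Since $\RR_d$ is homogeneous, applying the tangent cone functor at $0$ to the germ identity from (i) gives $TC_1(\VV_d)=TC_0(\RR_d)=\RR_d$.  For the reverse inclusion $\RR_d\subseteq \tau_1(\VV_d)$, fix $z\in\RR_d$; by (i), $\exp(tz)\in \VV_d$ for all $t$ in some Euclidean neighborhood of $0\in\C$, and for each defining Laurent polynomial $f$ of $\VV_d$ the function $t\mapsto f(\exp(tz))$ is an entire quasi-polynomial, so vanishing near $0$ forces vanishing on all of $\C$; hence $z\in\tau_1(\VV_d)$.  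Part (iii) then follows by applying Lemma \ref{lem:tau1 lin} to $\RR_d=\tau_1(\VV_d)$.  For (iv), each rational linear component $L$ of $\RR_d$ has image $\exp(L)$ that is already an algebraic subtorus of $\Hom(G,\C^{\times})^0$; the germ equality in (i) identifies the analytic germ $(\VV_d,1)$ with $\bigcup_i (\exp(L_i),1)$, and by irreducibility each algebraic component of $\VV_d$ through $1$ must equal one of these subtori $\exp(L_i)$.

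The hard part will be making the CDGA-level jump locus machinery in (i) rigorous and functorial.  One must set up, for each CDGA, a Maurer--Cartan variety filtered by cohomology jump subvarieties, and prove that weak equivalences of $1$-minimal models induce analytic germ isomorphisms of these loci at the origin.  One must then identify the CDGA jump loci attached to the Sullivan model with the analytic germ of $\VV_d(G)$ at $1$; this amounts to first-order deformation theory of rank~$1$ local systems at the trivial one, and requires controlling the Alexander-type invariant through its $I$-adic completion, in the spirit of the identification $\gr(\widehat{\Phi}_G)=\Theta_A$ of Proposition \ref{prop:linalex}.  The reason $1$-formality is exactly the right hypothesis is that the degree~$1$ jump loci are determined by the cup-product pairing $H^1(G,\C)\otimes H^1(G,\C)\to H^2(G,\C)$ together with the filtered Lie algebra structure on $\m(G)$, and $1$-formality of $G$ forces $\m(G)$ to be the degree-completion of the quadratic Lie algebra determined by this pairing.
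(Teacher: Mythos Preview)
The paper is a survey and does not itself prove Theorem~\ref{thm:tcone}; it merely records the statement, citing \cite{DPS-duke}.  So there is no in-paper proof to compare against.  That said, your outline is faithful to the strategy actually carried out in \cite{DPS-duke}: one builds, for any CDGA $(A,d_A)$, germs of jump loci inside the Maurer--Cartan set, shows these germs are invariant under $1$-quasi-isomorphism, identifies the jump loci of $(H^*(G,\C),0)$ with $\RR_d(G)$ and those of a Sullivan model with the analytic germ of $\VV_d(G)$ at $1$, and then invokes $1$-formality to obtain (i).  Your deductions of (ii)--(iv) from (i) are the standard ones; in particular, your identity-theorem argument for $\RR_d\subseteq\tau_1(\VV_d)$ (the map $t\mapsto f(\exp(tz))$ is a finite exponential sum, hence entire, so vanishing near $0$ forces global vanishing) is exactly how one closes that inclusion, and (iii) then drops out of Lemma~\ref{lem:tau1 lin}.

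Two small points worth tightening.  First, in (i) the genuinely delicate identification is not ``the germ of $\VV_d$ equals the CDGA jump locus of the Sullivan model,'' but rather that the relevant germ comparison already takes place at the level of the \emph{completed} Alexander invariant (or equivalently, the Malcev Lie algebra), and this is where the functoriality-under-weak-equivalence step lives; your last paragraph correctly flags this, and Proposition~\ref{prop:linalex} is indeed the degree-zero shadow of the needed statement.  Second, in (iv) you pass from a germ equality $(\VV_d,1)\cong\bigcup_i(\exp L_i,1)$ to an equality of irreducible components; this uses that an irreducible Zariski-closed subset of an algebraic torus is determined by its analytic germ at any of its points, which is true but should be stated rather than left implicit.
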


The tangent cone formula from part \eqref{tc2} of the 
theorem can be employed as a non-formality test. We illustrate 
how this works with a well-known example of a manifold, 
whose lack of formality is usually detected by means of 
triple Massey products. 

\begin{example}
\label{ex:heisenberg}
Let $M=G_{\R}/G_{\Z}$ be the Heisenberg nilmanifold. 
As we saw in Example \ref{ex:tc heis}, 
$TC_1(\VV_1(G_{\Z}))\ne \RR_1(G_{\Z})$.  Thus, $G_{\Z}$ 
is not $1$-formal, and hence $M$ is not formal. 
\end{example}

Next, we illustrate how the rationality property from 
part \eqref{tc3} can also be used to detect non-formality. 

\begin{example}[\cite{DPS-duke}]
\label{ex:sqr2}
Consider the group 
$G=\langle x_1, x_2, x_3, x_4 \mid r_1, r_2, r_3\rangle$, 
with relators $r_1=[x_1, x_2]$, $r_2=[x_1, x_4] [x_2^{-2}, x_3]$, 
and $r_3= [x_1^{-1}, x_3] [x_2, x_4]$.  
Then $\RR_1(G)=\{z \in \C^4 \mid z_1^2-2z_2^2=0\}$ 
splits into linear subspaces defined over $\R$, but not 
over $\Q$.  Thus, $G$ is {\em not}\/ $1$-formal. 
\end{example}

\subsection{Resonance upper bound}
\label{subsec:res bound}
We return now to the BNSR invariants.  As before, let $X$ 
be a connected CW-complex with finite $k$-skeleton, for some 
$k\ge 1$.  Suppose there is an integer $q\le k$ such that 
the exponential map induces an isomorphism of analytic 
germs, $\exp\colon (\RR^i_d(G),0)  \isom(\VV^i_d(G),1)$, 
for all $i\le q$. We then get from Theorem \ref{thm:bns tau} 
the following ``resonance upper bound" for the $q$-th 
$\Sigma$-invariant of $X$:
\begin{equation}
\label{eq:bnsr res bound}
\Sigma^q(X, \Z)\subseteq \Big( \bigcup_{i\le q} 
\RR_1^i(X, \R) \Big)^{\compl}.
\end{equation}

\begin{example}
\label{eq:toric rb}
For a toric complex $T_L$ associated to a finite simplicial 
complex $L$, the discussion from Example \ref{ex:tc toric}, 
yields $\Sigma^q(T_L, \Z)\subseteq \big( \bigcup_{i\le q} 
\RR_1^i(T_L, \R) \big)^{\compl}$, for all $q\ge 1$, where 
\begin{equation}
\label{eq:res toric}
\RR^i_d(T_L, \R)=  \bigcup_{\stackrel{\sW \subseteq \sV}{%
\sum_{\sigma\in L_{\sV\setminus \sW}}
\dim_{\k} \widetilde{H}_{i-1-\abs{\sigma}} 
(\lk_{L_\sW}(\sigma),\k) \ge d}} \R^{\sW}. 
\end{equation}
For a right-angled group $G_{\Gamma}$ associated 
to a graph $\Gamma$, the BNSR invariants were 
computed by Meier, Meinert, and Van-Vyck in \cite{MMV}. 
Comparing these invariants with the resonance varieties 
of $G_{\Gamma}$ as in \cite[\S 14.3]{PS-bns} reveals 
that the equality 
\begin{equation}
\label{eq:bnsr raag}
\Sigma^q(G_\Gamma, \Z)= \big( \bigcup_{i\le q} 
\RR_1^i(G_\Gamma, \R) \big)^{\compl} 
\end{equation}
holds, provided the following condition is satisfied: 
for every simplex $\sigma$ of $\Delta=\Delta_{\G}$, 
and for every subset $\sW\subseteq \sV$ such that 
$\sigma \cap W=\emptyset$, the homology groups   
$\widetilde{H}_{j} ( \lk_{\Delta_{\sW}}(\sigma),\Z)$ 
are torsion-free, for all $j\le q-\dim(\sigma)-2$.  
This condition is satisfied, for instance, when 
$\Gamma$ is a tree, or $q=1$. 
\end{example}

As an immediate application of Theorem \ref{thm:tcone}, we have 
the following corollary.  

\begin{corollary}[\cite{PS-bns}]
\label{cor:formal bns bound}
Let $G$ be a $1$-formal group.  Then 
\begin{equation}
\label{eq:bns res bound}
\Sigma^1(G) \subseteq H^1(G, \R)\setminus \RR_1(G,\R).
\end{equation}
\end{corollary}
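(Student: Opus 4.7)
The plan is to combine two earlier results essentially verbatim: the BNS bound from Theorem \ref{thm:bns tau} supplies the general containment, and the tangent cone formula from Theorem \ref{thm:tcone}\eqref{tc2} upgrades the exponential tangent cone of $\VV_1(G)$ to the resonance variety $\RR_1(G)$ in the $1$-formal setting.

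First I would invoke Theorem \ref{thm:bns tau} in the special case $X=K(G,1)$, $q=1$, and $k=1$. Inequality \eqref{eq:bns bound2} then reads
\begin{equation*}
\Sigma^1(G) \subseteq \bigl(H^1(G,\R)\cap \tau_1(\VV_1(G,\C))\bigr)^{\compl}.
\end{equation*}
This inclusion makes no formality hypothesis; it is the general ceiling on $\Sigma^1$ in terms of the exponential tangent cone to the first characteristic variety.

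Second, I would feed in the $1$-formality hypothesis via Theorem \ref{thm:tcone}\eqref{tc2}, which gives
$\tau_1(\VV_1(G,\C)) = \RR_1(G,\C)$. Substituting into the previous display yields
\begin{equation*}
\Sigma^1(G) \subseteq \bigl(H^1(G,\R)\cap \RR_1(G,\C)\bigr)^{\compl}.
\end{equation*}

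Finally, I would identify $H^1(G,\R)\cap \RR_1(G,\C)$ with $\RR_1(G,\R)$ using the field-extension property of resonance varieties recalled in \S\ref{subsec:rv} (for any extension $\k\subseteq \mathbb{K}$, $\RR^i_d(X,\k)=\RR^i_d(X,\mathbb{K})\cap H^1(X,\k)$, applied to $\R\subseteq \C$). Taking complements inside $H^1(G,\R)$ then produces the stated inclusion $\Sigma^1(G) \subseteq H^1(G,\R)\setminus \RR_1(G,\R)$. There is no genuine obstacle here: the argument is a direct three-line substitution, and all of the difficulty has been absorbed into Theorems \ref{thm:bns tau} and \ref{thm:tcone}, which are quoted as black boxes.
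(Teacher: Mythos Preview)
Your proposal is correct and follows essentially the same route as the paper: the corollary is presented there as an immediate consequence of Theorem~\ref{thm:tcone} combined with the bound from Theorem~\ref{thm:bns tau}, which is exactly your two-step substitution (with the field-extension identification $\RR_1(G,\R)=\RR_1(G,\C)\cap H^1(G,\R)$ left implicit in the paper).
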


Recall that $\RR_1(G,\R)$ is a  homogeneous subvariety of 
$H^1(G, \R)$.  Thus, for equality to hold in \eqref{eq:bns res bound}, 
the set $\Sigma^1(G)$ must be symmetric about the origin, i.e., 
$\Sigma^1(G) =-\Sigma^1(G)$. This does not happen, in general. 

\begin{example}
\label{ex:bs group}
Let $G=\langle x_1,x_2\mid x_1^{\,}x_2x_1^{-1}=x_2^2\rangle$. 
Then $H^1(G,\R)=\R$; thus, $G$ is $1$-formal, and 
$\RR_1(G,\R)=\{0\}$.  On the other hand, it follows 
from \cite[Theorem 7.3]{BR} that $\Sigma^1(G)=(-\infty, 0)$; 
in particular, $\Sigma^1(G) \ne -\Sigma^1(G)$ and 
$\Sigma^1(G) \ne \RR_1(G,\R)^{\compl}$. 
\end{example}

In Example \ref{ex:link2}, we will exhibit a $1$-formal group 
$G$ for which $\Sigma^1(G)=-\Sigma^1(G)$, and yet 
$\Sigma^1(G) \ne \RR_1(G,\R)^{\compl}$.

Finally, let us note that  inclusion \eqref{eq:bns res bound} 
may fail to hold when $G$ is not $1$-formal:  
If $M=G_{\R}/G_{\Z}$ is the Heisenberg nilmanifold, then 
$\Sigma^1(G)=H^1(G,\R)\setminus \{0\}$, yet 
$\RR_1(G,\R)^{\compl}=\emptyset$. 

\section{K\"{a}hler and quasi-K\"{a}hler manifolds}
\label{sect:kahler}

In this section, we discuss the Alexander polynomial, 
cohomology jumping loci, and BNSR invariants 
of K\"{a}hler and quasi-K\"{a}hler manifolds and their 
fundamental groups. 

\subsection{K\"{a}hler manifolds and formality}
\label{subsec:kahler}

A compact, connected, complex manifold $M$ is called 
a {\em K\"{a}hler manifold}\/ if $M$ admits a Hermitian 
metric $h$ for which the imaginary part $\omega=\Im(h)$ 
is a closed $2$-form.   The best known examples are 
smooth, complex projective varieties. 

A finitely presented group $G$ is said to be a {\em K\"{a}hler group}\/  
if it can be realized as $G=\pi_1(M)$, where $M$ is a compact 
K\"{a}hler manifold. If $M$ can be chosen to be a smooth, irreducible, 
complex projective variety, then $G$ is said to be a {\em projective 
group}.  Both classes of groups are closed under finite direct products. 
Clearly, every projective group is a K\"{a}hler group, but 
whether the converse holds is an open problem. 

For example, $G=\Z^{2r}$ is the fundamental group 
of the complex torus $(S^1\times S^1)^r$, and thus a 
projective group.  Furthermore, all finite groups are 
projective, by a classical result of J.-P.~Serre.  For 
background information  on K\"{a}hler groups, we 
refer to the monograph \cite{ABCKT}.

If $M$ is a compact K\"{a}hler manifold, then each cohomology 
group $H^m(M,\Z)$ admits a pure Hodge structure of weight $m$.   
That is, the vector space $H^m(M,\Z)\otimes \C$ decomposes 
as a direct sum $\bigoplus_{p+q=m} H^{p,q}$, with $H^{q,p}$
the complex conjugate of $H^{p,q}$. 
Hodge theory has strong implications on the topology 
of compact K\"{a}hler manifolds.  For example, 
their odd Betti numbers must be even. Consequently, 
if $G$ is a K\"{a}hler group, then $b_1(G)$ must be even. 

A deeper constraint was established by Deligne, Griffiths, 
Morgan, and Sullivan in  \cite{DGMS}.   For a compact 
K\"{a}hler manifold $M$, let $d$ be the exterior derivative, 
$J$ the complex structure, and $d^c=J^{-1}dJ$. Then the 
following holds: If $\alpha$ is a form which is closed for 
both $d$ and $d^c$, and exact for either $d$ or $d^c$, 
then $\alpha$ is exact  for $dd^c$.  As a consequence 
of this ``$dd^c$ Lemma," all compact K\"{a}hler manifolds 
are formal; in particular, all K\"{a}hler groups are $1$-formal. 

\subsection{Quasi-K\"{a}hler manifolds}
\label{subsec:quasi kahler}

A manifold $X$ is said to be a {\em quasi-K\"{a}hler manifold}\/ 
if there is a compact K\"{a}hler manifold $\overline{X}$ and a 
normal-crossings divisor $D$ such that $X=\overline{X}\setminus D$. 
Smooth, irreducible, quasi-projective complex varieties 
are examples of quasi-K\"{a}hler manifolds.   

The notions of quasi-K\"{a}hler group and quasi-projective 
group are defined as above.   Both classes of groups 
are closed under finite direct products.  Clearly, every 
quasi-projective group is a quasi-K\"{a}hler group, but 
again, whether the converse holds is an open problem. 
Furthermore, every K\"{a}hler group is a quasi-K\"{a}hler 
group, but the converse does not hold; for instance, 
$\Z=\pi_1(\C^{\times})$ is a quasi-projective, non-K\"{a}hler group.

By a well-known result of Deligne~\cite{De2, De3}, each cohomology 
group $H=H^k(X,\Z)$ of a quasi-projective variety $X$ admits a 
mixed Hodge structure, that is, an increasing filtration $W_{\bullet}$ 
on $H_{\Q}=H\otimes \Q$, called the weight filtration, and a 
decreasing filtration $F^{\bullet}$ on $H_{\C}=H\otimes \C$, 
called the Hodge filtration, such that, for each $m\ge 0$, 
the associated graded piece $\gr^{W}_m(H_{\Q})$, together 
with the filtration induced by $F^{\bullet}$ on $H_{\C}$, is 
a pure Hodge structure of weight $m$.  Similarly, a 
quasi-K\"{a}hler manifold $X$ inherits a mixed Hodge 
structure from each compactification $\overline{X}$ 
as above; if $X$ is a smooth, quasi-projective variety, 
this structure is unique.

For a quasi-K\"{a}hler manifold $X$, the existence of mixed Hodge 
structures on its cohomology groups puts definite constraints on the 
topology of $X$.  Not surprisingly, these constraints are weaker than 
in the K\"{a}hler case.  For instance, quasi-K\"{a}hler groups need 
not be $1$-formal.  As an illustration, let $X$ be complex Heisenberg 
manifold, i.e., the total space of the $\C^{\times}$-bundle over 
$\C^{\times}\times \C^{\times}$, with Euler number $1$; then $X$ 
is a smooth, quasi-projective variety which fails to be $1$-formal. 

In this framework, Morgan \cite{M} proved the following result: 
If $X$ is a smooth, quasi-projective variety with $W_1(H^1(X,\C))=0$, 
then $X$ is $1$-formal.  As noted by Deligne \cite{De2, De3}, $W_1$ 
vanishes whenever $X$ admits a non-singular compactification 
$\overline{X}$ with $b_1(\overline{X})=0$.  This happens, 
for instance, when $X$ is the complement of a hypersurface 
in $\CP^n$.  It follows that fundamental groups of complements 
of projective hypersurfaces are $1$-formal.

\begin{remark}
\label{rem:curves}
Let $C$ be an algebraic curve in $\CP^2$, with complement 
$X=\CP^2\setminus C$.  By the above, the group $\pi_1(X)$ 
is $1$-formal. In fact, as shown by M\u{a}cinic \cite{Mac} and 
Cogolludo--Matei \cite{CoM} (using different methods), a stronger 
conclusion holds in this case: the space $X$ is formal.
\end{remark}

\subsection{Characteristic varieties}
\label{subsec:cv kahler}

Foundational results on the structure of the cohomology 
support loci for local systems on smooth projective varieties, 
and more generally, on compact K\"{a}hler manifolds were 
obtained by  Beauville \cite{Be}, Green--Lazarsfeld \cite{GL}, 
Simpson \cite{Sp92}, and Campana  \cite{Cm01}.  A more 
general result, valid in the quasi-K\"{a}hler case, was obtained 
by Arapura \cite{Ar}.  
 
\begin{theorem}[\cite{Ar}]
\label{thm:arapura}
Let $X=\overline{X}\setminus D$ be a quasi-K\"{a}hler manifold.  
Then:
\begin{romenum}
\item   \label{a1} 
Each component of $\VV^1_1(X)$ is either an isolated unitary 
character, or of the form $\rho\cdot  f^*(H^1 (C, \C^{\times}))$, 
for some torsion character $\rho$ and some admissible map 
$f\colon X\to C$. 

\item  \label{a2} 
If either $X=\overline{X}$ or $b_1(\overline{X})=0$, 
then, for all $i\ge 0$ and $d\ge 1$, each component of 
$\VV^i_d(X)$ is of the form $\rho\cdot  f^*(H^1 (T, \C^{\times}))$, 
for some unitary character $\rho$ and some holomorphic  
map $f\colon X\to T$ to a complex torus. 

\end{romenum}
\end{theorem}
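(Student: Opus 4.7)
The plan is to prove the theorem by combining the Riemann--Hilbert correspondence with Simpson's nonabelian Hodge theory and Arapura's generalization of the Castelnuovo--de Franchis theorem to the quasi-K\"ahler setting. First I would identify the character variety $\Hom(\pi_1(X),\C^{\times})$ with the moduli space of rank-$1$ flat connections on $X$, so that each $\VV^i_d(X)$ is cut out by the semicontinuous function $\rho \mapsto \dim_{\C} H^i(X, L_\rho)$ and is in particular a Zariski-closed algebraic subvariety. Under the Dolbeault description of this moduli space as pairs (degree $0$ line bundle, Higgs field), the scaling action $t \cdot (L, \phi) = (L, t\phi)$ preserves every jump locus, providing a key rigidity mechanism.

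For part \eqref{a2} in the compact K\"ahler case $X = \overline{X}$, I would first restrict attention to the unitary part of the character variety, where the Hodge decomposition of $H^i(X, L_\rho)$ is available for every $\rho$. The theorems of Beauville, Green--Lazarsfeld, Simpson and Campana show, via the harmonic-integral method applied to the Dolbeault complex, that every irreducible component of the intersection of $\VV^i_d(X)$ with the unitary locus is a translated subtorus, and that these subtori are pulled back from holomorphic maps to complex tori. The refinement that the translating character is of finite order follows from Simpson's density theorem for torsion points in the unitary locus, combined with the algebraicity of the jump loci. One then propagates from unitary to arbitrary complex characters via the $\C^{\times}$-equivariance, using that the unitary real form is Zariski-dense in the Betti moduli space.

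The extension to the quasi-K\"ahler case in part \eqref{a2} (when $b_1(\overline{X})=0$) proceeds by replacing the ordinary de Rham complex by Deligne's logarithmic complex $\Omega^{\bullet}_{\overline{X}}(\log D)$ twisted by the canonical Deligne extension of $L_\rho$, so that $H^i(X, L_\rho)$ is computed as hypercohomology on the compact manifold $\overline{X}$. The hypothesis $b_1(\overline{X})=0$ kills the pure weight-$1$ piece of the mixed Hodge structure on $H^1(X)$, allowing the compact K\"ahler analysis to be applied on $\overline{X}$ with controlled boundary corrections, again yielding translated subtori of the form $\rho \cdot f^*H^1(T,\C^{\times})$.

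Part \eqref{a1} is the most delicate step and, I expect, the main obstacle. Given a positive-dimensional irreducible component $W$ of $\VV^1_1(X)$, the generalized Castelnuovo--de Franchis theorem of Arapura produces a $2$-dimensional totally isotropic subspace of $H^1(X, \C)$ under cup product, and then, through a Stein-factorization argument applied to the associated rational map, realizes $W$ up to a translating character as the pullback $f^*(H^1(C, \C^{\times}))$ of an admissible map $f \colon X \to C$ onto a smooth quasi-projective curve. The torsion property of the translating character $\rho$ is established by applying the compact K\"ahler case to a smooth compactification of $C$ and carefully controlling the extension across the added points. Isolated components must be unitary, since otherwise the $\C^{\times}$-action on the Dolbeault moduli would expand them into positive-dimensional families, contradicting isolation. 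Constructing the admissible map and proving torsion of the translation simultaneously are the key technical difficulties, as they require interleaving Hodge-theoretic input with geometric control of the resulting pencil.
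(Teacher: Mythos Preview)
The paper does not prove this theorem at all: it is stated as a result quoted from Arapura's paper \cite{Ar}, with the compact K\"ahler case attributed to the earlier work of Beauville, Green--Lazarsfeld, Simpson, and Campana. This is a survey article, and Theorem~\ref{thm:arapura} is one of the foundational inputs from the literature, not something the author establishes. So there is no ``paper's own proof'' to compare your proposal against.

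That said, your sketch is a reasonable outline of the circle of ideas behind the original proofs. A couple of small points of calibration against the statement as written here: in part~\eqref{a2} the theorem only asserts that the translating character $\rho$ is \emph{unitary}, not torsion, so your appeal to Simpson's density-of-torsion argument is proving more than is being claimed (indeed, in the projective case torsion is known, but the survey records only the unitary conclusion). Conversely, in part~\eqref{a1} the translating character \emph{is} asserted to be torsion, while the isolated points are only asserted to be unitary; the paper notes separately (citing \cite{Bu}, \cite{ACM}) that for smooth quasi-projective varieties the isolated points are in fact torsion, so that refinement lies outside Arapura's original statement. Your heuristic for why isolated points are unitary---that the $\C^{\times}$-action on the Dolbeault side would otherwise inflate them---captures the right intuition, though the actual argument in Arapura's paper goes through the correspondence between characters and Higgs line bundles more carefully.
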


Here, a map $f\colon X \to C$  is said to be {\em admissible}\/ if 
$f$ is a holomorphic, surjective map to a connected, 
smooth complex curve $C$, and $f$ has a holomorphic, 
surjective extension with connected fibers to smooth 
compactifications, $\overline{f}\colon \overline{X} \to \overline{C}$, 
obtained by adding divisors with normal crossings; in particular, 
the generic fiber of $f$ is connected, and the induced 
homomorphism, $f_{\sharp}\colon \pi_1(X)\to \pi_1(C)$, is onto. 
A {\em complex torus}\/ is a complex Lie group $T$ which 
decomposes as a product of factors of the form $\C^{\times}$ or 
$S^1\times S^1$. 

For smooth, quasi-projective varieties $X$, the isolated points 
in $\VV^1_1(X)$ are actually torsion characters, see \cite{Bu}, \cite{ACM}.

As noted in \cite{PS-bns}, Arapura's theorem has the 
following immediate corollary.

\begin{corollary}
\label{thm:corollary}
For a quasi-K\"{a}hler manifold $X$, 
all the components of $\VV^i_d(X)$ passing through 
the origin of $\Hom(\pi_1(X),\C^{\times})$ are subtori, 
provided one of the following conditions holds. 
\begin{romenum}
\item \label{ar1}
$i=d=1$. 
\item  \label{ar2}
$X$ is K\"{a}hler.
\item  \label{ar3}
$W_1(H^1(X,\C))=0$.
\end{romenum}
\end{corollary}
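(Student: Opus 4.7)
The plan is to reduce each of the three cases to a direct application of Arapura's theorem (Theorem~\ref{thm:arapura}), together with one elementary observation: if $W = \rho \cdot S$ is a translate of a subtorus $S \subseteq \Hom(\pi_1(X),\C^{\times})$ by a character $\rho$, and $1 \in W$, then $\rho^{-1} \in S$, hence $\rho \in S$, and therefore $W = S$ is itself a subtorus. Thus whenever Arapura's theorem presents a component through the origin in the translated form $\rho \cdot f^*(H^1(-,\C^{\times}))$, that component is automatically the subtorus $f^*(H^1(-,\C^{\times}))$.

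I would then dispatch the three cases in turn. Case (\ref{ar1}) is immediate from part (\ref{a1}) of Theorem~\ref{thm:arapura}: a component of $\VV^1_1(X)$ through the identity is either the singleton $\{1\}$, which is the trivial subtorus, or a translate $\rho \cdot f^*(H^1(C, \C^{\times}))$, to which the observation above applies. Case (\ref{ar2}) reduces to part (\ref{a2}) by taking $\overline{X} = X$, since a compact K\"ahler manifold already plays the role of its own compactification with empty boundary divisor. For case (\ref{ar3}), the hypothesis must be translated into the condition $b_1(\overline{X}) = 0$ required by part (\ref{a2}): for any smooth compactification $\overline{X}$ of $X$ with normal-crossings boundary divisor, the pullback $H^1(\overline{X},\C) \to H^1(X,\C)$ is injective with image equal to $W_1 H^1(X,\C)$ by Deligne's description of the weight filtration, so the vanishing of $W_1$ forces $b_1(\overline{X}) = 0$; part (\ref{a2}) then applies.

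There is no substantive obstacle here; the only point requiring a little care is the translation of hypothesis in case (\ref{ar3}), which rests on a standard feature of Deligne's mixed Hodge structure on the cohomology of a smooth quasi-projective (or quasi-K\"ahler) variety. Once the correspondence between $W_1 H^1(X,\C) = 0$ and $b_1(\overline{X}) = 0$ is in hand, the conclusion in all three cases is a one-line consequence of Arapura's theorem combined with the translated-subtorus remark.
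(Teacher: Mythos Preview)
Your proposal is correct and follows the same route the paper intends: the paper simply declares this an ``immediate corollary'' of Arapura's theorem, and you have spelled out exactly the details that make it immediate---the elementary remark that a translated subtorus through the identity is already a subtorus, and the Deligne identification $W_1 H^1(X,\C) = \im\bigl(H^1(\overline{X},\C) \to H^1(X,\C)\bigr)$ that converts hypothesis~(\ref{ar3}) into the $b_1(\overline{X})=0$ hypothesis of Theorem~\ref{thm:arapura}(\ref{a2}). Nothing is missing.
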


In particular, $\tau_1(\VV^i_d(X))=TC_1(\VV^i_d(X))$, 
whenever one of the above conditions is satisfied. 

In \cite[Theorem~1.1]{Li09}, Libgober proves a ``local" version 
of Arapura's theorem, in which all the translations are done 
by characters of finite order. 

\begin{theorem}[\cite{Li09}]
\label{thm:lib mm}
Let $\mathcal{X}$ be a germ of a complex space with an isolated, 
normal singularity whose link is simply-connected, and let $\mathcal{D}$ 
be a divisor on $\mathcal{X}$ with $n$ irreducible components. 
Then:
\begin{romenum}
\item \label{li1}
The character group 
$\Hom(\pi_1(\mathcal{X}\setminus \mathcal{D}),\C^{\times})$ 
is the complex algebraic torus $(\C^{\times})^n$. 
\item \label{li2}
Each characteristic variety 
$\VV^i_d(\mathcal{X}\setminus \mathcal{D})$ is a finite union of 
complex algebraic subtori, possibly translated by roots of unity. 
\end{romenum}
\end{theorem}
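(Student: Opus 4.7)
Part (i) follows from a topological argument using the simply-connectedness of the link $L$. A small enough representative of the germ $\mathcal{X}$ is homeomorphic to the cone on $L$, hence simply-connected. I would then apply Lefschetz--Alexander duality (or tubular-neighborhood excision combined with the Hurewicz isomorphism) to conclude that $H_1(\mathcal{X}\setminus\mathcal{D},\Z)$ is freely generated by meridian loops around each of the $n$ irreducible components of $\mathcal{D}$. Hence $H_1 \cong \Z^n$, and $\Hom(\pi_1(\mathcal{X}\setminus\mathcal{D}), \C^{\times}) = (\C^{\times})^n$.

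For part (ii), my plan is to reduce to the global quasi-projective setting. First, take a log resolution $\pi\colon Y\to \mathcal{X}$ so that $\pi^{-1}(\mathcal{D})$ is a simple normal crossings divisor on the smooth space $Y$; since $\pi$ restricts to a biholomorphism on the complements, the characteristic varieties of $Y\setminus\pi^{-1}(\mathcal{D})$ and $\mathcal{X}\setminus\mathcal{D}$ coincide. Next, compactify $Y$ to a smooth projective variety $\overline{Y}$ such that $\overline{Y}\setminus Y$ is also a normal-crossings divisor, and set $U := \overline{Y}\setminus(\pi^{-1}(\mathcal{D})\cup (\overline{Y}\setminus Y))$. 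Arapura's theorem (Theorem \ref{thm:arapura}) applies to the smooth quasi-projective variety $U$, giving that each $\VV^i_d(U)$ is a finite union of subtori of $\Hom(\pi_1(U),\C^{\times})$ translated by unitary characters. The inclusion $U\hookrightarrow \mathcal{X}\setminus\mathcal{D}$ induces a surjection on fundamental groups, with the meridians around the extra divisors in $\overline{Y}\setminus Y$ lying in the kernel, so $\Hom(\pi_1(\mathcal{X}\setminus\mathcal{D}),\C^{\times})$ sits as an algebraic subtorus $T$ of $\Hom(\pi_1(U),\C^{\times})$, and the characteristic varieties of $\mathcal{X}\setminus\mathcal{D}$ are recovered by intersecting Arapura's components with $T$. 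Such an intersection is again a finite union of translated subtori.

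The remaining, and hardest, step is to improve \emph{unitary} to \emph{root of unity}. The idea is to exploit the local Hodge-theoretic structure near the isolated singularity: by the local monodromy theorem, the monodromies around the exceptional and divisorial components of $\pi^{-1}(\mathcal{D})$ are quasi-unipotent with eigenvalues that are roots of unity. Combined with the simple-connectedness of the link---which rules out continuous families of unitary characters coming from any positive-dimensional ``Albanese'' contribution, since the local nature of $\mathcal{X}$ forbids admissible maps to curves of positive genus or to higher-dimensional abelian varieties---this should force the translating characters to be of finite order. This last point is the main obstacle and the truly new ingredient beyond Arapura's global result; carrying it out would likely proceed either via Saito's mixed Hodge modules applied to the germ, or, in the spirit of Libgober's earlier work, via a direct analysis of quasi-adjunction ideals attached to the singularity that explicitly produces the relevant roots of unity from eigenvalues of the Milnor monodromy of $\mathcal{D}$ inside $\mathcal{X}$.
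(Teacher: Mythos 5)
The paper states this result as a citation to Libgober~\cite{Li09} and gives no proof, so there is no in-text argument to compare against; I am assessing your proposal on its own terms. Your sketch of part~(i) is plausible: a small conelike representative of the germ is contractible (since the link is simply-connected), and a tubular-neighborhood/Mayer--Vietoris or Alexander--Lefschetz duality argument, combined with normality, does yield $H_1(\mathcal{X}\setminus\mathcal{D},\Z)\cong\Z^n$ freely generated by meridians. The absence of torsion is the point that needs care, but the plan is sound.

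Your part~(ii) does not work as written, for a structural reason. The space $\mathcal{X}\setminus\mathcal{D}$ is a \emph{local} object: a bounded representative of the germ is a cone on the link $L$, and removing $\mathcal{D}$ leaves something homotopy-equivalent to $L\setminus(L\cap\mathcal{D})$, an open subset of an odd-dimensional compact real manifold. This is not a quasi-projective variety, so Arapura's theorem is not applicable to it. Compactifying a resolution $Y$ to $\overline{Y}$ and forming $U:=\overline{Y}\setminus\bigl(\pi^{-1}(\mathcal{D})\cup(\overline{Y}\setminus Y)\bigr)$ produces a quasi-projective variety, but it is a \emph{global} object with a genuinely different homotopy type (the resolution $Y$ of a germ is a small complex manifold, not an algebraic variety, and there is no quasi-projective $U$ whose complement is literally $\mathcal{X}\setminus\mathcal{D}$). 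Moreover, even granting such a $U$ with a dominant inclusion into $\mathcal{X}\setminus\mathcal{D}$ inducing a surjection on $\pi_1$, the claim that the characteristic varieties of $\mathcal{X}\setminus\mathcal{D}$ are recovered by intersecting those of $U$ with a subtorus is false: homology jump loci are not functorial under open inclusions in this way, since the twisted homology of the two spaces is unrelated in general. So the reduction is not merely incomplete, it is the wrong reduction. You correctly flag the unitary-vs.-roots-of-unity improvement as the hard new content, but the two paragraphs you devote to it are a heuristic gesture, not a proof. Libgober's actual argument is carried out directly on the local germ: it exploits the mixed Hodge structure on the cohomology of abelian covers of the link complement and the quasi-unipotence of the local monodromy (in the spirit of his polytopes of quasiadjunction), rather than attempting to deduce the local statement from Arapura's global theorem.
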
 

\subsection{Resonance varieties}
\label{subsec:res kahler}

As shown in Theorem C and Corollary 7.4 of \cite{DPS-duke}, 
the presence of a K\"{a}hler metric on a compact, connected, 
complex manifold $M$ imposes very stringent conditions on 
the degree~$1$ resonance varieties of $M$.  Likewise, the 
existence of a quasi-K\"{a}hler structure on an open manifold 
$X$ puts subtle geometric constraints on $\RR_d(X)$, 
provided $X$ is $1$-formal. These results---which use in an 
essential way Theorems \ref{thm:tcone} and \ref{thm:arapura}, 
as well as theorems from \cite{DGMS} and \cite{M} mentioned 
in \S\ref{subsec:kahler} and \S\ref{subsec:quasi kahler}---%
may be summarized as follows. 

\begin{theorem}[\cite{DPS-duke}]
\label{thm:res kahler} 
Let $X$ be a quasi-K\"{a}hler manifold, with fundamental 
group $G=\pi_1(X)$, and let $\{ L_{\alpha}\}_{\alpha}$ 
be the collection of positive-dimensional, irreducible 
components of $\RR_1(G)$.  If $G$ is $1$-formal, then
\begin{romenum}
\item  \label{rk1}  
Each $L_{\alpha}$ is a $p$-isotropic linear subspace of 
$H^1(G, \C)$, of dimension at least $2p+2$, for some 
$p=p(\alpha) \in \{0,1\}$. 

\item  \label{rk2} 
If $\alpha \ne \beta $, then $L_{\alpha} \cap L_{\beta}=\{0\}$.

\item  \label{rk3} 
$\RR_d(G)=\{0\} \cup \bigcup\nolimits_\alpha L_{\alpha}$, 
where the union is over all $\alpha$ for which 
$\dim L_{\alpha}>d+p(\alpha)$.  

\end{romenum}
Furthermore, 
\begin{romenum}
\setcounter{enumi}{3}
\item  \label{rk4} If $X$ is a compact K\"{a}hler manifold, 
then $G$ is $1$-formal, and each $L_{\alpha}$ is even-dimensional 
and $1$-isotropic.

\item  \label{rk5} If $X$ is a smooth, quasi-projective variety, 
and $W_1(H^1(X,\C))=0$, then $G$ is $1$-formal, 
and each $L_{\alpha}$ is $0$-isotropic. 
\end{romenum}
\end{theorem}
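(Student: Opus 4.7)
The plan is to leverage the three main tools assembled in the excerpt: the Tangent Cone Theorem~\ref{thm:tcone}, Arapura's Theorem~\ref{thm:arapura}, and the formality criteria reviewed in \S\ref{subsec:kahler}--\S\ref{subsec:quasi kahler}. In parts (i)--(iii) the group $G$ is $1$-formal by hypothesis, while in (iv) and (v) the $1$-formality of $G$ is the first conclusion, obtained directly from the $dd^c$-Lemma of \cite{DGMS} or Morgan's theorem, respectively. Once $1$-formality is in hand, Theorem~\ref{thm:tcone} identifies the positive-dimensional irreducible components $L_\alpha \subseteq \RR_1(G)$ with those of $\VV_1(G)$ through $1$ via the exponential map, and Theorem~\ref{thm:arapura}\eqref{a1} says that each such component of $\VV_1(G)$ is a rational subtorus of the form $f_\alpha^*(H^1(C_\alpha,\C^{\times}))$, where $f_\alpha\colon X\to C_\alpha$ is an admissible map to a smooth curve. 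Dualizing, $L_\alpha = f_\alpha^*(H^1(C_\alpha,\C))$.

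The isotropy and dimension bounds in (i) then come by analyzing the cup product on $H^1(C_\alpha)$: when $C_\alpha$ is compact of genus $g_\alpha$, the intersection form is a non-degenerate symplectic pairing into $H^2(C_\alpha)=\C$, so $L_\alpha$ is $1$-isotropic and positive-dimensionality in $\RR_1(G)$ forces $g_\alpha\ge 2$, i.e.\ $\dim L_\alpha\ge 4$; when $C_\alpha$ is non-compact, $H^2(C_\alpha)=0$ so $L_\alpha$ is $0$-isotropic, and the contribution to $\RR_1(G)$ forces $\chi(C_\alpha)<0$, so $\dim L_\alpha=b_1(C_\alpha)\ge 2$. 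To get (iii), I would compute $\beta_1(H^*(X,\C),\cdot a)$ for generic $a\in L_\alpha$: the cup product $\cdot a$ on $L_\alpha$ has kernel of codimension $p(\alpha)$ (all of $L_\alpha$ when $p=0$; the symplectic orthogonal of $a$ in $L_\alpha$ when $p=1$), so after accounting for the image of $H^0$ one obtains $\beta_1 \ge \dim L_\alpha - p(\alpha) - 1$, whence $L_\alpha\subseteq \RR_d(G)$ iff $\dim L_\alpha > d + p(\alpha)$.

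Part (ii) is the transversality $L_\alpha \cap L_\beta = \{0\}$ for $\alpha\ne\beta$; I would derive it from the maximality of admissible maps in Arapura's parametrization: a positive-dimensional common pullback would, by Stein factorization, factor through a single admissible map dominating both $f_\alpha$ and $f_\beta$, contradicting the fact that the $T_\alpha$ are distinct irreducible components of $\VV_1(G)$. For (iv), the compact K\"{a}hler hypothesis forces every admissible target $C_\alpha$ to be compact, so each $L_\alpha$ is $1$-isotropic of even dimension $2g_\alpha$. For (v), since $f_\alpha^*$ is a morphism of mixed Hodge structures, $f_\alpha^* W_1 H^1(C_\alpha,\C) \subseteq W_1 H^1(X,\C) = 0$ forces $H^1(\overline{C}_\alpha)=0$, so $C_\alpha$ is a punctured rational curve and $L_\alpha$ is $0$-isotropic.

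The main obstacle I anticipate is part (ii): Arapura's theorem as stated only guarantees that each component of $\VV_1(X)$ is a translated subtorus pulled back from \emph{some} admissible map, not that distinct components come from \emph{transverse} admissible maps. To rule out a positive-dimensional intersection of two pullback subspaces, one must invoke a version of the isotropic subspace theorem (in the style of Catanese and Beauville) showing that such an intersection would produce a common admissible map through which both $f_\alpha$ and $f_\beta$ factor, contradicting the irreducible-component structure. The $1$-formality hypothesis is what makes the tangent cone identification of Theorem~\ref{thm:tcone} clean and propagates this transversality from the subtorus level to the linear level of $\RR_d(G)$.
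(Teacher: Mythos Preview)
Your proposal is correct and follows precisely the approach the paper indicates. Note that this is a survey paper: it does not prove Theorem~\ref{thm:res kahler} in detail, but merely states (in the paragraph preceding the theorem) that the result is obtained by combining Theorems~\ref{thm:tcone} and~\ref{thm:arapura} with the formality criteria of \cite{DGMS} and \cite{M}---exactly the tools you invoke, in exactly the way you invoke them. Your identification of part~\eqref{rk2} as the step requiring a Catanese--Beauville style isotropic-subspace/factorization argument is also accurate and is how the cited source \cite{DPS-duke} handles it.
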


Here, we say that a non-zero subspace $U\subseteq  H^1(G,\C)$ 
is {\em $p$-isotropic}\/ with respect to the cup-product 
map $\cup_G\colon H^1(G,\C)\wedge H^1(G,\C) \to H^2(G,\C)$ 
if the restriction of $\cup_G$ to $U\wedge U$ has rank $p$.  
For example, if $C$ is a smooth complex curve with $\chi(C)<0$, 
then $\RR_1(\pi_1(C),\C)=H^1(C,\C)$, and  $H^1(C,\C)$ is either 
$1$- or $0$-isotropic, according to whether $C$ is compact or not.

\subsection{Examples and applications}
\label{subsec:apps}
Theorem \ref{thm:res kahler} can be used in a variety of ways 
to derive information on the fundamental groups of quasi-projective 
varieties---or rule out certain groups from being quasi-projective. 
As a first application, we obtain the following corollary, 
by comparing the conclusions of parts \eqref{rk4} and \eqref{rk5}. 

\begin{corollary}
\label{cor:qkk}
Let $X$ be a smooth, quasi-projective variety with $W_1(H^1(X,\C))=0$.  
Let $G=\pi_1(X)$, and suppose $\RR_1(G)\ne \{0\}$.  Then $G$ is not 
a K\"{a}hler group (though $G$ is $1$-formal).
\end{corollary}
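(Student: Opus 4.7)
The plan is a short reduction, combining parts (iv) and (v) of Theorem \ref{thm:res kahler} with Morgan's theorem. First I would dispense with the parenthetical claim: since $X$ is a smooth, quasi-projective variety with $W_1(H^1(X,\C))=0$, Morgan's theorem (as recalled in \S\ref{subsec:quasi kahler}) asserts that $X$ is $1$-formal, and therefore so is its fundamental group $G=\pi_1(X)$. This is independent of whether $G$ happens to be K\"ahler.

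Next I would argue by contradiction. Suppose $G$ is a K\"ahler group. The resonance variety $\RR_1(G,\C)$ depends only on the group $G$ (in fact, only on the cup-product map $\cup\colon H^1(G,\C)\wedge H^1(G,\C)\to H^2(G,\C)$), so the structure theorem applies regardless of which classifying space we use to compute it. Since $\RR_1(G)\ne \{0\}$, part (iii) of Theorem \ref{thm:res kahler} guarantees the existence of at least one positive-dimensional irreducible component $L_\alpha \subseteq H^1(G,\C)$; such a component exists because we have shown $G$ is $1$-formal, so the hypotheses of parts (i)--(iii) are satisfied.

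Now I would apply the two refinements of that theorem simultaneously to $L_\alpha$. On the K\"ahler side, part (iv) forces $L_\alpha$ to be $1$-isotropic, i.e.\ the restriction of $\cup_G$ to $L_\alpha\wedge L_\alpha$ has rank exactly $1$. On the quasi-projective side, our hypothesis $W_1(H^1(X,\C))=0$ allows us to invoke part (v), which forces the very same $L_\alpha$ to be $0$-isotropic, i.e.\ the restriction of $\cup_G$ to $L_\alpha\wedge L_\alpha$ has rank $0$. These two conclusions are manifestly incompatible, yielding the desired contradiction and proving that $G$ is not K\"ahler.

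There is essentially no obstacle here beyond carefully invoking the right pieces of Theorem \ref{thm:res kahler}; the one subtle point worth flagging is that the isotropy conditions in parts (iv) and (v) are both intrinsic to $G$, so they apply to the \emph{same} collection of subspaces $\{L_\alpha\}$ regardless of which realization (K\"ahler or quasi-K\"ahler with $W_1=0$) we use. Once that is observed, the incompatibility of $p=0$ and $p=1$ on a fixed positive-dimensional component closes the argument.
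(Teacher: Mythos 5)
Your proof is correct and follows exactly the route the paper indicates: the corollary is obtained "by comparing the conclusions of parts (iv) and (v)" of Theorem \ref{thm:res kahler}, and your argument fleshes out precisely that comparison, including the key observation that the components $L_\alpha$ and their isotropicity rank are intrinsic to $G$. The one minor point you could streamline is that the existence of a positive-dimensional component follows already from homogeneity of $\RR_1(G)$ (any nonzero point generates a line through the origin), though invoking part (iii) is equally valid.
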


Using now Deligne's result mentioned in \S\ref{subsec:quasi kahler}, 
we obtain a further corollary. 

\begin{corollary}
\label{cor:hyper kahler}
Let $X$ be the complement of a hypersurface in $\CP^n$, and 
let $G=\pi_1(X)$. If $\RR_1(G)\ne \{0\}$, then $G$ is not 
a K\"{a}hler group.
\end{corollary}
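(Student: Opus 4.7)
The plan is to derive this from the preceding Corollary \ref{cor:qkk} by verifying its hypothesis $W_1(H^1(X,\C))=0$ in the special case of a projective hypersurface complement. Since Corollary \ref{cor:qkk} already packages the non-formal-case exclusion and the contrast between parts \eqref{rk4} and \eqref{rk5} of Theorem \ref{thm:res kahler}, essentially no new ideas are required beyond recognizing that the needed weight condition is built into the geometric setup.

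First, I would observe that $X=\CP^n\setminus V$, for $V$ a hypersurface, is a smooth quasi-projective variety: $\CP^n$ is smooth and projective, and removing a (possibly singular) hypersurface yields a smooth open subvariety. Next, I would invoke Deligne's result cited in \S\ref{subsec:quasi kahler}: if $X$ admits a smooth compactification $\overline{X}$ with $b_1(\overline{X})=0$, then $W_1(H^1(X,\C))=0$. Taking $\overline{X}=\CP^n$ (after, if necessary, replacing $V$ by a normal-crossings resolution, which does not change the fundamental group of $X$ since it only modifies $X$ outside itself), we get $b_1(\overline{X})=b_1(\CP^n)=0$, so the weight hypothesis holds.

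With $W_1(H^1(X,\C))=0$ established, Corollary \ref{cor:qkk} applies directly: provided $\RR_1(G)\ne\{0\}$, the group $G=\pi_1(X)$ is $1$-formal but not K\"{a}hler. This completes the proof.

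The only step requiring any care is the reduction to a smooth compactification with $b_1=0$, i.e., checking that Deligne's criterion is really available here. There is no genuine obstacle, since $\CP^n$ itself serves as the compactification and the divisor at infinity can be resolved to normal crossings without altering $\pi_1(X)$; I would simply cite Deligne \cite{De2, De3} (as invoked in \S\ref{subsec:quasi kahler}) to justify $W_1(H^1(X,\C))=0$ and then quote Corollary \ref{cor:qkk}.
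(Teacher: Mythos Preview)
Your proof is correct and follows essentially the same route as the paper: invoke Deligne's criterion from \S\ref{subsec:quasi kahler} (a smooth compactification with $b_1=0$ forces $W_1(H^1(X,\C))=0$), then apply Corollary~\ref{cor:qkk}. One small quibble: your parenthetical about resolving $V$ to normal crossings is slightly muddled, since such a resolution replaces $\CP^n$ by a blow-up rather than leaving $\overline{X}=\CP^n$; but any such blow-up still has $b_1=0$, so the argument goes through unchanged, and in fact the paper's formulation of Deligne's result already covers the case $\overline{X}=\CP^n$ directly without needing the resolution step.
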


The assumption $\RR_1(G)\ne \{0\}$ is really necessary.  
For example, take $X=\C^2 \setminus \{z_1z_2= 0\}$.  
Then $G=\Z^2$ is clearly a K\"{a}hler group, but 
 $\RR_1(G)= \{0\}$.

The linearity property from Theorem \ref{thm:res kahler}\eqref{rk1} 
only uses the $1$-formality assumption on the group $G$, and 
follows at once from Theorem \ref{thm:tcone}\eqref{tc3}.  This 
property can be used to show that certain quasi-projective groups 
are not $1$-formal. 

\begin{example}[\cite{DPS-duke}]
\label{ex:conf spaces}
Let $X=F(\Sigma_g,n)$ be the configuration space of $n$ 
labeled points on a Riemann surface of genus $g$.  Clearly,  
$X$ is a connected, smooth, quasi-projective variety.
Its fundamental group, $\pi_1(X)=P_{g,n}$, is the pure 
braid group on $n$ strings on $\Sigma_g$.  
The cohomology ring $H^*(F(\Sigma_g,n),\C)$ was 
computed by Totaro in \cite{Tot}.  Using this computation, 
we get
\begin{equation*} 
\label{eq:resg1}
\RR_1(P_{1,n})=\left\{ (x,y) \in \C^n\times \C^n \left|
\begin{array}{l}
\sum_{i=1}^n x_i=\sum_{i=1}^n y_i=0 ,\\[2pt]
x_i y_j-x_j y_i=0,  \text{ for $1\le i<j< n$}
\end{array}
\right\}. \right.
\end{equation*}
For $n\ge 3$, this is an irreducible, non-linear variety 
(a rational normal scroll). Hence, the group 
$P_{1,n}$ is not $1$-formal. 
\end{example}

The more refined isotropicity properties  from 
Theorem \ref{thm:res kahler}, 
parts \eqref{rk1}, \eqref{rk4}, and \eqref{rk5} 
use both the $1$-formality and the (quasi-) K\"{a}hlerianity 
assumptions on the group $G$.  These isotropicity 
properties of the components of $\RR_1(G)$ are utilized  
in \cite[Theorem 11.7]{DPS-duke} to achieve a 
complete classification of (quasi-) K\"{a}hler 
groups within the class of right-angled Artin groups 
(which, recall, are always $1$-formal). 

\begin{theorem}[\cite{DPS-duke}]
\label{thm:artinserre}
Let $\G$ be a finite simple graph, and $G_{\G}$  the 
corresponding right-angled Artin group.  Then:
\begin{enumerate}
\item \label{ak1} 
$G_{\G}$ is a quasi-K\"{a}hler group if and only if 
$\G$ is a complete multipartite graph $K_{n_1,\dots,n_r}=
\overline{K}_{n_1} * \cdots * \overline{K}_{n_r}$, in which 
case $G_{\G}= F_{n_1}\times \cdots \times F_{n_r}$.

\item  \label{ak2}
$G_{\G}$ is a quasi-K\"{a}hler group if and only if 
$\G$ is a complete graph $K_{2m}$, 
in which case $G_{\G}= \Z^{2m}$. 
\end{enumerate}
\end{theorem}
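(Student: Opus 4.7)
\emph{The ``if'' directions.} For part (1), the identity $\G = \overline{K}_{n_1} * \cdots * \overline{K}_{n_r}$ translates via the dictionary ``join of graphs $\leftrightarrow$ direct product of RAAGs'' and ``discrete graph $\leftrightarrow$ free group'' into $G_\G \cong F_{n_1} \times \cdots \times F_{n_r}$. Realizing each $F_{n_i}$ as $\pi_1(\C \setminus \{n_i \text{ points}\})$ and taking Cartesian products exhibits $G_\G$ as the fundamental group of a smooth quasi-projective variety. For part (2), $G_{K_{2m}} = \Z^{2m}$ is the fundamental group of a complex $m$-torus, a compact K\"{a}hler manifold.

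\emph{Part (1), ``only if''.} Assume $G_\G$ is quasi-K\"{a}hler. Since every RAAG is $1$-formal, Theorem \ref{thm:res kahler} applies. By Example \ref{ex:tc toric}, the positive-dimensional irreducible components of $\RR_1(G_\G)$ are the coordinate subspaces $\C^\sW$ indexed by maximal subsets $\sW \subseteq \sV$ with $\G_\sW$ disconnected, and direct inspection of the Stanley--Reisner cohomology ring $H^*(G_\G)$ shows that such a $\C^\sW$ is $p$-isotropic with $p = |\sE(\G_\sW)|$. Conditions (i) and (ii) of Theorem \ref{thm:res kahler} thus force each maximal disconnecting $\sW$ to satisfy $|\sE(\G_\sW)| \le 1$ and $|\sW| \ge 2\,|\sE(\G_\sW)| + 2$, and any two distinct such $\sW$'s to be vertex-disjoint. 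Suppose now that $\G$ is not complete multipartite; equivalently, $\overline{\G}$ contains an induced $P_3$, so some triple $\{u,v,w\}$ satisfies $uv, vw \notin \sE(\G)$ and $uw \in \sE(\G)$. The maximal disconnecting sets extending $\{u,v\}$ and $\{v,w\}$ both contain $v$; by (ii) they coincide in a single set $\sW \supseteq \{u,v,w\}$, and by (i) the unique edge of $\G_\sW$ is $uw$, so $\sW = \{u,w\} \cup B$ with $B$ an independent set in $\G$ that is disjoint from the neighborhoods $N(u) \cup N(w)$, contains $v$, and has $|B| \ge 2$. A direct Aomoto-complex computation then gives $\beta_1(a) = |B|+1$ for generic $a \in \C^B$, whence $\C^B \subseteq \RR_{|B|+1}(G_\G)$. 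But among the components of $\RR_1(G_\G)$ only $\C^\sW$ can contain $\C^B$ (any other component is vertex-disjoint from $\sW$ by (ii), hence from $B$), and $\dim \C^\sW = |B|+2$ is not strictly greater than $(|B|+1)+1$, so Theorem \ref{thm:res kahler}(iii) forbids $\C^\sW \subseteq \RR_{|B|+1}(G_\G)$. Hence $\C^B \subseteq \{0\}$, contradicting $|B| \ge 2$.

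\emph{Part (2), ``only if'', and the main obstacle.} Assume $G_\G$ is K\"{a}hler. By part (1), $G_\G \cong F_{n_1} \times \cdots \times F_{n_r}$. If some $n_i \ge 2$, the product formula \eqref{eq:res prod} produces the positive-dimensional component $\{0\} \times \cdots \times \C^{n_i} \times \cdots \times \{0\}$ of $\RR_1(G_\G)$; it is $0$-isotropic, because $H^2(F_{n_i}, \C) = 0$, contradicting Theorem \ref{thm:res kahler}(iv). Hence all $n_i = 1$, giving $\G = K_r$ and $G_\G = \Z^r$; the Hodge-theoretic evenness of $b_1 = r$ then forces $r = 2m$. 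The main obstacle throughout is the higher-depth step in the only-if of part (1): the depth-$1$ obstructions (i) and (ii) alone do not suffice --- the graph $K_2 \sqcup n K_1$ already satisfies them --- and one must bring in condition (iii) of Theorem \ref{thm:res kahler}, matched against explicit combinatorial descriptions of the resonance varieties $\RR_d(G_\G)$ in all depths.
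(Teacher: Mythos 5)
The paper does not prove this theorem itself; it cites \cite[Theorem~11.7]{DPS-duke} and merely indicates that the isotropicity properties of $\RR_1(G)$ from Theorem~\ref{thm:res kahler} are what drive the argument. Your reconstruction follows exactly that indicated route, and it is essentially correct. The ``if'' directions and the Kähler step in part (2) are clean. For part (1) ``only if,'' your reduction to a bad triple $\{u,v,w\}$ (equivalently, an induced $P_3$ in $\overline{\G}$), your use of the isotropicity bound $|\sE(\G_\sW)|\le 1$, $|\sW|\ge 2|\sE(\G_\sW)|+2$, the pairwise-vertex-disjointness forced by (ii), and finally the higher-depth contradiction via (iii) at depth $|B|+1$ are all sound. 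Your observation that the graph $K_2\sqcup n K_1$ passes (i) and (ii) but fails (iii)---so that the depth-$1$ structure alone would not suffice---is a genuinely useful clarification that the survey does not spell out.

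One imprecision worth correcting: the ``direct Aomoto-complex computation'' does \emph{not} give $\beta_1(a)=|B|+1$ for generic $a\in\C^B$ in general. Working in the full cohomology ring $H^*(G_\G,\C)$, the kernel of $\cdot a\colon H^1\to H^2$ for generic $a$ supported on the independent set $B$ is cut out by $b_w=0$ for $w\in N_\G(B)$, giving $\beta_1(a) = |\sV|-|N_\G(B)|-1$. Since $B$, $N_\G(B)$, and $\{u,w\}$ are pairwise disjoint, this is $\ge |B|+1$ (with equality only if $\sV=B\sqcup N_\G(B)\sqcup\{u,w\}$; the value $|B|+1$ is what one gets by restricting to the subgraph $\G_\sW=K_2\sqcup\overline{K}_{|B|}$). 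Fortunately, the inequality $\beta_1(a)\ge |B|+1$ is all you use---it already gives $\C^B\subseteq\RR_{|B|+1}(G_\G)$---so the conclusion stands; only the stated equality should be weakened to $\ge$.
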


\subsection{Alexander polynomial}
\label{subsec:delta qk}

The approach we have been using so far in this section also 
informs on the Alexander polynomial of a (quasi-) K\"{a}hler 
group $G$.  The following theorem was proved in \cite{DPS-imrn}, 
assuming $G$ is (quasi-) projective; the same proof works 
in the stated generality. 

\begin{theorem}[\cite{DPS-imrn}]
\label{thm:alex qk}
Let $G$ be a quasi-K\"{a}hler group. Set $n=b_1(G)$, 
and let $\Delta_G$ be the Alexander polynomial of $G$. 
\begin{romenum}
\item \label{alk1}
If $n\ne 2$, then the Newton polytope of $\Delta_G$ 
is a line segment. 

\item \label{alk2}
If $G$ is actually a K\"{a}hler group, then $\Delta_G \doteq \const$.  
\end{romenum}
\end{theorem}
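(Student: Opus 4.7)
The plan is to analyze the codimension-one irreducible components of $\VV_1(G)$ in the identity component of the character variety, then reconstruct the Newton polytope of $\Delta_G$ via Theorem~\ref{thm:deltacd1}. Recall that $\cv (G)$ denotes the union of these codim-one components. By Theorem~\ref{thm:deltacd1}(ii), when $\Delta_G\ne 0$ the set $\cv (G)\setminus\{1\}$ equals $V(\Delta_G)\setminus\{1\}$, while part~(iii) asserts (for $n\ge 2$) that $\cv (G)=\emptyset \same \Delta_G\doteq\const$. To prove (i), I would show that $V(\Delta_G)$ is contained in finitely many translates of a single codim-one subtorus; equivalently, each codim-one component has the form $\{t^{\alpha_\ast}=\zeta_i\}$ for a common primitive $\alpha_\ast\in\Z^n$, and then $\Delta_G$ becomes a Laurent-monomial multiple of a polynomial in the single monomial $t^{\alpha_\ast}$, with Newton polytope a segment along $\R\alpha_\ast$.

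Fix a quasi-K\"ahler realization $X$ of $G$. By Arapura's Theorem~\ref{thm:arapura}(i), each positive-dimensional component of $\VV^1_1(X)$ has the form $\rho\cdot f^*\bigl(H^1(C,\C^\times)\bigr)$ for a torsion character $\rho$ and an admissible map $f\colon X\to C$. The subtorus $T_f:=f^*\bigl(H^1(C,\C^\times)\bigr)$ has dimension $b_1(C)$; hence a codim-one contribution forces $b_1(C)=n-1$, and $T_f$ is cut out by a single monomial equation $t^{\alpha_f}=1$ with $\alpha_f\in\Z^n$ primitive, so $\rho\cdot T_f$ is cut out by $t^{\alpha_f}=\zeta$ for a root of unity $\zeta=\rho^{\alpha_f}$.

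For part~(ii), suppose $G$ is K\"ahler. Hodge theory (\S\ref{subsec:kahler}) forces $n=b_1(G)$ to be even. Moreover, any realization $X$ is compact K\"ahler, so the target of an admissible map is necessarily a compact Riemann surface $C$ of genus $g(C)\ge 1$, whence $b_1(C)=2g(C)$ is also even. A codim-one subtorus in $(\C^\times)^n$ has odd dimension $n-1$, so no such $T_f$ can be codim one, and the isolated torsion characters contribute codim~$n\ge 2$. Thus $\cv (G)=\emptyset$. If $n=0$ then $\Z{H}=\Z$ and $\Delta_G\in\Z$ is constant trivially; if $n\ge 2$, Theorem~\ref{thm:deltacd1}(iii) yields $\Delta_G\doteq\const$.

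For part~(i) the cases $n=0$ and $n=1$ are immediate: the Newton polytope of $\Delta_G$ then lies in $\R^0$ or $\R^1$ and is automatically a (possibly degenerate) segment. For $n\ge 3$, the key claim is that the direction $\alpha_f$ extracted above is independent of the admissible pencil $f$, up to sign. I would argue by contradiction: if two admissible pencils $f\colon X\to C$ and $g\colon X\to C'$ produced non-proportional primitive vectors $\alpha_f,\alpha_g$, then $T_f+T_g$ would span a codim-zero subgroup of $(\C^\times)^n$, forcing the product map $(f,g)\colon X\to C\times C'$ to induce a surjection on $H^1$. A Stein factorization / Castelnuovo--de~Franchis style argument (in the spirit of Arapura, Catanese, and Dimca--Papadima--Suciu) would then identify the two pencils up to reparametrization, contradicting non-proportionality. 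Granting this uniqueness, $V(\Delta_G)$ is contained in $\bigcup_i\{t^{\alpha_\ast}=\zeta_i\}$, so $\Delta_G$ is (up to a unit in $\Z{H}$) a polynomial in the single variable $t^{\alpha_\ast}$, whose Newton polytope is a segment. The main obstacle is precisely this uniqueness-of-direction step for $n\ge 3$; everything else reduces to parity (in the K\"ahler case) or a direct dimension count (in the quasi-K\"ahler case) once Arapura's structure theorem is in hand.
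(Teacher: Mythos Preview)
The survey does not prove Theorem~\ref{thm:alex qk}; it is quoted from \cite{DPS-imrn}. So there is no in-paper proof to compare against, and the question is whether your outline matches the argument that actually lives in \cite{DPS-imrn} and whether it is complete.

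Your treatment of part~\eqref{alk2} is correct and is exactly the parity argument used in \cite{DPS-imrn}: for $X$ compact K\"ahler, an admissible target $C$ is a compact curve, so $b_1(C)$ is even, while a codimension-one subtorus of $(\C^\times)^n$ has odd dimension $n-1$; hence $\cv(G)=\emptyset$ and Theorem~\ref{thm:deltacd1}\eqref{dc3} finishes. Good.

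For part~\eqref{alk1} with $n\ge 3$, your strategy---reduce to showing that all codimension-one components of $\VV_1(G)\cap(\C^\times)^n$ are translates of a \emph{single} subtorus---is the right one, and is what \cite{DPS-imrn} does. But the justification you sketch is slightly misdirected and incomplete. Two remarks:
\begin{itemize}
\item The relevant structural fact from Arapura (used in \cite{DPS-imrn}) is that any two \emph{distinct} positive-dimensional irreducible components of $\VV_1(G)$ passing through $1$ intersect only in a finite set. Once you know this, the conclusion is immediate: if $T_f\neq T_g$ are two such components, both of dimension $n-1$, then $\dim(T_f\cap T_g)\ge (n-1)+(n-1)-n=n-2\ge 1$, a contradiction. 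Your formulation via ``$T_f+T_g$ spans, hence $(f,g)^*$ is surjective on $H^1$'' is the dual statement and does not by itself yield a contradiction; what matters is the non-trivial \emph{intersection}. The Castelnuovo--de~Franchis/Arapura machinery you allude to is precisely what proves the finite-intersection property, so you are pointing at the right tool but not quite invoking the right consequence.
\item You implicitly use that if $\rho\cdot T_f$ is a component then $T_f$ itself is a component through $1$. This is true, but it needs a word: since $f$ is admissible with $b_1(C)=n-1\ge 2$, one has $\chi(C)\le 0$, and Arapura's analysis gives $T_f\subseteq \VV_1(G)$ (with $T_f$ an actual component when $\chi(C)<0$; the borderline elliptic case $\chi(C)=0$, which can only occur for $n=3$, is handled separately in \cite{DPS-imrn}). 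Without this step, you cannot compare two translated components $\rho\cdot T_f$ and $\rho'\cdot T_g$ directly.
\end{itemize}
In short: the architecture is right and matches \cite{DPS-imrn}; to close the gap you flagged, replace the ``surjectivity on $H^1$'' line by the finite-intersection property of the through-$1$ components, and add one sentence explaining why the untranslated $T_f$ is itself such a component.
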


If $n\ge 3$,  we may write 
$\Delta_G (t_1,\dots , t_n)\doteq c P(t_1^{e_1}\cdots t_n^{e_n})$, 
for some $c\in \Z$, some polynomial $P\in \Z[t]$ equal to a product of 
cyclotomic polynomials, and some exponents $e_i\ge 1$ with 
$\gcd(e_1,\dots ,e_n)=1$. 

\subsection{Bieri--Neumann--Strebel invariants}
\label{subsec:bns kahler}

Recently, Delzant \cite{De1} found a very precise connection 
between the BNS invariant of a compact K\"{a}hler manifold $M$ 
and admissible maps $f\colon M\to C$. (Recall that such maps, 
also known as {\em pencils}, are holomorphic, surjective maps 
to smooth complex curves, and have connected generic fiber.) 

\begin{theorem}[Delzant \cite{De1}] 
\label{thm:delzant}
Let $M$ be a compact K\"{a}hler manifold, with $G=\pi_1(M)$. 
Then $\Sigma^1(G)^{\compl}= \bigcup f_{\alpha}^* 
\big( H^1(C_{\alpha}, \R)\big)$, where the union is taken 
over those pencils $f_{\alpha}\colon M\to C_{\alpha}$
with the property that either $\chi(C_{\alpha})<0$, or 
$\chi(C_{\alpha})=0$ and $f_{\alpha}$ has some multiple fiber. 
\end{theorem}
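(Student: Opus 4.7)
The plan is to establish the two inclusions separately. The inclusion $\bigcup_{\alpha} f_{\alpha}^*(H^1(C_{\alpha}, \R)) \subseteq \Sigma^1(G)^{\compl}$ flows from the tools already developed in the paper, while the reverse inclusion genuinely requires harmonic-map theory on K\"{a}hler manifolds.

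For the forward inclusion, I would fix an admissible pencil $f\colon M \to C$ satisfying the stated hypothesis. By Arapura's theorem (Theorem~\ref{thm:arapura}\eqref{a1}), the subset $\rho_0 \cdot f^*(H^1(C, \C^{\times}))$ lies in $\VV^1_1(G,\C)$ for a suitable torsion character $\rho_0$ (trivial when $f$ has no multiple fiber; present precisely in the elliptic or cylindrical case with a multiple fiber, to ensure the component is positive-dimensional). When $\rho_0 = 1$, one invokes that $G$ is $1$-formal (by the $dd^c$-lemma of \cite{DGMS}) together with Theorem~\ref{thm:tcone} to conclude $f^*(H^1(C,\R)) \subseteq \tau_1(\VV^1_1(G,\C))$, and then Theorem~\ref{thm:bns tau} places this real subspace in $\Sigma^1(G)^{\compl}$. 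In the translated case, Theorem~\ref{thm:bns valuation} applies directly, after choosing a discrete valuation on the function field of the subtorus $\rho_0 \cdot f^*(H^1(C,\C^{\times}))$ whose composition with the appropriate character recovers a prescribed $f^*(\alpha)$; in both cases every $\chi = f^*(\alpha)$ with $\alpha\ne 0$ is thereby placed outside $\Sigma^1(G)$.

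For the reverse inclusion, I would first reduce to rational characters $\chi\colon G \twoheadrightarrow \Z$ by openness of $\Sigma^1(G)$ and density of $\Hom(G,\Q)$ in $\Hom(G,\R)$. If such a $\chi$ lies in $\Sigma^1(G)^{\compl}$, then by the Bieri--Neumann--Strebel characterization the kernel $\ker(\chi)$ fails to be finitely generated. The geometric heart of the argument is to invoke the harmonic-map theory of Gromov--Schoen, Jost--Zuo, and Napier--Ramachandran: on a compact K\"{a}hler manifold, the non-finite generation of $\ker(\chi)$ forces the existence of an equivariant pluriharmonic map $\widetilde{M} \to \R$, which together with its harmonic conjugate integrates to a holomorphic map onto a Riemann surface with connected fibers. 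After Stein factorization and smooth compactification, this yields an admissible $f\colon M \to C$ with $\chi$ factoring through $f_{\sharp}$. A case analysis then rules out $C = \CP^1$ without a multiple fiber (where $f_{\sharp}$ would have torsion image, preventing $\chi$ from factoring nontrivially) and $C$ of Euler characteristic zero without a multiple fiber (where, conversely, $\chi$ would lie in $\Sigma^1(G)$), leaving exactly the dichotomy in the statement.

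The main obstacle is the reverse inclusion. The characteristic-variety machinery of Theorems~\ref{thm:bns tau}--\ref{thm:bns valuation} only yields an upper bound on $\Sigma^1(G)^{\compl}$ in terms of characters supported on $\VV^1_1(G)$; extracting an actual admissible map from a purely group-theoretic non-finiteness condition requires the complex-analytic rigidity of K\"{a}hler manifolds, which enters through harmonic-map theory. Replacing that geometric step by a purely algebraic one would appear to need substantial new input beyond the framework developed in the survey.
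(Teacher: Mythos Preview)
The paper does not prove this theorem: it is quoted as Delzant's result, with citation to \cite{De1}, and then used as a black box in the proof of Theorem~\ref{thm:kahler}. There is therefore no argument in the paper to compare your proposal against.

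That said, your sketch is a reasonable outline of the ingredients that actually go into Delzant's proof. The forward inclusion can indeed be extracted from the machinery already in the survey, essentially as you describe; this is the content of Theorems~\ref{thm:bns tau} and~\ref{thm:bns valuation} combined with Arapura's description of the components of $\VV_1(G)$. For the reverse inclusion you are right that the decisive input is complex-analytic and lies outside the survey's framework. Delzant's actual mechanism is slightly different from what you wrote: rather than passing directly from non-finite generation of $\ker(\chi)$ to a pluriharmonic function, he uses the tree/valuation interpretation of $\Sigma^1(G)^{\compl}$ (going back to Brown and to Bieri--Strebel) together with his earlier work \cite{De08} on harmonic maps to trees and the Green--Lazarsfeld loci, to produce the required fibration over a curve. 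Your case analysis at the end (ruling out $\CP^1$ and the multiple-fiber-free elliptic case) is correct in spirit; in the elliptic case without multiple fibers the orbifold fundamental group of the base is $\Z^2$, and one checks directly that the pulled-back characters lie in $\Sigma^1(G)$.
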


Recall from Corollary \ref{cor:formal bns bound} that 
the BNS invariant of a $1$-formal group $G$ is contained 
in the complement in $H^1(G, \R)$ of the resonance variety 
$\RR_1(G,\R)$.  In general, this inclusion is strict.  
Nevertheless the class 
of K\"{a}hler groups for which the aforementioned 
inclusion is an equality can be identified precisely. 
This is done in \cite[Theorem 16.4]{PS-bns}, 
using the above result of Delzant, together with 
work of Arapura \cite{Ar} and Theorem \ref{thm:res kahler}.

\begin{theorem}[\cite{PS-bns}]
\label{thm:kahler}
Let  $M$ be a compact K\"{a}hler manifold with $b_1(M)>0$, and 
let $G=\pi_1(M)$.  Then $\Sigma^1(G)=\RR_1(G, \R)^{\compl}$ 
if and only if there is no pencil $f\colon M\to E$ onto an elliptic 
curve $E$ such that $f$ has multiple fibers.  
\end{theorem}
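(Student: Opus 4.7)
Since $M$ is compact K\"{a}hler, the group $G=\pi_1(M)$ is $1$-formal by \cite{DGMS}, so Corollary~\ref{cor:formal bns bound} gives the inclusion $\Sigma^1(G)\subseteq \RR_1(G,\R)^{\compl}$ unconditionally.  The theorem therefore reduces to the claim that the reverse inclusion $\Sigma^1(G)^{\compl}\subseteq \RR_1(G,\R)$ holds precisely when $M$ carries no elliptic pencil with multiple fibers.

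The plan is to unfold both sides and match terms.  By Delzant's Theorem~\ref{thm:delzant}, $\Sigma^1(G)^{\compl}=\bigcup_\alpha f_\alpha^*H^1(C_\alpha,\R)$; since $M$ is compact, each $C_\alpha$ is a compact Riemann surface, and the pencils split into those with $g(C_\alpha)\ge 2$ (the $\chi<0$ case) and elliptic pencils with multiple fibers (the $\chi=0$ case).  On the resonance side, Theorem~\ref{thm:tcone}(\ref{tc4}) identifies the positive-dimensional components of $\RR_1(G,\C)$, via $\exp$, with the positive-dimensional subtori through $1$ in $\VV_1(G)$, which in turn, by Arapura's Theorem~\ref{thm:arapura}(\ref{a1}), have the form $f^*(H^1(C,\C^\times))$ for admissible maps $f\colon M\to C$.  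The isotropicity and dimension constraints of Theorem~\ref{thm:res kahler}(\ref{rk4})---each $L_\alpha$ is $1$-isotropic of even dimension $\ge 4$---single out exactly those pullbacks with $g(C)\ge 2$.  Thus $\RR_1(G,\R)=\{0\}\cup \bigcup_{g(C_\alpha)\ge 2} f_\alpha^* H^1(C_\alpha,\R)$.

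For the $(\Leftarrow)$ direction, suppose $M$ admits no elliptic pencil with multiple fibers.  Delzant's union then contains only contributions from genus-$\ge 2$ pencils, and each such contribution is the real locus of a component $L_\alpha$ of $\RR_1(G)$ by the preceding paragraph, so $\Sigma^1(G)^{\compl}\subseteq \RR_1(G,\R)$, which gives equality.

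The main obstacle is the $(\Rightarrow)$ direction.  Given an elliptic pencil $f\colon M\to E$ with multiple fibers, set $V:=f^*(H^1(E,\R))$; by Delzant, $V\subseteq \Sigma^1(G)^{\compl}$, and one must exhibit a vector of $V$ lying outside $\RR_1(G,\R)$.  The subspace $V$ is $2$-dimensional and $1$-isotropic, since the cup product on $H^1(E,\C)$ is non-degenerate and $f^*$ is injective on $H^2$ (as $f$ is surjective with positive-dimensional connected fibers).  Theorem~\ref{thm:res kahler}(\ref{rk1}) immediately rules out $V$ being a component of $\RR_1(G)$ (dimension too small for a $1$-isotropic component).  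The delicate step, and the technical heart of the argument, is to rule out $V\subseteq L_\beta$ for a larger component $L_\beta=g^*(H^1(C',\C))$ with $g(C')\ge 2$: dualizing the inclusion $V\subseteq L_\beta$ yields a factorization of the abelianization of $f_\sharp$ through that of $g_\sharp$, which, via the Albanese construction applied to $C'$ and the universal property of the elliptic target, promotes to a holomorphic map $h\colon C'\to E$ with $f=h\circ g$.  Since $f$ and $g$ are both admissible (have connected fibers), $h$ must have connected fibers and hence be an isomorphism, contradicting $g(C')\ge 2$.  Therefore $V\not\subseteq\RR_1(G,\R)$, and the equality $\Sigma^1(G)=\RR_1(G,\R)^{\compl}$ fails, completing the proof.
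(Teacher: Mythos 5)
Your proof is correct and takes the route the paper indicates, combining the $1$-formality bound (Corollary~\ref{cor:formal bns bound}), Delzant's Theorem~\ref{thm:delzant}, Arapura's Theorem~\ref{thm:arapura}, and the isotropicity and dimension constraints of Theorem~\ref{thm:res kahler}. The Albanese factorization step you use to rule out $V\subseteq L_\beta$---which the paper leaves to the cited reference---is sound: since $f^*H^1(E)\subseteq g^*H^1(C')$ implies $\ker g_*\subseteq\ker f_*$ in $H_1(M,\Z)$, one gets a Hodge morphism $\bar h\colon H_1(C',\Z)\to H_1(E,\Z)$ with $\bar h\circ g_*=f_*$, which yields a holomorphic $h\colon C'\to E$ with $f=h\circ g$ (up to translation), and connectedness of the generic fibers of $f$ and $g$ forces $h$ to be a biholomorphism, contradicting $g(C')\ge 2$.
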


The equality $\Sigma^1(G)=\RR_1(G, \R)^{\compl}$ does not 
hold for arbitrary K\"{a}hler groups $G$. The following example, 
based on a well-known construction of Beauville \cite{Be}, 
illustrates this point.

\begin{example}
\label{rem:beauville}
Let $Y$ be a compact K\"{a}hler manifold on which a finite 
group $\pi$ acts freely.  Let $E$ be an elliptic curve, and let 
$p\colon C \to E$ be a ramified, regular $\pi$-cover, 
with at least one ramification point. Clearly, the diagonal 
action of $\pi$ on $C \times Y$ is free; let 
$M=(C \times Y)/\pi$ be the orbit space. 
It is readily seen that $M$ is a compact K\"{a}hler 
manifold, with $b_1(M)>0$. 

Consider the commuting diagram
\begin{equation}
\label{eq:beauville}
\xymatrix{
C \times Y \ar^{q}[r] \ar^{\proj_1}[d] & M  \ar^{f}[d]  \\
C \ar^{p}[r] & E 
}
\end{equation}
where $q$ is the orbit map and $f$ is the map induced by 
the first-coordinate projection.  As noted in \cite[Example 1.8]{Be}, 
the fibers of $f$ over the ramification points of $p$ are 
multiple fibers, while the other (generic) fibers of $f$ are 
isomorphic to $Y$.  In other words, $f\colon M\to E$ is 
an elliptic pencil with multiple fibers.  Let $G=\pi_1(M)$.   
By Theorem \ref{thm:kahler}, then, the BNS invariant 
$\Sigma^1(G)$ is strictly contained in $\RR_1(G,\R)^{\compl}$.
\end{example}

\section{Hyperplane arrangements}
\label{sect:arrs}

\subsection{The complement of an arrangement}
\label{subsec:hyp}

A {\em hyperplane arrangement}\/ is a finite collection 
of hyperplanes in some complex affine space $\C^{\ell}$.  
The main topological object associated to an arrangement 
$\A$ is its {\em complement}, 
$X(\A)=\C^{\ell}\setminus\bigcup_{H\in \A}H$. 
This is a smooth, quasi-projective variety, whose topological 
invariants are intimately connected to the combinatorics of 
the arrangement, as encoded in the {\em intersection lattice}, 
$L(\A)$, which is the poset of all non-empty intersections of 
$\A$, ordered by reverse inclusion.  

\begin{example}
\label{ex:braid arr}
The best-known example is the braid arrangement 
$\A_\ell$, consisting of the diagonal hyperplanes in 
$\C^{\ell}$.  The complement is the configuration 
space $F(\C,\ell)$ of $\ell$ ordered points in $\C$, 
while the intersection lattice is the lattice of partitions 
of $[\ell]=\set{1,\dots,\ell}$, ordered by refinement.  
In the early 1960s,  Fox and Neuwirth showed that 
$\pi_1(X(\A_\ell))=P_{\ell}$, the pure braid group on 
$\ell$ strings, while Neuwirth and Fadell showed 
that $X(\A_\ell)$ is aspherical. 
\end{example}

For a general arrangement with complement $X=X(\A)$, 
the cohomology ring $H^*(X,\Z)$ was computed 
by Brieskorn in the early 1970s, building on pioneering 
work of Arnol'd on the cohomology ring of the braid arrangement.  
It follows from Brieskorn's work that the space $X$ is formal.
In 1980, Orlik and Solomon gave a simple combinatorial 
description of the ring $H^*(X,\Z)$:  it is the quotient 
$A=E/I$ of the exterior algebra $E$ on classes dual 
to the meridians, modulo a certain ideal $I$ determined 
by the intersection poset.   We refer to the book 
by Orlik and Terao~\cite{OT} for detailed explanations 
and further references.  

The fundamental group of the complement,  $G(\A)=\pi_1(X(\A))$, 
can be computed algorithmically, using the braid monodromy 
associated to a generic projection of a generic slice $\B$ in $\C^2$; 
see \cite{CS97} and references therein.  The end result is a 
finite presentation with generators $x_1,\dots ,x_n$ 
corresponding to the meridians (oriented compatibly with 
the complex orientations of  $\C^2$ and the lines in $\B$), 
and commutator relators of the form $x_i \alpha_j(x_i)^{-1}$, 
where $\alpha_j\in P_{n}$ are the (pure) braid monodromy 
generators, acting on the meridians via the Artin 
representation.  In particular, $H_1(G(\A),\Z)=\Z^n$, 
with preferred basis the images of the meridional generators. 

It should be noted that arrangement groups are not always 
combinatorially determined.  Indeed, G.~Rybnikov has 
produced a pair arrangements, $\A$ and $\A'$, with 
$L(\A)\cong L(\A')$, but $G(\A)\not\cong G(\A')$.  
For a detailed account of Rybnikov's celebrated 
example, we refer to \cite{ACCM}. 

\subsection{Resonance varieties of arrangements} 
\label{subsec:res arr}
Let $\A$ be a central arrangement in $\C^{\ell}$ 
(i.e., all hyperplanes of $\A$ pass through the origin). 
The resonance varieties $\RR^i_1(X(\A),\C)$ were 
first defined and studied by Falk in \cite{Fa97}.  The resonance 
varieties over an arbitrary field, $\RR^i_d(X(\A),\k)$, were  
considered by Matei and Suciu in \cite{MS00}, and  
investigated in detail by Falk in \cite{Fa07}.   The 
varieties $\RR^i_d(X(\A),\k)$ lie in the affine space 
$\k^n$, where $n=\abs{\A}$; they depend solely on 
the intersection lattice, $L(\A)$, and on the characteristic 
of the field $\k$.  A basic problem in the subject is to find 
concrete formulas making this dependence explicit. 

Best understood are the degree $1$ resonance varieties 
over the complex numbers, $\RR_d(\A)=\RR^1_d(X(\A),\C)$. 
These varieties admit a very precise combinatorial description, 
owing to work of Falk \cite{Fa97}, Cohen--Suciu \cite{CS99}, 
Libgober \cite{Li01}, Libgober--Yuzvinsky \cite{LY}, and others, 
with the state of the art being the recent work of 
Falk--Yuzvinsky \cite{FY}, Pereira--Yuzvinsky \cite{PeY}, 
and Yuzvinsky~\cite{Yu}.  Let us briefly describe these 
varieties, based on the original approach from \cite{Fa97}, 
with updates as warranted; for the latest approach, using 
``multinets," we refer to \cite{FY}. 

By the Lefschetz-type theorem of Hamm and L\^{e}, 
taking a generic two-dimen\-sional section does not 
change the fundamental group of the complement. 
Thus, in order to describe $\RR_1(\A)=\RR_1(G(\A))$, 
we may assume $\A=\{\ell_1,\dots ,\ell_n\}$ is an affine 
line arrangement in $\C^2$, for which no two lines are 
parallel.  The following facts are known:
\begin{enumerate}
\item The variety $\RR_1(\A)\subset \C^n$ lies in the 
hyperplane $\Delta_n=\{x\in \C^n \mid \sum_{i=1}^{n} x_i=0\}$.  
\item Each component is a linear subspace of 
dimension at least $2$. 
\item Two distinct components of $\RR_1(\A)$ meet 
only at $0$, and $\RR_d(\A)$ is the union of those 
subspaces of dimension greater than $d$.
\end{enumerate} 

\begin{example}
\label{ex:pencil}
Let $\A$ be a pencil of $n$ lines through the origin of 
$\C^2$, defined by the equation $z_1^n-z_2^n=0$. 
The fundamental group of the complement is 
$G=\langle x_1,\dots , x_n \mid 
\text{$x_1\cdots x_n$ central} \rangle$. 
If $n=1$ or $2$, then $\RR_1(\A)=\{0\}$; 
otherwise, $\RR_1(\A)=\cdots =\RR_{n-2}(\A)=\Delta_n$, 
and $\RR_{n-1}(\A)=\{0\}$. 
\end{example}

Returning to the general case, the simplest components of 
$\RR_1(\A)$ are the {\em local}\/ components:  to an intersection 
point $v_J=\bigcap_{j\in J} \ell_j$ of multiplicity $\abs{J} \ge 3$, 
there corresponds a subspace $L_J$ of dimension $\abs{J}-1$, given 
by equations of the form $\sum_{j\in J} x_j = 0$, and $x_i=0$ 
if $i\notin J$. 

The remaining components correspond to certain ``neighborly 
partitions" of sub-arrange\-ments of $\A$: to each such 
partition $\Pi$, there corresponds a subspace $L_{\Pi}$, given 
by equations of the form $\sum_{j\in \pi} x_j=0$, with $\pi$ 
running through the blocks of $\Pi$. Moreover, $\dim L_{\Pi}>0$ 
if and only if a certain bilinear form associated to $\Pi$ is degenerate.  

If $\abs{\A}\le 5$, then all components of $\RR_1(\A)$ are local.  
For $\abs{\A}\ge 6$, though, the resonance variety $\RR_1(\A)$ 
may have interesting components. 

\begin{example}
\label{ex:braid}  
Let $\A$ be a generic $3$-slice of the braid 
arrangement $\A_4$, with defining polynomial 
$Q(\A)=z_0z_1z_2(z_0-z_1)(z_0-z_2)(z_1-z_2)$. 
Take a generic plane section, and label the 
corresponding lines as $6,2,4,3,5,1$. 
The variety $\RR_{1}(\A)\subset \C^6$ has $4$ 
local components, corresponding to the triple 
points $124, 135, 236, 456$, 
and one non-local  component, corresponding to the 
neighborly partition $\Pi=(16| 25 | 34)$:
\[
\small{
\begin{aligned}
& L_{124}= \{ x_1 + x_2 + x_4=x_3=x_5=x_6=0 \} ,\ 
L_{135}=\{ x_1 + x_3 + x_5=x_2=x_4=x_6=0 \} ,  \\
& L_{236}= \{ x_2 + x_3 + x_6= x_1=x_4=x_5=0 \}, \
 L_{456}= \{ x_4 + x_5 + x_6=x_1=x_2=x_3=0 \} ,\\
& L_{\Pi}=\{ x_1+x_2+x_3=x_1- x_6=x_2-x_5=x_3-x_4=0 \} .
\end{aligned}
}
\]
Since all these components are $2$-dimensional, $\RR_2(\A)=\{0\}$. 
\end{example}

For an arbitrary arrangement $\A$, it follows from the work of 
Falk, Pereira, and Yuzvinsky \cite{FY, PeY, Yu} that any non-local 
component in $\RR_{1}(\A)$ has dimension either $2$ or $3$. 
The only known example for which non-local components of dimension 
$3$ occur is the Hessian arrangement of $12$ planes in $\C^3$, 
defined by the polynomial 
$Q(\A)=z_0z_1z_2\prod_{j=0}^{2} \prod_{k=0}^{2} 
(z_0+\omega^j z_1+\omega^k z_2)$,  where $\omega=\exp(2\pi i/3)$. 

\subsection{Characteristic varieties of arrangements} 
\label{subsec:cv arr}

Let $\A=\{H_1,\dots, H_n\}$ be a central arrangement in 
$\C^{\ell}$, and let $G(\A)=\pi_1(X(\A))$ be the fundamental group 
of its complement. From the discussion in \S\ref{subsec:hyp}, we 
know that $G(\A)$ is a quasi-projective, $1$-formal group.    
Let $\VV_d(\A)=\VV_d(G(\A),\C)$ be its (degree~$1$) characteristic 
varieties.  

From Arapura's Theorem \ref{thm:arapura}, we know that 
$\VV_d(\A)$ consists of subtori in $(\C^{\times})^n$, 
possibly translated by roots of unity (necessarily of 
finite order, if $d=1$), together with a finite number 
of isolated unitary characters.   By Theorem \ref{thm:tcone}, 
we have 
\begin{equation}
\label{eq:arr tcone}
TC_1(\VV_d(\A))=\RR_d(\A), \quad\text{for all $d\ge 1$}. 
\end{equation}
The tangent cone formula \eqref{eq:arr tcone} for complements 
of hyperplane arrangements was first proved (by different methods) 
by Cohen and Suciu \cite{CS99}, Libgober \cite{Li02}, and Libgober 
and Yuzvinsky \cite{LY}, and was generalized to the higher-degree 
jump loci by Cohen and Orlik \cite{CO00}. 

As an upshot of \eqref{eq:arr tcone}, the components of 
$\VV_1(\A)$ passing through the origin are completely determined 
by $\RR_1(\A)$, and thus, by the intersection lattice, $L(\A)$:  
to each (linear) component $L\subset \C^n$ of $\RR_1(\A)$ 
there corresponds a (torus) component $T=\exp(L) \subset 
(\C^{\times})^n$ of $\VV_1(\A)$.  The only novelty here is that 
some of these subtori may intersect away from the origin, 
sometimes in a character belonging to $\VV_2(\A)$---a 
phenomenon first noted in \cite[Example 4.4]{CS99}. 

\begin{example}
\label{ex:cv braid}  
For the arrangement $\A$ from Example \ref{ex:braid}, the 
variety $\VV_1(\A)$ has four local components---$V_{124}$, 
$V_{135}$, $V_{236}$, and $V_{456}$---and one non-local 
component, $V_{\Pi}$.  Any two of these components 
intersect only at $1$; moreover, $\VV_2(\A)=\{1\}$. 
Computing the Alexander matrix of $G(\A)$ and applying  
Proposition \ref{prop:alex comm} reveals that there are no 
translated components in $\VV_1(\A)$.
\end{example}

Not too much is known about the components of 
$\VV_1(\A)$ not passing through $1$.  A major 
open problem in the subject is to decide whether 
such ``translated" components are combinatorially 
determined.  That this is not a moot point is illustrated 
by the following example from \cite{Su02}.  

\begin{example}
\label{ex:deleted B3}
Let $\A$ be the deleted $\operatorname{B}_3$ arrangement, 
with defining polynomial 
$Q(\A)=z_0z_1(z_0^2-z_1^2)(z_0^2-z_2^2)(z_1^2-z_2^2)$. 
The variety $\VV_1(\A)$ contains $7$ local components, 
corresponding to $6$ triple points and one quadruple point, 
and $5$ non-local components passing through $1$, 
corresponding to braid sub-arrangements.  Additionally, 
$\VV_1(\A)$ contains a component of the form $\rho\cdot T$, 
where
\[
T=\set{(t^2,t^{-2},1,1,t^{-1},t^{-1},t,t) \mid t\in \C^{\times}}
\]
is a $1$-dimensional subtorus of $(\C^{\times})^8$, 
and $\rho=(1,1,-1,-1,-1,-1,1,1)$ is a root of unity of order $2$.  
\end{example}

In \cite{Di07}, Dimca provides an effective algorithm for 
detecting translated tori in the characteristic varieties 
of arrangements, such as the component $\rho\cdot T$ 
arising in the above example. In \cite{NR09}, Nazir and Raza 
prove the following result: If $\A$ has $2$ lines (or less) 
that contain all intersection points of multiplicity $3$ and 
higher, then $\VV_1(\A)$ has no translated components.  
For the deleted $\operatorname{B}_3$ arrangement, 
there exist precisely $3$ lines containing all high-multiplicity 
points, and so Nazir and Raza's result is best possible 
from this point of view. 

\subsection{Arrangements with parallel lines} 
\label{subsec:parallel}
The theory developed so far works with only minimal 
modifications for arbitrary affine line arrangements in $\C^2$, 
i.e, for arrangements $\A=\{\ell_1,\dots ,\ell_n\}$ which  
may contain parallel lines.  Homogenizing, we get 
an arrangement $\bar\A=\{L_0,L_1, \dots ,L_n\}$, 
with $L_0=\CP^2\setminus \C^2$ the line at infinity. 
Viewing $\bar\A$ as the projectivization of a central 
arrangement $\hat\A$ in $\C^3$, and taking a generic 
$2$-section $\B$ of $\hat\A$, we are back to the situation 
studied previously.

\begin{remark}
\label{rem:proj}
Note that the complement of $\A$ in $\C^2$ 
is the same as the complement of $\bar\A$ in $\CP^2$. 
Thus, the arrangement group, $G(\A)=\pi_1(X(\A))$, 
can be viewed as the fundamental group of the 
complement of a projective hypersurface.   
\end{remark}

 It is now an easy matter to relate 
the jump loci of $\A$ to those of $\B$.  For instance, the 
resonance variety $\RR_1(\A)\subset \C^n$ may be 
obtained from $\RR_1(\B)\subset \C^{n+1}$ by slicing 
with a suitable hyperplane.  

\begin{example}
\label{ex:parallel}
Let $\A$ be an arrangement of $n$ parallel lines in $\C^2$. 
Clearly,  $X(\A)=(\C\setminus \set{\text{$n$ points}})\times \C$, 
and thus $G(\A)=F_n$.  It is readily seen that $\B$ is a pencil 
of $n+1$ lines in $\C^2$, and thus, $G(\B)\cong \Z\times F_n$. 
This isomorphism identifies $\RR_1(\B)=\Delta_n$ with 
$\RR_1(\A)=\C^n$.
\end{example}

In general, to a family of $k\ge 2$ parallel lines in $\A$ there  
corresponds a pencil of $k+1$ lines in $\B$.  The corresponding 
$k$-dimensional local component of $\RR_1(\B)$ yields a 
$k$-dimensional component of $\RR_1(\A)$.  Similar 
considerations apply to non-local components. 

\begin{example}
\label{ex:braid decone}
Let $\A$ be the line arrangement defined by the 
polynomial $Q(\A)=z_1z_2(z_1-1)(z_2-1)(z_1-z_2)$.   
The corresponding central arrangement, $\hat\A$, 
is the arrangement from Example \ref{ex:braid}.   
Thus, $\RR_1(\A)$ has a (unique) non-local 
component, $L=\{ x_1+x_2+x_3=x_2-x_5=x_3-x_4=0 \}$. 
\end{example}

\subsection{Zariski-type theorems} 
\label{subsec:zariski}
A well-known theorem of Zariski asserts that an arrangement 
of lines in $\CP^2$ has only double points  if and only 
if the fundamental group of its complement is free abelian;  
see \cite[Theorem 1.1]{CDP} for details and references.  
One may wonder whether there are other classes of line 
arrangements which may be similarly characterized 
in terms of their groups. 

In \cite[Corollary 1.7]{CDP}, Choudary, Dimca, and Papadima 
prove an analogue of Zariski's theorem, in the setting of affine 
arrangements. Given an $r$-tuple of integers $m=(m_1,\dots,m_r)$, 
a line arrangement $\A$ in $\C^2$ is said to be of type $\A(m)$ 
if $\A$ consists of lines in $r$ parallel directions, each 
direction containing $m_i$ lines.  Clearly, 
such arrangements have only double points, and any nodal 
arrangement in $\C^2$ is of type $\A(m)$, for some $r$-tuple $m$. 

\begin{theorem}[\cite{CDP}]
\label{thm:cdp}
Let $\A$ be a line arrangement in $\C^2$, with 
group $G=G(\A)$. The following are equivalent:
\begin{romenum}
\item \label{cdp1}  $\A$ is of type $\A(m)$, for some 
$r$-tuple $m=(m_1,\dots,m_r)$.
\item \label{cdp2}  $G$ is isomorphic to 
$F_{m_1}\times \cdots \times F_{m_r}$, 
via an isomorphism preserving the standard 
generators in $H_1$. 
\end{romenum}
\end{theorem}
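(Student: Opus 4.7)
The plan is to prove each implication with a different method: a direct topological construction for (i) $\Rightarrow$ (ii), and the cohomology jumping loci machinery for (ii) $\Rightarrow$ (i).

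For (i) $\Rightarrow$ (ii), I would first handle the low-rank cases. When $r=1$, an affine coordinate change identifies $X(\A)$ with $\C\times(\C\setminus\{m_1\ \text{points}\})$, so $G(\A)=F_{m_1}$. When $r=2$, placing the two parallel directions along the coordinate axes realizes $X(\A)$ as a product $(\C\setminus S_1)\times(\C\setminus S_2)$, from which the product group $F_{m_1}\times F_{m_2}$ with meridians in standard position is immediate. For $r\ge 3$, I would consider the product of projections $\Phi=(p_1,\dots,p_r)\colon X(\A)\to\prod_{i=1}^r(\C\setminus\{m_i\ \text{points}\})$, where $p_i$ is the linear projection along the $i$-th parallel direction. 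Since $\Phi$ sends each line's meridian to the corresponding standard generator of the target, the induced map $\Phi_*$ is automatically surjective and $H_1$-compatible. Injectivity of $\Phi_*$ can then be attacked via the Zariski--van Kampen presentation of $G(\A)$ relative to a generic projection: because all affine singularities of $\A$ are nodes, the braid-monodromy relations from the affine double points collapse to commutators between meridians of lines in distinct parallel classes, and one must show that the monodromy around the infinitely-distant concurrent points $P_i\in L_\infty$ imposes no further relation.

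For (ii) $\Rightarrow$ (i), I would use resonance varieties. By the product formula for resonance and the identifications $\RR^1_1(F_m,\C)=\C^m$ for $m\ge 2$ and $\RR^1_1(F_1,\C)=\{0\}$, the irreducible components of $\RR_1(F_{m_1}\times\cdots\times F_{m_r})$ are exactly the coordinate subspaces $\C^{m_i}$ indexed by those $i$ with $m_i\ge 2$. Because the given isomorphism preserves standard $H_1$-generators, this decomposition transports to $\RR_1(G(\A))\subset\C^n$ (with $n=m_1+\cdots+m_r$), giving coordinate-subspace components associated with a partition of the $n$ lines into blocks of sizes $m_1,\dots,m_r$. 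The combinatorial description of $\RR_1(\A)$ recalled in \S\ref{subsec:res arr} then applies. An $m_i$-dimensional coordinate-subspace component must be a local component of the projective arrangement $\bar\A=\A\cup\{L_\infty\}$ coming from a multiple point on $L_\infty$ of multiplicity $m_i+1$ through which pass $L_\infty$ and exactly the $m_i$ lines of the $i$-th block; hence the lines in each block of size $\ge 2$ are parallel in $\C^2$. The absence of additional components rules out affine multiple points of multiplicity $\ge 3$ (which would produce non-coordinate-subspace local components of the form $\{\sum_{j\in J}x_j=0\}$), any neighborly-partition components, and any extra parallelism between distinct blocks (which would produce an unexpected higher-dimensional coordinate subspace). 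Thus the lines of $\A$ split into exactly $r$ pairwise-non-parallel parallel classes of sizes $m_1,\dots,m_r$, so $\A$ is of type $\A(m)$.

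The main obstacle I anticipate lies in the forward direction for $r\ge 3$: namely, verifying that the braid monodromy around the infinitely-distant concurrent points $P_i$ contributes no hidden braiding between meridians from different parallel classes and no extra relation within a single class. A careful combinatorial analysis of the Zariski--van Kampen presentation, or alternatively an inductive argument exploiting the iterated-fibration structure provided by the projections $p_i$ together with the natural sections these afford through the product structure of the target of $\Phi$, should resolve this point, but the bookkeeping is not routine.
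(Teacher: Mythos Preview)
This theorem is not proved in the survey; it is quoted from \cite{CDP}, so there is no ``paper's proof'' to compare against directly.  That said, here is an assessment of your two directions.

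Your argument for \eqref{cdp2} $\Rightarrow$ \eqref{cdp1} via resonance varieties is correct and is very much in the spirit of this survey---indeed, it is essentially the method the survey itself deploys in the neighbouring Theorems~\ref{thm:fan}, \ref{thm:arr kahler}, and \ref{thm:arr raag}.  The one point that deserves an extra sentence is the claim that a non-local (neighborly-partition/multinet) component of $\RR_1(\A)$ is never a coordinate subspace; this holds because any such component lies in the hyperplane $\{\sum_{j\in\mathrm{supp}} x_j=0\}$, which contains no positive-dimensional coordinate subspace.  The original \cite{CDP} handles this direction by analyzing the Orlik--Solomon/holonomy data directly rather than by invoking the resonance stratification, so your route is a genuine, and arguably cleaner, alternative.

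For \eqref{cdp1} $\Rightarrow$ \eqref{cdp2}, your map $\Phi$ and the verification that $\Phi_*$ is surjective and $H_1$-compatible are fine.  The obstacle you anticipate, however, is real and is exactly where the content lies: from the local picture at a node $\ell_a\cap\ell_b$ one only obtains that \emph{some conjugates} of the two meridians commute, and upgrading this to $[x_a,x_b]=1$ for the \emph{standard} meridians (equivalently, showing the braid-monodromy relations are unconjugated commutators) is precisely what must be proved.  Your proposed fixes do not obviously close the gap---in particular, the ``natural sections'' you allude to cannot exist for $r\ge 3$, since the target of $\Phi$ has complex dimension $r>2$.  The approach in \cite{CDP} sidesteps this via a deformation argument: the parameter space of nodal arrangements with fixed parallelism type $\A(m)$ is a connected Zariski-open set, so one may isotope to a complexified real representative, for which the braid monodromy is standard and each node contributes an honest relation $[x_a,x_b]=1$.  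If you wish to retain your $\Phi$-strategy, the cleanest completion is: prove (by deformation or explicit monodromy) that standard meridians of non-parallel lines commute in $G(\A)$; this yields a homomorphism $\psi\colon\prod_i F_{m_i}\to G(\A)$ on generators, and $\Phi_*\circ\psi=\id$ forces $\psi$ to be the required isomorphism.
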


In particular, if a line arrangement $\A$ has only double 
points, then the fundamental group of its complement 
is isomorphic to a finite direct product of finitely generated 
free groups. 

The next result was inspired by recent work of Fan \cite{Fan09}.  
The implication $\eqref{p1} \Rightarrow \eqref{p4}$ below 
recovers the main result (Theorem 1) from \cite{Fan09}. 
It also improves on Theorem \ref{thm:cdp} in the particular case 
when $r=1$, by allowing arbitrary isomorphisms between 
$G$ and a free group.  We offer a completely different---and 
much shorter---proof of this implication, based on the techniques 
described so far.  

\begin{theorem}
\label{thm:fan}
Let $\A=\set{\ell_1,\dots, \ell_n}$ be an arrangement 
of lines in $\C^2$, with group $G=G(\A)$.  
The following are equivalent:
\begin{romenum}
\item \label{p1}  The group $G$ is a  free group.  
\item \label{p2}  The characteristic variety $\VV_1(\A)$ 
coincides with $(\C^{\times})^n$. 
\item \label{p3}  The resonance variety $\RR_1(\A)$ 
coincides with $\C^n$. 
\item \label{p4}  The lines $\ell_1,\dots, \ell_n$ are 
all parallel. 
\end{romenum}
\end{theorem}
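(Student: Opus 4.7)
The plan is to prove the chain $(iv) \Rightarrow (i) \Rightarrow (iii) \Leftrightarrow (ii)$, and then close the loop with the substantive implication $(iii) \Rightarrow (iv)$. (Throughout I assume $n \ge 2$; the case $n = 1$ is degenerate, with (ii) and (iii) failing while (i) and (iv) hold trivially.) The implication $(iv) \Rightarrow (i)$ is immediate from Example \ref{ex:parallel}: when all lines are parallel, $X(\A) \simeq \bigvee^n S^1$, giving $G(\A) = F_n$. For $(i) \Rightarrow (iii)$, if $G(\A) = F_n$ with $n \ge 2$, then $H^{\ge 2}(G, \C) = 0$, so the Aomoto complex $(A, \cdot a)$ yields $\beta_1(A, a) = \dim_{\C}(A^1/\C a) \ge n - 1 \ge 1$ for every $a \in A^1$, whence $\RR_1(\A) = \C^n$.

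For $(iii) \Leftrightarrow (ii)$, the group $G(\A)$ is $1$-formal by Remark \ref{rem:curves}, since $X(\A) = \CP^2 \setminus \bar{\A}$ is the complement of a projective hypersurface (namely the curve $\bar{\A}$). By Theorem \ref{thm:tcone}(i), the exponential map induces an isomorphism of analytic germs $\exp\colon (\RR_1(\A), 0) \isom (\VV_1(\A), 1)$. Hence $\RR_1 = \C^n$ forces the germ of $\VV_1$ at $1$ to be $n$-dimensional; since $\VV_1$ is closed in the irreducible $n$-torus $(\C^{\times})^n$, we conclude $\VV_1 = (\C^{\times})^n$. The converse direction follows either from the same germ isomorphism or, more cheaply, from Libgober's inclusion $TC_1(\VV_1) \subseteq \RR_1$ in Theorem \ref{thm:lib}.

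The substantive direction $(iii) \Rightarrow (iv)$ proceeds as follows. Assume $\RR_1(\A) = \C^n$. By Theorem \ref{thm:tcone}(iii), $\RR_1(\A)$ decomposes as a finite union of rationally defined linear subspaces; irreducibility of $\C^n$ as an algebraic variety forces one of these subspaces to equal $\C^n$ itself. Theorem \ref{thm:res kahler}(v) then applies, since $X(\A)$ is smooth quasi-projective with $W_1(H^1(X(\A), \C)) = 0$ (as a hypersurface complement in $\CP^2$), forcing this maximal component to be $0$-isotropic; equivalently, the cup product $\cup\colon H^1(G, \C) \wedge H^1(G, \C) \to H^2(G, \C)$ vanishes identically. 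Since the Orlik--Solomon algebra $H^*(X(\A), \C)$ is generated in degree $1$ by the meridians $e_1, \ldots, e_n$, its degree-$2$ part $A^2$ is spanned by the products $e_i e_j$, so vanishing cup product yields $H^2(X(\A), \C) = 0$. Finally, the M\"{o}bius-function formula $b_2(X(\A)) = \sum_{v}(m(v) - 1)$---summed over intersection points $v$ of $\A$ in $\C^2$ of multiplicity $m(v) \ge 2$---vanishes if and only if $\A$ admits no intersection points, which is exactly to say that all lines are parallel.

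The main obstacle is assembling the ingredients for $(iii) \Rightarrow (iv)$: one must recognize the $1$-formality of $G(\A)$ and the vanishing $W_1(H^1) = 0$ for the hypersurface complement, which together unlock the structural results of Theorems \ref{thm:tcone} and \ref{thm:res kahler}. This translates the algebraic hypothesis $\RR_1 = \C^n$ into the geometric statement $\cup = 0$, after which the degree-$1$ generation of the Orlik--Solomon algebra reduces matters to the combinatorial vanishing $b_2(X(\A)) = 0$. Once this framework is in place, no single step is difficult.
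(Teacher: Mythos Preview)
Your proof is correct, and your careful handling of the degenerate case $n=1$ is a nice touch the paper omits. However, your route for the key implication $(iii)\Rightarrow(iv)$ differs substantially from the paper's.

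The paper argues $(iii)\Rightarrow(iv)$ entirely within the combinatorics of $\RR_1(\A)$ developed in \S\ref{subsec:res arr}--\S\ref{subsec:parallel}: if $\A$ has an intersection point of multiplicity $k\ge 3$, the corresponding local component of $\RR_1(\A)$ has dimension $k-1<n$, contradicting $\RR_1(\A)=\C^n$; hence all points are double, so $\A$ is of type $\A(m_1,\dots,m_r)$, and each parallel family of $m_i\ge 2$ lines contributes a component of dimension $m_i<n$ unless $r=1$. This uses nothing beyond the explicit description of the linear components of $\RR_1(\A)$.

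Your argument instead invokes the structural Theorem~\ref{thm:res kahler}\eqref{rk5} (hence Arapura's theorem and mixed Hodge theory) to deduce that the full space $\C^n$, as the unique component of $\RR_1(\A)$, must be $0$-isotropic; then degree-$1$ generation of the Orlik--Solomon algebra forces $b_2(X(\A))=0$, and the M\"obius-function formula $b_2=\sum_v(m(v)-1)$ shows there are no intersection points at all. This is heavier machinery, but it has the virtue of bypassing the component-by-component analysis of $\RR_1(\A)$ and working directly at the level of the cup product. The paper's approach, by contrast, is more elementary and self-contained, and illustrates concretely how the resonance decomposition encodes the incidence structure of $\A$.
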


\begin{proof}
\eqref{p1} $\Rightarrow$ \eqref{p2}. If $G$ is a free group 
(necessarily, of rank $n$), then, as noted in Example \ref{ex:wedge}, 
 $\VV_1(G)=(\C^{\times})^n$. 

\eqref{p2}  $\Rightarrow$ \eqref{p3}. If  $\VV_1(G)=(\C^{\times})^n$, 
then obviously $TC_1(\VV_1(G)) = \C^n$.  Hence, by formula \eqref{eq:lib} 
(or, alternatively, formula \eqref{eq:arr tcone}),  $\RR_1(G) = \C^n$. 

\eqref{p3} $\Rightarrow$ \eqref{p4}. Suppose $\A$ has a point 
of multiplicity $k\ge 3$.  Then $\RR_1(\A)$ has a (local) component 
of dimension $k-1$.  But $k\le n$, and so this contradicts 
$\RR_1(\A) = \C^n$. Thus, all multiple points of $\A$ must be  
double points, i.e, $\A$ must be of type $\A(m)$, for some 
$r$-tuple $m=(m_1,\dots,m_r)$. To each family of $m_i\ge 2$ 
parallel lines, there corresponds a component of $\RR_1(\A)$ 
of dimension $m_i$.  But $m_i<n$, unless $r=1$ and $m_1=n$, 
i.e., $\A$ consists of $n$ parallel lines. 

\eqref{p4} $\Rightarrow$ \eqref{p1}. If the lines are parallel, 
then, as noted in Example \ref{ex:parallel}, $G=F_n$.
\end{proof}

\subsection{Comparison with K\"{a}hler groups} 
\label{subsec:kahler arr}
Recall that arrangement groups are $1$-formal, quasi-projective 
groups.   But are they K\"{a}hler groups?  Of course, 
a necessary condition for $G=G(\A)$ to be a
K\"{a}hler group is that $b_1(G)=\abs{\A}$ must be 
even. In \cite[Example 3.48]{ABCKT}, it is noted that the 
group of a pencil of $n\ge 3$ lines cannot be realized as 
a ``non-fibered" K\"{a}hler group, i.e., the fundamental 
group of a compact K\"{a}hler manifold which does not 
fiber over a Riemann surface of genus $g\ge 2$. 

In the next theorem, we go much further, and identify 
precisely which arrangement groups are K\"{a}hler groups.

\begin{theorem}
\label{thm:arr kahler}
Let $\A$ be an arrangement of lines in $\C^2$, 
with group $G=G(\A)$. The following are equivalent:
\begin{romenum}
\item \label{h1} $G$ is a K\"{a}hler group. 
\item \label{h2} $G$ is a free abelian group of even rank.
\item \label{h3} $\A$ is an arrangement of an even number of lines 
in general position. 
\end{romenum}
\end{theorem}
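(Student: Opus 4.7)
The plan is to close the cycle $(\text{iii}) \Rightarrow (\text{ii}) \Rightarrow (\text{i}) \Rightarrow (\text{iii})$. The implication $(\text{iii}) \Rightarrow (\text{ii})$ is immediate from Theorem~\ref{thm:cdp} applied with $m=(1,\dots,1)$: an arrangement of $n$ lines in general position is of type $\A(1,\dots,1)$ (with $n$ entries), so $G(\A) \cong F_1 \times \cdots \times F_1 = \Z^n$. For $(\text{ii}) \Rightarrow (\text{i})$, $\Z^{2r}$ is realized as the fundamental group of the compact complex torus $(S^1 \times S^1)^r$.

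The substantive direction is $(\text{i}) \Rightarrow (\text{iii})$. First, $G = G(\A) = \pi_1(X(\A))$ is quasi-projective: by Remark~\ref{rem:proj}, $X(\A) = \CP^2 \setminus \bar{\A}$ is a smooth quasi-projective variety, and since $\bar{\A}$ is a hypersurface in $\CP^2$, Deligne's vanishing recalled in \S\ref{subsec:quasi kahler} gives $W_1(H^1(X,\C)) = 0$; by Morgan's theorem, $G$ is $1$-formal. Now assume $G$ is K\"{a}hler. Theorem~\ref{thm:res kahler}\eqref{rk4} asserts that every positive-dimensional irreducible component of $\RR_1(G)$ is $1$-isotropic, whereas Theorem~\ref{thm:res kahler}\eqref{rk5}, applied to the quasi-projective structure on $X(\A)$, forces every such component to be $0$-isotropic. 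These conclusions are compatible only if no positive-dimensional component exists, i.e., $\RR_1(G) = \{0\}$.

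Next I would translate this vanishing into combinatorial constraints on $\A$. Any intersection point of $\bar{\A}$ of multiplicity $k \ge 3$ produces a local component $L_J \subset \RR_1(\A)$ of dimension $k-1 \ge 2$, as described in \S\ref{subsec:res arr}, so $\bar{\A}$ must have only double points in $\CP^2$. In particular, no three affine lines of $\A$ are concurrent; and since $k \ge 2$ mutually parallel lines of $\A$ together with the line at infinity meet in a common point, yielding a multiple point of multiplicity $k+1 \ge 3$ in $\bar{\A}$, no two lines of $\A$ can be parallel either. Hence $\A$ is in general position. By $(\text{iii}) \Rightarrow (\text{ii})$, $G \cong \Z^n$ with $n = \abs{\A}$, and the K\"{a}hler hypothesis forces $b_1(G) = n$ to be even.

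The crux of the proof is the isotropicity dichotomy in the second paragraph: the whole argument rests on juxtaposing the $1$-isotropic conclusion forced by a K\"{a}hler realization against the $0$-isotropic conclusion forced by the smooth quasi-projective realization with vanishing $W_1$. Both conclusions depend ultimately on the tangent cone theorem (Theorem~\ref{thm:tcone}) via the $1$-formality of arrangement complements; the combinatorial translation to general position and the final parity observation are then routine.
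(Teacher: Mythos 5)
Your proof is correct and follows essentially the same route as the paper. The paper proves $(\text{i}) \Rightarrow (\text{iii})$ by observing that non-general position forces $\RR_1(\A)\ne\{0\}$ and then invoking Corollary~\ref{cor:hyper kahler} together with Remark~\ref{rem:proj}; you simply unfold that corollary into the underlying isotropicity dichotomy from Theorem~\ref{thm:res kahler}\eqref{rk4} versus~\eqref{rk5}, which is exactly what its proof (via Corollary~\ref{cor:qkk}) amounts to. The remaining implications are handled identically (the paper cites Zariski's theorem where you cite Theorem~\ref{thm:cdp}, but these give the same conclusion for a general-position arrangement).
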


\begin{proof}
Implication \eqref{h3} $\Rightarrow$ \eqref{h2} follows from  
Zariski's theorem, while  \eqref{h2} $\Rightarrow$ \eqref{h1}  
follows from the discussion in \S\ref{subsec:kahler}.  

To prove \eqref{h1} $\Rightarrow$ \eqref{h3}, assume 
$G=G(\A)$ is a K\"{a}hler group. If $\A$ is not in general 
position, i.e, $\A$  is not of type $\A(1,\dots,1)$, then 
either $\A$  has an intersection point of multiplicity $k+1\ge 3$, 
or $\A$ contains a family of $k\ge 2$ parallel lines.  In either 
case, $\RR_1(\A)$ has a $k$-dimensional component; in particular, 
$\RR_1(\A)\ne \set{0}$.  In view of Remark \ref{rem:proj}, 
the conclusion follows from Corollary \ref{cor:hyper kahler}. 
\end{proof}

\subsection{Comparison with right-angled Artin groups} 
\label{subsec:raag arr}
Arrangement groups also share many common features with 
right-angled Artin groups.  For instance, if $G$ is a group in 
either class, then $G$ is a commutator-relators group; 
$G$ is $1$-formal; and each resonance variety $\RR_d(G)$ 
is a union of linear subspaces.  Moreover, the free groups $F_n$ 
and the free abelian groups $\Z^n$ belong to both classes.  
So what exactly is the intersection of these two classes of 
naturally defined groups?  

To answer this question, we need one more definition, 
due to Fan \cite{Fan97}.   Given a line arrangement $\A$, 
its {\em multiplicity graph}, $\Gamma(\A)$, is the graph 
with vertices the intersection points with multiplicity at 
least $3$, and edges the segments between multiple 
points on lines which pass through more than one 
multiple point.

\begin{theorem}
\label{thm:arr raag}
Let $\A$ be an arrangement of lines in $\C^2$, 
with group $G=G(\A)$. The following are equivalent:
\begin{romenum}
\item \label{rg1} $G$ is a right-angled Artin group. 
\item \label{rg2} $G$ is a finite direct product of finitely 
generated free groups.
\item \label{rg3} The multiplicity graph $\Gamma(\A)$ is a forest. 
\end{romenum}
\end{theorem}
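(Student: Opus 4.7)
The plan is to prove the theorem by decomposing it into two equivalences: $\eqref{rg1}\Leftrightarrow\eqref{rg2}$, which will follow from the classification of quasi-K\"{a}hler right-angled Artin groups in Theorem \ref{thm:artinserre}, and $\eqref{rg2}\Leftrightarrow\eqref{rg3}$, which is the content of Fan's theorem on the multiplicity graph from \cite{Fan97}.

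First, for $\eqref{rg2}\Rightarrow\eqref{rg1}$ I will use that a direct product of finitely generated free groups is naturally a right-angled Artin group: if $G(\A)\cong F_{n_1}\times\cdots\times F_{n_r}$, then setting $\Gamma=K_{n_1,\ldots,n_r}=\overline{K}_{n_1}*\cdots*\overline{K}_{n_r}$ and combining $G_{\overline{K}_n}=F_n$ with the product formula $G_{\Gamma*\Gamma'}=G_{\Gamma}\times G_{\Gamma'}$ yields $G_{\Gamma}\cong G(\A)$. Conversely, for $\eqref{rg1}\Rightarrow\eqref{rg2}$ I will invoke that $X(\A)$ is a smooth quasi-projective variety, so $G(\A)$ is a quasi-projective, hence quasi-K\"{a}hler, group; if also $G(\A)\cong G_{\Gamma}$ for some finite simple graph $\Gamma$, Theorem \ref{thm:artinserre}\eqref{ak1} compels $\Gamma$ to be complete multipartite, $\Gamma=K_{n_1,\ldots,n_r}$, and hence $G_{\Gamma}=F_{n_1}\times\cdots\times F_{n_r}$.

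The remaining equivalence $\eqref{rg2}\Leftrightarrow\eqref{rg3}$ will be quoted directly from Fan \cite{Fan97}: the group $G(\A)$ admits a decomposition as a finite direct product of finitely generated free groups precisely when the multiplicity graph $\Gamma(\A)$ is a forest.

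The hard part is the direction $\eqref{rg2}\Rightarrow\eqref{rg3}$ in Fan's equivalence, where one must show that any cycle in $\Gamma(\A)$ genuinely obstructs a direct-product-of-free-groups decomposition. Should one wish to reprove this using the tools in the survey, a natural strategy would be to compare the explicit combinatorial description of $\RR_1(\A)$, via local components coming from multiple points together with non-local components coming from neighborly partitions, with the resonance variety of a product $F_{n_1}\times\cdots\times F_{n_r}$, which by the product formula \eqref{eq:res prod} is a union of coordinate subspaces meeting only at the origin; a cycle in $\Gamma(\A)$ will typically produce either overlapping local components or non-local components incompatible with this coordinate-subspace structure.
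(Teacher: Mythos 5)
Your proof of the equivalence $\eqref{rg1}\Leftrightarrow\eqref{rg2}$ matches the paper exactly: since $X(\A)$ is a smooth quasi-projective variety, $G(\A)$ is quasi-K\"{a}hler, and Theorem \ref{thm:artinserre}\eqref{ak1} then forces any right-angled Artin presentation of $G(\A)$ to have the form $G_\G=F_{n_1}\times\cdots\times F_{n_r}$ with $\G=K_{n_1,\dots,n_r}$; the converse is the standard realization of a product of free groups as a RAAG on a complete multipartite graph.

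The gap is in your handling of $\eqref{rg2}\Leftrightarrow\eqref{rg3}$, where you attribute the full equivalence to Fan \cite{Fan97}. Fan's paper proves only the implication $\eqref{rg3}\Rightarrow\eqref{rg2}$: if $\Gamma(\A)$ is a forest, then $G(\A)$ is a finite direct product of free groups. The converse direction, $\eqref{rg2}\Rightarrow\eqref{rg3}$ --- that any such product decomposition forces the multiplicity graph to be a forest --- was not established by Fan; the paper cites Eliyahu, Liberman, Schaps, and Teicher \cite{ELST} (Theorem 6.1, read together with their Remark 2.8) for precisely this implication. You yourself flag $\eqref{rg2}\Rightarrow\eqref{rg3}$ as ``the hard part'' and sketch a resonance-variety strategy, but that sketch is not carried out: you would need to show that a cycle in $\Gamma(\A)$ actually produces a component of $\RR_1(\A)$ that cannot appear in $\RR_1(F_{n_1}\times\cdots\times F_{n_r})$, and as stated the argument does not rule out the possibility that the extra components coincide or cancel in a way compatible with the product structure. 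So as written, the proposal relies on a misattributed citation for one direction and an unfinished heuristic for the same direction; both need to be replaced by the \cite{ELST} reference (or by an actual proof along your suggested lines).
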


\begin{proof}
Recalling that every arrangement group is a quasi-K\"{a}hler group, 
equivalence \eqref{rg1} $\Leftrightarrow$ \eqref{rg2} follows 
at once from Theorem \ref{thm:artinserre}\eqref{ak1}.  

Implication \eqref{rg3} $\Rightarrow$ \eqref{rg2} 
is proved in \cite[Theorem 1]{Fan97}, while 
\eqref{rg2} $\Rightarrow$ \eqref{rg3} 
is proved in \cite[Theorem 6.1]{ELST}, if we keep  
in mind Remark 2.8 from that paper. 
\end{proof}

\subsection{Alexander polynomials of arrangements}
\label{subsec:alex hyp arr}
As we saw in Theorem \ref{thm:deltacd1}, there is a strong
relationship between the Alexander polynomial of a group $G$ 
and the codimension-$1$ strata of its degree-$1$ jump loci. 
This relationship was used in \cite[Proposition 3.4]{DPS-imrn} 
to compute the Alexander polynomial of $G=\pi_1(X(\A))$, 
in the case when $\A$ is a generic plane section of a central 
hyperplane arrangement.  A similar proof works for arbitrary 
affine line arrangements. For the sake of completeness, we 
go over the argument, with the required modifications. 

Let $\A$ be an arrangement of $n$ lines in $\C^2$, 
with group $G=G(\A)$.  Recall $H=\ab(G)$ is isomorphic 
to $\Z^n$, and comes equipped with a preferred basis,  
corresponding to the oriented meridians around the 
lines; this yields an identification of $\Z{H}$ with 
$\Lambda=\Z[t_1^{\pm 1}, \dots , t_n^{\pm 1}]$. 
Define the Alexander polynomial of the arrangement 
to be $\Delta_{\A}:=\Delta_G$, viewed 
as a Laurent polynomial in $\Lambda$.   This polynomial 
depends (up to normalization) only on the group $G$ and 
its peripheral structure; thus, only on the homeomorphism 
type of the complement $X(\A)$.   

\begin{example}
\label{ex:alex pen}
Let $\A$ a pencil of $n$ lines.  Recall we have 
$G=\langle x_1,\dots , x_n \mid \text{$x_1\cdots x_n$ central} \rangle$. 
It is readily checked that $\Delta_\A=0$ if $n=1$; 
$\Delta_\A=1$ if $n=2$; and 
$\Delta_\A= (t_1\cdots t_n-1)^{n-2}$, otherwise.  
\end{example}

\begin{example}
\label{ex:alex parallel}
Let $\A$ be an arrangement of $n-1$ parallel lines, 
together with a transverse line, i.e., an arrangement 
of type $\A(n-1,1)$. Then 
$G=\langle x_1,\dots , x_n \mid  \text{$x_n$ central} \rangle
=F_{n-1}\times \Z$.  
It is readily checked that $\Delta_\A=0$ if $n=1$; 
$\Delta_\A=1$ if $n=2$; and 
$\Delta_\A= (t_n-1)^{n-2}$, otherwise.  
\end{example}

Note that, for $n\ge 3$, the corresponding pair of arrangements 
have isomorphic groups, but non-homeomorphic complements: 
the difference is picked up by the Alexander polynomial. 

\begin{theorem} 
\label{thm:delta arr}
Let $\A$ be an arrangement of $n$ lines in $\C^2$, 
with Alexander polynomial $\Delta_\A$.  
\begin{romenum}
\item \label{aa1}
If $\A$ is a pencil and $n\ge 3$, then 
$\Delta_\A= (t_1\cdots t_n-1)^{n-2}$. 
\item \label{aa2}
If $\A$ is of type $\A(n-1,1)$ and $n\ge 3$, then 
$\Delta_\A= (t_n-1)^{n-2}$.
\item \label{aa3}
For all other arrangements, $\Delta_\A\doteq \const$. 
\end{romenum} 
\end{theorem}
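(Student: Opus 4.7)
The plan is to convert the statement into an analysis of the codimension-one components of $\VV_1(G(\A))$ via Theorem \ref{thm:deltacd1}. Since $H_1(G(\A),\Z)=\Z^n$ is torsion-free, $\Hom(G(\A),\C^\times)^0=(\C^\times)^n$, so $\cv(G(\A))$ is the union of codimension-one irreducible components of $\VV_1(G(\A))$. For $n\ge 2$, then, $\Delta_\A\doteq\const$ iff $\cv(G(\A))=\emptyset$, and $\Delta_\A$ vanishes identically iff $\VV_1(G(\A))=(\C^\times)^n$; in the non-constant regime, $V(\Delta_\A)$ is exactly the union of codimension-one components of $\VV_1(G(\A))$.

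For cases (i) and (ii) I would first observe that both arrangements yield the same abstract group $G\cong F_{n-1}\times\Z$, admitting the common presentation $\langle y_1,\dots,y_{n-1},z\mid [y_i,z],\,i=1,\dots,n-1\rangle$, with the cases differing only in how the central generator $z$ is identified with a word in the meridians: $z=x_1\cdots x_n$ for the pencil, and $z=x_n$ for the transverse line of $\A(n-1,1)$. A routine Fox calculus computation on this presentation yields the $n\times(n-1)$ Alexander matrix whose top $(n-1)\times(n-1)$ block is $(1-\zeta)\,I_{n-1}$ and whose bottom row is $(\tau_1-1,\dots,\tau_{n-1}-1)$, where $\zeta=\ab(z)$ and $\tau_k=\ab(y_k)$. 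Its $(n-1)\times(n-1)$ minors are $\pm(1-\zeta)^{n-1}$ (deleting the bottom row) and $\pm(\tau_k-1)(1-\zeta)^{n-2}$ (deleting the $k$-th row); since $1-\zeta,\tau_1-1,\dots,\tau_{n-1}-1$ are pairwise coprime in the Laurent polynomial ring, the gcd of these minors is $(1-\zeta)^{n-2}$. Substituting the appropriate value of $\zeta$ in meridional coordinates then delivers the stated formulas.

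For case (iii), the sub-case in which $\A$ consists of $n\ge 2$ parallel lines is immediate: $G=F_n$, $\VV_1(G)=(\C^\times)^n$, and $\Delta_\A=0$ by Theorem \ref{thm:deltacd1}\eqref{dc1}. Otherwise $\A$ has non-parallel lines and $b_1(G)\ge 2$, and it suffices to verify $\cv(G(\A))=\emptyset$. The components of $\VV_1(G(\A))$ through the identity are, by Theorem \ref{thm:tcone}\eqref{tc4} and the tangent-cone formula \eqref{eq:arr tcone}, the exponentials of irreducible components of $\RR_1(\A)$; these are either local components $L_J$ of dimension $|J|-1$, parallel-family components of dimension equal to the family size, or non-local multi-net components of dimension at most $3$ by \cite{FY,PeY,Yu}. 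The standing assumption forces $|J|\le n-1$ at every multiple point and each parallel family to have size at most $n-2$, so no such component has codimension one.

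The hard part will be ruling out codimension-one \emph{translated} components of $\VV_1(G(\A))$. For these I would appeal to Arapura's Theorem \ref{thm:arapura}\eqref{a1}: any such component has the form $\rho\cdot f^*(H^1(C,\C^\times))$ for a torsion character $\rho$ and an admissible pencil $f\colon X(\A)\to C$. Since any admissible target of a line-arrangement complement is a Zariski-open subset of $\CP^1$, $b_1(C)=k-1$ where $k$ is the number of punctures; codimension one would force $k=n$, and the connected-fiber condition built into admissibility would then require every line of $\A$ to lie in a distinct fiber of $f$. A short combinatorial check shows this forces $\A$ to be a pencil or of type $\A(n-1,1)$, both excluded; hence $\cv(G(\A))=\emptyset$ and Theorem \ref{thm:deltacd1}\eqref{dc3} completes the proof.
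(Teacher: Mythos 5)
Your global strategy matches the paper's: reduce everything to the assertion $\cv(G(\A))=\emptyset$ and invoke Theorem~\ref{thm:deltacd1}\eqref{dc3}. Your Fox-calculus derivation of cases \eqref{aa1}--\eqref{aa2} is a correct and welcome amplification of Examples \ref{ex:alex pen}--\ref{ex:alex parallel}, which the paper merely asserts; and you explicitly dispose of the all-parallel subcase (where $G(\A)=F_n$ and $\Delta_\A=0$), a degenerate case the paper's proof silently passes over. The bound on components of $\VV_1(\A)$ passing through $1$ -- local components from multiple points and parallel families, plus non-local multinet components -- is handled the same way as in the paper.

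The gap is in the last step, where you rule out codimension-one \emph{translated} components. You invoke Arapura's Theorem \ref{thm:arapura}\eqref{a1} to write such a component as $\rho\cdot f^*\bigl(H^1(C,\C^\times)\bigr)$, correctly deduce that $C$ must be $\CP^1$ punctured at $n$ points, and then assert that the connected-fiber condition forces every line of $\A$ to lie in a distinct fiber, which a ``short combinatorial check'' then promotes to pencil or near-pencil structure. That check is not short and, as stated, the intermediate claim fails: a line of $\A$ may be a base component of the pencil (lying in every fiber) or transverse to it (lying in no fiber), and when the pencil has degree $d\ge 2$ the $n$ special fibers can each be supported on one or two lines with multiplicity rather than on $n$ distinct lines. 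Your argument does go through cleanly when $d=1$, but ruling out $d\ge 2$ is precisely where the Pereira--Yuzvinsky theorem is needed -- a pencil of degree $\ge 2$ without base components has at most four completely reducible fibers, so $n\le 4$. This is exactly the input the paper uses: it cites \cite[Theorem~7.2]{PeY} to bound the dimension of \emph{every} non-local component of $\VV_1(\A)$ by $4$, dispatching $n\ge 6$ at once and handling $n\le 5$ by inspection. So while your Arapura-based route is an appealing way to unify the translated and untranslated cases, it either contains a genuine gap or quietly reduces to the same Pereira--Yuzvinsky bound the paper invokes directly; you should make that dependence explicit rather than burying it in the ``short combinatorial check.''
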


\begin{proof}
Cases \eqref{aa1} and \eqref{aa2} have been treated 
already in the previous two examples.  Thus, we 
may assume $\A$ does not belong to either class; 
in particular, $n\ge 3$.  

From \S\S\ref{subsec:cv arr}--\ref{subsec:parallel}, 
we know that intersection points of multiplicity $k+1\ge 3$, 
and families of $k\ge 2$ parallel lines give rise to local 
components of $\VV_1(\A)$, of dimension $k$.  
In both situations, we must have $k\le n-2$,    
since we are in case \eqref{aa3}.  

If $n\le 5$, then all components of $\VV_1(\A)$ 
are local, except if $\A$ is the arrangement of $5$ lines 
from Example \ref{ex:braid decone}, in which case 
$\VV_1(\A)$ has a $2$-dimensional global component. 
If $n\ge 6$, the variety $\VV_1(\A)$ may very well have 
non-local components; yet, as shown in \cite[Theorem~7.2]{PeY}, 
the dimension of any such component is at most $4$. 

Putting things together, we see that  all components of 
$\VV_1(\A)$ must have codimension at least $2$; in 
other words, the codimension-$1$ stratum of this 
variety, $\cv(\A)$, is empty. Since $n\ge 2$,  
Theorem \ref{thm:deltacd1}\eqref{dc3} implies that 
$\Delta_\A\doteq \const$.  
\end{proof}

\subsection{$\Sigma$-invariants of arrangements} 
\label{subsec:bns arr}
Let $\A$ be a central arrangement in $\C^{\ell}$, 
with complement $X=X(\A)$.  It follows from the 
work of Esnault, Schechtman, and Viehweg \cite{ESV} 
that the exponential map, 
$\exp\colon H^1(X,\C) \to H^1(X,\C^{\times})$, 
induces an isomorphism of analytic germs, 
$\exp\colon (\RR^i_d(X,\C),0)  \isom(\VV^i_d(X,\C),1)$, 
for all $i\ge 0$ and all $d>0$. In particular, the resonance 
varieties $\RR^i_d(X,\C)$ are all finite unions of rationally 
defined linear subspaces.  Formula \eqref{eq:bnsr res bound}  
now yields the following combinatorial upper bound for the 
BNSR invariants of an arrangement. 

\begin{prop}[\cite{PS-bns}]
\label{prop:bns hyp}
Let $\A$ be a hyperplane arrangement in $\C^{\ell}$, 
with complement $X$. Then
$\Sigma^q(X,\Z)\subseteq \big(\bigcup_{i\le q} 
\RR^i_1(X, \R)\big)^{\compl}$, for all $q\ge 0$.  
\end{prop}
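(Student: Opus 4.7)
The plan is to combine two ingredients already in the paper: the general upper bound from Theorem \ref{thm:bns tau} and the Esnault--Schechtman--Viehweg exponential isomorphism recalled just before the statement. The former gives $\Sigma$-bounds in terms of exponential tangent cones of characteristic varieties; the latter identifies those tangent cones with the resonance varieties in the arrangement setting.

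First, I would invoke Theorem \ref{thm:bns tau} applied to $X=X(\A)$, which yields
\begin{equation*}
\Sigma^q(X,\Z)\subseteq \Big( H^1(X,\R)\cap \bigcup_{i\le q}
\tau_1\big(\VV_1^i(X,\C)\big)\Big)^{\compl}
\end{equation*}
for every $q\ge 0$. So the problem reduces to showing that for each $i$,
\begin{equation*}
H^1(X,\R)\cap \tau_1\big(\VV_1^i(X,\C)\big) \;=\; \RR^i_1(X,\R).
\end{equation*}

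Next, I would use the ESV result recalled immediately before the proposition: the exponential map $\exp\colon H^1(X,\C)\to H^1(X,\C^{\times})$ restricts to an isomorphism of analytic germs $(\RR^i_1(X,\C),0)\isom (\VV^i_1(X,\C),1)$. Since $\RR^i_1(X,\C)$ is a homogeneous subvariety (indeed a finite union of rationally defined linear subspaces), this germ isomorphism forces equality of the corresponding exponential tangent cone and the resonance variety: by the very definition \eqref{eq:tau1}, a vector $z\in H^1(X,\C)$ lies in $\tau_1(\VV_1^i(X,\C))$ iff the analytic curve $t\mapsto \exp(tz)$ stays in $\VV_1^i$ near $1$, and via the germ isomorphism this is equivalent to $tz\in \RR^i_1(X,\C)$ for all small $t$, i.e.\ $z\in \RR^i_1(X,\C)$ by homogeneity. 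Thus $\tau_1(\VV_1^i(X,\C))=\RR^i_1(X,\C)$.

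Finally, intersecting with $H^1(X,\R)$ gives $\RR^i_1(X,\R)$ on the nose, since $\RR^i_1(X,\C)$ is defined by equations with real (in fact rational) coefficients. Taking complements and unioning over $i\le q$ yields the asserted inclusion. There is no real obstacle here; the only point requiring a touch of care is the passage from an analytic germ isomorphism to the equality of (global) homogeneous varieties $\tau_1(\VV_1^i)$ and $\RR^i_1$, which, as indicated, follows because $\RR^i_1$ is a union of linear subspaces and both sides are determined by their germs at the origin.
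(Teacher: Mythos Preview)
Your proposal is correct and follows essentially the same route as the paper: the paper invokes the Esnault--Schechtman--Viehweg germ isomorphism and then appeals to formula \eqref{eq:bnsr res bound}, which itself was derived from Theorem \ref{thm:bns tau} under precisely that germ-isomorphism hypothesis. You have simply unpacked the passage from the germ isomorphism to $\tau_1(\VV^i_1)=\RR^i_1$ a bit more explicitly than the paper does.
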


On the other hand, it is known from the work 
of Eisenbud, Popescu, and Yuzvinsky \cite{EPY} 
that resonance ``propagates" for the Orlik-Solomon 
algebra of an arrangement.  More precisely, if 
$i< j \le \ell$, then $\RR^{i}_1(X,\C) \subseteq \RR^{j}_1(X,\C)$. 
Using this fact, we get the following immediate corollary. 

\begin{corollary}
\label{cor:bns prop}
Let $X$ be a hyperplane arrangement complement. Then
$\Sigma^q(X,\Z)\subseteq \RR^q_1(X, \R)^{\compl}$, 
for all $q\ge 0$.  
\end{corollary}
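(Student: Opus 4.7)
The plan is to deduce the corollary from Proposition \ref{prop:bns hyp} by showing that, thanks to EPY propagation, the union $\bigcup_{i \le q} \RR^i_1(X,\R)$ collapses to the single resonance variety $\RR^q_1(X,\R)$, so that the two upper bounds coincide.

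First I would transport the EPY inclusion $\RR^i_1(X,\C) \subseteq \RR^j_1(X,\C)$, valid for $1 \le i < j \le \ell$, to real coefficients. This is automatic, since $\RR^i_1(X,\R) = \RR^i_1(X,\C) \cap H^1(X,\R)$ by the general base-change property of resonance varieties recorded in \S\ref{subsec:rv}. Next I would treat the degree-zero term separately, using that $\RR^0_1(X,\R) = \{0\}$ and that the homogeneous variety $\RR^q_1(X,\R)$ contains the origin whenever it is non-empty, i.e., whenever $H^q(X,\R) \ne 0$. Taken together, these observations give $\bigcup_{i \le q} \RR^i_1(X,\R) = \RR^q_1(X,\R)$ as long as the right-hand side is non-empty; substituting into the bound of Proposition \ref{prop:bns hyp} yields the claim. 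In the remaining degenerate case $\RR^q_1(X,\R) = \emptyset$ (which arises, for example, when $q$ exceeds the cohomological dimension of $X$), the desired conclusion $\Sigma^q(X,\Z) \subseteq H^1(X,\R) \setminus \{0\}$ is tautological.

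There is no genuine obstacle: once the EPY propagation property is in hand, the entire argument reduces to routine set-theoretic bookkeeping, consistent with the result being advertised as an \emph{immediate} corollary of the cited inputs. If one did not wish to invoke propagation, the inclusion would still follow trivially from Proposition \ref{prop:bns hyp} together with the evident containment $\RR^q_1(X,\R) \subseteq \bigcup_{i \le q} \RR^i_1(X,\R)$; the point of using propagation is precisely to establish that the simpler-looking bound of the corollary is in fact as sharp as the one furnished by the proposition.
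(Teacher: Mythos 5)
Your argument is correct and follows the same route as the paper: substitute the Eisenbud--Popescu--Yuzvinsky propagation property into the bound of Proposition \ref{prop:bns hyp}, with the degree-$0$ term and the degenerate case $\RR^q_1(X,\R)=\emptyset$ handled separately, and the passage from $\C$- to $\R$-coefficients justified by base change. Your final remark is worth singling out, however: as you observe, the stated inclusion $\Sigma^q(X,\Z)\subseteq\RR^q_1(X,\R)^{\compl}$ already follows from Proposition \ref{prop:bns hyp} alone, because $\RR^q_1(X,\R)$ is one of the terms in the union $\bigcup_{i\le q}\RR^i_1(X,\R)$ and taking complements reverses inclusions. The EPY propagation is therefore needed only to show that the two upper bounds coincide---that the corollary's simpler bound has lost nothing relative to the proposition's. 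The paper's phrasing (``Using this fact, we get the following immediate corollary'') can be read as suggesting EPY is logically required for the inclusion itself, whereas it is required only for sharpness; your clarification of this point is a genuine, if modest, improvement on the paper's exposition.
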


The obvious question now is whether equality holds 
in the above corollary.  In the case when $q=1$, this is 
a question about the BNS invariant of the arrangement group.  
We state this question (as well as a couple of related 
ones), as follows. 

\begin{question}
\label{quest:bns arr}
Let $\A$ be an arrangement of lines in $\C^2$, with 
group $G=G(\A)$. 
\begin{romenum}
\item Does the equality $\Sigma^1 (G)=- \Sigma^1 (G)$ hold?  
\item Even stronger, does the equality 
$\Sigma^1(G)=\RR_1(G,\R)^{\compl}$ hold?
\item If it doesn't, is the BNS invariant combinatorially determined?
\end{romenum}
\end{question}
For a complexified real arrangement, complex conjugation in 
$\C^{2}$ restricts to a homeomorphism $\alpha\colon X \to X$ 
with the property that $\alpha_*= - \id$ on $H_1(X, \Z)$. 
It follows that $\Sigma^1 (G)=- \Sigma^1 (G)$, which is 
consistent with the symmetry property of $\RR_1(G, \R)^{\compl}$. 

\section{Milnor fibrations}
\label{sect:milnor}

\subsection{The Milnor fibration of a polynomial}
\label{subsec:mf poly}

Let $f\in \C[z_0,\dots, z_d]$ be a weighted homogeneous polynomial 
of degree $n$, with positive integer weights $(w_0,\dots, w_d)$ 
assigned to the variables.  Denote by $V=V(f)$ the zero-set 
of the polynomial function $f\colon \C^{d+1} \to \C$, and let 
$X:=\C^{d+1} \setminus V$ be the complement of this variety. 

As shown by Milnor \cite{Mi}, the restriction $f\colon X \to \C^{\times}$ 
is the projection map of a smooth, locally trivial bundle, nowadays known 
as the (global) {\em  Milnor fibration}\/  of $f$.  The {\it Milnor fiber}, 
$F=F(f)$, is simply the typical fiber of this fibration, $F:=f^{-1}(1)$.  
The Milnor fiber of $f$ is a smooth affine variety, having the homotopy 
type of a $d$-dimensional, finite CW-complex. We will assume 
throughout that $f$ is not a proper power, so that the 
Milnor fiber $F$ is connected.

The {\em geometric monodromy}\/ of the Milnor fibration is 
the map  $h\colon F\to F$,  $(z_0,\dots,z_d) \mapsto 
(\zeta_n^{w_0} z_0,\dots,\zeta_n^{w_d} z_d)$, 
where $\zeta_n = \exp(2\pi i/n)$.  Plainly, the mapping 
torus of $h$ is homeomorphic to the complement $X$.   
Moreover, the map $h$ generates a cyclic group $\Z_n$ 
which acts freely on $F$.  This free action gives rise to a 
regular $n$-fold covering, $F \to F/\Z_n$.

\subsection{The Milnor fiber as a finite cyclic cover}
\label{subsec:mf cyclic}
Let us describe the cover $F \to F/\Z_n$ more concretely, at least 
in the case when $f$ is homogeneous.  Following the approach 
from \cite{CS95}  (with a few more details added in for completeness), 
we build an interlocking set of fibrations in which this cover fits.  
These fibrations will be recorded in Diagram \eqref{eq:mf diagram} below. 

Start with the map $p\colon\C^{d+1}\setminus\{0\} \to \CP^d$, 
$z=(z_0,\dots,z_d) \mapsto (z_0:\dots:z_d)$; this is the 
projection map of the Hopf bundle, with fiber $\C^{\times}$.  
Denote by $U$ the complement of the hypersurface in $\CP^{d}$ 
defined by $f$.  The map $p$ restricts to a bundle map, 
$p_X\colon X\to U$, also with fiber $\C^{\times}$. 
By homogeneity, $f(wz)=w^n f(z)$, for every $z\in X$ and 
$w\in \C^{\times}$. Thus, the restriction of $f$ to a fiber of 
$p_X$ may be identified with the covering projection 
$q\colon \C^{\times}\to \C^{\times}$, $q(w)=w^n$.   

Let $E=\set{(z,w)\in X\times \C^{\times}\mid f(z)=w^n}$, and let 
$\pi\colon E\to X$ and $\phi\colon E\to \C^{\times}$ be the restrictions 
of the coordinate projections on $X\times \C^{\times}$.  Clearly, 
$f\circ \pi=q\circ \phi$.  Thus, $\pi$ is a regular $\Z_n$-cover, 
obtained as the pullback of $q$ along $f$.  
\begin{equation}
\label{eq:mf diagram}
\xymatrix{
\Z_n \ar[dr] & \Z_n \ar[dr]  \\
\Z_n \ar[dr] & E \ar[r]^{\phi} \ar[d]^{\psi} \ar[dr]^{\pi} 
& \C^{\times}\ar[d] \ar[dr]^{q} \\
& F\ar[r] \ar[dr]^{p_F} & X\ar[r]^{f}\ar[d]^{p_X} & \C^{\times}\\  
& & U 
}
\end{equation}

Now let $p_F\colon F \to U$ be the restriction 
of $p_X$ to the Milnor fiber. It is readily seen that this is the 
orbit map of the free action of the geometric monodromy on 
$F$; hence, $F/{\Z}_n \cong  U$. 

Finally, given an element $(z,w)\in E$, note that 
$f(w^{-1}z)=w^{-n}f(z)=1$.  Hence, we may define a map 
$\psi\colon E\to F$ by $\psi(z,w)=w^{-1}z$.  Plainly, 
$p_F\circ \psi = p_X\circ \pi$.  Therefore, the pullback of 
$p_F$ along $p_X$ coincides with the covering map 
$\pi\colon E\to X$.  

\subsection{Identifying the cyclic cover}
\label{subsec:char hom}
We now determine the classifying homomorphism 
$\lambda\colon \pi_1(U)\surj \Z_n$ of the regular, cyclic 
$n$-fold cover $p_{_F}\colon F\to U$.  

Let $f=f_1^{a_1} \cdots f_s^{a_s}$ be the factorization 
into distinct irreducible factors of $f$. Recall we assume 
$f$ is not a proper power, i.e., $\gcd(a_1,\dots , a_s)=1$. 
The hypersurface $V$ decomposes into irreducible 
components, $V_i=f_i^{-1}(0)$.  Choose as generators 
of $\pi_1(X)$ meridian circles $\gamma_i$ about $V_i$, 
with orientations compatible with the complex orientations 
of $\C^{d+1}$ and $V_i$.  

The map $f_{\sharp}\colon \pi_1(X)\to \Z$ factors through 
$f_*\colon H_1(X,\Z)\to \Z$, which is given by 
$f_*([\gamma_i])=a_i$, see \cite[pp.~76--77]{Di92}.  On the 
other hand, the map $(p_{_{X}})_{*}\colon H_1(X,\Z)\to H_1(U,\Z)$ 
may be identified with the canonical projection 
$\Z^s \surj \Z^s/(n_1,\dots,n_s)$, where 
$n_i=\deg (f_i)$, see \cite[pp.~102--103]{Di92}.

Putting these observations together, we obtain 
the following generalization of Proposition 1.2 
from \cite{CS95}.

\begin{prop}
\label{prop:mf cover} 
The classifying homomorphism of the covering 
$p_{_F}\colon F\to U$ is the composition  $\lambda=\alpha\circ \ab$, 
where $\ab\colon \pi_1(U) \rightarrow H_1(U) \cong
{\Z}^s/(n_1,\dots,n_s)$ is the abelianization map, and 
$\alpha\colon H_1(U) \rightarrow {\Z}_n$ is induced by the
composition $\Z^s \xrightarrow{ (a_1,\dots,a_s) }  \Z \surj \Z_n$. 
\end{prop}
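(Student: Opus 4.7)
The plan is to exploit the two different presentations of the cover $\pi \colon E \to X$ provided by diagram \eqref{eq:mf diagram}.

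First, I would identify the classifying homomorphism $\bar\lambda \colon \pi_1(X) \surj \Z_n$ of the cover $\pi$. By construction, $\pi$ is the pullback of the standard $\Z_n$-cover $q \colon \C^\times \to \C^\times$, $w \mapsto w^n$, along the map $f \colon X \to \C^\times$. By functoriality of pullback covers, $\bar\lambda$ is the composition
\[
\pi_1(X) \xrightarrow{\ f_{\sharp}\ } \pi_1(\C^\times) = \Z \surj \Z_n,
\]
the last arrow being reduction modulo $n$. Passing to abelianizations and invoking the cited formula $f_*([\gamma_i]) = a_i$, one sees that $\bar\lambda$ factors through $\ab \colon \pi_1(X) \to H_1(X) = \Z^s$ as the mod-$n$ reduction of the linear map $(a_1, \dots, a_s) \colon \Z^s \to \Z$.

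Second, I would apply functoriality in the other direction. The construction of $\psi \colon E \to F$ gives $p_{_F} \circ \psi = p_{_X} \circ \pi$, and as observed just before the statement, this square exhibits $\pi$ as the pullback of $p_{_F} \colon F \to U$ along $p_{_X} \colon X \to U$. Consequently, the classifying map $\lambda \colon \pi_1(U) \surj \Z_n$ that we seek must satisfy $\lambda \circ (p_{_X})_\sharp = \bar\lambda$.

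Third, I would use the cited identification of $(p_{_X})_* \colon H_1(X) \to H_1(U)$ with the canonical projection $\Z^s \surj \Z^s/(n_1, \dots, n_s)$ to close the argument. Since $(p_{_X})_*$ is surjective, $\lambda$ must factor through $\ab \colon \pi_1(U) \to H_1(U)$, and the identity $\lambda \circ (p_{_X})_\sharp = \bar\lambda$ determines $\lambda$ uniquely; the formula it is forced into is precisely $\alpha \circ \ab$, provided $\alpha$ descends from $\Z^s$ to the quotient $\Z^s/(n_1,\dots,n_s)$. The only non-trivial check is well-definedness of $\alpha$, which amounts to verifying that the map $(b_1,\dots,b_s) \mapsto \sum a_i b_i \pmod{n}$ vanishes on $(n_1,\dots,n_s)$; this follows from the degree identity
\[
\sum_{i=1}^{s} a_i n_i \;=\; \sum_{i=1}^{s} a_i \deg(f_i) \;=\; \deg(f) \;=\; n \;\equiv\; 0 \pmod{n}.
\]
I do not expect any serious obstacle beyond careful diagram-chasing; the only point requiring attention is the well-definedness verification above, which is really just a bookkeeping consequence of homogeneity of $f$.
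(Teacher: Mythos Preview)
Your proposal is correct and follows essentially the same route as the paper: the paper assembles exactly the two ingredients you use---the identification of $f_*$ on meridians and the description of $(p_{_X})_*$ as the projection $\Z^s \surj \Z^s/(n_1,\dots,n_s)$---and declares that ``putting these observations together'' yields the result, via the two pullback descriptions of $\pi\colon E\to X$ in diagram~\eqref{eq:mf diagram}. You have simply made the diagram chase and the well-definedness check for $\alpha$ explicit, which the paper leaves to the reader.
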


Alternatively, if we identify  $\pi_1(U)=\pi_1(X) / 
\langle \gamma_1^{n_1}\cdots \gamma_s^{n_s}\rangle$  
and write $\Z_n=\langle g \mid g^n=1\rangle$, then   
the epimorphism $\lambda\colon \pi_1(U)\surj \Z_n$ 
is given by $\lambda(\gamma_i) = g^{a_i}$. 

\subsection{Homology of the Milnor fiber}
\label{subsec:mf homology}
In \cite{Mi}, Milnor determined the homotopy type of the 
Milnor fiber in an important special case: If $f=f(z_0,\dots, z_d)$ 
has an isolated critical point at $0$ of multiplicity $\mu$, then 
$F(f)\simeq \bigvee^{\mu} S^d$.  As shown by Milnor and Orlik, 
the multiplicity can be calculated in terms of the degree and the 
weights as $\mu=(n/w_0 -1)\cdots (n/w_d-1)$. 

For non-isolated singularities, though, no general formula for 
the homology of the Milnor fiber is known. Nevertheless, 
Theorem \ref{thm:betti1 cover}, together with 
Proposition \ref{prop:mf cover}, yields a formula for 
the homology groups $H_1(F,\k)$, in terms of the 
characteristic varieties $V_d(U,\k)$ of the projectivized 
complement, under suitable assumptions on the field $\k$. 

\begin{theorem}
\label{thm:b1 mf}
Let $f=f_1^{a_1} \cdots f_s^{a_s}$ be a homogeneous 
polynomial of degree $n$, with irreducible factors $f_i$, 
and with $\gcd(a_1,\dots , a_s)=1$.  
Let $\k$ be an algebraically closed field of characteristic 
not dividing $n$, and fix a primitive $n$-th root of 
unity $\zeta\in \k$. Then,
\begin{equation}
\label{eq:betti1 mf}
\dim_{\k} H_1(F, \k) =s-1 + \sum_{1\ne k | n} \varphi(k) 
\depth_{\k} \big(\rho^{n/k}\big), 
\end{equation}
where $\rho\colon \pi_1(U)\to \k^{\times}$ is the character 
defined on meridians by $\rho(\gamma_i) = \zeta^{a_i}$. 
\end{theorem}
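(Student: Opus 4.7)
The plan is to apply Theorem \ref{thm:betti1 cover} directly to the regular $n$-fold cyclic cover $p_F\colon F\to U$ identified in Proposition \ref{prop:mf cover}. Since $f$ is not a proper power, $F$ is connected; the complement $U=\CP^d\setminus V$ has finitely generated fundamental group; and the hypotheses $\bar\k=\k$ and $\ch\k\nmid n$ are given. Choosing the embedding $\iota\colon\Z_n\to\k^{\times}$ so that the fixed generator $g\in\Z_n$ maps to the primitive $n$-th root of unity $\zeta$, Theorem \ref{thm:betti1 cover} yields
\[
\dim_{\k}H_1(F,\k) \;=\; \dim_{\k}H_1(U,\k) \;+\; \sum_{1\ne k\mid n}\varphi(k)\, \depth_{\k}\big((\iota\circ\lambda)^{n/k}\big).
\]

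The next step is to identify $(\iota\circ\lambda)^{n/k}$ with $\rho^{n/k}$. By the explicit formula for $\lambda$ given at the end of \S\ref{subsec:char hom}, we have $\lambda(\gamma_i)=g^{a_i}$, whence $(\iota\circ\lambda)(\gamma_i)=\zeta^{a_i}=\rho(\gamma_i)$ on each meridional generator; since the $\gamma_i$ generate $\pi_1(U)$, this gives $\iota\circ\lambda=\rho$, and hence the equality of the corresponding powers.

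It remains to show $\dim_{\k}H_1(U,\k)=s-1$. The discussion in \S\ref{subsec:char hom} identifies $(p_X)_*\colon H_1(X,\Z)\cong\Z^s\surj H_1(U,\Z)$ as the canonical projection $\Z^s\surj\Z^s/\langle(n_1,\dots,n_s)\rangle$, and Smith normal form on the $1\times s$ matrix $(n_1,\dots,n_s)$ gives
\[
H_1(U,\Z)\cong\Z^{s-1}\oplus\Z/\gcd(n_1,\dots,n_s).
\]
The degree identity $n=\sum_i a_in_i$ forces $\gcd(n_1,\dots,n_s)\mid n$; since $\ch\k\nmid n$, the torsion summand vanishes upon tensoring with $\k$, leaving $\dim_{\k}H_1(U,\k)=s-1$. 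Substituting both computations into the displayed formula produces the claimed identity. There is no real obstacle: the argument is a direct assembly of Proposition \ref{prop:mf cover}, Theorem \ref{thm:betti1 cover}, and the compatibility $\iota(g)=\zeta$, with the only subtlety being the elementary observation $\gcd(n_i)\mid n$ used to rule out characteristic torsion in $H_1(U,\k)$.
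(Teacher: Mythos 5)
Your proposal is correct and takes essentially the same route as the paper, which simply notes that Theorem \ref{thm:betti1 cover} together with Proposition \ref{prop:mf cover} yields the formula. You supply the missing bookkeeping — namely, that $\iota\circ\lambda=\rho$ once $\iota(g)=\zeta$, and that $H_1(U,\Z)\cong\Z^{s-1}\oplus\Z/\gcd(n_1,\dots,n_s)$ with the torsion killed over $\k$ because $\gcd(n_i)\mid\sum a_i n_i=n$ and $\ch\k\nmid n$ — which is exactly the short verification the paper leaves to the reader.
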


In the case where $\deg(f_i)=1$ for $1\le i\le s$, this result 
reduces to Theorem 4.4 from \cite{CDS}. 

\subsection{Milnor fibers of arrangements}
\label{subsec:mf arr}

Much effort has been put over the past two decades in 
computing the homology groups of the Milnor fiber  
of a hyperplane arrangement. For our purposes here, 
we shall restrict to central arrangements in $\C^3$, 
defined by polynomials of the form $f=f_1\cdots f_n$, 
with $f_i=f_i(z_0, z_1, z_2)$ linear forms.  

Let $\A$ be such an arrangement, with 
complement $X$, projectivized complement $U$, 
and meridians  $\gamma_1,\dots ,\gamma_n$.  Since 
$\CP^{d}\setminus \{f_i=0\}\cong \C^d$, the bundle  
$p_X\colon X\to U$ is trivial.   This gives a diffeomorphism 
$X\cong U\times \C^{\times}$, under which the group 
$\pi_1(\C^{\times})$ corresponds to 
$\Z=\langle \gamma_1\cdots \gamma_n \rangle$.  
Thus, the characteristic varieties of $\pi_1(U)$ are given by
\begin{equation}
\label{eq:vux}
\VV_d(U,\k)=\{ t\in (\k^{\times})^n \mid t\in \text{$\VV_d(X,\k)$ 
and $t_1\cdots t_n=1$}\}. 
\end{equation}
In particular, $\VV_d(U,\C)\subset (\C^{\times})^n$ 
is the intersection of $\VV_d(\A)=\VV_d(X,\C)$ with 
the subtorus $(\C^{\times})^{n-1}=\set{t_1\cdots t_n=1}$.  

Some well-known questions arise.  

\begin{question}
\label{quest:mf}
Let $F=\{f=1\}$ be the Milnor fiber of a central arrangement 
$\A$ in $\C^3$. 
\begin{romenum}
\item  \label{qmf1}
Is $H_1(F,\Z)$ torsion-free?
\item \label{qmf2}
Is $H_1(F,\Z)$ determined by the intersection lattice $L(\A)$?
\item \label{qmf3}
Is $b_1(F)$ determined by $L(\A)$?
\end{romenum} 
\end{question}

Of course, an affirmative answer to \eqref{qmf2} implies one 
for \eqref{qmf3}, while an  affirmative answer to both \eqref{qmf1} 
and  \eqref{qmf3} implies one for \eqref{qmf2}.  In view of 
Theorem \ref{thm:b1 mf}, if one could show that all translated 
components of $\VV_d(U,\C)$ are combinatorially determined, 
one would obtain an affirmative answer to \eqref{qmf3}. 

\begin{example}
\label{ex:milnor braid}  
Let $\A$ be a generic $3$-slice of the braid arrangement, 
with defining polynomial $f=z_0z_1z_2(z_0-z_1)(z_0-z_2)(z_1-z_2)$.
Let $U$ be the projectivized complement of $\A$, and let 
$\rho\colon \pi_1(U)\to \C$, $\gamma_i \mapsto \zeta_6$ 
be the character corresponding to the Milnor fiber $F$.  
Recalling from Example \ref{ex:cv braid} the decomposition 
into irreducible components of $\VV_1(U)$, we see that 
$\lambda^2$ belongs to $V_{\Pi}$, yet $\lambda\notin \VV_1(U)$.  
Since $\VV_2(U)=\{1\}$, we find 
\[
b_1(F)=5 + \varphi(3)\cdot \depth_{\C} (\lambda^2) = 5+2\cdot 1=7. 
\]
Direct computation with the Jacobian matrix of $\pi_1(U)$, based 
on the method outlined in Proposition \ref{prop:fox} shows that, 
in fact, $H_1(F,\Z)=\Z^7$. 
\end{example}

If one is willing to go to multi-arrangements, then the answer 
to Question \ref{quest:mf}\eqref{qmf1} is negative, as the 
following example from \cite[\S 7.3]{CDS} illustrates. 

\begin{example} 
\label{ex:mf deleted B3}
Let $\A$ be the be the deleted $\operatorname{B}_3$ 
arrangement discussed in Example \ref{ex:deleted B3}.  
Assigning weights $(2,1,3,3,2,2,1,1)$ to the 
factors of $Q(\A)$, we obtain a new homogeneous 
polynomial (of degree $15$), 
\[
f=z_0^2z_1(z_0^2-z_1^2)^3(z_0^2-z_2^2)^2(z_1^2-z_2^2).
\]

Note that $\A$ and $V(f)$ share the same (projectivized) 
complement, yet the Milnor fibers of $Q(\A)$ and $f$ are 
different.  Applying Theorem \ref{thm:b1 mf} to the Milnor fiber 
$F=F(f)$, we find that $\dim_\k H_1(F, \k)=7$ if $\ch\k \ne 2,3$, 
or $5$, yet $\dim_\k H_1(F, \k)=9$ if $\ch\k =2$.  

Direct computation with the Jacobian matrix of $\pi_1(U)$ 
yields the precise formula for the homology with integer 
coefficients:  
\[
H_1(F, \Z)=\Z^7\oplus\Z_2\oplus\Z_2.
\]
\end{example}

\subsection{Formality of the Milnor fiber}
\label{subsec:mf formality}

The following question was raised in \cite{PS-formal}:  Is the 
Milnor fiber of a reduced polynomial, $F(f)$, always $1$-formal? 
This question has led to a certain amount of recent activity, out 
of which two completely different examples came out, showing 
that, in general, the Milnor fiber is {\em not}\/ formal. 

\begin{example}[Zuber \cite{Zu}]
\label{ex:zuber}
Let $\A=\A(3,3,3)$ be the monomial arrangement in $\C^3$ 
defined by the polynomial 
\[
f(z_0,z_1,z_2)=(z_0^3-z_1^3)(z_0^3-z_2^3)(z_1^3-z_2^3),
\]
and let $F$ be the Milnor fiber of $f$.  It is shown in \cite{Zu} 
that $TC_1(\VV_1(F,C))\ne \RR_1(F,\C)$; hence, by 
Theorem \ref{thm:tcone}, $F$ is not $1$-formal. 
\end{example}

\begin{example}[Fern\'andez de Bobadilla \cite{FdB}]
\label{ex:bobadilla}
Consider the polynomial 
\[
f(z_0,\dots , z_{10}) = z_0z_2z_3z_5z_6 + z_0z_2z_4 z_7+ 
z_1z_2z_4z_8 + z_1z_3z_5z_9 + z_1z_3z_4z_{10}.
\]
Then $f$ is weighted homogeneous of degree $5$, with weights 
$(1,1,1,1,1,1,1,2,2,2,2)$. As shown in \cite{FdB}, the Milnor fiber 
$F$ is homotopy equivalent to the complement of the coordinate
subspace arrangement $\A=\A_K$ from \cite[\S 9.3]{DeS}.  
This arrangement consists of subspaces $H_1,\dots , H_5$ 
in $\C^6$, given by $H_{i}=\set{x_i=x_{i+1}=0}$.  

It is easy to see that $F$ is $2$-connected.  Computations 
from \cite[\S 6.2]{DeS}  show that  there are classes 
$\alpha, \beta, \gamma\in H^3(F,\Z)=\Z^5$ such that 
the triple Massey product 
$\langle \alpha, \beta, \gamma\rangle \in H^8(F,\Z)=\Z$ 
is defined, with $0$ indeterminacy, and does not vanish.   
Hence, $F$ is not formal. 
\end{example}

In view of these examples, Question 5.5 from \cite{PS-formal} 
needs to be reformulated, as follows.
\begin{question}
\label{question:formal mf}
For which (weighted) homogeneous polynomials $f$ is 
the Milnor fiber $F(f)$ a $1$-formal (or even formal) space?
\end{question}

For hyperplane arrangements, we can ask an even more precise question. 
\begin{question}
\label{quest:formal mf arrr}
Let $\A$ be a central arrangement in $\C^3$, with Milnor fiber $F$.
\begin{romenum}
\item Is the Milnor fiber $1$-formal? 
\item If not, can this be detected by means of (triple) Massey 
products in $H^2(F,\Q)$?
\item Is the formality property of the Milnor fiber 
combinatorially determined? 
\end{romenum}
\end{question}

\section{Three-dimensional manifolds}
\label{sect:3mfd}

\subsection{Alexander polynomial and characteristic varieties}
\label{subsec:res3}

Throughout this section, $M$ will be a compact, connected 
$3$-manifold, possibly with boundary $\partial M$.  Let 
$G=\pi_1(M)$ be the fundamental group of $M$, and 
$H=\ab(G)$ its maximal torsion-free abelian quotient. 
Let $E_1(A_G)\subset \Z{H}$ be the Alexander ideal, and 
$\Delta_G=\gcd(E_1(A_G))$ the Alexander polynomial of $G$.  

We start with a lemma from \cite{DPS-imrn}, based on the work 
of Eisenbud--Neumann \cite{EN} and McMullen \cite{McM}. 

\begin{lemma} 
\label{lem:3mfd}
If either (1) $\partial M\ne \emptyset$ and $\chi (\partial M)=0$, or 
(2) $\partial M=\emptyset$ and $M$ is orientable, then $E_1(A_G)$ 
is almost principal (in the sense of Definition \ref{def:alexprinc}).
\end{lemma}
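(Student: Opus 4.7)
The plan is to exhibit $E_1(A_G)$ as $\Delta_G \cdot I_H^q$ for some small $q$, from which the almost-principal inclusion becomes immediate. In each case I will use Fox's fundamental formula to pin down the left kernel of the Alexander matrix $\Phi_G$, and in case (2) I will additionally use Poincar\'{e} duality for the cellular $\Lambda$-chain complex of $\widetilde{M}$ (with $\Lambda = \Z H$) to pin down the right kernel; the rest is linear algebra over the UFD $\Lambda$, exploiting the coprimality $\gcd(t_1 - 1, \dots, t_n - 1) = 1$.

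For case (1), since $\chi(M) = \tfrac12 \chi(\partial M) = 0$ and $\partial M \neq \emptyset$, $M$ collapses onto a $2$-dimensional spine, so I would present $G$ from a CW-structure with one $0$-cell, $n$ $1$-cells, and $n-1$ $2$-cells; then $\Phi_G$ is an $n \times (n-1)$ matrix over $\Lambda$. Fox's fundamental formula puts the row $v = (t_1 - 1, \dots, t_n - 1)$ in the left kernel of $\Phi_G$, and the standard cofactor identity on $n \times (n-1)$ matrices puts the vector of signed maximal minors $((-1)^{i+1} D_i)$ in the same left kernel. Assuming $\Phi_G$ has generic rank $n-1$ (otherwise $\Delta_G = 0$ and $E_1(A_G) = 0$, and the claim is trivial), proportionality of these two rows over $\mathrm{Frac}(\Lambda)$ combined with coprimality of $\{t_i - 1\}$ forces $(-1)^{i+1} D_i = \Delta_G \cdot (t_i - 1)$ up to units. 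Hence $E_1(A_G) = \Delta_G \cdot I_H$, which yields the almost-principal inclusion with $q = 1$.

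For case (2), a Heegaard-type CW-structure on the closed oriented manifold gives one $0$-cell, $n$ $1$-cells, $n$ $2$-cells, and one $3$-cell, so $\Phi_G = \partial_2$ is an $n \times n$ matrix with vanishing determinant; the Fox formula again puts $v$ in the left kernel. The new ingredient is Poincar\'{e} duality for the $\Lambda$-chain complex of $\widetilde{M}$: the top boundary $\partial_3 \colon \Lambda \to \Lambda^n$ lands in the right kernel of $\partial_2$, and on a dualizable cell structure it coincides, up to the involution $t_i \mapsto t_i^{-1}$ and sign, with the transpose of $\partial_1$. Consequently the right kernel contains a column vector $w$ whose $i$-th entry is $(t_i - 1)$ times a unit of $\Lambda$. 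Running the adjugate-matrix argument on both sides---each row of $\mathrm{adj}(\Phi_G)$ proportional to $v$, each column proportional to $w$---and clearing denominators using $\gcd\{(t_i - 1)(t_j - 1)\} = 1$, I would get $M_{ij} = \mathrm{unit} \cdot \Delta_G \cdot (t_i - 1)(t_j - 1)$. Hence $E_1(A_G) = \Delta_G \cdot I_H^2$, and the conclusion follows with $q = 2$.

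The only substantive step is the Poincar\'{e} duality input in case (2): verifying that $\partial_3$ has the stated form requires a chain-level self-duality of the $\Lambda$-complex of $\widetilde{M}$ compatible with the involution $t_i \mapsto t_i^{-1}$, which is precisely the ingredient extracted from the Eisenbud--Neumann \cite{EN} and McMullen \cite{McM} framework cited in the statement. Everything else reduces to linear algebra over the UFD $\Lambda$ and the coprimality of the meridional elements $t_i - 1$.
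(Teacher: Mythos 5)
The paper itself gives no self-contained proof: the paragraph following the statement simply cites Eisenbud--Neumann \cite{EN} for case~(1) and McMullen \cite{McM} for case~(2). Your sketch is therefore doing more than the paper, and its skeleton---Fox's fundamental formula supplying one kernel vector of $\Phi_G$, Poincar\'e duality of the $\Lambda$-chain complex of $\widetilde M$ supplying the other in the closed case, then linear algebra over the UFD $\Lambda$---is indeed the right one and is essentially what those references do.

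There is, however, a genuine gap: the coprimality $\gcd(t_1-1,\dots,t_n-1)=1$ on which both halves of your argument lean \emph{fails whenever $b_1(M)=1$}, and such manifolds lie squarely inside the hypotheses of the lemma (knot exteriors for (1); zero-surgeries on knots for (2)). When $H\cong\Z$, every $t_i$ is a power $t^{a_i}$ of one generator, so $\gcd(t^{a_i}-1)=t^{\gcd a_i}-1=t-1$. Your claimed equality $E_1(A_G)=\Delta_G\cdot I_H$ is then false: for the trefoil, with $G=\langle a,b\mid aba=bab\rangle$, the Alexander matrix is $(1-t+t^2,\ -1+t-t^2)^T$, so $E_1(A_G)=(1-t+t^2)=(\Delta_G)$, which strictly contains $\Delta_G\cdot I_H=(t-1)(\Delta_G)$; the scalar $\lambda$ you "clear denominators'' with is an honest fraction here. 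The conclusion survives with a small repair: set $g=\gcd_i(t_i-1)$ and $u_i=(t_i-1)/g$; then the proportionality argument gives $D_i=\Delta_G\,u_i$ up to a global unit, hence $E_1(A_G)=\Delta_G\cdot(u_1,\dots,u_n)$, and since $I_H=(t_i-1)=g\cdot(u_i)\subseteq(u_i)$ you still get $\Delta_G\cdot I_H\subseteq E_1(A_G)$, i.e.\ almost-principality with $q=1$. The analogous patch works in case~(2), where $\gcd_{i,j}\big((t_i-1)(t_j-1)\big)=g^2$, yielding the inclusion $\Delta_G\cdot I_H^2\subseteq E_1(A_G)$ (note the paper itself only asserts an inclusion here, not equality). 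Finally, you are right to flag the Poincar\'e-duality step as the one requiring care: it needs the chain-level self-duality of $C_*(\widetilde M;\Lambda)$ (with the involution $t\mapsto t^{-1}$) for a handle-decomposition CW-structure, and that is exactly the ingredient imported from \cite{EN,McM}.
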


In the first case, $E_1(A_G)= I_H \cdot (\Delta_G)$, by \cite{EN}, 
while in the second case, $I^2_H\cdot (\Delta_G)\subseteq E_1(A_G)$, 
by \cite{McM}.  Proposition \ref{prop:v1 ap} now yields the 
following corollary, relating the first characteristic variety of $M$ 
to the zero set of the Alexander polynomial of $G=\pi_1(M)$. 

\begin{corollary}
\label{cor:v1 3m}
Under the above assumptions, 
\begin{romenum}
\item \label{3m1}
$\big( \VV_1(G) \cap \Hom(G,\C^{\times})^0\big) \setminus\set{1} 
= V(\Delta_G) \setminus\set{1}$.
\item  \label{3m2}
If, moreover, $H_1(G,\Z)$ is torsion-free, then 
$\VV_1(G)  \setminus\set{1} = V(\Delta_G) \setminus\set{1}$.
\end{romenum}
\end{corollary}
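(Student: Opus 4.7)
The plan is to deduce this corollary directly by combining Lemma \ref{lem:3mfd} with Proposition \ref{prop:v1 ap}, so the strategy is simply to verify that the hypotheses of the latter are supplied by the former.

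First, I would invoke Lemma \ref{lem:3mfd} to conclude that, under either of the two stated alternatives on $(M,\partial M)$, the Alexander ideal $E_1(A_G)$ is almost principal in the sense of Definition \ref{def:alexprinc}. In case (1), the Eisenbud--Neumann theorem \cite{EN} supplies the sharper identity $E_1(A_G) = I_H \cdot (\Delta_G)$, which trivially satisfies the inclusion \eqref{eq:almost princ} with $q=1$; in case (2), McMullen's inequality \cite{McM} gives the required inclusion $I_H^2 \cdot (\Delta_G) \subseteq E_1(A_G)$ with $q=2$.

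Once almost principality is in hand, part \eqref{3m1} follows at once from Proposition \ref{prop:v1 ap}\eqref{al1}, yielding the equality
\[
\bigl(\VV_1(G)\cap \Hom(G,\C^{\times})^0\bigr)\setminus\{1\} = V(\Delta_G)\setminus\{1\}.
\]
For part \eqref{3m2}, I would note that the additional hypothesis that $H_1(G,\Z)$ is torsion-free forces $\Hom(G,\C^{\times}) = \Hom(G,\C^{\times})^0$, so the intersection on the left-hand side becomes all of $\VV_1(G)$; this is precisely the content of Proposition \ref{prop:v1 ap}\eqref{al2}.

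There is no genuine obstacle here: the corollary is a pure packaging result, extracting the $3$-manifold content from the abstract almost principal framework. The only step requiring any care is making sure the two cases of Lemma \ref{lem:3mfd} are covered by the inclusion \eqref{eq:almost princ} with an appropriate choice of $q$, which is immediate from the quoted results of \cite{EN} and \cite{McM}.
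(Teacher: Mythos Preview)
Your proposal is correct and follows exactly the route taken in the paper: Lemma~\ref{lem:3mfd} supplies almost principality (via the Eisenbud--Neumann and McMullen inclusions you cite), and then Proposition~\ref{prop:v1 ap} gives both parts of the corollary verbatim. The paper compresses this into a single sentence preceding the corollary, but the logic is identical to yours.
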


\begin{example}
\label{ex:links}
Let $L=(L_1,\dots ,L_n)$ be a link in $S^3$, and let 
$M=S^3 \setminus \bigcup_{i=1}^n N(L_i)$ be its exterior 
(i.e., the complement of an open tubular neighborhood of 
the link). Then $M$ is a compact, connected, orientable 
$3$-manifold, with $\partial M$ consisting of $n$ disjoint tori, 
so that $\chi(\partial M)=0$.  

Let $G=\pi_1(M)$ be the link group.  
By  Lemma \ref{lem:3mfd}, the Alexander ideal of $G$ is almost 
principal. Since $H_1(G,\Z)=\Z^n$, Corollary \ref{cor:v1 3m}\eqref{3m2} 
insures that $\VV_1(G)= V(\Delta_G)$, at least away from $1$.  
\end{example}

\subsection{Resonance varieties}
\label{subsec:res 3mfd}

Assume now $M$ is closed (i.e., $\partial M=\emptyset$) and 
orientable, and fix an orientation $[M]\in H^3(M,\Z)\cong\Z$. With 
this choice, the cup product on $M$ and Kronecker pairing 
determine an alternating $3$-form $\eta$ on $H^1(M,\Z)$, 
given by $\eta(x,y,z) = \langle x\cup y\cup z , [M]\rangle$. 
In turn, the cup-product map $\mu\colon H^1(M,\Z) \wedge 
H^1(M,\Z) \to H^2(M,\Z)$ is determined by the form $\eta$, 
via $\langle x\cup y , \gamma \rangle = \mu (x,y,z)$, 
where $z=\PD(\gamma)$ is the Poincar\'{e} dual of 
$\gamma \in H_2(M,\Z)$. 

Fix a basis $\{e_1,\dots ,e_n\}$ for $H^1(M,\C)$, 
and choose  $\{e^{\vee}_1,\dots ,e^{\vee}_n\}$ as basis 
for $H^2(M,\C)$, where $e^{\vee}_i$ denotes the Kronecker 
dual of $\PD(e_i)$. Writing 
$\mu(e_i, e_j)=\sum_{k=1}^{n} \mu_{ijk}  e^{\vee}_k$ as 
in \eqref{eq:mult}, we find that 
$\eta(e_i,e_j,e_k )= \mu_{ijk}$. 

By Proposition \ref{prop:res linalex}, the resonance 
variety $\RR_1(M)$ is the vanishing locus of the codimension 
$1$ minors of the $n\times n$ matrix $\Theta$ with entries 
$\Theta_{kj} =  \sum_{i=1}^n \mu_{ijk} x_i$.  
Since $\eta$ is an alternating form, $\Theta$ 
is a skew-symmetric matrix.  Using this fact, 
the following structure theorem is proved in \cite{DS}. 

\begin{theorem}[\cite{DS}]
\label{thm:res3}
Let $M$ be a closed, orientable $3$-manifold.  
Then:
\begin{romenum}
\item  \label{r1}
$H^1(M,\C)$ is not $1$-isotropic. 
\item \label{r2}
If $b_1(M)$ is even, then $\RR_1(M)=H^1(M,\C)$.
\item \label{r3}
If $\RR_1(M)$ contains a $0$-isotropic hyperplane, then 
$\RR_1(M)= H^1(M, \C)$.
\end{romenum}
\end{theorem}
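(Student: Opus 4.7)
My plan is to reduce all three parts to facts about the alternating $3$-form $\eta \in \Lambda^3 V^*$ on $V=H^1(M,\C)$ defined by $\eta(x,y,z) = \langle x\cup y\cup z,[M]\rangle$. Poincar\'e duality makes the pairing $H^1\otimes H^2 \to H^3\cong\C$ perfect, so $\eta$ fully encodes the cup product $\mu\colon\Lambda^2 V\to H^2$; in particular, the linearized Alexander matrix $\Theta(a)$ of \S\ref{subsec:lin alex} is the skew-symmetric $n\times n$ matrix representing the $2$-form $(x,y)\mapsto \eta(a,x,y)$. The key observation I will use throughout is that $\eta(a,a,\,\cdot\,)=0$, which forces $a\in\ker\Theta(a)$ and hence $\rank\Theta(a)\le n-1$ for every $a\in V$.

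Part (ii) is now immediate: when $n=b_1(M)$ is even, the skew-symmetric matrix $\Theta(a)$ has even rank, bounded by $n-1$ and therefore by $n-2$, so $\dim\ker\Theta(a)\ge 2$. Reading off $\beta_1$ from the Aomoto complex $A^0\xrightarrow{\cdot a}A^1\xrightarrow{\cdot a}A^2$ then gives $\beta_1(A,a)\ge 2-1=1$ for $a\ne 0$, and $\beta_1(A,0)=n\ge 1$, whence every element of $V$ lies in $\RR_1(M)$.

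For part (i), I argue by contradiction: suppose $\mu$ has image spanned by a single nonzero $v\in H^2$, and let $\ell_v(z):=\langle v\cup z,[M]\rangle$, a nonzero element of $V^*$. Writing $\mu(x,y)=c(x,y)\,v$ for a bilinear alternating $c$, I get $\eta(x,y,z)=c(x,y)\,\ell_v(z)$, and the cyclic invariance $\eta(x,y,z)=\eta(y,z,x)$ of an alternating $3$-form yields
\[
c(x,y)\,\ell_v(z)=c(y,z)\,\ell_v(x)\quad\text{for all } x,y,z\in V.
\]
Choosing $z_0$ with $\ell_v(z_0)=1$ and setting $\phi(y):=c(y,z_0)$ turns this into $c(x,y)=\phi(y)\,\ell_v(x)$; imposing $c(x,y)=-c(y,x)$ then yields $\phi(y)\,\ell_v(x)+\phi(x)\,\ell_v(y)=0$. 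Specializing $y=z_0$ forces $\phi=-\phi(z_0)\,\ell_v$, and substituting back forces $\phi(z_0)=0$, whence $c\equiv 0$ and $\mu=0$, a contradiction.

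For part (iii), let $U\subset V$ be a $0$-isotropic hyperplane contained in $\RR_1(M)$, and split $V=U\oplus\C w$. The $0$-isotropy of $U$ says $u_1\cup u_2=0$ in $H^2$ for all $u_1,u_2\in U$, which by the perfectness of the Poincar\'e pairing is equivalent to $\eta(u_1,u_2,z)=0$ for all $u_1,u_2\in U$ and all $z\in V$. I then evaluate $\eta$ on triples drawn from an ordered basis $(e_1,\dots,e_{n-1},w)$ of $V$: every triple with two or more entries in $U$ vanishes by $0$-isotropy, while every triple with two or more copies of $w$ vanishes by alternation. Every basis triple therefore evaluates to $0$, so $\eta\equiv 0$, the cup product on $H^1$ is identically zero, $\Theta$ vanishes identically, and $\RR_1(M)=V$ (the cases $n\le 2$ being covered by~(ii) or vacuous). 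The main conceptual point, rather than any hard calculation, is recognizing that having a $0$-isotropic hyperplane inside $\RR_1(M)$ is in fact so restrictive that it trivializes the entire trilinear cup-product form; keeping signs under control in the alternation argument of part~(i) is the only other place requiring some care.
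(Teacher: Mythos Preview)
Your proof is correct and follows exactly the setup the paper lays out: the skew-symmetric linearized Alexander matrix $\Theta(a)$ encoding the alternating $3$-form $\eta$, together with the observation $a\in\ker\Theta(a)$, which is precisely what the paper points to before citing \cite{DS} for details. Your arguments for all three parts are clean instances of this framework; as a bonus, your proof of \eqref{r3} shows that the containment $U\subset\RR_1(M)$ is not actually needed---a $0$-isotropic hyperplane already forces $\eta\equiv 0$ (and for $n\ge 3$ such a hyperplane automatically lies in $\RR_1(M)$ anyway).
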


\subsection{Thurston norm}
\label{subsec:thurston}

For each compact, orientable $3$-manifold $M$,  
Thurston defined in \cite{Th} a (semi-)norm on $H^1(M,\R)$, 
as follows. For a cohomology class $\phi\in H^1(M,\Z)$, 
set $\phinorm=\inf_{S}\{\chim (S)\}$, with the infimum 
taken over all properly embedded, compact surfaces 
$S$ representing the dual of $\phi$, and where $\chim(S)  = 
\sum_{i} \max \{-\chi(S_i),0\}$, after decomposing 
$S$ as a disjoint union of connected surfaces $S_i$. 

It turns out that the assignment $\phi \mapsto \|\phi \|$ 
is a seminorm on $H^1(M,\Z)$, which can be extended to the 
whole of $H^1(M,\R)$. Even though $\|\cdot\|$ vanishes on 
spherical and toroidal classes, it is simply called the {\em 
Thurston norm} of $M$. The unit ball in this norm, 
$B_T=\set{\phi \mid \phinorm \le 1}$, 
is a rational convex polyhedron, i.e., the intersection of 
finitely many rationally defined hyperplanes. 

A non-zero cohomology class $\phi\in H^1(M,\Z)$ is said to 
be {\em fibered}\/ if there is a smooth fibration $f\colon M\to S^1$ 
such that $\phi$ corresponds to $f_{*} \colon H_1(M,\Z)\to \Z$ 
under the isomorphism $H^1(M,\Z)\cong \Hom(H_1(M,\Z),\Z)$. 
In this case, $\phinorm=\chim(S)$, where $S=f^{-1}(1)$ is the 
fiber of the fibration. Given a fibered class $\phi$, there exists 
a (top-dimensional) face $F$  of $B_T$ so that $\phi$ belongs 
to the open cone $\R_{+} \cdot F^{\circ}$.  Such a face $F$ is 
called a {\em fibered face}, the reason being that 
each integral cohomology class in  
$\R_{+} \cdot F^{\circ}$ is fibered. 

\subsection{Alexander norm and Thurston norm}
\label{subsec:mcmullen}
In \cite{McM}, McMullen used the Alexander polynomial 
of $G=\pi_1(M)$ to define a (semi-)norm on $H^1(M,\R)$, 
called the {\em Alexander norm}.  If we write  
$\Delta_G=\sum n_i g_i$, with $n_i\in \Z\setminus\set{0}$ 
and $g_i \in \ab(G)$, then the norm of a class 
$\phi\in H^1(M,\R)$ is defined by $\| \phi \|_{A}=\sup \phi(g_i-g_j)$. 
The unit ball $B_A$ of the Alexander norm is, up to scaling, 
the polytope dual to the Newton polytope of $\Delta_G$. 

\begin{theorem}[\cite{McM}] 
\label{thm:McM}
Let $M$ be a compact, orientable $3$-manifold whose
boundary, if any, is a union of tori. Then: 
\begin{romenum}
\item   If $b_1(M)\ge 2$,  then $\| \phi \|_{A} \le \| \phi \|_{T}$, for all 
$\phi\in H^1(M,\R)$. 
\item   If $b_1(M)=1$,  then $\| \phi \|_{A} \le \| \phi \|_{T}+b_3(M)+1$, 
for all primitive classes $\phi\in H^1(M,\Z)$. 
\end{romenum}
Moreover, equality holds when $\phi\in H^1(M,\Z)$ is a fibered class, 
represented by a fibration $f\colon M\to S^1$ with $\chi(f^{-1}(1))\le 0$. 
\end{theorem}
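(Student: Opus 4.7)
The plan is to reduce the multi-variable statement to a one-variable assertion about the infinite cyclic cover determined by $\phi$, and then bound the degree of a specialized Alexander polynomial using a Thurston-norm-minimizing surface.

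First I would establish a one-variable reformulation of $\|\phi\|_A$. A primitive class $\phi\in H^1(M,\Z)$ induces a ring map $\phi_{\sharp}\colon \Z{H}\to \Z[t^{\pm 1}]$ sending $h\mapsto t^{\phi(h)}$; let $\Delta^{\phi}_G := \phi_{\sharp}(\Delta_G)\in \Z[t^{\pm 1}]$. A direct computation with the definitions of the Newton polytope and its dual norm yields $\|\phi\|_A = \deg \Delta^{\phi}_G$, where $\deg$ denotes the spread (top minus bottom exponent). Moreover, since $M$ has the homotopy type of a $2$-complex (boundary a union of tori, or closed orientable of dimension $3$ after deleting a point in the closed case, reducing via Lefschetz duality), Lemma \ref{lem:3mfd} shows that $E_1(A_G)$ is almost principal, so $\Delta^{\phi}_G$ controls (up to a bounded correction) the order of the $\Z[t^{\pm 1}]$-module $H_1(M^{\phi},\Z)$, where $M^{\phi}\to M$ is the infinite cyclic cover classified by $\phi$.

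Next I would set up the geometric side. Choose a properly embedded oriented surface $S\subset M$ dual to $\phi$ and realizing $\|\phi\|_T=\chim(S)$. Replacing $S$ by its Thurston-norm-minimizing representative, we may assume no component of $S$ is a sphere or disk, so $\chim(S)=-\chi(S)$. Cut $M$ along $S$ to produce a compact cobordism $N$ with $\partial N\supset S_+\sqcup S_-$; the infinite cyclic cover $M^{\phi}$ is built by gluing $\Z$ copies of $N$ end-to-end along the lifts of $S$. The Mayer--Vietoris sequence for this decomposition, with $\Q[t^{\pm 1}]$-coefficients, takes the form
\[
\xymatrix{
\cdots \ar[r] & H_i(S,\Q)\otimes \Lambda \ar[r]^(.55){t\,i_+ - i_-} & H_i(N,\Q)\otimes \Lambda \ar[r] & H_i(M^{\phi},\Q) \ar[r] & \cdots,
}
\]
where $\Lambda=\Q[t^{\pm 1}]$ and $i_{\pm}\colon S\to N$ are the two inclusions. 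The key algebraic step is to extract from this sequence a square presentation matrix for $H_1(M^{\phi},\Q)$ whose determinant has degree at most $b_1(S)$. Using the half-lives-half-dies principle for the cobordism $N$ one argues that the image of $i_+ - i_-$ at $t=1$ has half-dimensional kernel, so the determinant of the relevant twisted map $(t\,i_+-i_-)$ is nonzero and has degree $\le \dim_{\Q} H_1(S,\Q)=-\chi(S)+2b_0(S)$. Since the minimizing surface has no $S^2$ or $D^2$ components, this bound rearranges to $\deg \Delta^{\phi}_G \le \|\phi\|_T$ when $b_1(M)\ge 2$; for $b_1(M)=1$ the single-generator issue in the cover forces an extra $b_3(M)+1$ on the right-hand side, accounting exactly for the correction term in part (ii).

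Finally, I would handle the equality for fibered classes. If $\phi$ is represented by a fibration $f\colon M\to S^1$ with fiber $F=f^{-1}(1)$, then $M^{\phi}$ deformation retracts onto $F$, and the $t$-action on $H_1(M^{\phi},\Q)=H_1(F,\Q)$ is the monodromy $h_*$. The order of this module is $\det(t\cdot I - h_*)$, of degree exactly $b_1(F)=\chim(F)+(\text{correction from }b_0)$, which matches the upper bound above. The hard part in this program is the second paragraph: promoting the Mayer--Vietoris long exact sequence to an explicit square presentation matrix and controlling its determinant carefully enough to obtain the sharp degree bound $-\chi(S)$ (rather than the naive $b_1(S)$), since it is precisely this sharpness that makes the inequality match $\|\phi\|_T$ rather than something weaker, and that ensures equality for fibered classes.
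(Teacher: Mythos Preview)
The paper does not give a proof of this theorem; it is stated as a result of McMullen, cited from \cite{McM}, and used as a black box in the surrounding discussion. There is therefore no argument in the paper to compare your proposal against.

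For what it is worth, your outline is broadly faithful to McMullen's own strategy: specialize the multivariable polynomial along $\phi$, cut $M$ along a Thurston-norm-minimizing surface $S$, realize the infinite cyclic cover as $\Z$ copies of the cut piece glued end-to-end, and extract a degree bound from the Mayer--Vietoris presentation of $H_1(M^{\phi},\Q)$ over $\Q[t^{\pm 1}]$. Two points in your sketch would need tightening. First, the invocation of Lemma~\ref{lem:3mfd} and almost-principality is not what drives the inequality; McMullen works directly with the torsion $\Q[t^{\pm 1}]$-module $H_1(M^{\phi},\Q)$ and its order ideal. Second, the naive bound coming from the size of the Mayer--Vietoris matrix is $b_1(S)$, which exceeds $\chi_-(S)$ by a multiple of $b_0(S)$; McMullen closes this gap not via ``half-lives-half-dies'' for the cobordism $N$, but by showing that enough of $H_1(S,\Q)$ already dies under one of the inclusions $i_{\pm}$ to cut the effective presentation down to the correct size. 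You correctly identify this as the crux, and it is.
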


McMullen's inequalities generalize the classical relation 
$\deg \Delta_K(t)\le 2\operatorname{genus}(K)$ for knots.  
In many cases, the Alexander and Thurston norms agree 
on all of $H^1(M,\R)$.  Yet even when $M$ fibers over the 
circle and $b_1(M)\ge 2$, the two norms may disagree, 
as examples of Dunfield \cite{Du} show. Nevertheless, 
as shown by Long \cite{Lo}, the two norms coincide for 
irreducible graph links in homology $3$-spheres. 

\subsection{BNS invariant}
\label{subsec:bns 3m}

In \cite{BNS}, Bieri, Neumann, and Strebel found a remarkable 
connection between the Thurston norm of $M$ and the 
BNS invariant of $G=\pi_1(M)$. We summarize their result, 
as follows.

\begin{theorem}[\cite{BNS}] 
\label{thm:bns 3m}
Let $M$ be a compact, connected $3$-manifold, with 
fundamental group $G=\pi_1(M)$.  Then, $\Sigma^1(G)$ 
is the union of open cones $\R_{+} \cdot F^{\circ}$,  
with $F$ running through the fibered faces of the 
Thurston unit ball $B_T$. Consequently:
\begin{romenum}
\item \label{bns-i} 
$\Sigma^1(G) = -\Sigma^1(G)$.
\item \label{bns-ii} 
$M$ fibers over the circle if and only if 
$\Sigma^1(G)\ne \emptyset$.
\end{romenum}
\end{theorem}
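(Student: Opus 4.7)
The plan is to establish the main structural claim---that $\Sigma^1(G)$ equals the union of open cones on fibered faces of $B_T$---and then read off (i) and (ii) as formal consequences. The bridge between $3$-manifold topology and the $\Sigma$-invariant is a chain of equivalences for a primitive integral class $\chi \colon G \surj \Z$:
\begin{equation*}
\chi \in \Sigma^1(G) \cap (-\Sigma^1(G)) \;\Longleftrightarrow\; \ker(\chi) \text{ is finitely generated} \;\Longleftrightarrow\; \chi \text{ is fibered}.
\end{equation*}
The first biconditional is the general characterization from \S\ref{subsec:bns} of $\Sigma^1$ in terms of finite generation of kernels of projections onto abelian quotients; the second is Stallings' fibration theorem for $3$-manifolds. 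Coupled with Thurston's result from \cite{Th} that the integral fibered classes, regarded in $H^1(M,\R)$, form precisely the union $\bigcup_F \R_+ \cdot F^{\circ}$ over the fibered faces $F$ of $B_T$, and using the openness of $\Sigma^1(G)$ together with the density of integral rays, I would deduce
\begin{equation*}
\Sigma^1(G) \cap (-\Sigma^1(G)) = \bigcup_F \R_+ \cdot F^{\circ}.
\end{equation*}

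The main obstacle is then to remove the intersection with $-\Sigma^1(G)$, which amounts to establishing the symmetry $\Sigma^1(G) = -\Sigma^1(G)$ in the $3$-manifold setting. This symmetry is genuinely nontrivial: it fails for arbitrary finitely generated groups, as Example \ref{ex:bs group} illustrates, and for $3$-manifolds it must come from Poincar\'e--Lefschetz duality on $M$. One concrete route is via the Novikov--Sikorav reformulation of \S\ref{subsec:novikov}: for primitive integral $\chi$, one has $\chi \in \Sigma^1(G)$ iff $H_1(M, \widehat{\Z G}_{-\chi}) = 0$, and duality for the compact $3$-manifold $M$ (with boundary contributions handled separately) relates the vanishing of this Novikov homology in direction $-\chi$ to the corresponding vanishing in direction $+\chi$, forcing symmetry. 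A more geometric alternative is to represent a given integral $\chi \in \Sigma^1(G)$ by a circle-valued Morse function $f\colon M \to S^1$ and to use the connectedness hypothesis on the Cayley graph $\CC_\chi(G)$ to pair up and cancel critical points via handle moves, thereby directly producing a fibration and bypassing the need to establish symmetry abstractly.

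Once the identification $\Sigma^1(G) = \bigcup_F \R_+ \cdot F^{\circ}$ is in hand, consequences (i) and (ii) are immediate. For (i), the right-hand side is manifestly invariant under $\phi \mapsto -\phi$, since $-F$ is a fibered face whenever $F$ is (the fibration classifying $\chi$ and the fibration classifying $-\chi$ differ only by orientation reversal on the $S^1$-target). For (ii), $M$ fibers over $S^1$ iff some integral class in $H^1(M,\Z)$ is fibered, iff $B_T$ admits at least one fibered face, iff $\Sigma^1(G) \ne \emptyset$.
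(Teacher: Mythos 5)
The paper states Theorem~\ref{thm:bns 3m} as a citation to \cite{BNS} without reproducing a proof, so there is no internal argument to compare against line by line; the relevant comparison is with the Bieri--Neumann--Strebel proof of their Theorem~E, which this theorem reproduces. Your overall scaffold is the right one---combine the general kernel-finite-generation characterization of $\Sigma^1$ with Stallings' fibration theorem and Thurston's identification of the fibered cones---and you correctly isolate the symmetry $\Sigma^1(G) = -\Sigma^1(G)$ as the genuinely hard content, which is exactly where the $3$-manifold topology must enter. That said, two parts of the proposal are gaps rather than proofs, and one of them is the load-bearing wall.

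First, the symmetry step is only sketched, and neither of your suggested routes is the one BNS actually take. Their argument runs through the general theorem that, for $\chi\colon G\surj\Z$, membership $\chi\in\Sigma^1(G)$ is equivalent to $G$ being an ascending HNN extension $G = H\ast_{\theta}$ over a finitely generated $H$, with stable letter $t$ satisfying $\chi(t)>0$. The $3$-manifold input is then Scott's compact core theorem and embedded surface theory, which force the ascending HNN to degenerate to a genuine semidirect product---i.e.\ $H = \ker\chi$ is already finitely generated---and Stallings takes it from there. Your first route (Poincar\'e--Lefschetz duality for Novikov homology of $M$) is a legitimate modern alternative, used for instance in the Friedl--Kim circle of ideas, but you do not carry it out, and the boundary and non-orientable cases need real care. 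Your second route (cancelling critical points of a circle-valued Morse function using connectedness of $\CC_\chi(G)$) is morally close to the Stallings/Browder--Levine philosophy, but again stays at the level of a plan. As written, the symmetry is asserted, not proved, and the theorem does not follow.

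Second, the ``openness of $\Sigma^1$ plus density of integral rays'' step deducing $\Sigma^1(G)\cap(-\Sigma^1(G)) = \bigcup_F\R_+\cdot F^\circ$ is too quick. Agreement of two open cones on rational rays does not in general force equality. The inclusion $\Sigma^1(G)\cap(-\Sigma^1(G))\subseteq\bigcup_F\R_+\cdot F^\circ$ can be salvaged using the fact that the Thurston unit ball is an integral polytope, so its non-fibered cones and face-boundaries are rationally defined and a rational witness near any putative exception produces a contradiction. But the reverse inclusion needs an argument for \emph{irrational} classes in a fibered cone: for these you must either invoke Fried's theorem on non-singular closed one-forms to represent the class directly and then verify the Cayley-graph connectivity criterion, or appeal to the BNS-side rationality of $\Sigma^1(G)^c$. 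Just saying ``density'' does not close this.

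Finally, Stallings' theorem as usually stated requires irreducibility (or an equivalent hypothesis), and the theorem here is asserted for all compact connected $3$-manifolds; the reducible case, as well as small manifolds such as $S^1\times S^2$ and connected sums, needs a separate (easy but non-vacuous) verification that both sides are empty. This is handled in \cite{BNS} via the prime decomposition and should at least be flagged.
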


We exploit this theorem to give the example promised in 
\S\ref{subsec:res bound}, showing that the inclusion 
$\Sigma^{1}(G) \subseteq \RR_1(G,\R)^{\compl}$ 
may be strict, even when $G$ is $1$-formal and 
$\Sigma^1(G)=-\Sigma^1(G)$. 

\begin{example}
\label{ex:link2}
Let $L=(L_1,L_2)$ be a non-fibered, two-component link in 
$S^3$ with non-zero linking number. Let $X$ be the complement 
of $L$, and $G=\pi_1(X)$. Since $\lk(L_1,L_2)\ne 0$, we 
have a ring isomorphism $H^*(X,\Q)\cong H^*(T^2,\Q)$.  
Consequently, $X$ is a formal space, and 
$\RR_1(G,\R)=\set{0}$. 
On the other hand, since $G$ is a link group, 
$\Sigma^1(G)=-\Sigma^1(G)$, by 
Theorem \ref{thm:bns 3m}\eqref{bns-i}.
Furthermore, since $L$ is not a fibered link, 
$\Sigma^1(G)\ne\set{0}^{\compl}$, by 
Theorem \ref{thm:bns 3m}\eqref{bns-ii}.
\end{example}

Here is a another application. 

\begin{corollary}[\cite{PS-bns}]
\label{cor:3m formal}
Let $M$ be a closed, orientable $3$-manifold.   If $b_1(M)$ is 
even, and $G=\pi_1(M)$ is $1$-formal, then $M$ does not fiber 
over the circle.
\end{corollary}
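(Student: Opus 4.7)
The plan is to chain together three results already in the excerpt: the structure theorem for $\RR_1$ of a closed orientable $3$-manifold, the tangent-cone/formality bound on $\Sigma^1$, and the BNS characterization of fibered $3$-manifolds.

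First, I invoke Theorem \ref{thm:res3}\eqref{r2}: since $M$ is closed and orientable with $b_1(M)$ even, the degree-one resonance variety fills out all of $H^1(M,\C)$, that is, $\RR_1(G,\C)=H^1(G,\C)$. Restricting scalars (resonance varieties are defined by real equations, since the cup product is rationally defined), this gives $\RR_1(G,\R)=H^1(G,\R)$.

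Next I use the $1$-formality of $G$. By Corollary \ref{cor:formal bns bound}, the BNS invariant satisfies
\[
\Sigma^1(G)\subseteq H^1(G,\R)\setminus\RR_1(G,\R).
\]
Combined with the previous step, the right-hand side is empty, so $\Sigma^1(G)=\emptyset$. (The case $b_1(G)=0$ is trivial, as $\Sigma^1(G)\subseteq\Hom(G,\R)\setminus\{0\}$ is automatically empty then.)

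Finally, Theorem \ref{thm:bns 3m}\eqref{bns-ii} says that a compact connected $3$-manifold $M$ fibers over $S^1$ if and only if $\Sigma^1(\pi_1(M))\ne\emptyset$. Since we have just shown the latter is empty, $M$ does not fiber over the circle, completing the proof. There is no real obstacle here: the content of the corollary is precisely the synthesis of the $3$-manifold resonance structure theorem with the formality-based resonance upper bound for $\Sigma^1$, and the BNS fibration criterion then translates the conclusion into geometric terms.
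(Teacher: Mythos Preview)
Your proof is correct and follows exactly the same three-step approach as the paper: Theorem~\ref{thm:res3}\eqref{r2} gives $\RR_1(G,\R)=H^1(G,\R)$, Corollary~\ref{cor:formal bns bound} then forces $\Sigma^1(G)=\emptyset$, and Theorem~\ref{thm:bns 3m}\eqref{bns-ii} finishes.
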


Indeed, by Theorem \ref{thm:res3}\eqref{r2}, we must 
have $\RR_1(G,\R)=H^1(G,\R)$.  Hence, by Corollary 
\ref{cor:formal bns bound}, $\Sigma^1(G)=\emptyset$, 
and the conclusion follows from 
Theorem \ref{thm:bns 3m}\eqref{bns-ii}.

\subsection{K\"{a}hler $3$-manifold groups} 
\label{subsec:kahler 3mfd}

The following question was raised by \linebreak S.~Donaldson and 
W.~Goldman and in 1989, and independently by A.~Reznikov 
in 1993:  Which $3$-manifold groups are K\"{a}hler groups? 
In \cite{Re}, Reznikov obtained a deep restriction on certain groups 
lying at the intersection of these two classes of fundamental groups. 
In \cite{DS}, the question was answered in full generality.

\begin{theorem}[\cite{DS}]
\label{thm:3dk}
Let $G$ be the fundamental group of a closed $3$-manifold. 
Then $G$ is a K\"{a}hler group if and only if $G$ is a finite 
subgroup of $\operatorname{O}(4)$, acting freely on $S^3$.
\end{theorem}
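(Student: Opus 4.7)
The backward implication is immediate: a finite subgroup of $\operatorname{O}(4)$ is in particular a finite group, and by Serre every finite group is projective, hence K\"{a}hler.

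For the forward direction, suppose $G=\pi_1(M)$ with $M$ a closed $3$-manifold and $G$ K\"{a}hler. We may assume $M$ is orientable: otherwise pass to the orientation double cover $\widetilde{M}$, whose fundamental group is an index-two, hence still K\"{a}hler, subgroup of $G$; proving that subgroup finite forces $G$ itself to be finite, which by a standard Lefschetz computation on $S^3$ precludes $M$ from being non-orientable in the first place.

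The resonance-theoretic heart of the argument is to show $b_1(G)=0$. By Hodge theory $b_1(G)$ is even, so it suffices to rule out $b_1(G)\ge 2$. The classifying map $f\colon M\to K(G,1)$ is an isomorphism on $H^1$ and an injection on $H^2$, so under the identification $H^1(G,\C)=H^1(M,\C)$ the cup products $\cup_G$ and $\cup_M$ have equal rank on each subspace of $H^1\wedge H^1$; consequently $\RR_1(G)=\RR_1(M)$, and $p$-isotropicity of subspaces is unchanged between $G$ and $M$. By Theorem \ref{thm:res3}\eqref{r2}, $\RR_1(M)=H^1(M,\C)$, so $\RR_1(G)=H^1(G,\C)$ is its own unique positive-dimensional irreducible component. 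Theorem \ref{thm:res kahler}\eqref{rk4} then forces this whole space to be $1$-isotropic, in direct contradiction with the non-isotropicity statement of Theorem \ref{thm:res3}\eqref{r1}. Hence $b_1(G)=0$.

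It then remains to show that such a $G$ must be finite (and thus, by elliptisation, a free $\operatorname{O}(4)$-subgroup acting on $S^3$). Apply the Kneser--Milnor prime decomposition $M=M_1\#\cdots\#M_k$ to obtain $G\cong G_1*\cdots*G_k$; Gromov's theorem that an infinite K\"{a}hler group is one-ended precludes any nontrivial free-product decomposition, so after absorbing $S^3$-summands one may take $M$ prime. Since $b_1(G)=0$, the factor $S^2\times S^1$ is excluded, and the sphere theorem leaves two alternatives: either $G$ is finite---in which case elliptisation realises $M$ as a spherical space form $S^3/G$ with $G\subset\operatorname{O}(4)$ acting freely, as required---or $M$ is aspherical and $G$ is an infinite $\PD_3$-group with $b_1(G)=0$. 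The hard part is ruling out this last case, as it lies outside the cohomology-jumping-loci framework of this survey; the natural route is to exhibit a finite cover $M'\to M$ with $b_1(\pi_1(M'))>0$, using geometrisation together with virtual positive-first-Betti-number input for each of the geometric pieces, and then re-apply the resonance contradiction of the previous paragraph to the K\"{a}hler subgroup $\pi_1(M')$.
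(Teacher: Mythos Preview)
Your backward implication and your handling of the case $b_1(G)>0$ match the paper's argument: the same Serre reference, the same passage to an orientable cover, and the same resonance contradiction pitting Theorem~\ref{thm:res3}\eqref{r1},\eqref{r2} against Theorem~\ref{thm:res kahler}\eqref{rk4}. Your explicit verification that the classifying map $M\to K(G,1)$ identifies $\RR_1(G)$ with $\RR_1(M)$ and preserves $p$-isotropicity is a welcome clarification that the survey leaves implicit.

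Where you part ways with \cite{DS} is the case $b_1(G)=0$. The paper's route is through Kazhdan's property~(T): roughly, a theorem of Fujiwara asserts that an infinite $3$-manifold group never has property~(T), and this is played off against a result of Reznikov linking the K\"ahler condition to property~(T), together with some further $3$-manifold input. Your route---Kneser--Milnor, Gromov's one-endedness of infinite K\"ahler groups to force primeness, the finite/aspherical dichotomy via the sphere theorem, then virtual positive first Betti number to recycle the aspherical case back into the resonance contradiction---is logically sound, but the last step is much heavier than you indicate. ``Virtual positive $b_1$'' for an arbitrary closed aspherical $3$-manifold is essentially Agol's virtual fibering theorem (together with Wise's work, and Przytycki--Wise and Liu for mixed and graph manifolds), all of which postdates \cite{DS} by several years. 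So your approach is correct but anachronistic, and it trades the comparatively self-contained property~(T) argument for the full strength of post-geometrisation $3$-manifold topology. If you keep your version, you should name Agol's theorem explicitly rather than gesture at ``virtual positive-first-Betti-number input for each of the geometric pieces.''
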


The backward implication follows from classical work of J.-P.~Serre. 
In the case when $b_1(G)>0$, the forward implication can be readily  
explained, using the techniques outlined in \S\ref{subsec:res kahler} 
and \S\ref{subsec:res 3mfd}.  By passing to orientation double covers 
if necessary, we may assume $G=\pi_1(M)$, with $M$ a closed, 
orientable $3$-manifold. Then $G$ is still a K\"{a}hler group, 
and so $b_1(G)$ must be even (and positive).  From Theorem \ref{thm:res3}, 
we know that $\RR_1(G,\C)=H^1(G,\C)$, and $H^1(G,\C)$ is not 
$1$-isotropic. But, since $G$ is a K\"{a}hler group, this contradicts 
Theorem \ref{thm:res kahler}\eqref{rk4}.   

To deal with the remaining case, $b_1(G)=0$, one needs some 
deep facts from $3$-manifold theory, together with theorems of 
Reznikov and Fujiwara, relating the K\"{a}hler, respectively the 
$3$-manifold condition on a group to Kazhdan's property ($T$). 

A similar classification of quasi-K\"{a}hler, $3$-manifold 
groups seems to be beyond reach at the present time.  
Nevertheless, such groups are classified in 
\cite[Theorem 1.1]{DPS-qk}, but only at the level of 
Malcev completions, and under a formality assumption.

\begin{theorem}[\cite{DPS-qk}]
\label{thm:3d qk formal}
Let $G$ be the fundamental group of a closed, orientable
$3$-manifold. Assume $G$ is $1$-formal. Then the following 
are equivalent.
\begin{romenum}
\item \label{m1}
$\m(G)\cong \m(\pi_1(X))$, for some quasi-K\"{a}hler manifold $X$.

\item \label{m2}
$\m(G)\cong \m(\pi_1(M))$, where $M$ is either 
$S^3$, $\# ^n S^1\times S^2$, or $S^1\times \Sigma_g$.
\end{romenum}
\end{theorem}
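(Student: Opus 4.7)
The proof splits into two directions. For the easy direction $\eqref{m2} \Rightarrow \eqref{m1}$, the plan is to exhibit explicit quasi-K\"{a}hler realizations matching each model: the trivial group $\pi_1(S^3)$ is realized by a point; the free group $F_n = \pi_1(\#^n S^1 \times S^2)$ is realized by the smooth affine curve $\C$ with $n$ points removed; and $\Z \times \pi_1(\Sigma_g) = \pi_1(S^1 \times \Sigma_g)$ is realized by the smooth quasi-projective variety $\C^{\times} \times \Sigma_g$, where $\Sigma_g$ is viewed as a smooth complex projective curve of genus $g$. In each case the underlying fundamental groups agree on the nose, so the Malcev isomorphism follows trivially.

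For the hard direction $\eqref{m1} \Rightarrow \eqref{m2}$, I would first note that since $G$ is $1$-formal and $\m(G) \cong \m(\pi_1(X))$, the group $\pi_1(X)$ is also $1$-formal, so by Theorem \ref{thm:tcone} the resonance variety $\RR_1$ is determined by the Malcev completion; in particular, $\RR_1(G) = \RR_1(\pi_1(X))$ as subvarieties of $H^1(G,\C)$. Consequently, Theorem \ref{thm:res kahler} applies to $G$: every positive-dimensional irreducible component of $\RR_1(G)$ is a $p$-isotropic linear subspace of dimension at least $2p+2$ for some $p \in \{0,1\}$. Simultaneously, Theorem \ref{thm:res3} imposes the Poincar\'{e}-duality constraints specific to a closed orientable $3$-manifold: $H^1(G,\C)$ is not $1$-isotropic; if $b_1(G)$ is even then $\RR_1(G) = H^1(G,\C)$; and any $0$-isotropic hyperplane contained in $\RR_1(G)$ forces $\RR_1(G) = H^1(G,\C)$.

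I would then case-split by $n = b_1(G)$. The cases $n \leq 1$ are immediate: no positive-dimensional component of $\RR_1$ can fit inside an $H^1$ of dimension $\leq 1$, so $\RR_1(G) = \{0\}$ and the cup product on $H^1$ is vacuously zero; hence $\m(G)$ is abelian of rank $n$, matching $\m(\pi_1(S^3))$ when $n = 0$ and $\m(\pi_1(S^1 \times S^2))$ when $n = 1$. For $n \geq 2$ even, Theorem \ref{thm:res3}\eqref{r2} forces $\RR_1(G) = H^1(G,\C)$, so $H^1(G,\C)$ is itself a component; it cannot be $1$-isotropic by Theorem \ref{thm:res3}\eqref{r1}, so by Theorem \ref{thm:res kahler} it must be $0$-isotropic, i.e.\ the cup product $H^1 \wedge H^1 \to H^2$ vanishes identically. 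The holonomy Lie algebra of $G$ is therefore free on $n$ generators, and $\m(G) \cong \m(F_n) \cong \m(\pi_1(\#^n S^1 \times S^2))$.

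The genuine difficulty is the case $n \geq 3$ odd. By Theorem \ref{thm:res3}\eqref{r3}, either $\RR_1(G) = H^1(G,\C)$ (in which case one concludes $\m(G) \cong \m(F_n)$ exactly as in the even case), or $\RR_1(G)$ contains no $0$-isotropic hyperplane. The strategy in this second sub-case is to use Poincar\'{e} duality to encode the cup-product $\mu \colon \wedge^2 H^1 \to H^2$ by a single alternating trilinear form $\nu \colon \wedge^3 H^1 \to \C$, and then to leverage the isotropy-and-dimension constraints of Theorem \ref{thm:res kahler} to classify the admissible pairs $(H^1,\nu)$. The expected outcome is that $\RR_1(G)$ is either $\{0\}$, in which case Poincar\'{e} duality forces $\nu$ to be a non-degenerate alternating $3$-form on $\C^n$ (possible only for $n=3$, yielding $\m(G) \cong \m(\Z^3) = \m(\pi_1(S^1 \times \Sigma_1))$), or else $\RR_1(G)$ is a single $1$-isotropic hyperplane of dimension $n - 1 = 2g$, forcing $\nu$ to be equivalent to the canonical $3$-form on $H^*(S^1 \times \Sigma_g, \C)$ and hence $\m(G) \cong \m(\pi_1(S^1 \times \Sigma_g))$. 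The hardest step will be this classification: ruling out the various mixed configurations of $0$- and $1$-isotropic components permitted abstractly by Theorem \ref{thm:res kahler} but incompatible with being the triple-product $3$-form of a closed orientable $3$-manifold.
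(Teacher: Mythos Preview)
First, note that the survey does not actually prove this result: Theorem~\ref{thm:3d qk formal} is quoted from \cite{DPS-qk} with no argument supplied, so there is no in-paper proof to compare against. Your outline is nonetheless in the right spirit---the tension between Theorem~\ref{thm:res3} and Theorem~\ref{thm:res kahler} is indeed the core mechanism---and your treatment of the easy direction, of the transfer of the resonance constraints along the Malcev isomorphism (both $\RR_1$ and the isotropy indices of its components depend only on $\ker\cup_G\subset H^1\wedge H^1$, which is a Malcev invariant for $1$-formal groups), and of the cases $b_1(G)\le 1$ and $b_1(G)$ even is correct.

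There is, however, a genuine gap in the odd case $n=b_1(G)\ge 3$. Your parenthetical claim that $\RR_1(G)=\{0\}$ is ``possible only for $n=3$'' does not follow from Poincar\'e duality together with Theorems~\ref{thm:res3} and~\ref{thm:res kahler}. Consider the $G_2$-type alternating $3$-form on $\C^7$, say $\nu=e^{123}+e^{145}+e^{167}+e^{246}-e^{257}-e^{347}-e^{356}$; by Sullivan's realization theorem for intersection forms of $3$-manifolds this is the cup-product form of some closed orientable $3$-manifold. For every nonzero $a\in\C^7$ the contraction $\iota_a\nu$ is a symplectic form on $\C^7/\C a$, hence $\RR_1=\{0\}$, and every hypothesis you invoke is satisfied vacuously---yet $\nu$ is not $\GL_7$-equivalent to any of the three model forms (its holonomy Lie algebra has the $14$-dimensional subspace $\mathfrak{g}_2\subset\wedge^2\C^7$ as quadratic relations, not those of $\Z^7$, $F_7$, or $\Z\times\pi_1(\Sigma_3)$). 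The constraints you have extracted from \eqref{m1} see only $\RR_1$ and its isotropy data; when $\RR_1=\{0\}$ these are vacuous, so something further from the quasi-K\"ahler hypothesis is needed to exclude such exotic $3$-forms. The proof in \cite{DPS-qk} supplies that extra input; your outline correctly isolates where the difficulty lies but does not identify the tool that closes it.
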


In particular, if $G$ is a quasi-K\"{a}hler, $1$-formal, orientable 
$3$-manifold group, then its Malcev completion is isomorphic 
to that of either $1$, $F_n$, or $\Z\times \pi_1(\Sigma_g)$.

\subsection{Boundary manifolds of line arrangements}
\label{sec:bdry}
We conclude with a more detailed description of a special 
class of closed, orientable $3$-manifolds, arising in 
the context of hyperplane arrangements. 
Let $\A=\set{\ell_0,\dots,\ell_n}$ be an arrangement of lines 
in $\CP^2$. The {\em boundary manifold}\/ $M=M(\A)$ is 
obtained by taking the boundary of a regular neighborhood 
of $\bigcup_{i=0}^n \ell_i$ in $\CP^2$.  Such manifolds 
were investigated by Jiang and Yau \cite{JY93}, Westlund \cite{We}, 
and Hironaka \cite{Hi01}, and more recently, in \cite{CS06, CS08}. 

\begin{example}[\cite{CS06}]
\label{ex:bdry pencil}
Suppose $\A$ is a pencil of $n+1$ lines; then $M=\sharp^n S^1\times S^2$. 
On the other hand, if $\A$ is a near-pencil (i.e., a pencil of $n$ lines, 
together with another line in general position), then 
$M=S^1\times \Sigma_{n-1}$.
\end{example}

Suppose now $\A$ has $r$ non-transverse intersection points, i.e., 
points $F_J=\bigcap_{j \in J} \ell_j$ of multiplicity $\abs{J}\ge 3$. 
Blowing up $\CP^2$ at each of these points, we obtain an 
arrangement $\widetilde{\A}=\{L_0,\dots ,L_{n+r}\}$ in  
$\widetilde{\CP}^2 \cong \CP^2 \# r \overline{\CP}^2$, 
consisting of the proper transforms of the lines of $\A$, 
together with the exceptional lines arising from the blow-ups.  

This construction realizes the boundary manifold of $\A$ 
as a graph manifold, in the sense of Waldhausen.  The 
underlying graph $\G$ has vertex set $\cV(\G)$, with 
vertices in one-to-one correspondence with the lines 
of $\widetilde{\A}$: the vertex corresponding to $\ell_i$ is 
labeled $v_i$, while the vertex corresponding to $F_J$ 
is labeled $v_J$.  The graph $\G$ has 
an edge $e_{i,j}$ from $v_i$ to $v_j$, $i<j$, if the 
corresponding lines $\ell_i$ and $\ell_j$ are transverse, 
and an edge $e_{J,i}$ from $v_J$ to $v_i$ if 
$\ell_i \supset F_J$.  

Since $M$ is a graph manifold, the group $G=\pi_1(M)$ 
may be realized as the fundamental group of a graph of groups.  
As shown in \cite[Proposition 3.7]{CS08}, the resulting 
presentation for $G$ may be simplified to a commutator-relators 
presentation.

\begin{theorem}[\cite{CS08}]
\label{thm:alex poly arr}
Let $\A$ be an essential line arrangement in $\CP^2$, 
let $\G$ be the associated graph, and let $G$ be the 
fundamental group of the boundary manifold $M(\A)$.  
Then the Alexander polynomial of $G$ is 
\[
\Delta_G = \prod_{v \in \cV(\G)} (t_v-1)^{m_v-2}, 
\]
where $m_v$ denotes the degree of the vertex $v$, 
and $t_v=\prod_{i\in v} t_i$. 
\end{theorem}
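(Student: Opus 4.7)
I would combine the graph-manifold structure of $M=M(\A)$ with the jumping-loci machinery of Sections \ref{sect:cvs}--\ref{sect:alex} to pin down $\Delta_G$ via its zero locus together with local multiplicities. The first step is that $M$ is a closed, orientable $3$-manifold, so Lemma \ref{lem:3mfd} shows that $E_1(A_G)$ is almost principal, while the presentation of $G$ in \cite{CS08} exhibits $H=H_1(G,\Z)$ as torsion-free. Proposition \ref{prop:v1 ap}\eqref{al2} then yields $\VV_1(G)\setminus\{1\}=V(\Delta_G)\setminus\{1\}$, so the support of $\Delta_G$ is exactly the codimension-one part of $\VV_1(G)$.

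Next I would compute $\VV_1(G)$ from the Waldhausen decomposition of $M$ dictated by the blow-up $\widetilde\A$. Each vertex $v\in \cV(\G)$ contributes a Seifert piece $P_v\cong \Sigma_v\times S^1$, with $\Sigma_v$ a $2$-sphere with $m_v$ open disks removed and $S^1$-fiber equal to the meridian $t_v=\prod_{i\in v}t_i$ around the component indexed by $v$ (as read off from the intersection data of $\widetilde\A$). Applying Mayer--Vietoris with coefficients in a rank-one local system $\C_\rho$, I would show that if $\rho(t_{v'})\ne 1$ for every $v'\in\cV(\G)$ then $H_1(M,\C_\rho)=0$ (since $H_*(S^1,\C_\rho|_{S^1})=0$ on each piece, and likewise on each torus), while at a character $\rho$ generic in the subtorus $T_v=\{t_v=1\}$ only $P_v$ contributes, and the K\"unneth calculation for $F_{m_v-1}\times S^1$ yields $\dim_\C H_1(M,\C_\rho)=m_v-2$. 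Thus $\VV_1(G)\setminus\{1\}$ consists exactly of the subtori $T_v$ with $m_v\ge 3$, each of generic depth $m_v-2$, so $\Delta_G \doteq \prod_v(t_v-1)^{\mu_v}$ with $\mu_v\ge m_v-2$ by Theorem \ref{thm:h1 bound}.

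Finally I would establish the matching upper bound $\mu_v\le m_v-2$ to conclude $\mu_v= m_v-2$. The cleanest route is through a Torres-type splice-diagram formula for graph manifolds (in the spirit of Eisenbud--Neumann \cite{EN} and McMullen \cite{McM}), which expresses $\Delta_G$ as an explicit product indexed by the JSJ vertices; the graph $\G$ of $M(\A)$ and its vertex valences $m_v$ feed directly into such a formula and return exactly the predicted exponents. Alternatively, one can compute $\Delta_G$ via Fox calculus from the commutator-relators presentation of $G$ supplied by Proposition~3.7 of \cite{CS08} and verify that the gcd of the maximal minors of the Jacobian matches $\prod_v(t_v-1)^{m_v-2}$. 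Vertices with $m_v=2$ contribute $(t_v-1)^0=1$ and drop out, giving the formula.

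The main obstacle will be pinning down the exact multiplicity $m_v-2$ rather than just a lower bound: the Mayer--Vietoris depth count combined with Theorem \ref{thm:h1 bound} only gives one inequality, and the other side requires either invoking a Torres-type graph-manifold formula or carrying out a somewhat intricate Fox-calculus determinant computation. A secondary technicality is the bookkeeping of gluing maps along the edge tori in the twisted Mayer--Vietoris sequence; these gluings are dictated by the self-intersections of the proper transforms in $\widetilde\A$ and must be tracked carefully to confirm that adjacent pieces contribute nothing when $\rho$ is generic on $T_v$.
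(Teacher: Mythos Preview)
The survey does not prove this theorem; it is quoted from \cite{CS08} without argument and then \emph{used} (together with Corollary~\ref{cor:v1 3m}\eqref{3m2}) to derive formula~\eqref{eq:v1 bdry} for $\VV_1(G)$.  So there is no in-paper proof to compare against.  Your strategy in fact reverses the survey's logical flow: you propose to compute $\VV_1(G)$ first, by Mayer--Vietoris on the JSJ pieces, and then extract $\Delta_G$; the survey goes the other way.

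Your outline is reasonable in spirit but has two genuine gaps.  First, the vertex pieces $P_v$ are not products $\Sigma_v\times S^1$ in general: they are circle bundles over punctured spheres whose Euler numbers are the self-intersection numbers of the lines of $\widetilde{\A}$ (namely $-1$ for an exceptional curve and $1-k$ for a proper transform passing through $k$ blown-up points).  The depth computation you want (generic depth $m_v-2$ along $\{t_v=1\}$) does survive, but it needs the Wang sequence for the bundle rather than K\"unneth for a product, and the edge-torus bookkeeping is correspondingly more delicate than you indicate.  Second, and more seriously, you yourself flag that Theorem~\ref{thm:h1 bound} only gives $\mu_v\ge m_v-2$; the matching upper bound $\mu_v\le m_v-2$ is the whole content of the formula, and deferring it to ``a Torres-type splice formula'' or ``Fox calculus on the presentation of \cite{CS08}'' is deferring the proof.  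The argument actually carried out in \cite{CS08} is essentially the second of your two proposed endgames: one writes down the explicit commutator-relators presentation of $G$ coming from the graph-of-groups structure, computes the Alexander matrix by Fox calculus, and reads off $\Delta_G$ directly.  The jumping-loci description~\eqref{eq:v1 bdry} is then a corollary, not an input.
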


\subsection{Jumping loci of boundary manifolds}
\label{subsec:jump bdry}
The first characteristic variety of the group 
$G=\pi_1(M(\A))$ can be easily computed from 
Theorem \ref{thm:alex poly arr}. First note 
that $\Delta_G(1)=0$.  Hence, by 
Corollary \ref{cor:v1 3m}\eqref{3m2}, 
\begin{equation}
\label{eq:v1 bdry}
\VV_1(G) = \bigcup_{v \in \cV(\G)\, :\, m_v\ge 3} 
\set{t_v-1=0}.  
\end{equation}

The resonance varieties of boundary manifolds 
were studied in detail in \cite{CS06, CS08}.   
As before, let $\A=\{\ell_0,\dots,\ell_n\}$ be an 
arrangement of lines in $\CP^2$. The first resonance 
variety of $G=\pi_1(M(\A))$ admits a particularly 
simple description:
\begin{equation}
\label{eq:r1 bdry}
\RR_1(G)=\begin{cases}
\C^n &\text{if $\A$ is a pencil,}\\
\C^{2(n-1)} &\text{if $\A$ is a near-pencil,}\\    
H^1(G,\C) &\text{otherwise.}
\end{cases}
\end{equation}

The higher-depth resonance varieties, though, can be much 
more complicated, as the following example extracted from 
\cite[\S6.9]{CS06} illustrates.   

\begin{example}
Let $\A$ be an arrangement of $5$ lines in $\CP^2$ in general 
position, and set $G=\pi_1(M(\A))$.  Then $H^1(G,\C)=\C^{10}$, 
and $\RR_7(G)=Q\times \{0\}$, where 
\[
Q=\{z\in \C^{6} \mid z_1z_6 -z_2z_5+z_3z_4=0\}, 
\]
which is an irreducible quadric, with an isolated singular point 
at $0$.  
On the other hand, formula \eqref{eq:v1 bdry} implies that 
$\VV_d(G)\subseteq \{1\}$, for all $d\ge 1$. Consequently, 
$TC_1(\VV_7(G))\ne \RR_7(G)$.  The tangent 
cone theorem now shows that $G$ is not $1$-formal. 
\end{example}

The above computation indicates that, for arrangements 
different from those in Example \ref{ex:bdry pencil}, 
the boundary manifold may well be non-formal.  As a 
matter of fact, we have the following result, which summarizes 
Theorem 9.7 from \cite{CS08}  and Proposition 4.7 from 
\cite{DPS-imrn}, and makes Theorem \ref{thm:3d qk formal} 
much more precise for this particular class of $3$-manifolds.

\begin{theorem}[\cite{CS08, DPS-imrn}] 
\label{thm:bdry}
Let $\A=\{\ell_0,\dots,\ell_n\}$ be an arrangement of lines in 
$\CP^2$, and let $M$ be the corresponding boundary manifold. 
The following are equivalent:
\begin{romenum}
\item  \label{f1} The manifold $M$ is formal. 
\item  \label{f2} The group $G=\pi_1(M)$ is $1$-formal.
\item  \label{f3'} The group $G$ is quasi-projective.
\item  \label{f3} The group $G$ is quasi-K\"{a}hler.
\item  \label{f4} $\A$ is either a pencil or a near-pencil. 
\item  \label{f5} $M$ is either $\sharp^n S^1\times S^2$ or 
$S^1\times \Sigma_{n-1}$.
\end{romenum}
\end{theorem}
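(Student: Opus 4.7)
The plan is to establish the equivalences via two cycles, both anchored at condition \eqref{f4}:
\[
\eqref{f4} \Rightarrow \eqref{f5} \Rightarrow \eqref{f1} \Rightarrow \eqref{f2} \Rightarrow \eqref{f4}, \qquad
\eqref{f4} \Rightarrow \eqref{f5} \Rightarrow \eqref{f3'} \Rightarrow \eqref{f3} \Rightarrow \eqref{f4}.
\]
The implication \eqref{f4} $\Rightarrow$ \eqref{f5} is the content of Example~\ref{ex:bdry pencil}. For \eqref{f5} $\Rightarrow$ \eqref{f1}, both $\sharp^n S^1 \times S^2$ and $S^1 \times \Sigma_{n-1}$ are formal: the factors $S^1 \times S^2$ and $S^1 \times \Sigma_g$ are formal as products of formal spaces, and formality of manifolds is preserved under connected sums (cf.~\S\ref{subsec:formal spaces}). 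For \eqref{f5} $\Rightarrow$ \eqref{f3'}, realize the fundamental groups as $F_n = \pi_1(\C \setminus \set{n\text{ points}})$ and $\Z \times \pi_1(\Sigma_g) = \pi_1(\C^{\times} \times \Sigma_g)$, both fundamental groups of smooth quasi-projective varieties. The implications \eqref{f1} $\Rightarrow$ \eqref{f2} and \eqref{f3'} $\Rightarrow$ \eqref{f3} are immediate from the definitions.

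The crux lies in \eqref{f2} $\Rightarrow$ \eqref{f4} and \eqref{f3} $\Rightarrow$ \eqref{f4}, both proved by contrapositive. Assume $\A$ is neither a pencil nor a near-pencil. Formula~\eqref{eq:r1 bdry} then gives $\RR_1(G) = H^1(G, \C)$, while \eqref{eq:v1 bdry} exhibits $\VV_1(G) \cap \Hom(G,\C^{\times})^0$ as a finite union of codimension-one subtori of the form $\{t_v=1\}$ passing through the identity (reducing to $\{1\}$ if no vertex has degree $\ge 3$). In either case, $TC_1(\VV_1(G))$ is a proper algebraic subvariety of $H^1(G, \C) \cong \C^{b_1(G)}$, since it is a finite union of hyperplanes and $b_1(G) \ge 1$ for every non-trivial arrangement. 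This already suffices for \eqref{f2} $\Rightarrow$ \eqref{f4}: were $G$ to be $1$-formal, Theorem~\ref{thm:tcone}\eqref{tc2} would force the equality $TC_1(\VV_1(G)) = \RR_1(G) = H^1(G,\C)$, which is impossible.

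For \eqref{f3} $\Rightarrow$ \eqref{f4} the tangent cone equality is unavailable, so one appeals instead to the constraint on Alexander polynomials of quasi-K\"{a}hler groups. By Theorem~\ref{thm:alex poly arr}, $\Delta_G = \prod_{v}(t_v-1)^{m_v-2}$, whence the Newton polytope of $\Delta_G$ is the Minkowski sum $\sum_{v:\, m_v \ge 3} (m_v-2)\cdot [0, a_v]$, with $a_v$ the exponent vector of the monomial $t_v$ in the lattice $H$ from \eqref{eq:tf ab}. A combinatorial analysis of the incidence graph $\G$ shows that when $\A$ is neither a pencil nor a near-pencil, there exist two vertices $v_1, v_2$ of multiplicity $\ge 3$ whose exponent vectors $a_{v_1}, a_{v_2}$ are $\Z$-linearly independent, so the Newton polytope of $\Delta_G$ has dimension at least two. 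Provided $b_1(G) \ne 2$, Theorem~\ref{thm:alex qk}\eqref{alk1} then yields the desired contradiction. The small remaining cases with $b_1(G) \le 2$ are handled by direct inspection: the finite list of arrangements with such small first Betti number consists entirely of pencils.

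The main obstacle is the combinatorial step in \eqref{f3} $\Rightarrow$ \eqref{f4}, namely showing that exactly the pencils and near-pencils produce graphs $\G$ in which the exponent vectors $\{a_v : m_v \ge 3\}$ all lie on a single ray through the origin in $H$. This demands a careful case analysis of the possible incidence patterns among the lines and their multiple points, together with a tracking of the linear relations imposed on the meridians by passage to $H_1(M,\Z)$.
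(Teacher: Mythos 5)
Your proposal follows essentially the same route as the paper: the easy implications flow from the classification in Example~\ref{ex:bdry pencil}, while the two crucial implications \eqref{f2}~$\Rightarrow$~\eqref{f4} and \eqref{f3}~$\Rightarrow$~\eqref{f4} are obtained by contrapositive, using respectively the tangent cone Theorem~\ref{thm:tcone} together with formulas \eqref{eq:v1 bdry} and \eqref{eq:r1 bdry}, and the Alexander polynomial constraint of Theorem~\ref{thm:alex qk} together with the explicit formula of Theorem~\ref{thm:alex poly arr}. The combinatorial verification that the Newton polytope of $\Delta_G$ has dimension at least two when $\A$ is neither a pencil nor a near-pencil (which you correctly flag as the remaining technical step) is precisely what is carried out in the cited references \cite{CS08, DPS-imrn}.
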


The crucial implications here are \eqref{f2} $\Rightarrow$ 
 \eqref{f4}, which follows from Theorem \ref{thm:tcone}, 
together with formulas \eqref{eq:v1 bdry} and \eqref{eq:r1 bdry}, 
and \eqref{f3} $\Rightarrow$ \eqref{f4}, which makes use of  
Theorem \ref{thm:alex qk} and Theorem \ref{thm:alex poly arr}. 

\vspace*{2.0pc}

\newcommand{\arxiv}[1]
{\texttt{\href{http://arxiv.org/abs/#1}{arXiv:#1}}}

\newcommand{\arx}[1]
{\texttt{\href{http://arxiv.org/abs/#1}{arXiv:}}
\texttt{\href{http://arxiv.org/abs/#1}{#1}}}

\newcommand{\doi}[1]
{\texttt{\href{http://dx.doi.org/#1}{doi:#1}}}

\renewcommand{\MR}[1]
{\href{http://www.ams.org/mathscinet-getitem?mr=#1}{MR#1}}

\newcommand{\MRh}[2]
{\href{http://www.ams.org/mathscinet-getitem?mr=#1}{MR#1 (#2)}}

\end{document}